\newcommand{\stbt}[4]{\left(\begin{smallmatrix}#1 & #2 \\ #3 & #4\end{smallmatrix}\right)}
\title{Rankin--Eisenstein classes for modular forms}
\author{Guido Kings}
\address[Kings]{Fakult\"at f\"ur Mathematik \\
Universit\"at Regensburg\\
93040 Regensburg\\
Germany}
\email{guido.kings@mathematik.uni-regensburg.de}
\author{David Loeffler}
\address[Loeffler]{Mathematics Institute\\
Zeeman Building, University of Warwick\\
Coventry CV4 7AL, UK}
\email{d.a.loeffler@warwick.ac.uk}
\urladdr{\url{http://orcid.org/0000-0001-9069-1877}}
\author{Sarah Livia Zerbes}
\address[Zerbes]{Department of Mathematics \\
University College London\\
Gower Street, London WC1E 6BT, UK}
\email{s.zerbes@ucl.ac.uk}
\urladdr{\url{http://orcid.org/0000-0001-8650-9622}}
\thanks{The authors' research was supported by the following grants: SFB 1085 ``Higher invariants'' (Kings); Royal Society University Research Fellowship ``$L$-functions and Iwasawa theory'' and NSF Grant No. 0932078 000 (Loeffler); EPSRC First Grant EP/J018716/1 and NSF Grant No. 0932078 000 (Zerbes).}
\theoremstyle{plain}
    \newtheorem{theorem}{Theorem}[subsection]
\newtheorem{theorem*}{Theorem}
    \newtheorem{lemma}[theorem]{Lemma}
    \newtheorem{proposition}[theorem]{Proposition}
    \newtheorem{corollary}[theorem]{Corollary}
    \newtheorem{definition}[theorem]{Definition}
    \newtheorem{conjecture}[theorem]{Conjecture}
\theoremstyle{remark}
    \newtheorem{remark}[theorem]{Remark}
    \newtheorem{note}[theorem]{Note}
    \newtheorem*{note*}{Note}
\DeclareMathOperator{\TSym}{TSym}
\DeclareMathOperator{\Sym}{Sym}
\DeclareMathOperator{\Hom}{Hom}
\DeclareMathOperator{\Ext}{Ext}
\DeclareMathOperator{\Spec}{Spec}
\DeclareMathOperator{\Fil}{Fil}
\DeclareMathOperator{\comp}{comp}
\DeclareMathOperator{\GL}{GL}
\DeclareMathOperator{\SL}{SL}
\DeclareMathOperator{\Gr}{Gr}
\DeclareMathOperator{\pr}{pr}
\DeclareMathOperator{\Gal}{Gal}
\DeclareMathOperator{\coker}{coker}
\DeclareMathOperator{\AJ}{AJ}
\newcommand{\Eis}{\mathrm{Eis}}
\newcommand{\Cusp}{\mathrm{Cusp}}
\newcommand{\dR}{\mathrm{dR}}
\newcommand{\rig}{\mathrm{rig}}
\newcommand{\an}{\mathrm{an}}
\newcommand{\cris}{\mathrm{cris}}
\newcommand{\et}{\text{\textup{\'et}}}
\newcommand{\mot}{\mathrm{mot}}
\newcommand{\ord}{\mathrm{ord}}
\newcommand{\syn}{\mathrm{syn}}
\newcommand{\fp}{\mathrm{fp}}
\newcommand{\can}{\mathrm{can}}
\newcommand{\res}{\mathrm{res}}
\newcommand{\AH}{\cH}
\newcommand{\sC}{\mathscr{C}}
\newcommand{\sE}{\mathscr{E}}
\newcommand{\sF}{\mathscr{F}}
\newcommand{\sH}{\mathscr{H}}
\newcommand{\sS}{\mathscr{S}}
\newcommand{\sY}{\mathscr{Y}}
\newcommand{\cD}{\mathcal{D}}
\newcommand{\cE}{\mathcal{E}}
\newcommand{\cH}{\mathcal{H}}
\newcommand{\cN}{\mathcal{N}}
\newcommand{\cO}{\mathcal{O}}
\newcommand{\cR}{\mathcal{R}}
\newcommand{\cT}{\mathcal{T}}
\newcommand{\cX}{\mathcal{X}}
\newcommand{\cY}{\mathcal{Y}}
\newcommand{\frP}{\mathfrak{P}}
\newcommand{\frS}{\mathfrak{S}}
\newcommand{\frT}{\mathfrak{T}}
\newcommand{\CC}{\mathbf{C}}
\newcommand{\HH}{\mathbf{H}}
\newcommand{\QQ}{\mathbf{Q}}
\newcommand{\RR}{\mathbf{R}}
\newcommand{\ZZ}{\mathbf{Z}}
\newcommand{\DD}{\mathbf{D}}
\newcommand{\FF}{\mathbf{F}}
\newcommand{\Qp}{{\QQ_p}}
\newcommand{\Zp}{{\ZZ_p}}
\newcommand{\Lp}{L_{\frP}}
\newcommand{\isom}{\cong}
\newcommand{\sgn}{\mathrm{sgn}}
\newcommand{\id}{\mathrm{id}}
\newcommand{\into}{\hookrightarrow}
\newcommand{\Tate}{\mathrm{Tate}}
\numberwithin{equation}{subsection}
\begin{document}

\begin{abstract}
 In this paper we make a systematic study of certain motivic cohomology classes (``Rankin--Eisenstein classes'') attached to the Rankin--Selberg convolution of two modular forms of weight $\ge 2$. The main result is the computation of the $p$-adic syntomic regulators of these classes. As a consequence we prove many cases of the Perrin-Riou conjecture for Rankin--Selberg convolutions of cusp forms.
\end{abstract}

\maketitle

%\versioninfo. Coloured comments: \textcolor{blue}{Guido} \textcolor{red}{David} \textcolor{green}{Sarah}

\setcounter{tocdepth}{1}

\tableofcontents

\section{Introduction}

 \subsection{Background} 

  In this paper we study certain motivic cohomology classes, which we call \emph{Rankin--Eisenstein classes}, associated to the values of Rankin--Selberg convolution $L$-functions of pairs of cuspidal modular eigenforms. 

  The first examples of Rankin--Eisenstein classes were introduced by Beilinson in \cite{Beilinson-L-values}. The classes considered by Beilinson live in the motivic cohomology group
  \[
   H^3_\mot\left(Y_1(N)\times Y_1(N),\QQ(2)\right),
  \]
  where $Y_1(N)$ is the modular curve classifying elliptic curves with a point of exact order $N$, and are obtained by push-forward of modular units along the diagonal. Beilinson mainly considered the regulator into Deligne cohomology
  \[
   r_\cD:H^3_\mot\left(Y_1(N)\times Y_1(N),\QQ(2)\right)
   \to H^3_\cD\left(Y_1(N)_\RR\times Y_1(N)_\RR,\RR(2)\right).
  \]
  and was able to compute explicitly the image of Rankin--Eisenstein classes under $r_{\cD}$ in terms of the values of Rankin--Selberg $L$-functions $L(f, g, s)$, for $f$ and $g$ weight 2 eigenforms of level $N$. 
  
  Beilinson's construction of the Rankin--Eisenstein classes, and his computation of their images under the Deligne regulator, were generalized to higher-weight modular forms by Scholl (unpublished, but see \cite{Kings-Higher-Regulators} for similar results in the case of Hilbert modular surfaces.) Using the geometry of Kuga--Sato varieties over $Y_1(N)$, Scholl defined motivic cohomology classes whose Deligne regulators were related to the Rankin--Selberg $L$-functions of pairs of modular forms of any weights $\ge 2$.

  Extending Beilinson's ideas in a different direction, Flach \cite{Flach-finiteness} studied the image under the \'etale regulator
  \[
   r_\et:H^3_\mot\left(Y_1(N)\times Y_1(N),\QQ(2)\right)
   \to H^3_\et\left(Y_1(N)\times Y_1(N),\Qp(2)\right)
  \]
  of certain motivic classes closely related to Rankin--Eisenstein classes, and used these to obtain finiteness results for the Tate--Shafarevich group of the symmetric square of an elliptic curve.

  In recent years the programme of Bertolini--Darmon--Rotger on the systematic study of Rankin--Selberg convolutions in $p$-adic families \cite{BDR-BeilinsonFlach, BDR-BeilinsonFlach2} has given a new impulse to the study of Rankin--Eisenstein classes (which appear in their work under the alternative name of \emph{Beilinson--Flach classes}). Roughly, the work of Bertolini--Darmon--Rotger is concerned with the \emph{syntomic regulator}
  \[
   r_\syn: H^3_\mot\left(Y_1(N)\times Y_1(N),\QQ(2)\right)
   \to H^3_\syn\left(Y_1(N)_\Zp\times Y_1(N)_\Zp,\Qp(2)\right)
  \]
  which is a $p$-adic analogue of $r_{\cD}$. The main result of \cite{BDR-BeilinsonFlach} relates the images of weight 2 Beilinson--Flach elements under $r_\syn$ to the values of $p$-adic $L$-functions (for good ordinary primes $p$), a $p$-adic version of Beilinson's computation of the Deligne regulators of these elements.

 \subsection{Outline of the paper}
 
  The main aims of this paper are as follows. Firstly, we give a careful account of the construction of the Rankin--Eisenstein classes of general weights, and the evaluation of their images under the Deligne regulator $r_{\cD}$. More precisely, for any integers $k, k', j$ such that $0 \le j \le \min(k, k')$, and any level $N \ge 4$, we define a class
  \[ 
   \Eis^{[k, k', j]}_{\mot, 1, N}
   \in H^3_{\mot}\left(Y_1(N)^2, \TSym^{[k, k']}(\sH_\QQ)(2-j)\right).
  \]
  (See \S \ref{sect:lieberman} for the definition of the motivic cohomology group appearing on the right-hand side, and Defintion \ref{def:Rankin-Eisenstein-class} for the definition of the element $\Eis^{[k, k', j]}_{\mot, 1, N}$.) 
  
  If $f, g$ are eigenforms of level $N$ and weights $k + 2, k' + 2$, then the condition $0 \le j \le \min(k, k')$ forces the $L$-function $L(f, g, s)$ to vanish to order 1 at $s = 1+ j$, due to the $\Gamma$-factors appearing in its functional equation. In Theorem \ref{thm:deligne-regulator-formulae}, we relate the first derivative $L'(f, g, 1 + j)$ to the image of $\Eis^{[k, k', j]}_{\mot, 1, N}$ under the Deligne regulator. These results are not new -- as mentioned above, they appear in unpublished notes of Scholl -- but they were not available in published form prior to our work.
  
  \begin{note*} 
   Since the first version of this paper was posted on the Arxiv, an independent proof of the same theorem has also been given by Brunault and Chida \cite{brunaultchida16}. Their paper also shows that the Rankin--Eisenstein classes extend to the cohomology of a product of compactified modular curves, an issue which we do not address in the present paper.
  \end{note*}
  
  Secondly, we give a higher-weight generalisation of the syntomic regulator computations of \cite{BDR-BeilinsonFlach}, relating the $p$-adic regulators of Rankin--Eisenstein classes to the values of $p$-adic $L$-functions, for all good ordinary primes $p$. This result, Theorem \ref{thm:syntomicreg}, is the main new result presented in this paper. In order to adapt the strategy of \cite{BDR-BeilinsonFlach} to this setting, we need to develop a new tool in arithmetic geometry, a generalisation of the ``finite-polynomial cohomology''' of Besser \cite{Besser-Coleman-integration} to allow coefficients in an overconvergent filtered F-isocrystal. We anticipate that this cohomology theory may be useful for computing $p$-adic regulators in many other settings (for instance, see \cite{LSZ} for an application to Hilbert modular surfaces).
  
  Thirdly, we use our computations of syntomic and Deligne regulators to verify an instance of a very general conjecture of Perrin-Riou, which roughly says that the leading terms of the complex $L$-functions at all integers $n$ (divided by a suitable ``period'') can be $p$-adically interpolated by a $p$-adic $L$-function. We verify this for integers in a certain range: we show in Theorem \ref{thm:Perrin-Riou} that the leading terms at $s = 1+j$, for all $0 \le j \le \min(k, k')$, can be $p$-adically interpolated by Hida's $p$-adic Rankin $L$-function. (This inequality on $j$ singles out precisely those values where the complex $L$-function vanishes to the first order. For $s \le 0$ the corresponding motivic cohomology is expected to be two-dimensional and in that case we even do not know Beilinson's conjecture.) We would like to mention that so far there are only very few cases where the conjecture of Perrin-Riou has been proven; besides the case of Dirichlet $L$-functions (essentially treated by Perrin-Riou), only the cases of modular forms \cite{Niklas-thesis} and certain elliptic curves with complex multiplication \cite{Bannai-Kings-CM} are known.
  
 \subsection{Relation to other works}

  This paper is the first in a series, of which the next instalment is \cite{KLZ1b}. As it serves as one of the major motivations for the present work, we shall briefly describe what will be carried out in the next paper. 
  
  The first main result of \cite{KLZ1b} is to construct three-parameter $p$-adic families of cohomology classes, interpolating the Rankin--Eisenstein classes constructed in the present paper for all values of the parameters $k, k', j$. This relies crucially on the first author's results on $p$-adic interpolation of Beilinson's Eisenstein symbol \cite{Kings-Eisenstein}. Having constructed these families, we then use Theorem \ref{thm:syntomicreg} of the present paper, together with a $p$-adic interpolation argument, to prove an ``explicit reciprocity law'' relating certain specialisations of this $p$-adic family to \emph{critical} values of complex $L$-functions. This is used in \emph{op.cit.} to prove many new cases of the Bloch--Kato conjecture for Rankin convolutions. 

  The beautiful idea of proving a relation to $L$-values in a non-critical range, and then using $p$-adic analytic continuation to pass from this range to the range of critical values, originates in the work of Bertolini, Darmon and Rotger. They used this strategy in \cite{BDR-BeilinsonFlach2}, to prove an explicit reciprocity law for a 1-parameter family of Rankin--Eisenstein classes; and our strategy is to a large extent inspired by their work. The chief difference between their approach and ours is in the input to the $p$-adic interpolation argument. The input we use is Theorem \ref{thm:syntomicreg} below, which holds for modular forms of level prime to $p$ and arbitrary weights; whereas they start from an analogous formula in which the modular forms are taken to have weight 2, but possibly high $p$-power levels. 

  Hence our approach has the advantage of giving extra information information about the \'etale and syntomic regulators of these higher-weight motivic classes, leading to a proof of Perrin-Riou's conjecture relating classical and $p$-adic $L$-values (Theorem \ref{thm:Perrin-Riou} of this paper). Moreover, the use of higher-weight modular forms means that our explicit reciprocity law can be generalised to non-ordinary Coleman families, as we shall show in the third paper in this series \cite{loefflerzerbes16}.

 \subsection{Acknowledgements}

  The authors are very grateful to Massimo Bertolini, Henri Darmon, and Victor Rotger for many inspiring discussions, in which they shared with us their beautiful ideas about Beilinson--Flach elements. The authors also would like to thank the organizers of the Banff workshop ``Applications of Iwasawa Algebras'' in March 2013, at which the collaboration was initiated which led to this paper. Part of this paper was written while the second and third author were visiting MSRI in Autumn 2014; they would like to thank MSRI for the hospitality. Finally, we thank the referee for several valuable comments and corrections.

\section{Geometrical preliminaries}

 In this section, we recall a number of cohomology theories attached to algebraic varieties (or more general schemes) over various base rings, and the relationships between these. The results of this section are mostly standard, but one aspect is new: we outline in \S \ref{sect:fpdefs} how to extend Besser's finite-polynomial cohomology to cover general coefficient sheaves.

 \subsection{Cohomology theories}

  We begin by introducing several cohomology theories associated to algebraic varieties (or, more generally, schemes). The most fundamental of these is motivic cohomology:

  \begin{definition}[{Beilinson, \cite{Beilinson-L-values}}]
   If $X$ is a regular scheme, we define \emph{motivic cohomology} groups
   \[ H^i_\mot(X,\QQ(n)) \coloneqq \Gr^\gamma_n K_{2n-i}(X)\otimes\QQ,\]
   the $n$-th graded piece of the $\gamma$-filtration of the $(2n-i)$-th algebraic $K$-theory of $X$.
  \end{definition}

  \begin{remark}
   This definition is compatible with the definition due to Voevodsky used in \cite{LLZ14}. Voevodsky's motivic cohomology can be defined with $\ZZ$-coefficients, but in this paper we shall only consider motivic cohomology with $\QQ$-coefficients (as we will need to decompose the motivic cohomology groups into eigenspaces for the action of a finite group), so the older definition via higher $K$-theory suffices.
  \end{remark}

  We will also need to work with several other cohomology theories. In each of these there is an appropriate notion of a \emph{coefficient sheaf}:

  \begin{itemize}
   \item \'etale cohomology, with coefficients in lisse \'etale $\Qp$-sheaves (for schemes on which the prime $p$ is invertible);
   \item algebraic de Rham cohomology (for smooth varieties over fields of characteristic 0), with coefficients in vector bundles equipped with a filtration and an integrable connection $\nabla$;
   \item Betti cohomology (for smooth varieties over $\CC$), with coefficients in $\QQ$ or more generally locally constant sheaves of $\QQ$-vector spaces;
   \item Absolute Hodge cohomology (for smooth varieties over $\RR$ or $\CC$), with coefficients in variations of mixed $\RR$-Hodge structures;
   \item rigid cohomology for smooth $\Zp$-schemes, with coefficients in overconvergent F-isocrystals;
   \item rigid syntomic cohomology for smooth $\Zp$-schemes, with coefficients in the category of ``admissible overconvergent filtered F-isocrystals'' defined in \cite{Bannai-syntomic, BannaiKings}. (The definition of this coefficient category and the associated cohomology theory in fact depends not only on the choice of a $\Zp$-scheme $X$, but also on a choice of a suitable smooth compactification $\bar X$; but we shall generally suppress this from the notation.)
  \end{itemize}

  We denote these theories by $H^\bullet_{\cT}(\dots)$, for $\cT \in \{ \et, \dR, B, \AH, \rig, \syn\}$. We sometimes write $\overline{\et}$ for \'etale cohomology over $\overline{\QQ}$. We write $\QQ_{\cT}$ for the trivial coefficient sheaf, and $\QQ_{\cT}(n)$ for the $n$-th power of the Tate object in the relevant category. (For Betti cohomology we take $\QQ_B = \QQ$, and $\QQ_B(n) = (2\pi i)^n \QQ \subseteq \CC$.)

  \begin{remark}
   We shall occasionally abuse notation slightly by writing $H^i_{\cT}(Y)$, for $Y$ a scheme over a ring such as $\ZZ[1/N]$; in this case, we understand this to signify the cohomology of the base-extension of $Y$ to a ring over which the cohomology theory $\cT$ makes sense. Thus, for instance, we write $H^i_{\syn}(Y)$ or $H^i_{\cD}(Y)$ for $Y$ a scheme over $\ZZ[1/N]$, and by this we intend $H^i_{\syn}(Y_{\Zp})$ and $H^i_{\cD}(Y_{\RR})$ respectively. This convention makes it significantly easier to state theorems which hold in all of the above theories simultaneously.
  \end{remark}

  For any of these theories, we can define higher direct images of coefficient sheaves for smooth proper morphisms, and we have a Leray spectral sequence. (For syntomic cohomology this was shown in the PhD thesis of N.~Solomon, \cite{Solomon}.)

  \begin{remark}
   All our coefficients are of geometric origin, and in fact the cohomology with coefficients $H^i_\cT(X, \sF_\cT)$ we use arises as a direct summand of an appropriate $H^i_\cT(Y, \QQ_\cT)$ by decomposing the Leray spectral sequence for some map $Y \to X$.
  \end{remark}

 \subsection{Comparison maps}

  The above cohomology theories are related by a number of natural maps.

  \subsubsection*{Regulator maps}

   Firstly, for each $\cT$ there is a ``regulator'' map
   \[ r_{\cT}: H^i_{\mot}(X, \QQ(n)) \to H^i_{\cT}(X, \QQ_{\cT}(n)).\]
   These maps are compatible with cup-products, pullbacks, and pushforward along proper maps.

   \begin{remark}
    For the compatibility of the syntomic regulator $r_{\syn}$ with pushforward maps, see \cite{deglisemazzari15}.
   \end{remark}

  \subsubsection*{Geometric comparison isomorphisms}

   We have the following well-known comparison isomorphisms. Firstly, if $X_{\CC}$ is a smooth variety over $\CC$ one has a comparison isomorphism
   \begin{equation}
    \label{eq:cplxcomparison}
    H^i_\dR(X_\CC, \CC) \cong H^i_B(X_{\CC}(\CC), \CC).
   \end{equation}
   compatible with $r_B, r_{\dR}$.

   If $X_{\CC} = X_{\RR} \times_{\RR} \CC$ for some variety $X_{\RR}$ over $\RR$, then both sides have
   an $\RR$-structure
   \[ H^i_\dR(X_\RR, \RR) \otimes\CC \cong H^i_B(X_{\RR}(\CC), \RR)\otimes\CC \]
   which are not respected by the comparison isomorphism,
   but $\id\otimes c$ on the
   left hand side (where $c:\CC\to\CC$ is complex conjugation) corresponds on $H^i_B(X_{\RR}(\CC), \RR)\otimes \CC$ to the map $\overline{F_\infty} = F_\infty \otimes c$, where $F_\infty$ denotes pullback via the complex conjugation automorphism $F_{\infty}$ of the topological space $X_{\RR}(\CC)$. As usual
   we define $H^i_B(X_{\RR}(\CC), \QQ(n))^\pm \coloneqq H^i_B(X_{\RR}(\CC), \QQ(n))^{\overline{F}_\infty=\pm 1}$ where
   $\QQ(n) \coloneqq (2\pi i)^n\QQ$.

   Similarly, let $X$ be a smooth $\Zp$-scheme, and suppose that we can embed $X$ in a smooth proper $\Zp$-scheme $\bar X$ as the complement of a simple normal crossing divisor relative to $\Spec \Zp$ (so that $\mathscr{X} = (X, \bar X)$ is a \emph{smooth pair} in the sense of \cite[Appendix A]{BannaiKings}). Let $\sF$ be any admissible overconvergent filtered $F$-isocrystal on $X$; then we can define de Rham and rigid realizations $\sF_{\dR}$ and $\sF_{\rig}$, and one has an isomorphism
   \begin{equation}
    \label{eq:rigidcomparison}
    H^i_{\dR}(X_{\Qp}, \sF_{\dR}) \cong H^i_{\rig}(X, \sF_{\rig}).
   \end{equation}

   If $X_{\Qp}$ is a smooth variety over $\Qp$ one has Faltings comparison isomorphism
   \begin{equation}
    \label{eq:dRcomparison}
    \comp_{\dR} : H^i_{\dR}(X_{\Qp}, \Qp) \otimes \mathbf{B}_{\dR} \cong H^i_{\et}(X_{\overline{\QQ}_p}, \Qp) \otimes \mathbf{B}_{\dR}
   \end{equation}
   where $\mathbf{B}_{\dR}$ is Fontaine's ring of periods; and this isomorphism is $\Gal(\overline{\QQ}_p / \Qp)$-equivariant, if we let the Galois group act trivially on $H^i_{\dR}(X_{\Qp} / \Qp)$ and via its native action on $H^i_{\et}(X_{\overline{\QQ}_p}, \Qp)$. This isomorphism is also compatible with the filtrations on both sides, and with the regulator maps $r_{\et}$, $r_{\dR}$ (for the variety $X_{\overline{\QQ}_p}$).

   If $X$ is itself proper, then rigid cohomology coincides with crystalline cohomology, and one has a comparison isomorphism refining \eqref{eq:dRcomparison},
   \begin{equation}
    \label{eq:criscomparison}
    \comp_{\mathrm{cris}} : H^i_{\rig}(X, \Qp) \otimes \mathbf{B}_{\mathrm{cris}} \cong H^i_{\et}(X_{\overline{\QQ}_p}, \Qp) \otimes \mathbf{B}_{\mathrm{cris}}
   \end{equation}
   compatible with $\Gal(\overline{\QQ}_p / \Qp)$ and with the Frobenius $\varphi$.

   \begin{remark}
    One can check that if $X$ is non-proper, but can be compactified to a smooth pair $\mathscr{X}$, then the \'etale cohomology of $X_{\Qp}$ is crystalline. Hence, combining equations \eqref{eq:rigidcomparison} and \eqref{eq:dRcomparison}, we have a canonical isomorphism of the form \eqref{eq:criscomparison}; but it is not clear if it commutes with the action of $\varphi$ when $X$ is non-proper.
   \end{remark}

   \begin{remark}
    \label{remark:nocoeffs}
    Note that we have not attempted to define a version of \eqref{eq:dRcomparison} or \eqref{eq:criscomparison} with coefficients, as it is not clear what the appropriate category of coefficient sheaves should be.
   \end{remark}

 \subsection{The Leray spectral sequence and its consequences}

  For any of the cohomology theories $\cT \in \{ \et, \bar\et, \dR, B, \AH, \rig, \syn\}$, and a variety $X$ with structure map $\pi: X \to \Spec R_{\cT}$ where $R_{\cT}$ is the appropriate base ring, we have a Leray spectral sequence
  \[ {}^\cT E_2^{ij} = H^i_{\cT}(\Spec R_{\cT}, R^j \pi_* \sF) \Rightarrow H^{i + j}_{\cT}(X, \sF).\]
  For the ``geometric'' theories $\cT \in \{ \bar\et, \dR, B, \rig \}$ this is not interesting, as the groups $H^i_{\cT}(\Spec R_{\cT}, -)$ are zero for $i \ne 0$. However, it is interesting for the ``absolute'' theories $\cT \in \{ \cD, \et, \syn \}$.

  \subsubsection*{Deligne cohomology and absolute Hodge cohomology}
  For a smooth variety $a:X_\RR\to\Spec \RR$
  one can define the Deligne-Beilinson cohomology
  groups $H^i_\cD(X_\RR,\RR(n))$  in terms of
  holomorphic differentials with logarithmic poles along
  a compactification. These groups are connected with
  de Rham and Betti cohomology via
  a long exact cohomology sequence
  \begin{equation}\label{eq:deligneexactseq}
  \ldots\to F^nH^i_\dR(X_\RR,\RR)\to H^{i}_B(X_\RR(\CC),\RR(n-1))^+\to H^{i+1}_\cD(X_\RR,\RR(n))\to F^nH^{i+1}_\dR(X_\RR,\RR)\to\ldots
  \end{equation}

  Recall the definition of absolute Hodge cohomology.
  Let $MHS_\RR^+$ be the category of mixed
 $\RR$-Hodge structures $M_\RR$ carrying an involution
 $F_\infty:M_\RR\to M_\RR$, which respects the weight filtration
 and such that $\overline{F}_\infty:M_\RR\otimes \CC\to
 M_\RR\otimes \CC$ respects the Hodge filtration.
 For any separated scheme $X_\RR\to \Spec \RR$ of finite type Beilinson \cite{Beilinson-Hodge}
 has defined a complex $R\Gamma(X_\RR,\RR(n))\in
 D^b(MHS_\RR^+)$ whose cohomology groups are the mixed
 Hodge structures
 $H^i(X_\RR(\CC),\RR(n))$ with the involution $\overline{F}_\infty$.  The absolute Hodge
cohomology of $X_\RR$ is by definition
\[
H^i_\AH(X_\RR, \RR(n)) \coloneqq R^i\Hom_{D^b(MHS_\RR^+)}(\RR(0),R\Gamma(X_\RR,\RR(n)))
\]
and one has  a short exact sequence
\begin{equation}\label{eq:absolute-Hodge-seq}
0\to \Ext^1_{MHS_\RR^+}(\RR(0),H^{i-1}_B(X_\RR(\CC),\RR(n)))
\to H^i_\AH(X_\RR, \RR(n))\to
\Hom_{MHS_\RR^+}(\RR(0),H^{i}_B(X_\RR(\CC),\RR(n)))\to 0.
\end{equation}
The computation of the Ext-groups of $M_\RR\in MHS_\RR^+$ is
standard and one has
\[ \Hom_{MHS_\RR^+}(\RR(0),M_\RR) = W_0M_{\RR}^+ \cap \Fil^0 M_{\CC},\quad \Ext^1_{MHS_\RR^+}(\RR(0), M_\RR) = \frac{W_0M_{\CC}^+}{W_0M_{\RR}^+ + \Fil^0 M_{\CC}^+}. \]
In the case where $a:X_\RR\to\Spec \RR$ is smooth
and the weights of $H^{i-1}_B(X_\RR,\RR(n))$
are $\le 0$ the absolute Hodge cohomology coincides
with the Deligne-Beilinson cohomology
$H^i_\cD(X_\RR,\RR(n))$. The advantage of
absolute Hodge cohomology is that one can define
this theory also with coefficients.

Let
$MHM_\RR(X_\RR)$ be the category of algebraic
$\RR$-mixed Hodge modules over $\RR$ of Saito
(this means a Hodge module over $X_\CC$ together
with an involution of $\overline{F}_\infty$, see
\cite[Appendix A]{HW} for more details). For
any $M_\RR\in MHM_\RR(X_\RR)$ one
defines
\[
H^i_\AH(X_\RR, M_\RR) \coloneqq R^i\Hom_{MHM_\RR(X_\RR)}(\RR(0),M_\RR)).
  \]
In the case where $M_\RR=\RR(n)$
one has the adjunction
\[
R\Hom_{MHM_\RR(X_\RR)}(\RR(0),\RR(n)))\isom
R\Hom_{D^b(MHS_\RR^+)}(\RR(0),Ra_*\RR(n)))
\]
and $Ra_*\RR(n)\isom R\Gamma(X_\RR,\RR(n))$.
This
interprets the above results in terms of the
Leray spectral sequence for $Ra_*$.

  \subsubsection*{Syntomic cohomology}

   The theory of syntomic cohomology, for smooth pairs $(X, \bar X)$ over $\Zp$, is closely parallel to that of absolute Hodge cohomology. An overconvergent filtered isocrystal on $\Spec \Zp$ is simply a filtered $\varphi$-module, in the sense of $p$-adic Hodge theory, and the cohomology groups $H^i_{\syn}(\Spec \Zp, D)$ are given by the cohomology of the 2-term complex $\Fil^0 D \rTo^{1-\varphi} D$; thus we have
   \begin{equation}
    \label{eq:hif}
    H^0_\syn(\Spec \Zp, D) = D^{\varphi = 1} \cap \Fil^0 D, \quad H^1_{\syn}(\Spec \Zp, D) = \frac{D}{(1 - \varphi) \Fil^0 D}.
   \end{equation}

   For a general smooth pair $(X, \bar X)$, and $\sF$ an admissible overconvergent filtered F-isocrystal on $X$, the comparison isomorphism $H^i_{\dR}(X_{\Qp}, \sF_{\dR}) \cong H^i_{\rig}(X, \sF_{\rig})$ allows us to interpret these spaces as a filtered $\varphi$-module, which we shall denote by $H^i_{\rig}(X, \sF)$; this is precisely the higher direct image $R^i \pi_* \sF$, where $\pi: X \to \Spec \Zp$ is the structure map. Exactly as in the complex case, the Leray spectral sequence becomes a long exact sequence
   \begin{equation}
    \label{eq:syntomicexactseq}
    \dots \to H^i_{\syn}(X, \sF) \to \Fil^0 H^i_{\dR}(X_{\Qp}, \sF_{\dR}) \rTo^{1 - \varphi} H^i_{\rig}(X, \sF_{\rig})
    \to \dots,
   \end{equation}
   which is the $p$-adic analogue of the long exact sequence \eqref{eq:deligneexactseq}.

  \subsubsection*{\'Etale cohomology}

   Let $X$ be a smooth variety over a field $K$ of characteristic 0, and $\sF$ a lisse \'etale sheaf on $X$. Then we have a Leray spectral sequence
   \[ {}^{\et} E_2^{ij} = H^i_{\et}(\Spec K, H^j(X_{\overline{K}}, \sF)) \Rightarrow H^{i+j}_{\et}(X, \sF).\]
   In general, this must be interpreted in terms of Jannsen's continuous \'etale cohomology \cite{jannsen88}; we shall only use this for $K = \Qp$, in which case continuous \'etale cohomology coincides with the usual \'etale cohomology.

   If $X_\QQ$ is a smooth variety over $\QQ$, then $X_{\QQ}$ admits a smooth model $X$ over $\ZZ[1/S]$ for some set of primes $S$ (with $p \in S$ without loss of generality); and if $\sF$ is of geometric origin, then it will extend to a lisse $\Qp$-sheaf on $X$ for large enough $S$. Then the cohomology groups $H^j(X_{\overline{\QQ}}, \sF)$ are unramified outside $S$, and this sequence becomes
   \[  H^i(\Gal(\QQ^S / \QQ), H^j(X_{\overline{\QQ}}, \sF)) \Rightarrow H^{i+j}_{\et}(X, \sF).\]

%%%%%%%%%%%%%%%%%%%%%%%%%%%%

 \subsection{Compatibility of \'etale and syntomic cohomology}\label{section:etsyncompatible}

  Let $D$ be a filtered $\varphi$-module over $\Qp$. As noted above, we can regard $D$ as an overconvergent filtered $F$-isocrystal on $\Spec \Zp$, and the cohomology groups $H^i_{\syn}(\Spec \Zp, D)$ are given by the formulae \eqref{eq:hif}.

  If $D = \DD_{\mathrm{cris}}(V)$ for $V$ a crystalline $G_{\Qp}$-representation, then we have canonical maps
  \[ H^i_\syn(\Spec \Zp, D) \to H^i(\Qp, V)\]
  arising from the Bloch--Kato short exact sequence of $G_{\Qp}$-modules
  \begin{equation}
   \label{eq:fundamentalseq}
   0 \rTo V \rTo V \otimes \mathbf{B}_{\mathrm{cris}} \rTo (V \otimes \mathbf{B}_{\mathrm{cris}}) \oplus (V \otimes \mathbf{B}_{\dR}/\mathbf{B}_{\dR}^+) \rTo 0.
  \end{equation}
  The map $H^i_\syn(\Spec \Zp, D) \to H^i(\Qp, V)$ is an isomorphism for $i = 0$, and for $i = 1$ it is injective, with image the subspace $H^1_f(\Qp, V)$ parametrizing crystalline extensions of the trivial representation by $V$. The resulting isomorphism
  \[ \frac{D}{(1 - \varphi) \Fil^0 D} \rTo^\cong H^1_f(\Qp, V) \]
  is denoted by $\widetilde\exp_{\Qp, V}$ in \cite{LVZ}; it satisfies $\widetilde\exp_{\Qp, V} \circ (1 - \varphi) = \exp_{\Qp, V}$, where $\exp_{\Qp, V}$ is the Bloch--Kato exponential map.

  \begin{remark}
   If $D$ is a filtered $\varphi$-module over $\Qp$, then the space $D / (1 - \varphi)\Fil^0 D$ is easily seen to parametrize extensions (in the category of filtered $\varphi$-modules) of the trivial module by $D$, and the map $\widetilde\exp_{\Qp, V}$ is just the natural map $\Ext^1_{\varphi, \Fil}(\mathbf{1}, \DD_{\mathrm{cris}}(V)) \rTo \Ext^1_{G_{\Qp}}(\mathbf{1}, V)$.
  \end{remark}

  Then we have the following theorem:

  \begin{theorem}[Besser, Nizio\l]\mbox{~}
   \label{thm:absolutecomparison}
   \begin{enumerate}
    \item Suppose $\mathscr{X} = (X, \bar X)$ is a smooth pair over $\Zp$, with $\bar X$ projective. Then there is a natural map
    \[ \comp: H^i_{\syn}(X, \Qp(n)) \to H^i_{\et}(X_{\Qp}, \Qp(n)),\]
    for each $n$, fitting into a commutative diagram (functorial in $\mathscr{X}$) 
    \begin{equation}
     \label{eq:absolutecomparison}
     \begin{diagram}
      H^i_{\mot}(X, \QQ(n)) & \rTo^{r_\syn} & H^i_{\syn}(X, \Qp(n)) \\
      \dTo & & \dTo_{\comp} \\
      H^i_{\mot}(X_{\Qp}, \QQ(n))& \rTo^{r_{\et}} & H^i_{\et}(X_{\Qp}, \Qp(n)).
     \end{diagram}
    \end{equation}
    where the left vertical map is given by base extension.
    \item If $X$ is projective, there is a morphism of spectral sequences ${}^{\syn} E^{ij} \to {}^{\et} E^{ij}$ for each $n$, compatible with the morphism $\comp$ on the abutment; and on the $E_2$ page the morphisms
    \[ H^i_\syn(\Spec \Zp, H^j_{\rig}(X, \Qp(n))) \to H^i(\Qp, H^j_{\et}(X_{\overline{\QQ}_p}, \Qp(n))) \]
    are given by the exact sequence \eqref{eq:fundamentalseq} for $V = H^j_{\et}(X_{\overline{\QQ}_p}, \Qp(n))$, together with the Faltings comparison isomorphisms
    \( \comp_{\dR}: H^j_\rig(X, \Qp) \cong H^j_\dR(X, \Qp) \cong \DD_\dR(H^j_{\et}(X_{\overline{\QQ}_p}, \Qp)) \)
    of \eqref{eq:dRcomparison}.
   \end{enumerate}
  \end{theorem}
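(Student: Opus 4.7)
The plan is to deduce both statements from Besser's construction of the rigid syntomic regulator and Nizio\l's comparison between syntomic and $p$-adic étale cohomology. For part (1), one realises rigid syntomic cohomology as the hypermapping fibre of $1-\varphi$ acting on $\Fil^n$ of the filtered de Rham/rigid complex (the sheaf-theoretic incarnation of the long exact sequence \eqref{eq:syntomicexactseq}). On the étale side, Faltings' comparison \eqref{eq:dRcomparison} together with the fundamental exact sequence \eqref{eq:fundamentalseq} gives a parallel mapping-fibre description of $R\Gamma_\et(X_{\Qp}, \Qp(n))$ after tensoring with $\mathbf{B}_{\mathrm{cris}}$. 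The morphism between the two mapping fibres, constructed by Nizio\l, defines $\comp$. Its compatibility with the motivic regulators is inherited from the universal Chern classes: both $r_\syn$ and $r_\et$ arise by pullback from universal classes on $B\GL_\infty$, and $\comp$ sends one to the other, as in \cite{besserloefflerzerbes} and the references there.

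For part (2), I would apply the Leray spectral sequence for $\pi: X \to \Spec \Zp$ in each theory. The syntomic $E_2^{ij}$ is $H^i_\syn(\Spec\Zp, R^j\pi_* \Qp(n))$, where by \eqref{eq:rigidcomparison} the direct image $R^j\pi_*\Qp(n)$ is the filtered $\varphi$-module $H^j_\rig(X)(n)$; because $X$ is projective, the crystalline comparison \eqref{eq:criscomparison} identifies this, compatibly with $\varphi$ and filtration, with $\DD_{\mathrm{cris}}(H^j_\et(X_{\overline{\QQ}_p}, \Qp(n)))$. The étale $E_2^{ij}$ is $H^i(\Qp, H^j_\et(X_{\overline{\QQ}_p}, \Qp(n)))$, and the claimed description of $\comp$ on $E_2$ follows from formulae \eqref{eq:hif} together with the standard identification (via the fundamental exact sequence) of $H^1_\syn(\Spec\Zp, \DD_{\mathrm{cris}}(V))$ with $H^1_f(\Qp, V)$.

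The main obstacle is sheaf-theoretic rather than cohomological: one must lift $\comp$ to a morphism between the complexes producing the two Leray filtrations, functorial in $\pi$, so that it induces a morphism of spectral sequences and not merely a map on abutments. This is achieved using Nizio\l's log-syntomic cycle class into $R\Psi \Qp(n)$, which is manifestly functorial for morphisms of smooth proper $\Zp$-schemes, and therefore sends the syntomic Leray filtration to the étale one. This is also the point at which the projectivity hypothesis is essential: for non-proper $X$, the compatibility of \eqref{eq:criscomparison} with $\varphi$ is unavailable (as noted in the remark preceding the theorem), so the Frobenius structures on the two $E_2$-pages need not match and the comparison on $E_2$ could fail.
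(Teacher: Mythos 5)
The paper does not prove this theorem: its ``proof'' is a bare citation to Besser's paper \cite{Besser-rigid-syntomic}, Corollary 9.10 and Proposition 9.11, so there is no independent argument in the paper to compare your sketch against. Your roadmap is a reasonable expansion of the ideas underlying the cited result, and you have correctly located both where the projectivity hypothesis enters (matching the remark the paper places just before the comparison isomorphism \eqref{eq:criscomparison}) and the real chain-level difficulty in part (2): that $\comp$ must be realized as a filtered morphism of complexes, compatible with the Leray filtrations, before one can deduce a morphism of spectral sequences rather than merely a map of abutments.

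Two points of imprecision, neither fatal but worth flagging. First, your description of $\comp$ ``as a morphism between the two mapping fibres'' is not quite how Besser proceeds: the \'etale side admits no direct mapping-fibre presentation over $\Qp$ (one only appears after tensoring with $\mathbf{B}_{\cris}$, at which point one has left the world of $\Qp$-sheaves on $X$), and in the cited reference the map goes through Fontaine--Messing syntomic cohomology as an intermediary, with Nizio{\l}'s theorem supplying the final identification with $p$-adic \'etale cohomology in the appropriate degree range. Your sketch waves at ``Nizio{\l}'s log-syntomic cycle class into $R\Psi\Qp(n)$,'' which is not the mechanism of Besser's Proposition 9.11. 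Second, the Chern-class argument for regulator compatibility describes the correct shape of the proof, but ``$\comp$ sends one universal class to the other'' is precisely the content of Corollary 9.10, not a property one can invoke independently to derive it; some verification that $\comp$ respects pullback of universal classes on $B\GL_\infty$ is still required. Since the paper defers entirely to the literature here, your sketch supplements rather than diverges from it.
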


  \begin{proof}
   See \cite{Besser-rigid-syntomic}, Corollary 9.10 and Proposition 9.11.
  \end{proof}
 
  \begin{remark}
   Part (1) is stated in \emph{op.cit.} for general smooth quasiprojective $\Zp$-schemes $X$ (without the hypothesis that $X$ can be compactified to a smooth pair). This is incorrect, as the results of \cite{niziol97} quoted in \emph{op.cit.} only apply when $X$ is projective. However, for smooth pairs $(X, \bar X)$ one can define the comparison map in (1) by applying Nizio\l's comparison theorems to $\bar X$ considered as a log-scheme, with the log-structure associated to the divisor $\bar X - X$. We are grateful to Wieslawa Nizio\l\ for explaining this argument to us.
  \end{remark}
 
  We will use this in \S \ref{sect:ajdefs} below, to show that the Abel--Jacobi maps for syntomic and \'etale cohomology are related by the Bloch--Kato exponential map.

 \subsection{Finite-polynomial cohomology}
  \label{sect:fpdefs}

  In order to evaluate the syntomic regulators, we will make use of a family of cohomology theories defined by Besser \cite{Besser-Coleman-integration}, depending on a choice of polynomial $P \in 1 + T \Qp[T]$; these reduce to syntomic cohomology when $P(T) = 1 - T$. Besser's theory is described in \emph{op.cit.} for coefficient sheaves of the form $\Qp(n)$, and we briefly outline below how to extend this to more general coefficient sheaves.

  Let $X$ be a smooth $\Zp$-scheme, and $\sF$ an admissible overconvergent filtered $F$-isocrystal on $X$ (or, more precisely, on some smooth pair $(X, \bar X)$ compactifying $X$).

  \begin{definition}
   For a polynomial $P \in 1 + T \Qp[T]$, we define groups $H^i_{\fp}(X, \sF, P)$ by replacing $1 - \varphi$ with $P(\varphi)$ in the definition of rigid syntomic cohomology with coefficients (cf.~\cite[Appendix A]{BannaiKings}).

   We also define compactly-supported versions $H^i_{\fp,c}(X, \sF, P)$ similarly (cf.~\cite{Besser-K1-surface}).
  \end{definition}

  \begin{remark}
   When $\sF = \Qp(n)$ for some $n$, these groups reduce to Besser's original finite-polynomial cohomology (cf.~\cite{Besser-Coleman-integration}), but with a different numbering: in Besser's theory $H^i_{\syn}(X, \Qp(n))$ corresponds to taking $P(T) = 1 - T / p^n$, whereas if more general coefficients are allowed, it is more convenient to number in such a way that syntomic cohomology always corresponds to $P(T) = 1 - T$, whatever the value of $n$.
  \end{remark}

  Exactly as in the case of syntomic cohomology (see \eqref{eq:syntomicexactseq} above), we have a long exact sequence
  \[
   \dots \to H^i_{\fp}(X, \sF, P) \to \Fil^0 H^i_{\dR}(X_{\Qp}, \sF_{\dR}) \rTo^{P(\varphi)} H^i_{\rig}(X, \sF_{\rig}, P)
   \to \dots
  \]
  and similarly for the compactly-supported variant; and there are ``change-of-$P$'' maps fitting into a diagram
  \begin{diagram}
   \dots \to & H^i_{\fp}(X, \sF, P) & \rTo & \Fil^0 H^i_{\dR}(X_{\Qp}, \sF_{\dR}) & \rTo^{P(\varphi)} & H^i_{\rig}(X, \sF_{\rig}) & \to \dots\\
   & \dTo & & \dTo^{\mathrm{id}} && \dTo^{Q(\varphi)} \\
   \dots \to & H^i_{\fp}(X, \sF, PQ) & \rTo & \Fil^0 H^i_{\dR}(X_{\Qp}, \sF_{\dR}) & \rTo^{PQ(\varphi)} & H^i_{\rig}(X, \sF_{\rig}) & \to \dots\\
  \end{diagram}

  \begin{definition}
   \label{def:starproduct}
   Exactly as in the case of Tate-twist coefficients in \cite[\S 2]{Besser-K1-surface}, we define cup products
   \[ H^i_{\fp}(X, \sF, P)\times H^j_{\fp,c}(X, \mathscr{G},Q)\rTo^\cup H^{i+j}_{\fp,c}(X, \sF\otimes \mathscr{G}, P\star Q),\]
   where the polynomial $P\star Q$ is defined by the formula
   \[
    \left(\prod_i(1-\alpha_i T)\right)\star\left(\prod_j(1-\beta_j T)\right) = \prod_{i,j}(1-\alpha_i\beta_j T).
   \]
  \end{definition}

  These cup-products are compatible with the change-of-$P$ maps, in the obvious sense. They also satisfy a more subtle compatibility with the long exact sequence: the cup-product $H^i_{\fp}(X, \sF, P)\times H^j_{\fp,c}(X,\mathscr{G},Q)\rTo^\cup H^{i+j}_{\fp,c}(X,\sF \otimes \mathscr{G}, P\star Q)$ is compatible with the cup-products
  \[ H^u_{\fp}(\Spec \Zp, H^i_{\rig}(X \sF), P) \times H^v_{\fp}(\Spec \Zp, H^j_{\rig, c}(X, \mathscr{G}), Q) \to H^{u + v}_{\fp}(\Spec \Zp, H^{i + j}_{\rig, c}(X, \sF \otimes \mathscr{G}), P \star Q). \]

  If the polynomial $P$ satisfies $P(p^{-1}) \ne 0$, and $X$ is connected and has dimension $d$, then there is a canonical isomorphism
  \[
   \operatorname{tr}_{\fp, X}: H^{2d + 1}_{\fp, c}(X, \Qp(d + 1), P) \rTo^\cong \Qp
  \]
  given by $\frac{1}{P(p^{-1})} \operatorname{tr}_{\rig, X}$, where $\operatorname{tr}_{\rig, X}: H^{2d}_{\rig, c}(X, \Qp) \cong \Qp$ is the trace map for rigid cohomology; the inclusion of the factor $\frac{1}{P(p^{-1})}$ makes this map compatible with the change-of-$P$ maps. For polynomials $P, Q$ with $(P \star Q)(p^{-1}) \ne 0$, we thus have a pairing
  \[ \langle -, - \rangle_{\fp, X} : H^i_{\fp}(X, \sF, P) \times H^{2d + 1 - i}_{\fp, c}(X, \sF^\vee (d + 1), Q) \to \Qp \]
  given by composing the cup-product with the map $\operatorname{tr}_{\fp, X}$.

%%%%%%%%%%%%%%%

\section{Modular curves and modular forms}

%%%%%%%%%%%%%%%

 We now introduce the specific geometric objects to which we will apply the general theory of the previous section: the modular curves $Y_1(N)$, and various coefficient sheaves on these curves.

 \subsection{Symmetric tensors}

  If $H$ is an abelian group, we define the modules $\TSym^k H$, $k \ge 0$, of symmetric tensors with values in $H$ following \cite[\S 2.2]{Kings-Eisenstein}. By definition, $\TSym^k H$ is the submodule of $\frS_k$-invariant elements in the $k$-fold tensor product $H \otimes \dots \otimes H$ (while the more familiar $\Sym^k H$ is the module of $\frS_k$-coinvariants). If $H$ is free of finite rank, and $H^\vee$ is its dual, then there is a canonical isomorphism $\TSym^k(H)^\vee = \Sym^k (H^\vee)$.

  The direct sum $\TSym^\bullet H \coloneqq \bigoplus_{k \ge 0} \TSym^k H$ is equipped with a ring structure via symmetrization of the naive tensor product, so for $h \in H$ we write $h^{[r]}:=h^{\otimes r}$ and one has 
  \begin{equation}\label{TSymmult}
   h^{[m]} \cdot h^{[n]} = \frac{(m + n)!}{m! n!} h^{[m + n]}.
  \end{equation}

  Similarly, one can define $\TSym^k H$ for a module $H$ over any commutative ring $A$ (e.g.~for vector spaces over a field). There is a natural ring homomorphism $\Sym^\bullet H \to \TSym^\bullet H$ given by mapping $x^{k}$ to $k! x^{[k]}$, which is an isomorphism if $k!$ is invertible in $A$.
  
  In general $\TSym^k$ does not commute with base change, and hence does not sheafify well. In the cases where we consider $\TSym^k(H)$, $H$ is always a free module over the coefficient ring, so that this functor coincides with $\Gamma^k(H)$, the $k$-th divided power of $H$. This functor does sheafify (on an arbitrary site), so that the above definitions and constructions carry over to sheaves of abelian groups. Thus, for any of the cohomology theories $\cT \in \{B, \et,\dR, \syn, \rig,\cD\}$, and $\sF$ an object of the appropriate category of coefficient sheaves, we can define objects $\TSym^k \sF$. We use this to make the following key definition:

  \begin{definition}
   Let $\pi:\cE \to Y$ be an elliptic curve such that $\cE$ and $Y$ are regular. For $\cT \in \{B, \et,\dR, \syn, \rig,\cD\}$, we define an element of the appropriate category of coefficient sheaves on $Y$ by
   \[ \sH_{\cT} = (R^1 \pi_* \QQ_\cT)^\vee. \]
  \end{definition}

  We now suppose $Y$ is a $T$-scheme, for some base scheme $T$, and we let $\Delta$ be the diagonal embedding $Y \into Y^2 = Y \times_T Y$.
\begin{definition}\label{def:Sym-r-s}
We define sheaves on $Y^2$ by
  \begin{align*} \TSym^{[k, k']} \sH_{\cT}& \coloneqq \pi_1^* \left(\TSym^k \sH_{\cT}\right) \otimes \pi_2^* \left(\TSym^{k'} \sH_{\cT}\right)\\
\Sym^{(k, k')} \sH_{\cT}& \coloneqq \pi_1^* \left(\Sym^k \sH_{\cT}\right) \otimes \pi_2^* \left(\Sym^{k'} \sH_{\cT}\right)
,\end{align*}
  where $\pi_1$ and $\pi_2$ are the first and second projections $Y^2 \to Y$.
\end{definition}
Then the pullback of these sheaves along $\Delta$ are the sheaves on $Y$ \begin{align*}
\Delta^{*}\TSym^{[k, k']} \sH_{\cT}&\isom \TSym^{k} \sH_{\cT}\otimes
\TSym^{k'} \sH_{\cT}\\
\Delta^{*}\Sym^{(k, k')} \sH_{\cT}&\isom \Sym^{k} \sH_{\cT}\otimes
\Sym^{k'} \sH_{\cT}.
\end{align*}

  If $Y$ is smooth of relative dimension $d$ over $T$, then we obtain pushforward (Gysin) maps
  \begin{equation}\label{eq:gysin-map} \Delta_*: H^{i}_{\cT}(Y,  \TSym^{k} \sH_{\cT}\otimes
\TSym^{k'} \sH_{\cT}(n)) \to H^{i + 2d}_{\cT}(Y^2, \TSym^{[k, k']}\sH_{\cT}(n + d))\end{equation}
  for $\cT \in \{ B, \dR, \et, \bar\et, \rig, \syn, \cH \}$ (and any $i$ and $n$). 
%%%%%%%%%%%%%%%%

 \subsection{Lieberman's trick}
  \label{sect:lieberman}

  \begin{definition}
   For an integer $k \ge 0$, let $\frS_k$ be the symmetric group on $k$ letters, and let $\frT_k$ be the semidirect product $\mu_2^k \rtimes \frS_k$. We define a character
   \begin{align*}
    \varepsilon_k: \frT_k &\to \mu_2\\
    (\eta_1,\ldots,\eta_k,\sigma) &\mapsto \eta_1\cdots\eta_k\sgn(\sigma).
   \end{align*}
   (Cf.~\cite[\S A.1]{Scholl-Kato-Euler-system}.)
  \end{definition}

  Let $\pi:\cE \to Y$ be an elliptic curve such that $\cE$ and $Y$ are regular. Let $\pi^k:\cE^k\to Y$ be the $k$-fold fibre product of $\cE$ over $Y$. On $\cE$ the group $\mu_2$ acts via $[-1]:\cE \to \cE$, and on $\cE^k$ the symmetric group $\frS_k$ acts by permuting the factors. This induces an action of the semi-direct product $\frT_k$ on $\cE^k$.

  \begin{definition}
   Let
   \[
    H^i_\mot(Y, \TSym^k\sH_\QQ(j))\coloneqq H^{i+k}_\mot(\cE^k,\QQ(j+k))(\varepsilon_k)
   \]
   be the $\varepsilon_k$-eigenspace.
  \end{definition}
%
%   This construction can also be considered in various other cohomology theories. We will need this in Betti, \'etale, de Rham, syntomic, rigid and Deligne cohomology. Let $\cT\in \{B,\et,\dR, \syn, \rig,\cD\}$
% %
%   \begin{definition}
%    Let $\TSym^{[k,k']}\sH_\cT\coloneqq\TSym^k\sH_\cT \otimes \TSym^{k'}\sH_\cT$.
%   \end{definition}

  The next result, which is a standard application of Lieberman's trick, justifies the above notation. Cf.~\cite[Lemma 1.5]{BannaiKings}.

  \begin{theorem}
   \label{thm:lieberman}
   Let $\cT \in \{B,\et, \bar\et, \dR, \syn, \rig,\cD\}$, and $\QQ_\cT,\sH_\cT$ be the realizations of $\QQ,\sH$ in the respective categories. Then one has isomorphisms
   \[
    H^{i+k}_\cT(\cE^k,\QQ_\cT(j+k))(\varepsilon_k) \isom H^i_\cT(Y, \TSym^k\sH_{\cT}(j)).
   \]
%    and
%    \[
%     H^{i+k+k'}_\cT(\cE^k\times_\ZZ \cE^{k'},\QQ_\cT(j+k+k'))(\varepsilon_k\times \varepsilon_{k'})\isom
%     H^i_\cT(Y,\TSym^{[k,k']}\sH_\cT(j)).
%    \]
   Moreover, the regulator map $r_{\cT}$ commutes with the action of $\frT_k$, and thus gives a map
   \[
    r_\cT:H^i_\mot(Y, \TSym^k\sH_\QQ(j)) \to H^i_\cT(Y, \TSym^k\sH_{\cT}(j)).
   \]
%    and similarly
%    \[
%    r_{\cT}:  H^i_\mot(Y, \TSym^{[k,k']}\sH_\QQ(j)) \to H^i_\cT(Y, \TSym^{[k,k']}\sH_\cT(j)).
%    \]
  \end{theorem}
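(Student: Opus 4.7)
The plan is to apply the Leray spectral sequence for the smooth proper morphism $\pi^k: \cE^k \to Y$ together with the Künneth decomposition of the fibre, and then to analyze the action of $\frT_k = \mu_2^k \rtimes \frS_k$. In each of the listed theories $\cT$ we have
\[ E_2^{p,q} = H^p_\cT\big(Y, R^q \pi^k_* \QQ_\cT\big) \Rightarrow H^{p+q}_\cT(\cE^k, \QQ_\cT), \]
and Künneth decomposes $R^q \pi^k_* \QQ_\cT$ as $\bigoplus_{q_1 + \cdots + q_k = q} R^{q_1} \pi_* \QQ_\cT \otimes \cdots \otimes R^{q_k} \pi_* \QQ_\cT$. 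Since $\frT_k$ acts trivially on the Tate twist $\QQ_\cT(j+k)$, it suffices to identify the $\varepsilon_k$-eigenspace of $H^{i+k}_\cT(\cE^k, \QQ_\cT)$ and reinstate the twist at the end.

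The involution $[-1]$ on $\cE$ acts trivially on $R^0 \pi_* \QQ_\cT$ and on $R^2 \pi_* \QQ_\cT \cong \QQ_\cT(-1)$, and as $-1$ on $R^1 \pi_* \QQ_\cT = \sH_\cT^\vee$; hence the $i$-th factor of $\mu_2$ acts on the $(q_1,\ldots,q_k)$-summand by $(-1)^{q_i}$. The restriction of $\varepsilon_k$ to $\mu_2^k$ is $\prod \eta_i$, so a summand contributes to the $\varepsilon_k$-eigenspace only if every $q_i$ is odd; since $q_i \in \{0,1,2\}$, this forces $q_i = 1$ for all $i$ and $q = k$. Therefore the $\varepsilon_k$-eigenspace of $E_2^{p,q}$ vanishes unless $q = k$, the spectral sequence trivially degenerates on this eigenspace, and
\[ H^{i+k}_\cT(\cE^k, \QQ_\cT)(\varepsilon_k) \cong H^i_\cT\Big(Y,\, \big((R^1 \pi_* \QQ_\cT)^{\otimes k}\big)(\varepsilon_k)\Big). \]
By graded commutativity of the cup product, $\sigma \in \frS_k$ acts on $(R^1\pi_*\QQ_\cT)^{\otimes k}$ as $\sgn(\sigma)$ times the naive permutation (Koszul signs, since all tensor factors lie in odd cohomological degree), so the $\sgn$-eigenspace is the naive $\frS_k$-invariants $\TSym^k(R^1\pi_*\QQ_\cT)$. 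Poincaré duality for $\pi$ (i.e.~the Weil pairing) gives $R^1\pi_*\QQ_\cT \cong \sH_\cT(-1)$, hence $\TSym^k(R^1\pi_*\QQ_\cT)(j+k) \cong \TSym^k\sH_\cT(j)$, yielding the asserted isomorphism.

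For the equivariance of $r_\cT$: the regulator is a natural transformation of functors on schemes, and so automatically intertwines any action by algebraic automorphisms, in particular that of $\frT_k$ on $\cE^k$; passing to $\varepsilon_k$-eigenspaces, which is exact in characteristic zero, then produces the induced map on $H^i_\cT(Y, \TSym^k \sH_\cT(j))$. The main point requiring care is that the Leray spectral sequence, the Künneth decomposition, and the identification $R^2 \pi_* \QQ_\cT \cong \QQ_\cT(-1)$ all hold in each of the listed theories; this is standard but uses nontrivial inputs in the less familiar cases, such as Solomon's thesis for syntomic cohomology and Saito's formalism of mixed Hodge modules for absolute Hodge cohomology, all of which are referenced earlier in the excerpt.
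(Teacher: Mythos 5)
Your proof is correct and follows essentially the same route as the paper's (very brief) argument: apply the Leray spectral sequence for $\pi^k:\cE^k\to Y$, observe that $[-1]$ acts on $R^q\pi_*\QQ_\cT$ by $(-1)^q$ to force $q_i=1$ in every Künneth factor, account for the Koszul signs under the $\frS_k$-action to obtain $\TSym^k$ in the $\sgn$-eigenspace, and identify $R^1\pi_*\QQ_\cT\cong\sH_\cT(-1)$ via the Weil pairing. The only thing the paper's one-sentence proof also emphasizes, which you correctly flag at the end, is that the existence of a suitable Leray spectral sequence in each of the listed theories (notably syntomic and absolute Hodge) is the nontrivial prerequisite.
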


  \begin{proof}
   This is immediate from the fact that $[-1]_*$ acts on $R^i\pi_*\QQ_\cT$ by multiplication with $[-1]^{2-i}$ and the isomorphism $\sH_{\cT}\isom R^1\pi_*\QQ_\cT(1)$ induced from the Tate pairing once one has a Leray spectral sequence.
  \end{proof}

  \begin{remark}
   In the case when $Y$ is a smooth $\Zp$-scheme, we can use Lieberman's trick to extend the comparison morphisms $\comp$ and $\comp_{\dR}$, defined above for cohomology with coefficients that are twists of the Tate object, to sheaves of the form $\TSym^k \sH$. By identifying $H^i_{\syn}(Y, \TSym^k(\sH)(r))$ with a direct summand of $H^{i + k}_{\syn}(\cE^k, \Qp(r + k))$, and similarly for \'etale and de Rham cohomology, we obtain maps
   \[ \comp: H^i_{\syn}(Y, \TSym^k(\sH_{\Qp})(r)) \to H^i_{\et}(Y_{\Qp}, \TSym^k(\sH_{\Qp})(r)) \]
   and isomorphisms
   \[ \comp_{\dR}: H^i_{\dR}(Y_{\Qp}, \TSym^k(\sH_{\Qp})(r)) \to \DD_{\mathrm{cris}}(H^i_{\et}(Y_{\overline{\QQ}_p}, \TSym^k(\sH_{\Qp})(r))). \]
  \end{remark}

%%%%%%%%%%%%%%%%%

 \subsection{Modular curves}
  \label{section:modularcurves}

  We recall some notations for modular curves, following \cite[\S\S 1--2]{Kato-p-adic}. For an integer $N \ge 5$, we shall write $Y_1(N)$ for the $\ZZ[1/N]$-scheme representing the functor
  \[
   S\mapsto\{
   \mbox{isomorphism classes $(E, P)$}\}
  \]
  where $S$ is a $\ZZ[1/N]$-scheme, $E/S$ is an elliptic curve, and $P \in E(S)$ is a section of exact order $N$ (equivalently, an embedding of the constant group scheme $\ZZ/N\ZZ$ into $E$).

  We use the same analytic uniformization of $Y_1(N)(\CC)$ as in \cite[1.8]{Kato-p-adic}:
  \begin{align*}
   \Gamma_1(N) \backslash \HH &\isom Y_1(N)(\CC) \\  \tau &\mapsto
  (\CC/(\ZZ\tau+\ZZ), 1/N)
  \end{align*}
  where $\HH$ is the upper half plane and $\Gamma_1(N)\coloneqq\{ \stbt 1 * 0 1 \bmod N \} \subseteq \SL_2(\ZZ)$.

  Let $\Tate(q)$ be the Tate curve over $\ZZ((q))$, with its canonical differential
  $\omega_\can$. Let $\zeta_N \coloneqq e^{2\pi i/N}$; then the pair $\left(\Tate(q), \zeta_N\right)$ defines a morphism $\Spec \ZZ[\zeta_N,1/N]((q))\to Y_1(N)$ and hence defines a cusp $\infty$.
  The $q$-development at $\infty$ is compatible with the Fourier series in the analytic
  theory if one writes $q \coloneqq e^{2\pi i\tau}$.

  \begin{remark}
   Note that the cusp $\infty$ is not defined over $\QQ$ in our model; so the $q$-expansions of elements of the coordinate ring of $Y_1(N)_{\QQ}$, or more generally of algebraic differentials on $Y_1(N)$ with values in the sheaves $\Sym^k \sH_{\QQ}^\vee$, do not have $q$-expansion coefficients in $\QQ$.
  \end{remark}

  The curves $Y_1(N)$ are equipped with Hecke correspondences $T_n$ and $T_n'$ for each integer $n \ge 1$, defined as in \cite[\S 2.9 \& \S 4.9]{Kato-p-adic}.

  \begin{remark}
   Note that the action of the operators $T_n$ on de Rham cohomology is given by the familiar $q$-expansion formulae, while the operators $T_n'$ are the transposes of the $T_n$, which have no direct interpretation in terms of $q$-expansions when $n$ is not coprime to the level.
  \end{remark}

 \subsection{Motives for Rankin convolutions}

  Let $f,g$ be normalized cuspidal new eigenforms of weight $k+2, k'+2$, levels $N_f,N_g$ and characters $\varepsilon(f),\varepsilon(g)$. We choose a number field $L$ containing the coefficients of $f$ and $g$.

  From the work of Scholl \cite{Scholl-motives}, one knows how to associate (Grothendieck) motives $M(f), M(g)$ with coefficients in $L$ to $f,g$. We denote by $M(f \otimes g)$ the tensor product of these motives (over $L$), which is a 4-dimensional motive over $\QQ$ with coefficients in $L$.

  \begin{definition}
   For $\cT \in \{ \dR, \rig, B, \bar\et\}$ (the ``geometric'' cohomology theories) we write $M_{\cT}(f \otimes g)$ for the $\cT$-realization of $M$, which is the maximal $L \otimes_{\QQ} \QQ_{\cT}$-submodule of
  \[
   H^2_\cT(Y_1(N_f) \times Y_1(N_g), \Sym^{(k,k')}\sH_\cT^\vee) \otimes_\QQ L
  \]
  on which the Hecke operators $(T_\ell, 1)$ and $(1, T_\ell)$ act as multiplication by the Fourier coefficients $a_\ell(f)$ and $a_\ell(g)$ respectively, for every prime $\ell$ (including $\ell \mid N_f N_g$).
  \end{definition}

  Note that this direct summand lifts, canonically, to a direct summand of $H^2_{c, \cT}$, since $f$ and $g$ are cuspidal.

  The Hodge filtration of $M_{\dR}(f \otimes g)$ is given by
  \[
   \dim_{L} \Fil^n M_{\dR}(f\otimes g) =
   \begin{cases}
    4& n\le 0\\
    3& 0< n \le\min\{k,k'\}+1\\
    2& \min\{k,k'\}+1<n \le \max\{k,k'\}+1\\
    1& \max\{k,k'\}+1<n\le k+k'+2\\
    0& k+k'+2<n.
   \end{cases}
  \]
  In particular, if $n = 1 + j$ with $0 \le j \le \min(k, k')$, the space $\Fil^{1 + j} M_{\dR}(f \otimes g) = \Fil^0 M_{\dR}(f \otimes g)(1 + j)$ has dimension 3 over $L$.

  \begin{definition}
   Dually, we write $M_{\cT}(f \otimes g)^*$ for the maximal \emph{quotient} of
  \[ H^2_\cT(Y_1(N_f) \times Y_1(N_g), \TSym^{[k,k']}\sH_\cT(2)) \otimes_\QQ L\]
  on which the dual Hecke operators $(T_\ell', 1)$ and $(1, T_\ell')$ act as multiplication by $a_\ell(f)$ and $a_\ell(g)$.
  \end{definition}

  \begin{remark}
   The twist by 2 implies that the Poincar\'e duality pairing
   \[ M_\cT(f \otimes g) \times M_{\cT}(f \otimes g)^* \to L \otimes_{\QQ} \QQ_{\cT}\]
   is well-defined and perfect, justifying the notation $M_{\cT}(f \otimes g)^*$.
  \end{remark}

  \begin{definition}\label{def:tangentspace}
   Let
   \begin{align*}
   t(M(f\otimes g)(j)) \coloneqq \frac{M_{\dR}(f\otimes g)(j)}{\Fil^0 M_{\dR}(f\otimes g)(j)}&&
   t(M(f\otimes g)^*(-j)) \coloneqq \frac{M_{\dR}(f\otimes g)^*(-j)}{\Fil^0 M_{\dR}(f\otimes g)^*(-j)}
   \end{align*}
   be the tangent spaces of the motives $M(f\otimes g)(j)$ and $M(f\otimes g)^*(-j)$.
  \end{definition}

  Note that $ t(M(f\otimes g)^*(-j))$ is the dual of the $L$-vector space $\Fil^{1 + j} M_{\dR}(f\otimes g)$; so for $0 \le j \le \min(k, k')$ it is also 3-dimensional over $L$. This tangent space will be the target of the Abel--Jacobi maps in \S \ref{sect:ajdefs} below.

%%%%%%%%%%%%%%%

 \subsection{Rankin L-functions}\label{section:Rankin-L-fct}

  Let $f$, $g$ be cuspidal eigenforms of weights $r, r' \ge 1$, levels $N_f, N_g$ and characters $\varepsilon_f,\varepsilon_g$. We define the Rankin $L$-function
  \[ L(f, g, s) = \frac{1}{L_{(N_f N_g)} (\varepsilon_f \varepsilon_g, 2s + 2 - r - r')} \sum_{n \ge 1} a_n(f) a_n(g) n^{-s}, \]
  where $L_{(N_f N_g)} (\varepsilon_f \varepsilon_g,s)$ denotes the Dirichlet $L$-function with the Euler factors at the primes dividing $N_f N_g$ removed.

  \begin{remark}
   This Dirichlet series differs by finitely many Euler factors from the $L$-function $L(\pi_f \otimes \pi_g, s)$ of the automorphic representation $\pi_f \otimes \pi_g$ of $\GL_2 \times \GL_2$ associated to $f$ and $g$. In particular, it has meromorphic continuation to all of $\CC$. It is holomorphic on $\CC$ unless $\langle \bar{f}, g \rangle \ne 0$, where $\bar{f} = \sum \bar{a_n}(f) q^n$, in which case it has a pole at $s = r$. If $f$ and $g$ are normalized newforms and $(N_f, N_g) = 1$, then $L(f, g, s) = L(\pi_f \otimes \pi_g, s)$.
  \end{remark}

  \begin{theorem}[{Shimura, \cite{Shimura-periods}}]
   For integer values of $s$ in the range $r' \le s \le r-1$, we have
   \[ \frac{L(f, g, s)}{\pi^{2s - r' + 1} \langle f, f \rangle_{N_f}} \in \overline{\QQ}, \]
   where $\langle f_1, f_2 \rangle_{N}$ is the Petersson scalar product of weight $r$ modular forms defined by
   \[ \int_{\Gamma_1(N) \backslash \HH} \overline{f_1(\tau)} f_2(\tau) \Im(\tau)^{r - 2}\, \mathrm{d}x \wedge \mathrm{d}y.\]
  \end{theorem}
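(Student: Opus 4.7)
The plan is to proceed via the classical Rankin--Selberg integral representation together with Shimura's theory of nearly holomorphic modular forms and his rationality theorem for Petersson products. First I would recall the Rankin--Selberg unfolding identity, which realizes $L(f,g,s)$, up to elementary $\Gamma$-factors and powers of $\pi$, as a Petersson product of $\bar f$ against $g$ times a specific real-analytic Eisenstein series $E^*_s(\tau)$ of weight $r-r'$ on some congruence subgroup containing $\Gamma_1(\operatorname{lcm}(N_f,N_g))$. Schematically,
\[
\pi^{-s}\,\Gamma(s)\,\Gamma(s-r'+1)\,L(f,g,s) \;=\; c \cdot \bigl\langle \bar f,\; g \cdot E^*_s \bigr\rangle_{N}
\]
for an elementary constant $c$ depending on the levels and the nebentypus.

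The second step is the key observation due to Shimura: at integer $s$ in the critical range $r' \le s \le r-1$, the series $E^*_s$ is \emph{nearly holomorphic} of weight $r-r'$, meaning a polynomial in $(4\pi\,\Im\tau)^{-1}$ whose coefficients are holomorphic modular forms with Fourier coefficients in a cyclotomic field. Since $g$ has Fourier coefficients in a number field (Shimura), the product $g\cdot E^*_s$ is nearly holomorphic of weight $r$ with $\overline{\QQ}$-rational $q$-expansion. Applying Shimura's holomorphic projection operator $\pi_{\mathrm{hol}}$ -- which preserves the Petersson product against holomorphic cusp forms of weight $r$, and maps nearly holomorphic forms with $\overline{\QQ}$-coefficients to holomorphic cusp forms with $\overline{\QQ}$-coefficients via an explicit formula involving incomplete $\Gamma$-integrals -- I may replace $g \cdot E^*_s$ by an honest cusp form $h\in S_r(\Gamma_1(N))$ with algebraic Fourier coefficients.

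Finally, I would invoke Shimura's classical rationality theorem: for $f$ a normalized newform in $S_r(\Gamma_1(N_f))$ and any cusp form $h \in S_r(\Gamma_1(N))$ with $\overline{\QQ}$-rational Fourier coefficients, the ratio $\langle \bar f, h\rangle_{N}/\langle f, f\rangle_{N_f}$ lies in $\overline{\QQ}$ (this can be proved via Eichler--Shimura, identifying the Petersson product with an algebraic pairing on the parabolic cohomology of the modular curve tensored with $\CC$, on which the $f$-isotypic component is defined over a number field). Chaining this with the previous two steps yields the claim, provided the powers of $\pi$ match. The main obstacle is precisely this $\pi$-bookkeeping: one must track the factors of $\pi$ coming from (i) the $\Gamma$-functions $\Gamma(s)\Gamma(s-r'+1)$ at integer arguments, (ii) the normalization of $E^*_s$, which absorbs a factor of $\Im(\tau)^{s-r'+1}$, and (iii) the holomorphic projection formula, which inserts a power of $(4\pi)$ from the Mellin integrals $\int_0^\infty y^{s+r-2} e^{-4\pi n y}\,dy$. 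Collecting these contributions and matching against $\pi^{2s-r'+1}$ is routine but finicky; the remaining algebraic factors are absorbed into the $\overline{\QQ}$-multiplier.
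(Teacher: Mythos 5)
The paper does not prove this statement; it is cited to \cite{Shimura-periods} as a known result of Shimura, and the paper only records the precise Rankin--Selberg integral formula \eqref{eq:Rankin-formula} below it as context. Your outline is essentially Shimura's original argument, and the structure is sound: Rankin--Selberg unfolding, near-holomorphy of the weight-$(r-r')$ Eisenstein series at the integer points $s-r+1 \in \{r'-r+1,\dots,0\}$ (the resulting depth is $r-1-s$), holomorphic projection, and Shimura's period rationality theorem. Two small refinements. First, the coefficient functions $f_j$ in the expansion $\sum_j (4\pi\Im\tau)^{-j} f_j(\tau)$ of a nearly holomorphic form are \emph{not} individually holomorphic modular forms of any fixed weight (think of $E_2 - \tfrac{3}{\pi y}$); the correct formulation of the algebraic structure is either that a nearly holomorphic form with $\overline{\QQ}$-rational coefficients is a $\overline{\QQ}$-linear combination of iterated Maass--Shimura derivatives $\delta^j h_j$ of holomorphic forms $h_j$ with algebraic $q$-expansions, or, more geometrically, that nearly holomorphic forms are sections of vector bundles built from $\sH_\dR^\vee$ over a model of the modular curve defined over $\overline{\QQ}$. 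Second, one can avoid invoking holomorphic projection (and its convergence subtleties in low weight) by quoting Shimura's theorem directly in the form: for any nearly holomorphic cusp form $\Phi$ of weight $r$ with $\overline{\QQ}$-rational coefficients and $f$ a normalized eigenform, $\langle \bar f, \Phi\rangle_N / \langle f, f\rangle_{N_f} \in \overline{\QQ}$. Finally, since $\Gamma(s)$ and $\Gamma(s-r'+1)$ are nonzero rationals for integer $s$ in the stated range, the $\pi$-bookkeeping is already encapsulated in \eqref{eq:Rankin-formula}, which reduces the claim to the algebraicity of the Petersson pairing exactly as you describe.
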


  More precisely, we have the Rankin--Selberg integral formula
  \begin{equation}\label{eq:Rankin-formula}
   L(f, g, s) = N^{r + r' - 2s - 2} \frac{\pi^{2s - r' + 1} (-i)^{r - r'} 2^{2s + r - r'}}{\Gamma(s)\Gamma(s - r' + 1)}
      \left\langle \overline{f}, g E^{(r - r')}_{1/N}(\tau, s - r + 1)\right\rangle_N
  \end{equation}
  where $N \ge 1$ is some integer divisible by $N_f$ and $N_g$ and with the same prime factors as $N_f N_g$, and $E^{(r - r')}_{1/N}(\tau, s - r + 1)$ is a certain real-analytic Eisenstein series (cf.~\cite[Definition 4.2.1]{LLZ14}), whose values for $s$ in this range are nearly-holomorphic modular forms defined over $\overline{\QQ}$.

  The next theorem shows that the algebraic parts of the $L$-values $L(f, g, s)$, for $s$ in the above range, can be $p$-adically interpolated.

  \begin{theorem}[Hida]\label{thm:Hida}
   Let $p \nmid N_f N_g$ be a prime at which $f$ is ordinary. Then there is a $p$-adic $L$-function $L_p(f, g) \in \overline{\QQ}_p \otimes_{\Zp} \Zp[[
   \Gamma]]$ with the following interpolation property: for $s$ an integer in the range $r' \le s \le r-1$, we have
   \[ L_p(f, g, s) = \frac{\mathcal{E}(f, g, s)}{\mathcal{E}(f) \mathcal{E}^*(f)} \cdot \frac{\Gamma(s)\Gamma(s - r' + 1)} {\pi^{2s - r' + 1} (-i)^{r - r'} 2^{2s + r - r'} \langle f, f \rangle_{N_f}} \cdot L(f, g, s),\]
   where the Euler factors are defined by
   \begin{align*}
    \mathcal{E}(f) &= \left( 1 - \frac{\beta_f}{p \alpha_f}\right), \\
    \mathcal{E}^*(f) &= \left( 1 - \frac{\beta_f}{\alpha_f}\right),\\
    \mathcal{E}(f, g, s) &= \left( 1 - \frac{p^{s-1}}{\alpha_f \alpha_g}\right) \left( 1 - \frac{p^{s-1}}{\alpha_f \beta_g}\right) \left( 1 - \frac{\beta_f \alpha_g}{p^s}\right) \left( 1 - \frac{\beta_f \beta_g}{p^s}\right).
   \end{align*}
   Moreover, the function $L_p(f, g, s)$ varies analytically as $f$ varies in a Hida family, and if $g$ is ordinary it also varies analytically in $g$.
  \end{theorem}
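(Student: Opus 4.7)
The plan is to reconstruct Hida's original argument: convert the Rankin--Selberg integral formula (\ref{eq:Rankin-formula}) into a pairing against an ordinary $p$-stabilization, $p$-adically interpolate the Eisenstein series as the weight character varies, and then apply Hida's ordinary projector. For $s$ an integer in the critical range $r'\le s\le r-1$, formula (\ref{eq:Rankin-formula}) writes the left-hand side of the interpolation identity (up to the prefactor $\tfrac{\Gamma(s)\Gamma(s-r'+1)}{\pi^{2s-r'+1}(-i)^{r-r'}2^{2s+r-r'}\langle f,f\rangle_{N_f}}\cdot L(f,g,s)$) as the Petersson product $\langle \bar f, g\cdot E^{(r-r')}_{1/N}(\tau,s-r+1)\rangle_N$. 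The goal is then to exhibit this quantity, with the stated $p$-Euler corrections, as the value at $s$ of a single analytic function on $\Zp[[\Gamma]]$.

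First, I would rewrite $\langle \bar f,-\rangle_{N_f}$ as a linear functional defined by the ordinary $p$-stabilization $f_\alpha$ of $f$. Because $f$ is ordinary, the assignment ``coefficient of $f_\alpha$'' is a well-defined functional on the ordinary part of the space of cusp forms of level $N_f p$, and comparing it to $\langle \bar f,-\rangle_{N_f}$ produces precisely the ratio $1/(\mathcal{E}(f)\mathcal{E}^*(f))$: the factor $\mathcal{E}^*(f)=1-\beta_f/\alpha_f$ comes from the dual-basis normalization at the two $p$-roots $\alpha_f,\beta_f$, while $\mathcal{E}(f)=1-\beta_f/(p\alpha_f)$ arises from trace-lowering from level $N_fp$ back to $N_f$. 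Next, I would $p$-stabilize the Eisenstein series, replacing $E^{(r-r')}_{1/N}(\tau,s-r+1)$ by its ``depleted'' variant $E^{[p]}$ with Euler factor at $p$ removed. A direct eigenvalue calculation using that $f_\alpha$ is a $U_p$-eigenform with eigenvalue $\alpha_f$, together with the factorization of the local $L$-factor of $\pi_f\otimes\pi_g$ at $p$ in terms of $\alpha_f,\beta_f,\alpha_g,\beta_g$, shows that the discrepancy is exactly the factor $\mathcal{E}(f,g,s)$.

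Second, I would interpolate the construction $p$-adically. By Serre's theorem, the family of Eisenstein series $E^{[p]}(\tau, s-r+1)$, viewed through its $q$-expansion, is a continuous family of $p$-adic modular forms as the weight character $s$ varies over $\Zp[[\Gamma]]$. Multiplying by the fixed form $g$ and applying Hida's ordinary projector $e_\ord$ lands in a finitely generated module over $\Zp[[\Gamma]]$ (by Hida's control theorem for ordinary families), and the result depends analytically on $s$. Pairing against the dual basis element corresponding to $f_\alpha$ in the ordinary Hecke algebra produces the sought analytic function $L_p(f,g,s)$, which by construction specializes at integer $s$ in the critical range to the required value. Variation in a Hida family through $f$ is then automatic: the functional ``coefficient of $f_\alpha$'' lifts to the ordinary Hecke algebra of the family, and substitution gives analyticity on the full weight space; if $g$ is likewise ordinary, one runs the same construction symmetrically on the $g$-side, which gives analyticity in both variables jointly.

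The main technical obstacle is the precise bookkeeping of the local factors at $p$: one must match the three distinct contributions (ordinary projection, change of level $N_f\to N_fp$, and $p$-depletion of the Eisenstein series) against the single quantity $\mathcal{E}(f,g,s)/(\mathcal{E}(f)\mathcal{E}^*(f))$, which requires a careful unwinding of the Rankin--Selberg computation at $p$. None of the intermediate analytic or algebraic ingredients is novel here; all are contained in Hida's original papers on Rankin--Selberg $p$-adic $L$-functions for ordinary families, so in practice the proof is a reference rather than a reconstruction.
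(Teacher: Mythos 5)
The paper does not prove this theorem: it is quoted as Hida's result (with a pointer to \cite[\S 5]{LLZ14} for normalizations in the surrounding remarks), and the construction is treated as a black box. Your sketch is a reasonable reconstruction of the standard argument — Rankin--Selberg integral, $p$-stabilization of $f$ to produce the denominator $\mathcal{E}(f)\mathcal{E}^*(f)$, $p$-depletion of the Eisenstein series to produce $\mathcal{E}(f,g,s)$, Serre/Katz $q$-expansion interpolation of the Eisenstein family, and Hida's ordinary projector with the control theorem to get analyticity — and you correctly observe that the content is a citation rather than new mathematics. Since the paper offers nothing to compare against, there is no divergence to report; the only caveat I would add is that your attribution of $\mathcal{E}(f)$ purely to ``trace-lowering from level $N_f p$ to $N_f$'' and $\mathcal{E}^*(f)$ purely to ``dual-basis normalization'' is a convenient mnemonic but not a clean split: in Hida's actual computation these two factors emerge together from a single calculation of the Petersson pairing $\langle f_\alpha, f_\alpha\rangle_{N_f p}$ against $\langle f, f\rangle_{N_f}$, so the bookkeeping you flag as the ``main technical obstacle'' is indeed where care is needed, but your outline of the ingredients is sound.
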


  \begin{remark}\label{remark:DpversusLp}\mbox{~}
   \begin{enumerate}
    \item The $L$-function $L_p(f, g, s)$ considered here is $N^{2s + 2 - r - r'} \mathcal{D}_p(f, g, 1/N, s)$ in the notation of \cite[\S 5]{LLZ14}. We include the power of $N$ in the definition because it makes $L_p(f, g, s)$ independent of the choice of $N$.
    \item The complex $L$-function $L(f, g, s)$ is symmetric in $f$ and $g$, i.e.~we have $L(f, g, s) = L(g, f, s)$, but this is not true of $L_p(f, g, s)$.
    \item Removing the ordinarity condition on $f$ at $p$ has been the subject of several recent works; see Remark \ref{rem:nonord}(ii) below.
   \end{enumerate}
  \end{remark}

%%%%%%%%%%%%%%%%%%%%%%%%%%%%%%%%%%%%%%%%%%%%%

\section{Eisenstein classes on \texorpdfstring{$Y_1(N)$}{Y1(N)}}

 \subsection{Motivic Eisenstein classes}
  \label{sect:motivicEis}

  The fundamental input to the constructions of this paper are the following cohomology classes first constructed by Beilinson:

  \begin{theorem}
   \label{thm:motivic-eisenstein-classes}
   Let $N \ge 5$ and let $b \in \ZZ / N\ZZ$ be nonzero. Then there exist nonzero cohomology classes (``motivic Eisenstein classes'')
   \[ \Eis^k_{\mot, b, N} \in H^1_{\mot}(Y_1(N), \TSym^k \sH_{\QQ}(1))\]
   for all integers $k \ge 0$, satisfying the following residue formula: we have
   \[ \res_{\infty}\left(\Eis^k_{\mot, b, N}\right) = -N^k \zeta(-1-k). \]
  \end{theorem}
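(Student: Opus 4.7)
The plan is to construct the classes $\Eis^k_{\mot, b, N}$ via Beilinson's motivic elliptic polylogarithm, then verify the residue formula by transporting the calculation to the de Rham realization, where the classes become explicit Eisenstein series. Let $\pi: \cE \to Y_1(N)$ be the universal elliptic curve and $U \subseteq \cE$ the complement of the $N$-torsion subscheme. Beilinson's motivic elliptic polylogarithm is a canonical pro-system of classes $\operatorname{pol}^k \in H^1_{\mot}(U, \TSym^k \sH_\QQ(1))$, $k \ge 0$, constructed from Steinberg symbols on fibred powers of $\cE$ (a motivic treatment is given in \cite{Kings-Eisenstein}). Let $t_b: Y_1(N) \to U$ be the torsion section $b \cdot e_{1/N}$, which lands in $U$ since $b \not\equiv 0 \pmod N$. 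I would define
\[
  \Eis^k_{\mot, b, N} := t_b^* \left(\operatorname{pol}^k\right) \in H^1_{\mot}(Y_1(N), \TSym^k \sH_\QQ(1));
\]
equivalently, via Lieberman's trick (Theorem \ref{thm:lieberman}), $\Eis^k_{\mot, b, N}$ admits an alternative description as a Beilinson--Deninger--Scholl Eisenstein symbol on the $k$-fold fibred power $\cE^k$. Non-vanishing will follow from non-vanishing in any realization.

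Next I would reduce the residue formula to a realization calculation. The residue at $\infty$ is the boundary map in the localization sequence for the inclusion of the cusp $\infty$ into a smooth compactification $\bar Y_1(N)$, followed by the canonical ``unipotent'' projection $(\TSym^k \sH)|_\infty \to \QQ$ induced by the Tate curve uniformization. Regulator maps commute with this boundary and this projection, so it is enough to verify the residue formula after applying $r_{\dR}$ (or $r_{\AH}$), using that the motivic residue lands in a one-dimensional $\QQ$-space which embeds into its de Rham counterpart.

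For the de Rham calculation, the image $r_\dR(\Eis^k_{\mot, b, N})$ is represented by a specific weight $k+2$ nearly-holomorphic Eisenstein series built from the Kronecker--Eisenstein series associated with the torsion point $b/N$; this is the content of the de Rham realization of the polylogarithm, worked out in \cite{BannaiKings}. Expanding this series at $\infty$ via the Tate curve and projecting $\TSym^k \sH|_\infty$ onto its trivial quotient kills the $b$-dependent contributions (which live in the ``multiplicative'' direction of $\sH|_\infty$), leaving a constant term proportional to $B_{k+2}/(k+2)$. Using $\zeta(-1-k) = -B_{k+2}/(k+2)$ and tracking a factor of $N^k$ coming from the normalization of $\sH$ versus the tautological trivialization at the cusp, one obtains the formula $\res_\infty \Eis^k_{\mot, b, N} = -N^k \zeta(-1-k)$.

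The principal obstacle is this last step: pinning down the exact constant $-N^k$ together with the sign requires careful bookkeeping of normalizations through the polylogarithm construction, Lieberman's trick, and the Tate curve trivialization---a delicate but essentially mechanical calculation, due in origin to Beilinson (see also \cite{Beilinson-L-values}). Once the formula is established in the de Rham realization, compatibility of $r_\dR$ with boundary maps transports the identity back to motivic cohomology.
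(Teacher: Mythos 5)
Your proposal takes essentially the same route as the paper: define the motivic class as the specialization of the motivic elliptic polylogarithm at the torsion section $b t_N$, then pin down the residue by passing to a realization and using the fact that the regulator commutes with the boundary map. Two differences are worth flagging.

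First, the paper's actual definition is $\Eis^k_{\mot,b,N} := -N\,\sE^{k+2}_{\mot,b}$, where $\sE^{k+2}_{\mot,b}$ is the Beilinson--Levin class whose \'etale realization is $-N^{k-1}\operatorname{contr}_{\sH}((bt_N)^*\operatorname{pol}^{k+1})$ in the notation of \cite{Kings-Eisenstein}. Your proposed definition $\Eis^k_{\mot,b,N} := t_b^*(\operatorname{pol}^k)$ is not literally this object: besides the degree-shift ($\operatorname{pol}^{k+1}$ vs.\ your $\operatorname{pol}^k$) there are the $k!$ discrepancy between $\Sym^k$ and $\TSym^k$, a factor of $N$ from comparing residue maps on $Y_1(N)$ versus $Y(N)$, and the explicit extra $-N$ renormalization --- all pointed out in the remark immediately following the theorem. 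You do acknowledge that the constant $-N^k$ ``requires careful bookkeeping,'' but without building those normalizations into the \emph{definition}, the residue formula as stated would simply not hold for $t_b^*(\operatorname{pol}^k)$.

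Second, the paper computes the residue via the \'etale realization, quoting the explicit residue formula of \cite[Theorem~5.2.2]{Kings-Eisenstein} for the $\ell$-adic Eisenstein class. You propose to use the de Rham realization via \cite{BannaiKings} instead (whose constant term is visible in Proposition~\ref{prop:de-rham-eis}). Both are legitimate, since the motivic residue lands in $H^0_{\mot}$ of the cusp, which is just $\QQ$ and injects into either realization; the paper's choice has the advantage of being able to cite a single theorem that already handles the normalization, whereas the de Rham route forces one to redo the Bernoulli-number bookkeeping. The content is the same either way.
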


  \begin{proof}
   By the construction in \cite[\S 6.4]{BeilinsonLevin} there
   is associated to the canonical order $N$ section $t_N$ of the universal elliptic curve $\cE$ over $Y_1(N)$
   a class
   $ \sE^{k+2}_{\mot,b}\in H^1_{\mot}(Y_1(N), \TSym^k \sH_{\QQ}(1))$, which is essentially the specialization of the elliptic
   polylogarithm at $b t_N$. It has
   the property that \cite[6.4.5]{BeilinsonLevin}
   \[
    r_\et(\sE^{k+2}_{\mot,b})=-N^{k-1}\operatorname{contr}_{\sH_{\QQ_p}}((bt_N)^*\operatorname{pol}^{k+1})
   \]
   where the right hand side is the notation of \cite[\S 4.2]{Kings-Eisenstein}. Now we set
   $\Eis^k_{\mot,b,N}\coloneqq-N\sE^{k+2}_{\mot,b}$; by the residue formula of \cite[Theorem 5.2.2]{Kings-Eisenstein}
   we see that $\Eis^k_{\mot,b,N}$ has the stated residue at $\infty$.
  \end{proof}

  \begin{remark}
   Note that our definition of the residue map is a little different from that of \cite{Kings-Eisenstein}. Firstly, the residue map in \emph{op.cit.} is defined using $Y(N)$, rather than $Y_1(N)$, which introduces a factor of $N$. Secondly, we directly use a trivialisation of $\TSym^k \sH_{\QQ}$ at the cusp $\infty$, rather than using the isomorphism to $\Sym^k \sH_{\QQ}$, which introduces a factor of $k!$ (cf. \cite[Lemma 5.1.6]{Kings-Eisenstein}).
  \end{remark}

  \begin{remark}
   For $k = 0$, we have $H^1_{\mot}(Y_1(N), \QQ(1)) = \cO(Y_1(N))^\times \otimes \QQ$, and the Eisenstein class $\Eis^k_{\mot, b, N}$ is simply the Siegel unit $g_{0, b/N}$ in the notation of \cite{Kato-p-adic}.
  \end{remark}

%%%%%%%%%%%%%%%%

 \subsection{Eisenstein classes in other cohomology theories}
  \label{sect:generalEis}

  As a consequence of the existence of the motivic Eisenstein class, we immediately obtain Eisenstein classes in all the other cohomology theories introduced above: we define
  \[ \Eis^k_{\cT, b, N} = r_{\cT}\left(\Eis^k_{\mot, b, N}\right).\]

  In particular, we have the following:
  \begin{itemize}
   \item An \'etale Eisenstein class
   \[ \Eis^k_{\et, b, N} \in H^1_{\et}\left(Y_1(N)[1/p], \TSym^k \sH_{\Qp}(1)\right). \]
   \item A de Rham Eisenstein class
   \[ \Eis^k_{\dR, b, N} \in H^1_\dR\left(Y_1(N)_{\QQ}, \TSym^k \sH_{\QQ}(1)\right).\]
   \item An Eisenstein class in absolute Hodge cohomology
   \[ \Eis^k_{\cH, b, N} \in H^1_{\cH}\left(Y_1(N)_{\RR}, \TSym^k \sH_{\RR}(1)\right)\]
   whose image in $H^1_\dR(Y_1(N)_{\RR}, \TSym^k \sH_{\RR}(1))$ coincides with the image of $\Eis^k_{\dR, b, N}$ under $\otimes \RR$.
   \item A syntomic Eisenstein class
   \[ \Eis^k_{\syn, b, N} \in H^1_{\syn}\left(\sY, \TSym^k \sH_{\Qp}(1)\right),\]
   for any $p \nmid N$, where $\sY$ denotes the smooth pair $(Y_1(N)_{\Zp}, X_1(N)_{\Zp})$, whose image in the group $H^1_\dR(Y_1(N)_{\Qp}, \TSym^k \sH_{\Qp}(1))$ coincides with the image of $\Eis^k_{\dR, b, N}$ under $\otimes \Qp$, and whose image under the map
   \[ \comp: H^1_{\syn}\left(\sY, \TSym^k \sH_{\Qp}(1)\right) \to H^1_{\et}\left(Y_1(N)_{\Qp}, \TSym^k \sH_{\Qp}(1)\right)\]
   is the localization at $p$ of the \'etale Eisenstein class $\Eis^k_{\et, b, N}$.
  \end{itemize}

  We shall give explicit formulae for the de Rham, absolute Hodge, and syntomic Eisenstein classes in \S\S \ref{sect:deRhamEis}--\ref{sect:syntomicEis} below; these formulae involve Eisenstein series -- classical algebraic Eisenstein series for the de Rham case, and real-analytic and p-adic Eisenstein series in the absolute Hodge and syntomic cases respectively.

%%%%%%%%%%%%%%%%%%

 \subsection{The de Rham Eisenstein class}
  \label{sect:deRhamEis}

  We give an explicit formula for the de Rham Eisenstein class $\Eis^k_{\dR, b, N}$, in terms of certain modular forms which are Eisenstein series of weight $k + 2$.

  \begin{definition}
   \label{def:algebraic-eisenstein-series}
   Write $\zeta_N \coloneqq e^{2\pi i/N}$ and $q \coloneqq e^{2\pi i\tau}$.
   For $k \ge -1$ and $b\in \ZZ/N\ZZ$ not equal to zero, we define an algebraic Eisenstein series
   \[
    F_{k+2,b} \coloneqq \zeta (-1-k)+
    \sum_{n>0}q^n
    \sum_{\stackrel{dd'= n}{d,d'>0}}d^{k+1}(\zeta_N^{bd'} + (-1)^k\zeta_N^{-bd'})
   \]
   where $\zeta(s)\coloneqq\sum_{n>0}\frac{1}{n^s}$ is the Riemann zeta function.
  \end{definition}

  \begin{remark}
  The Eisenstein series $F_{k+2,b}$ is $F^{(k+2)}_{0,b/N}$ in the notation of \cite[Proposition 3.10]{Kato-p-adic}.
  \end{remark}

  As in Section \ref{section:modularcurves}, denote by $\Tate(q)$ the Tate elliptic curve over $\ZZ((q))$. Endowing $\Tate(q)$ with the order $N$ section corresponding to $\zeta_N \in \mathbf{G}_m / q^\ZZ$ gives a point of $Y_1(N)$ over $\ZZ((q)) \otimes_{\ZZ} \ZZ[1/N, \zeta_N]$. Moreover, the canonical differential $\omega_{\can}$ on $\Tate(q)$ gives a section trivializing $\Fil^1 \sH^\vee |_{\Tate(q)}$, and hence of $\Fil^0 \sH |_{\Tate(q)}$, via the isomorphism $\sH \cong \sH^\vee(1)$; we write $v^{[0, k]}$ for the $k$-th tensor power of this section, regarded as a section of $\TSym^k \sH$ over $\Tate(q)$.

  \begin{proposition}
   \label{prop:de-rham-eis}
   The pullback of the de Rham Eisenstein class to $(\Tate(q), \zeta_N)$ is given by
   \[ \Eis^k_{\dR, b, N} = -N^k F_{k+2,b} \cdot v^{[0, k]} \otimes \frac{\mathrm{d}q}{q}.\]
  \end{proposition}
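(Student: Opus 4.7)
The plan is to identify the proposed right-hand side with $\Eis^k_{\dR,b,N}$ by matching residues at cusps, using the fact that the Eisenstein subspace of $H^1_\dR(Y_1(N),\TSym^k\sH(1))$ is controlled faithfully by its residue data at cusps (together with the diamond operator action).

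First I would check that the expression $-N^k F_{k+2,b}\cdot v^{[0,k]}\otimes\frac{dq}{q}$, initially defined only on the formal neighborhood of $\infty$ via the Tate parameter, extends to a global de Rham class. The mechanism is the Kodaira--Spencer isomorphism $\omega_{\cE/Y}^{\otimes 2}\cong\Omega^1_{Y_1(N)}(\log\text{cusps})$: multiplication by $\omega_{\can}^{k}$ converts a holomorphic weight $k+2$ modular form into a section of $\TSym^k\sH\otimes\Omega^1_{Y}(\log\text{cusps})$, horizontal for the Gauss--Manin connection, whose restriction to $\Tate(q)$ is the expression above. Applied to the Eisenstein series $F_{k+2,b}$, which is a genuine holomorphic modular form on $Y_1(N)$, this produces a well-defined cohomology class.

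Next I would compute the de Rham residue at the cusp $\infty$: by construction it equals the constant term of the $q$-expansion of $-N^k F_{k+2,b}$, which by Definition \ref{def:algebraic-eisenstein-series} is $-N^k\zeta(-1-k)$. By compatibility of the motivic and de Rham residue maps with $r_\dR$, together with Theorem \ref{thm:motivic-eisenstein-classes}, this agrees with $\res_\infty(\Eis^k_{\dR,b,N})$.

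Finally, a uniqueness argument closes the proof. Both classes lie in the Eisenstein subspace of $H^1_\dR(Y_1(N),\TSym^k\sH(1))$: the left-hand side because it arises from a specialization of the elliptic polylogarithm (which kills cuspidal classes under Poincar\'e duality), and the right-hand side by its very construction from an Eisenstein series. On the Eisenstein subspace, the residues at all cusps form a faithful invariant; residues at the other cusps are recovered from that at $\infty$ by naturality of the construction $b\mapsto\Eis^k_{\mot,b,N}$ under the action of the diamond operators $\langle d\rangle$ on $(\ZZ/N\ZZ)^\times$, which permutes the parameter $b$ and transports the cusp $\infty$ to all the other cusps above $\infty$. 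The main obstacle is exactly this faithfulness / transport step: to avoid checking residues at each cusp individually, a cleaner route is to invoke the explicit de Rham description of the elliptic polylogarithm given in \cite{BeilinsonLevin} (and refined in \cite{Kings-Eisenstein}), together with the normalization $\Eis^k_{\mot,b,N}=-N\sE^{k+2}_{\mot,b}$ used in the proof of Theorem \ref{thm:motivic-eisenstein-classes}, which pins down the formula on the nose.
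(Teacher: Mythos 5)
Your global-extension step via Kodaira--Spencer and the residue computation at $\infty$ are both fine (a minor slip: the section $F_{k+2,b}\,\omega_{\can}^k\otimes\frac{\mathrm{d}q}{q}$ is not Gauss--Manin horizontal, but that is irrelevant since any $\TSym^k\sH$-valued $1$-form on a curve is automatically $\nabla$-closed). The gap is exactly where you suspect: the residue at $\infty$ is a single number, and the residue map is injective only when read on \emph{all} cusps, so matching one residue determines the class only modulo the kernel of $\res_\infty$ inside the Eisenstein subspace, which is nontrivial for $N\ge 5$. Your proposed repair by diamond operator transport covers only cusps in the $(\ZZ/N)^\times$-orbit of $\infty$; the cusps of $Y_1(N)$ lying over $0$ of $X_0(N)$ are not reached, so residues there are not controlled by the argument as written.

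The paper's proof sidesteps this by citing the explicit computation of \cite{BannaiKings}, where the de Rham Eisenstein class is worked out directly from the de Rham realization of the elliptic polylogarithm of \cite{BeilinsonLevin}: that computation already identifies the class with a scalar multiple of $F_{k+2,b}\cdot v^{[0,k]}\otimes\frac{\mathrm{d}q}{q}$, so the remaining ambiguity is a \emph{single} scalar (arising from the mismatch in normalizations between the two papers), and the residue at $\infty$ is precisely enough to determine that one constant. The ``cleaner route'' you mention at the end --- invoking the explicit de Rham description and the normalization $\Eis^k_{\mot,b,N}=-N\,\sE^{k+2}_{\mot,b}$ --- is in fact the paper's actual proof; what you should keep in mind is that once that description is invoked, the role of $\res_\infty$ shrinks to pinning down a normalization, rather than having to be faithful on the whole Eisenstein subspace.
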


  \begin{proof}
   See \cite{BannaiKings}. (Our normalizations are slightly different from those of \emph{op.cit.}, but it is clear that the above class has the correct residue at $\infty$.)
  \end{proof}

%%%%%%%%%%%%%%%%%%

 \subsection{The Eisenstein class in absolute Hodge cohomology}
  \label{sect:DeligneEis}

  The results in this section are well-known and due to Beilinson and Deninger.
  As we have to express the computations with our normalizations anyway, we decided
  to show how the Eisenstein class in absolute Hodge cohomology can be
  computed very easily as in the syntomic case by solving explicitly a differential
  equation. This transfers the main idea in \cite{BannaiKings} from syntomic
  cohomology to the case of
  absolute Hodge cohomology. The aim of this section is to give an explicit description of the class $\Eis^k_{\cH,b,N}\coloneqq r_{\cH}(\Eis^k_{\mot,b,N})$.

  \begin{proposition}
   \label{prop:explicitdelignecoho}
   The group $H^1_\cH(Y_1(N)_\RR, \TSym^k\sH_\RR(1))$ is the group of equivalence
   classes of pairs $(\alpha_\infty,\alpha_\dR)$, where
   \[
    \alpha_\infty\in \Gamma(Y_1(N)(\CC), \TSym^k\sH_\RR\otimes \sC^\infty)
   \]
   is a $\sC^\infty$-section of $\TSym^k(\sH_\RR)(1)$ and
   $\alpha_\dR\in \Gamma(X_1(N)_\RR, \TSym^k\underline{\omega}\otimes \Omega^1_{X_1(N)}(C))$
   is an algebraic section with logarithmic poles along $C \coloneqq X_1(N)\setminus Y_1(N)$,
   such that
   \[
    \nabla(\alpha_\infty)=\pi_1(\alpha_\dR).
   \]
   A pair $(\alpha_{\infty}, \alpha_{\dR})$ is equivalent to $0$ if we have
   \[ (\alpha_{\infty}, \alpha_{\dR}) = (\pi_1(\beta), \nabla(\beta)) \text{ for some }\beta \in \Gamma(X_1(N)_\RR, \TSym^k(\underline{\omega})(C)).\]
   Here $\pi_1:\TSym^k\sH_\CC\isom \TSym^k\sH_\RR\otimes\CC\to \TSym^k\sH_\RR(1)$
   is induced by the projection $\CC\to \RR(1)$, $z\mapsto (z-\overline{z})/2$.
  \end{proposition}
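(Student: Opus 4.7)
The plan is to unpack Beilinson's mapping-cone description of absolute Hodge cohomology with coefficients, specialise it to $Y=Y_1(N)_\RR$ and $\sF=\TSym^k\sH$, and read off $H^1$ directly.

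First, for a smooth real variety $Y$ with smooth projective compactification $j\colon Y\hookrightarrow X$ and complement $C$, and $\sF$ an admissible variation of $\RR$-Hodge structure whose algebraic de Rham realisation admits Deligne's canonical extension $(\overline{\sF}_\dR,F^\bullet,\nabla)$ with logarithmic connection on $X$, Beilinson's construction gives a quasi-isomorphism
\[
R\Gamma_\cH(Y,\sF_\RR(n))\ \simeq\ \mathrm{cone}\Bigl[\,F^n\bigl(\overline{\sF}_\dR\otimes\Omega^\bullet_X(\log C)\bigr)\ \oplus\ \mathcal{A}^\bullet_Y\otimes\sF_\RR(n)\ \longrightarrow\ \mathcal{A}^\bullet_Y\otimes\sF_\dR\Bigr][-1],
\]
where $\mathcal{A}^\bullet_Y$ is the smooth de~Rham complex on $Y(\CC)$ equipped with the flat Gauss--Manin connection, and the two arrows are the holomorphic-to-smooth inclusion and the complexification $\sF_\RR(n)\hookrightarrow\sF_\CC$. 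This is the coefficient analogue of the short exact sequence \eqref{eq:absolute-Hodge-seq}.

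Second, I would specialise to $\sF=\TSym^k\sH$, $n=1$. Since $F^0\sH=\underline{\omega}$ and $F^1\sH=0$, one has $F^0\TSym^k\sH=\TSym^k\underline{\omega}$ and $F^1\TSym^k\sH=0$; hence $F^1$ of the log de~Rham complex of $\TSym^k\overline{\sH}_\dR$ vanishes in degree~$0$ and equals $\TSym^k\underline{\omega}\otimes\Omega^1_X(C)$ in degree~$1$. The smooth complex $\mathcal{A}^\bullet_Y\otimes\TSym^k\sH_\RR(1)$ with its flat $\nabla$ resolves the locally constant sheaf $\TSym^k\sH_\RR(1)$, by the Poincar\'e lemma with coefficients. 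Taking $H^1$ of the cone, the data of a cocycle is a triple $(\alpha_\dR,\alpha_\infty,\gamma)$ with $\alpha_\dR\in\Gamma(X,\TSym^k\underline{\omega}\otimes\Omega^1_X(C))$, $\alpha_\infty\in\mathcal{A}^0_Y\otimes\TSym^k\sH_\RR(1)$, and a mediating $\gamma\in\mathcal{A}^0_Y\otimes\TSym^k\sH_\dR$; after decomposing via $\sF_\dR=\sF_\RR\oplus\sF_\RR(1)$ and using the Poincar\'e lemma to kill the real component, $\gamma$ can be absorbed into $\alpha_\infty$, and the cocycle condition collapses to $\nabla\alpha_\infty=\pi_1(\alpha_\dR)$. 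The coboundaries are exactly pairs $(\pi_1\beta,\nabla\beta)$ for $\beta\in\Gamma(X,\TSym^k\underline{\omega}(C))$.

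The main obstacle is pole tracking at the cusps: one must verify that the Hodge-filtered algebraic de~Rham cohomology of $Y$ with coefficients in $\TSym^k\sH_\dR$ is computed by the logarithmic de~Rham complex of the canonical extension on $X$ with its induced Hodge filtration, and that the gauge freedom for $\beta$ is at most a simple pole along $C$ (forced by the logarithmic singularity of $\nabla$ on the canonical extension). For the variation $\TSym^k\sH$ on the modular curve this is a direct consequence of the standard description of the Gauss--Manin connection on the universal elliptic curve together with the Kodaira--Spencer isomorphism $\underline{\omega}^{\otimes 2}\cong\Omega^1_X(\log C)$.
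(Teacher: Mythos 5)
Your route via Beilinson's mapping‑cone description of absolute Hodge cohomology with coefficients is a legitimate third approach, genuinely different from both alternatives the paper sketches (the group of extensions in $MHM_\RR(Y_\RR)$, and the reduction to $H^{k+1}_\cD(\cE^k_\RR,\RR(k+1))$ followed by the Lieberman projector $\varepsilon_k$). Conceptually it is closest to the $MHM$‑extension description, and has the merit of making the passage from a three‑term cocycle to the two‑term presentation $(\alpha_\infty,\alpha_\dR)$ explicit, which the paper suppresses.

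However the unwinding of the cone contains a bookkeeping slip that needs to be repaired. In $H^1$ of $\mathrm{Cone}(\phi)[-1]$ with $\phi\colon D^\bullet\to E^\bullet$, a cocycle lives in $D^1\oplus E^0$. With $D=F^1(\overline{\sF}_\dR\otimes\Omega^\bullet_X(\log C))\oplus(\cA^\bullet_Y\otimes\sF_\RR(1))$ and $E=\cA^\bullet_Y\otimes\sF_\CC$, and $F^1\sF_\dR=0$ in degree $0$, a $1$‑cocycle is a triple $(\alpha_\dR,p,\gamma)$ with $\alpha_\dR\in\Gamma(X,\TSym^k\underline{\omega}\otimes\Omega^1_X(\log C))$, $p\in\cA^1_Y\otimes\sF_\RR(1)$ closed, and $\gamma\in\cA^0_Y\otimes\sF_\CC$ with $\nabla\gamma=\pm(\alpha_\dR-p)$. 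You place the $\sF_\RR(1)$‑component in $\cA^0_Y$, which is a degree shift, and the phrase ``$\gamma$ can be absorbed into $\alpha_\infty$'' then doesn't parse, since your $\alpha_\infty$ and $\gamma$ are both degree‑$0$ while the cone gives one degree‑$1$ and one degree‑$0$ component. The correct reduction is the opposite: the $0$‑coboundaries come from $D^0=0\oplus\cA^0_Y\otimes\sF_\RR(1)$ and shift $\gamma$ by an arbitrary $\sF_\RR(1)$‑valued section; using this gauge freedom to normalise one real/imaginary component of $\gamma$, the \emph{remaining} component of $\gamma$ is what the Proposition calls $\alpha_\infty$, the closed $1$‑form $p$ is then forced to equal the other real/imaginary component of $\alpha_\dR$, and the relation $\nabla\alpha_\infty=\pi_1(\alpha_\dR)$ drops out of the cocycle equation. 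So it is $\gamma$, not $p$, that survives as $\alpha_\infty$, and $p$ is the redundant datum. Your concluding concern about matching the Hodge‑filtered logarithmic de~Rham complex on $X$ with the Hodge filtration of $H^\bullet_\dR(Y,\TSym^k\sH)$ and about the permissible poles of the gauge parameter $\beta$ is real and correctly identified; it rests on the Kodaira--Spencer isomorphism and Griffiths transversality exactly as you say. Finally, you should say a word about why the cone you write down computes \emph{absolute Hodge} cohomology and not merely Deligne--Beilinson cohomology: the paper points out that the two coincide when the weights of $H^{i-1}_B$ are $\le0$, and this weight hypothesis needs to be checked for $\TSym^k\sH_\RR(1)$ (it holds, but silently assuming it is a gap).
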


  \begin{proof}
   Can be deduced either from the explicit description as group of extensions
   of mixed Hodge modules or from the standard description of
   $H^{k+1}_\cD(\cE^k_\RR,\RR(k+1))$ and application of the projector $\varepsilon_k$.
  \end{proof}

  Consider the covering $\HH\to Y_1(N)(\CC)$, which maps $\tau\mapsto (\CC/(\ZZ\tau+\ZZ),1/N)$.
  Over $\HH$ we have the standard section $\omega=dz$ of $\sH^\vee_{\CC}$, where
  $z$ is the coordinate on $\CC$. Denote by $\langle\quad,\quad\rangle$ the Poincar\'e duality pairing
  on $\sH^\vee_\CC$ and by $\omega^\vee$, $\overline{\omega}^\vee$ the
  basis dual to $\omega,\overline{\omega}$.  We have the following formulae:
  \begin{align*}
  \langle\overline{\omega},\omega\rangle&=\frac{\tau-\overline{\tau}}{2\pi i}&&& \langle\omega,\overline{\omega}\rangle&=-\langle\overline{\omega},\omega\rangle\\
  \omega^\vee&=\frac{2\pi i\overline{\omega}}{\tau-\overline{\tau}}&&&\overline{\omega}^\vee&=
  -\frac{2\pi i{\omega}}{\tau-\overline{\tau}}\\
  \nabla(\omega)&=\frac{\omega-\overline{\omega}}{\tau-\overline{\tau}}\mathrm{d}\tau&&&
  \nabla(\overline{\omega})&=\frac{\omega-\overline{\omega}}{\tau-\overline{\tau}}
  \mathrm{d}\overline{\tau}\\
  \nabla(\omega^\vee)&=-\frac{\omega^\vee \mathrm{d}\tau+\overline{\omega}^\vee \mathrm{d}\overline{\tau}}{\tau-\overline{\tau}}&&&
  \nabla(\overline{\omega}^\vee)&=\frac{\omega^\vee \mathrm{d}\tau+\overline{\omega}^\vee \mathrm{d}\overline{\tau}}{\tau-\overline{\tau}}
  \end{align*}

  \begin{definition}
   Let
   $w^{(r,s)}\coloneqq\omega^r\overline{\omega}^s\in \Sym^k\sH^\vee_\CC$ and
   $w^{[r,s]}\coloneqq\omega^{\vee,[r]}\overline{\omega}^{\vee,[s]}\in \TSym^k\sH_\CC$, where we use the notation of \eqref{TSymmult} and the product in 
$\TSym^\bullet\sH_\CC$.
  \end{definition}

%  Under the isomorphism $\sH^\vee_\CC\isom \sH_\CC$ induced by the pairing
%  $\langle\quad,\quad \rangle$ the image of $w^{(j,k-j)}$ is given by
%  \[
%  (-1)^{j}j!(k-j)!\left(\frac{\tau-\overline{\tau}}{2\pi i}\right)^k w^{[k-j,j]}.
%  \]
  One has $\overline{w^{(r,s)}}=w^{(s,r)}$, and $\overline{w^{[r,s]}}=(-1)^{r+s}w^{[s,r]}$. Also, one has
  \[ \nabla(w^{[r, s]}) = \frac{1}{\tau - \bar\tau}\left( (-r d\tau + s d\bar\tau) w^{[r, s]} + (r + 1) w^{[r + 1, s-1]} - (s + 1) w^{[r-1, s+1]}\right),\]
  and hence, if $D_j$ are $C^\infty$ functions of $\tau$ satisfying the symmetry relation
  \[ (2\pi i)^{k-j} (\tau - \bar \tau)^{k-j} D_{k - j} = \overline{ (2\pi i)^j (\tau - \bar \tau)^{j} D_j},\]
  we have
  \begin{multline}
   \label{eq:deligne-eisenstein-class2}
   \nabla\left( \sum_{j = 0}^k (2\pi i)^j (\tau - \bar \tau)^j D_j w^{[k-j, j]} \right)\\
   = \sum_{j = 0}^k \Big[(2\pi i)^{j + 1}(\tau - \bar\tau)^j \left(\delta_{2j - k}(D_j) + (k - j) D_{j + 1}\right)\,\mathrm{d}\tau + \left(\dots\right) \mathrm{d}\bar\tau\Big] w^{[k-j, j]},
  \end{multline}
  where $\dots$ indicate the term derived from the previous one by interchanging $j$ and $k-j$ and applying complex conjugation. Here $\delta_r$ denotes the Maass--Shimura differential operator $(2\pi i)^{-1} \left(\frac{\partial}{\partial \tau} + \frac{r}{\tau - \bar \tau}\right)$.

  We define real analytic Eisenstein series:
  \begin{definition}
   \label{def:real-eisenstein-series}
   For $t\ge 0$ and $s\in\CC$ with $t+2\Re(s)>2$ and $b\in\ZZ/N\ZZ$, we define
   \[
   F^\an_{t,s,b} \coloneqq
   (-1)^t\frac{\Gamma(s+t)}{(2\pi i)^{s+t}}
   \sum_{(m,n)\in \ZZ^2\setminus \{(0,0)\}}
   \frac{e^{2\pi i mb/N}(\tau-\overline{\tau})^s}{(m\tau +n)^{t}|m\tau+n|^{2s}}.
   \]
  \end{definition}

  \begin{remark}
   In \cite[4.2.1]{LLZ14} this Eisenstein series was denoted by $F^{(t)}_{b/N}(\tau,s)$. Note also that $F^\an_{k+2,0,b}$ coincides with the algebraic Eisenstein series $F_{k+2,b}$ of Definition \ref{def:algebraic-eisenstein-series}.
  \end{remark}

  With these definitions the Eisenstein class in absolute Hodge cohomology is given
  as follows:

  \begin{proposition}
   \label{prop:deligne-eisenstein}
   The class $\Eis^k_{\cH,b,N}=(\alpha_{\infty},\alpha_{\dR})\in
   H^1_\cH(Y_1(N)_\RR, \TSym^k\sH_\RR(1))$ is given by
   \[
   \alpha_{\infty}\coloneqq
   \frac{-N^k}{2}
   \sum_{j=0}^k(-1)^{j}(k-j)!(2\pi i)^{j-k}(\tau-\overline{\tau})^{j}F^{\an}_{2j-k, k+1-j, b} w^{[k-j,j]}
   \]
   and
   \[
   \alpha_{\dR}\coloneqq
   (-1)^{k+1} N^kF^\an_{k+2, 0, b}(\tau-\overline{\tau})^kw^{[0,k]} \otimes 2\pi i\, \mathrm{d}\tau.
   \]
  \end{proposition}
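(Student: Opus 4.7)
The plan is to verify that the explicit pair $(\alpha_\infty, \alpha_{\dR})$ in the statement satisfies the conditions of Proposition~\ref{prop:explicitdelignecoho}, and then to identify the resulting class with $r_\cH(\Eis^k_{\mot, b, N})$ by matching its de Rham image and boundary behaviour at the cusps. This is an absolute-Hodge analogue of the syntomic computation in \cite{BannaiKings}, as indicated in the preamble to the statement.

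First I would check the algebraicity and reality conditions. The component $\alpha_{\dR}$ lies in $\Gamma(X_1(N)_\RR, \TSym^k \underline{\omega} \otimes \Omega^1(C))$ because $F^\an_{k+2, 0, b}$ coincides with the holomorphic algebraic Eisenstein series $F_{k+2, b}$ of Definition~\ref{def:algebraic-eisenstein-series}, the factor $(\tau-\overline{\tau})^k w^{[0,k]}$ lands in $\TSym^k\underline\omega$ via $\sH \cong \sH^\vee(1)$, and $\mathrm{d}\tau = (2\pi i)^{-1}\mathrm{d}q/q$; comparison with Proposition~\ref{prop:de-rham-eis} identifies the image of this class in $H^1_\dR$ with $\Eis^k_{\dR, b, N}$. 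Reality of $\alpha_\infty$ as a section of the real sheaf $\TSym^k \sH_\RR(1)$ follows from the symmetry of the summand under $j \leftrightarrow k-j$, using the behaviour $\overline{F^\an_{t,s,b}} = (-1)^t F^\an_{t,s,-b}$ (read off from Definition~\ref{def:real-eisenstein-series}) together with $\overline{w^{[k-j,j]}} = (-1)^k w^{[j,k-j]}$.

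The main computation is then to verify the defining relation $\nabla(\alpha_\infty) = \pi_1(\alpha_{\dR})$. Writing $\alpha_\infty$ in the form $\sum_j (2\pi i)^j(\tau-\overline{\tau})^j D_j\, w^{[k-j,j]}$ with
\[ D_j = \tfrac{-N^k}{2}(-1)^j(k-j)!\,(2\pi i)^{-k} F^\an_{2j-k,\, k+1-j,\, b}, \]
formula \eqref{eq:deligne-eisenstein-class2} reduces the problem to showing that $\delta_{2j-k}(D_j) + (k-j)D_{j+1}$ vanishes in the middle range $0 \le j \le k-1$ and that the boundary terms match the appropriate components of $\pi_1(\alpha_{\dR})$. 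The engine of this calculation is the Maass--Shimura identity
\[ \delta_t F^\an_{t, s, b} = F^\an_{t+2, s-1, b}, \]
which follows by direct manipulation of the series in Definition~\ref{def:real-eisenstein-series} from $\delta_t = (2\pi i)^{-1}\bigl(\partial_\tau + t(\tau-\overline{\tau})^{-1}\bigr)$ together with the elementary identity $m(m\tau+n)^{-1} = (\tau-\overline{\tau})^{-1}[1 - (m\overline{\tau}+n)(m\tau+n)^{-1}]$. The two terms then cancel telescopically thanks to the combinatorial factor $(-1)^j(k-j)!$, leaving only the contribution at $j = k$ in the $\mathrm{d}\tau$ direction and, by the conjugate computation, at $j = 0$ in the $\mathrm{d}\overline{\tau}$ direction; these together assemble to $(\alpha_{\dR} - \overline{\alpha_{\dR}})/2 = \pi_1(\alpha_{\dR})$.

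To conclude, our class and $\Eis^k_{\cH, b, N}$ have the same de Rham image by the first step, so by \eqref{eq:absolute-Hodge-seq} they differ by an element of $\Ext^1_{MHS_\RR^+}(\RR(0), H^0_B(Y_1(N)(\CC), \TSym^k \sH_\RR(1)))$. For $k \ge 1$ this vanishes because $H^0_B$ itself is zero; for $k = 0$ one pins down the class by matching the constant term of $F_{2, b}$ against the residue prediction of Theorem~\ref{thm:motivic-eisenstein-classes} (using $\zeta(-1) = -\tfrac{1}{12}$). The hard part will be the telescoping step: it is sensitive to the precise constants in Definition~\ref{def:real-eisenstein-series} and to sign conventions in $\delta_r$ and in the complex conjugation of $w^{[r,s]}$, so the real labour is careful bookkeeping of powers of $2\pi i$, factors of $(\tau-\overline{\tau})$, and the binomial signs $(-1)^j(k-j)!$, rather than any conceptual subtlety.
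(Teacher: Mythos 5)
Your overall strategy is the same as the paper's: verify the defining relation $\nabla(\alpha_\infty)=\pi_1(\alpha_\dR)$ using the Maass--Shimura identity $\delta_t F^\an_{t,s,b}=F^\an_{t+2,s-1,b}$ and the resulting telescoping, match the de Rham component via Proposition~\ref{prop:de-rham-eis}, and finish by uniqueness from the de Rham image for $k>0$. However, two steps are stated incorrectly. The conjugation rule you quote, $\overline{F^\an_{t,s,b}} = (-1)^t F^\an_{t,s,-b}$, is false: conjugating the series in Definition~\ref{def:real-eisenstein-series} replaces $(m\tau+n)^{-t}$ by $(m\bar\tau+n)^{-t}=(m\tau+n)^{t}|m\tau+n|^{-2t}$, and after collecting constants the correct relation is
\[
\overline{F^\an_{t,s,b}}=\frac{\Gamma(s+t)}{\Gamma(s)}\,(2\pi i)^{-t}(\tau-\bar\tau)^{-t}\,F^\an_{-t,\,s+t,\,b}.
\]
This is in fact exactly what the symmetry hypothesis preceding Equation~\eqref{eq:deligne-eisenstein-class2} asks for, since that hypothesis relates the index $j$ to the index $k-j$, hence the parameter $t=2j-k$ to $-t$ (and $s$ to $s+t$), not $t$ to itself; with your rule the symmetry check would fail.

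For $k=0$ your argument does not close the remaining gap. The residue of Theorem~\ref{thm:motivic-eisenstein-classes} and the constant term $\zeta(-1)$ of $F_{2,b}$ only constrain the de Rham component $\alpha_\dR$, which you have already matched, and say nothing about the $\Ext^1_{MHS_\RR^+}(\RR(0),H^0_B(Y_1(N)(\CC),\RR(1)))$ indeterminacy you yourself point out is present when $k=0$. The paper resolves $k=0$ differently: it compares both members of the pair against the known formula for the absolute Hodge regulator of the Siegel unit, namely $\Eis^0_{\cH,b,N}=\bigl(\log|g_{0,b/N}|,\ d\log g_{0,b/N}\bigr)$, using the identity $F^\an_{0,1,b}=-2\log|g_{0,b/N}|$, and you will need some version of that period identity to pin down the extension class.
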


  \begin{proof}
   One checks easily that $\alpha_\infty$ takes values in $\TSym^k(\sH_{\RR})(1)$. Moreover, an elementary but lengthy computation using Equation \eqref{eq:deligne-eisenstein-class2} and the relation $\delta_t(F^\an_{t,s,b})= F^\an_{t + 2, s - 1, b}$  (cf.~\cite[Proposition 4.2.2(iii)]{LLZ14}) shows that $\nabla(\alpha_\infty) = \pi_1(\alpha_{\dR})$, so that $(\alpha_\infty, \alpha_{\dR})$ does indeed define a class in absolute Hodge cohomology. 
   
   For $k > 0$, the Eisenstein class $\Eis^k_{\cH,b,N}$ is uniquely determined by
   $\alpha_\dR=\Eis^k_{\dR,b,N}$; and $\alpha_{\dR}$ as defined above satisfies this, since $F^\an_{k+2, 0, b}$ coincides with the holomorphic Eisenstein series $F_{k + 2, b}$ of the previous section (and $\tfrac{\mathrm{d}q}{q} = 2\pi i\, \mathrm{d}\tau$, $v^{[0, k]} = (-1)^k (\tau - \bar\tau)^k w^{[0, k]}$).
 
    For $k = 0$ the Eisenstein class is characterized by
   $\Eis^0_{\cH,b,N}=(\log|g_{0,b/N}|,d\log(g_{0,b/N}))$, and we have
   \[ F^{\an}_{0, 1, b} = -2 \log |g_{0, b/N}|.\qedhere\]
  \end{proof}

  \begin{remark}
   Note that there is a typographical error in formula (iii) of \cite[(3.8.4)]{Kato-p-adic} (a minus sign is missing), which we incautiously reproduced without checking in \cite[Proposition 4.2.2(v)]{LLZ14}.
  \end{remark}

%%%%%%%%%%%%%%%%%%%%%%%%%%%%%%

 \subsection{The syntomic Eisenstein class on the ordinary locus}
  \label{sect:syntomicEis}

  We review the description from \cite{BannaiKings} of the syntomic Eisenstein class
  $\Eis^k_{\syn, b, N} \in  H^1_{\syn}(\sY^\ord, \TSym^k \sH_{\Qp}(1))$
  in terms of $p$-adic Eisenstein series. We assume here that $p \nmid N$.

  In this section we let $X = X_1(N)_{\Zp}$, $Y = Y_1(N)_{\Zp}$, and $Y^\ord$ the open subscheme of $Y$ where the Eisenstein series
  $E_{p-1}\in \Gamma(Y,\underline{\omega}^{\otimes p-1})$ is invertible.
  Let $\cY$, $\cX$ be the formal completions of $Y$ and $X$ with respect to the special
  fibre and $\cY_\Qp$, $\cX_\Qp$ be the associated rigid analytic spaces.
  We also let $j:\cY_\Qp\to \cX_\Qp$ be the open immersion and
  $Y_\Qp^\an$ be the rigid analytic space associated to $Y_\Qp$ so
  that $Y_\Qp^\an$ is a strict neighbourhood of $j$. We denote by $j^{\dagger}$ Berthelot's overconvergent sections functor and let
\[ 
\TSym^{k}\sH_{\rig}:=j_{Y_\Qp^\an}^\dagger\TSym^k\sH\mid_{Y_\Qp^\an}.
\]

  Let $\sY$ and $\sY^\ord$ be the smooth pairs $(Y, X)$ and $(Y^\ord,X)$. We shall give an explicit formula for the image of $\Eis^k_{\syn, b, N}$ under
  the restriction map
  \[
  H^1_{\syn}(\sY, \TSym^k \sH_{\Qp}(1))\to
  H^1_{\syn}(\sY^\ord, \TSym^k \sH_{\Qp}(1)).
  \]

  These syntomic cohomology groups have an explicit presentation exactly parallel to Proposition \ref{prop:explicitdelignecoho} above:

  \begin{proposition}[{\cite[Proposition A.16]{BannaiKings}}]
   A class in $H^1_{\syn}(\sY^\ord, \TSym^k \sH_{\Qp}(1))$ is given by a pair of sections $(\alpha_\rig, \alpha_\dR)$, where
   \[ \alpha_\rig \in \Gamma(\cX_{\Qp},\TSym^k\sH_\rig\mid_{Y_\Qp^\an})\]
   is an overconvergent section,
   \[\alpha_\dR \in \Gamma(X_\Qp, \TSym^k\underline{\omega}\otimes
   \Omega^1_{X_\Qp}(\Cusp))\]
   is an algebraic section with logarithmic poles along $\Cusp\coloneqq X_\Qp\setminus Y_\Qp$, and we have the relation
   \[ \nabla(\alpha_\rig)=(1-\varphi)\alpha_\dR.\]
  \end{proposition}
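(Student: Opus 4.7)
The plan is to combine the mapping-fibre description of syntomic cohomology with coefficients (as in \cite{BannaiKings}) with the fact that $Y^\ord$ has dimension one, so that both the algebraic log de Rham complex and the overconvergent rigid complex can be represented by explicit two-term complexes. This parallels the argument used for Proposition \ref{prop:explicitdelignecoho} in the absolute Hodge case.

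First I would unwind the definition: $H^1_{\syn}(\sY^\ord, \TSym^k\sH_\Qp(1))$ is the first hypercohomology of the mapping fibre of $1-\varphi$ from the $\Fil^0$ algebraic log de Rham complex on $(Y,X)$ to the overconvergent rigid complex on $\sY^\ord$. Since $X$ is a curve, after the Tate twist the $\Fil^0$ log de Rham complex is the two-term complex
\[
[\,\Fil^1 \TSym^k\sH \xrightarrow{\nabla} \TSym^k\sH \otimes \Omega^1_X(\Cusp)\,],
\]
placed in degrees $0$ and $1$, and likewise the rigid complex is two-term. A degree-one cocycle in the total complex of the mapping fibre is therefore exactly a pair $(\alpha_\rig,\alpha_\dR)$ satisfying $\nabla(\alpha_\rig) = (1-\varphi)\alpha_\dR$, modulo boundaries coming from the degree-zero term.

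Second I would refine this representation by passing to the top Hodge piece on the de Rham side. The key input is that on the modular curve $X_\Qp$, the natural projection of coherent complexes
\[
[\,\Fil^1 \TSym^k\sH \xrightarrow{\nabla} \TSym^k\sH \otimes \Omega^1_X(\Cusp)\,] \longrightarrow [\,0 \to \TSym^k\underline{\omega}\otimes \Omega^1_X(\Cusp)\,]
\]
is a quasi-isomorphism, which amounts to the well-known identification $\Fil^1 H^1_\dR(Y_1(N), \TSym^k\sH) = H^0(X, \TSym^k\underline{\omega}\otimes \Omega^1(\Cusp))$ with the space of weight $k+2$ modular forms. Using this, any cocycle can be gauged so that $\alpha_\dR$ lies in the subsheaf $\TSym^k\underline{\omega}\otimes \Omega^1_X(\Cusp)$, with a simultaneous adjustment of $\alpha_\rig$ to preserve the compatibility relation; the boundary relation then takes the stated form $(\alpha_\rig,\alpha_\dR) = (-(1-\varphi)\beta,\nabla\beta)$ with $\beta \in \Gamma(X_\Qp, \TSym^k\underline{\omega}(\Cusp))$.

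The main difficulty is in the second step, namely verifying that the gauging of $\alpha_\dR$ into the top Hodge piece can be performed consistently with the overconvergence of $\alpha_\rig$ and the Frobenius, so that the correction to $\alpha_\rig$ remains a section of $j^\dagger \TSym^k\sH$ and the compatibility $\nabla(\alpha_\rig) = (1-\varphi)\alpha_\dR$ continues to hold in the overconvergent sense. This compatibility is essentially encoded in the definitions of admissible overconvergent filtered $F$-isocrystals and rigid syntomic cohomology with coefficients set up in \cite{Bannai-syntomic,BannaiKings}, so the argument ultimately reduces to invoking their standard framework for the pair $(Y^\ord, X)$.
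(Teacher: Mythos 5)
The paper itself does not prove this proposition; it is quoted directly from \cite[Proposition A.16]{BannaiKings}. Your first step---unwinding the mapping-fibre definition of rigid syntomic cohomology and using that $Y$ is a curve, so both the filtered log de Rham complex and the overconvergent rigid complex are two-term---is the correct and standard starting point, and is indeed the argument in \emph{op.~cit.}

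The second step, however, contains a genuine error: no gauging into the top Hodge piece is needed or possible, and the displayed ``quasi-isomorphism'' is not correct. Because $\sH$ has Hodge weights $\{0,-1\}$, one has $\Fil^1\TSym^k\sH = 0$ and $\Fil^0\TSym^k\sH = \TSym^k\underline{\omega}$; the Tate twist by $(1)$ shifts the filtration by $-1$, so $\Fil^0(\TSym^k\sH(1)) = \Fil^1\TSym^k\sH = 0$. By Griffiths transversality the $\Fil^0$ log de Rham complex of $\TSym^k\sH(1)$ is therefore already
\[
\bigl[\,0 \longrightarrow \Fil^0\TSym^k\sH \otimes \Omega^1_X(\Cusp)\,\bigr]
= \bigl[\,0 \longrightarrow \TSym^k\underline{\omega} \otimes \Omega^1_X(\Cusp)\,\bigr],
\]
concentrated in degree $1$; a $1$-cochain of the mapping fibre has its de Rham component in $\Gamma(X_{\Qp}, \TSym^k\underline{\omega}\otimes\Omega^1_{X_\Qp}(\Cusp))$ automatically, with nothing to gauge away. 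Your displayed complex $[\,\Fil^1\TSym^k\sH \xrightarrow{\nabla} \TSym^k\sH \otimes \Omega^1_X(\Cusp)\,]$ does not respect Griffiths transversality (the degree-$1$ term should be $\Fil^0\TSym^k\sH\otimes\Omega^1$, not the full $\TSym^k\sH\otimes\Omega^1$), it is not quasi-isomorphic to $[\,0\to\TSym^k\underline{\omega}\otimes\Omega^1_X(\Cusp)\,]$ since its $H^1$ sees all of $H^1_\dR(Y,\TSym^k\sH)$ rather than just $\Fil^1$, and in any case its degree-$0$ term $\Fil^1\TSym^k\sH$ vanishes so there is no gauging freedom at all. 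Consequently, the ``main difficulty'' you identify---carrying out the gauging compatibly with overconvergence and $\varphi$---does not arise. The correct route to the stated description is: perform your step one, then simply read off the degree-by-degree terms of the filtered complex, noting that the Tate twist already kills the degree-$0$ piece.
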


  The natural map $H^1_{\syn}(\sY, \TSym^k \sH_{\Qp}(1)) \to H^1_\dR(Y_{\Qp}, \TSym^k \sH_{\Qp}(1))$ is given by mapping $(\alpha_\rig, \alpha_\dR)$ to the class of $\alpha_\dR$; thus $\alpha_\dR$ must be the unique algebraic differential with logarithmic poles at the cusps representing $\Eis^k_{\dR, b, N}$, whose $q$-expansion was given in Proposition \ref{prop:de-rham-eis} above.

  Before we can write down the explicit formula for $\alpha_\rig$,
  we need to introduce certain $p$-adic Eisenstein series, and a certain
  trivialization of $\sH_\Qp$.

  \begin{definition}
   Write $\zeta_N\coloneqq e^{2\pi i/N}$ and $q\coloneqq e^{2\pi i\tau}$.
   For $t, s \in \ZZ$ with $t + s \ge 1$ we set
   \[
    F^{(p)}_{t, s, b} \coloneqq \sum_{n > 0} q^n
    \sum_{\substack{ dd'=n \\p\nmid d'\\ d,d'>0}}
    d^{t + s - 1} (d')^{-s}(\zeta_N^{bd'}+(-1)^t\zeta_N^{-bd'})
    \in \Zp[\zeta_N][[q]].
   \]
  \end{definition}

  \begin{remark}
   For $(t, s)$ satisfying the inequalities $s + t \ge 1$, $t \ge 1$, $s \le 0$, the real-analytic Eisenstein series $F^{\an}_{t, s, b}$ is an algebraic nearly-holomorphic modular form defined over $\QQ$, and the $p$-adic one $F^{(p)}_{t, s, b}$ is the $p$-stabilization of this form; cf.~\S 5.2 of \cite{LLZ14}.
  \end{remark}

  Let $\widetilde{\cY}$ be the formal scheme
  which classifies elliptic curves over $p$-adic rings with a $\Gamma_1(N)$-structure together with
  an isomorphism $\eta: \widehat{\mathbb{G}}_m\isom \widehat{\cE}$
  of formal groups. The
  elements in $\Gamma(\widetilde{\cY},\cO_{\widetilde{\cY}})$ are called
  Katz $p$-adic modular forms.  The discussion in \cite[\S 5.2]{BannaiKings}
  shows that $F^{(p)}_{t, s, b}$ is the $q$-expansion of a $p$-adic modular form.

  Denote by $\widetilde{\sH}_\Qp$ the pull-back of $\sH_\Qp$
  to $\widetilde{\cY}$. Its dual $\sH^\vee_\Qp$ contains a canonical
  section $\widetilde{\omega}$ with $\eta^*(\widetilde{\omega})=dT/(1+T)$.

  \begin{definition}
   Denote by $\xi\in \Omega^1_{\widetilde{\cY}/\Zp}$ the differential form which corresponds to $ \widetilde{\omega}^{\otimes 2}$ under the Kodaira-Spencer isomorphism  $\underline{\omega}^{\otimes 2}\isom \Omega^1_{\widetilde{\cY}/\Zp}$  and denote by $\theta$ the differential operator dual to $\xi$.
  \end{definition}

  Then define $\widetilde{u}\coloneqq\nabla(\theta)(\widetilde{\omega})\in
  \widetilde{\sH}_\Qp^\vee$. The element $\widetilde{u}$ is a generator of
  the unit root space. We denote by $\widetilde{\omega}^\vee,\widetilde{u}^\vee$
  the basis dual to $\widetilde{\omega},\widetilde{u}$.

  The actions of $\nabla$ and $\varphi$ on these vectors are given by the formulae
  \begin{align*}
   \nabla(\tilde\omega) &= \tilde u \otimes \xi, &
   \nabla(\tilde \omega^\vee) &= 0, \\
   \nabla(\tilde u) &= 0, &
   \nabla(\tilde u^\vee) &= \tilde \omega^\vee \otimes \xi,\\
   \varphi(\tilde \omega) &= p \tilde\omega, &
   \varphi(\tilde \omega^\vee) &= p^{-1} \tilde\omega^\vee,\\
   \varphi(\tilde u) &= \tilde u,&
   \varphi(\tilde u^\vee) &= \tilde u^\vee.
  \end{align*}

  We are interested in the sheaves $\Sym^k \sH^\vee$ and $\TSym^k \sH$. The pullbacks of these to $\widetilde{\mathcal Y}$ have bases of sections given, respectively, by the sections $v^{(r,s)} \coloneqq \tilde{\omega}^r \tilde{u}^s$ with $r + s = k$, and by the $v^{[r,s]}\coloneqq (\tilde{\omega}^\vee)^{[r]} (\tilde{u}^\vee)^{[s]}$ with $r + s = k$. The pairing is given by $\langle v^{[r, s]}, v^{(r', s')}\rangle = \delta_{rr'} \delta_{ss'}$, so these two bases are dual to each other.

  We have
  \begin{align*}
   \nabla(v^{(r,s)}) &= r v^{(r-1,s+1)} \otimes \xi, &
   \nabla(v^{[r,s]}) &= (r + 1) v^{[r+1,s-1]} \otimes \xi, \\
   \varphi(v^{(r,s)}) &= p^r v^{(r,s)}, &
   \varphi(v^{[r,s]}) &= p^{-r} v^{[r,s]}.\\
  \end{align*}

  \begin{remark}
   The Tate curve $\Tate(q)$, equipped with its natural isomorphism of formal groups to $\mathbb{G}_m$ and the point $\zeta_N$ of order $N$, defines a point of $\widetilde{\mathcal{Y}}$ over the ring $\Zp[\zeta_N][[q]]$; the pullback of $\xi$ is $\tfrac{\mathrm{d}q}{q}$ and $\theta$ acts as the differential operator dual to this, which is $q \frac{\mathrm{d}}{\mathrm{d}q}$. This identifies our sections $\tilde \omega$ and $\tilde u$ with the $\omega_{\can}$ and $\eta_{\can}$ of \cite[Eq.~(22)]{DR-diagonal-cycles-I}, and $\theta$ with the Serre differential $d$ of \S 2.4 of \emph{op.cit.}
  \end{remark}

  Since $\theta$ acts on $q$-expansions as $q \frac{\mathrm{d}}{\mathrm{d}q}$, we have the formula
  \[ \theta(F^{(p)}_{t, s, b})=F^{(p)}_{t + 2, s - 1, b}\]
  for any $t, s$ with $t + s \ge 1$.

  \begin{definition}
   Let
   \[
   \alpha_\rig \coloneqq -N^k \sum_{j=0}^k(-1)^{k - j}(k-j)!
   F^{(p)}_{2j-k, k + 1 - j, b} v^{[k-j, j]}
   \in \Gamma(\widetilde{\cY},\TSym^k \widetilde{\sH}_\Qp)
   \]
   and (as above) let
   \[
   \alpha_\dR \coloneqq - N^k F_{k+2,b} v^{[0,k]} \otimes \xi \in \Gamma(X_\Qp, \TSym^k\underline{\omega}\otimes\Omega^1_{X_\Qp}(\Cusp))
   \]
   be the section representing $\Eis^k_{\dR, b, N}$.
  \end{definition}

  It is shown in \cite[Lemma 5.10]{BannaiKings} that  $\alpha_\rig \in \Gamma(\cX_{\Qp},\TSym^k\sH_\rig)$. With  these notations we have the following explicit description of the class $\Eis^k_{\syn, b, N}$; note its similarity to the description of $\Eis^k_{\cH, b, N}$ in Proposition \ref{prop:deligne-eisenstein}:

  \begin{theorem}[{\cite[Theorem 5.11]{BannaiKings}}]
   \label{thm:syntomicformula}
   The class
   $\Eis^k_{\syn, b, N} \in  H^1_{\syn}(\sY^\ord, \TSym^k \sH_{\Qp}(1))$
   is given by $\left(\alpha_\rig, \alpha_\dR\right)$ defined above.
  \end{theorem}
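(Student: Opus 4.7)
The plan is to use the explicit presentation of $H^1_{\syn}(\sY^\ord, \TSym^k \sH_{\Qp}(1))$ stated in the proposition just before: a class is represented by a pair $(\alpha_\rig, \alpha_\dR)$ of an overconvergent section of $\TSym^k \sH_{\Qp}$ and an algebraic differential with log poles at the cusps, satisfying the compatibility $\nabla(\alpha_\rig) = (1-\varphi)\alpha_\dR$. The proof then breaks into three checks: overconvergence of $\alpha_\rig$, the differential-equation compatibility, and the identification of the resulting class with $\Eis^k_{\syn, b, N}$.

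Overconvergence of $\alpha_\rig$ is Lemma 5.10 of Bannai--Kings, so the serious input is the compatibility with $\alpha_\dR$. For this, the key ingredients are: the formulas $\nabla(v^{[r,s]}) = (r+1)\, v^{[r+1,s-1]} \otimes \xi$, the Serre differential relation $\theta(F^{(p)}_{t,s,b}) = F^{(p)}_{t+2, s-1, b}$ on $q$-expansions, and the $\varphi$-eigenvalues $\varphi(v^{[r,s]}) = p^{-r} v^{[r,s]}$. Applying $\nabla$ to $\alpha_\rig$ term by term gives, on the $j$-th summand, the two contributions
\[
(-1)^{k-j}(k-j)!\, F^{(p)}_{2j-k+2, k-j, b}\, v^{[k-j,j]} \otimes \xi \quad\text{and}\quad (-1)^{k-j}(k-j)!\,(k-j+1)\, F^{(p)}_{2j-k, k+1-j, b}\, v^{[k-j+1, j-1]} \otimes \xi,
\]
both scaled by $-N^k$. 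Reindexing the second sum by $j \mapsto j-1$ produces the coefficient $-(-1)^{k-j}(k-j)!$ in front of $F^{(p)}_{2j-k+2, k-j, b}\, v^{[k-j,j]}$, which cancels the first sum telescopically for $0 \le j \le k-1$. The $j=-1$ boundary term vanishes because $v^{[k+1,-1]}=0$, and what survives is the $j=k$ boundary term
\[
\nabla(\alpha_\rig) = -N^k F^{(p)}_{k+2, 0, b}\, v^{[0,k]} \otimes \xi.
\]
On the other side, $\varphi$ fixes $v^{[0,k]}$, and pullback via Frobenius on the universal trivialized formal group corresponds to the $V_p$ operator on $q$-expansions (quotienting the Tate curve by its canonical subgroup sends $q$ to $q^p$). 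The identity $(1-V_p) F_{k+2, b} = F^{(p)}_{k+2, 0, b}$ is then immediate from the definitions of $F_{k+2,b}$ and $F^{(p)}_{k+2, 0, b}$, which coincides with the constant term dropping out of $\zeta(-1-k)$ and the $p\mid d'$ terms being killed. This shows $(1-\varphi)\alpha_\dR = -N^k F^{(p)}_{k+2, 0, b}\, v^{[0,k]} \otimes \xi$, matching $\nabla(\alpha_\rig)$.

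Finally, the resulting class must be identified with $\Eis^k_{\syn, b, N}$. By construction $\alpha_\dR$ is the de Rham representative of $\Eis^k_{\dR, b, N}$ from Proposition \ref{prop:de-rham-eis}, so the image of our class in $H^1_\dR(Y_{\Qp}, \TSym^k \sH_{\Qp}(1))$ agrees with the image of $\Eis^k_{\syn, b, N}$. For $k > 0$ the map $H^1_\syn \to H^1_\dR$ is injective on the eigenspace in question (because the residues determine the class up to de Rham), so this is enough; for $k = 0$ one instead checks directly against the explicit formula $\Eis^0_{\syn, b, N} = (\log^{(p)} g_{0, b/N},\, d\log g_{0,b/N})$ via the expansion of $F^{(p)}_{0,1,b}$.

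The main obstacle is (a) organizing the $\nabla$-computation so that the telescoping is transparent, and (b) normalizing the Frobenius action correctly on the basis $v^{[r,s]}$ and on $\xi$ at the Tate curve, so that the identity $(1-\varphi)F_{k+2,b} = F^{(p)}_{k+2,0,b}$ comes out with exactly the constants required to match the $j=k$ boundary term produced by $\nabla$.
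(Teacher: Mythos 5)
Your overall structure matches the paper's proof: exhibit $(\alpha_\rig,\alpha_\dR)$ in the explicit presentation, verify $\nabla(\alpha_\rig)=(1-\varphi)\alpha_\dR$, and then characterize the class --- by injectivity of $H^1_\syn\to H^1_\dR$ when $k>0$, and by direct comparison with $r_{\syn}(g_{0,b/N})$ when $k=0$. The telescoping computation of $\nabla(\alpha_\rig)$ is correct and actually more explicit than the paper, which simply asserts it. Two points, however, are not right as written.

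First, the parenthetical justification of injectivity for $k>0$ --- ``because the residues determine the class up to de Rham'' --- is not the reason. The kernel of $H^1_\syn(\sY,\TSym^k\sH(1))\to \Fil^0 H^1_{\dR}$ is, by the long exact sequence \eqref{eq:syntomicexactseq}, the cokernel of $(1-\varphi)$ on $H^0_{\rig}(Y,\TSym^k\sH_{\rig}(1))$, and this vanishes for $k>0$ because there are no global horizontal sections of $\TSym^k\sH(1)$ on the open modular curve. That is the content of the cited [BannaiKings, Prop.\ 4.1]; residues play no role here.

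Second, the identity $(1-V_p)F_{k+2,b}=F^{(p)}_{k+2,0,b}$, with $V_p$ interpreted as the bare substitution $q\mapsto q^p$, is false. If one expands both sides, the $q^{np}$-coefficient of $F_{k+2,b}-V_pF_{k+2,b}$ is $\sum_{dd'=np}d^{k+1}(\zeta_N^{bd'}+\cdots)-\sum_{dd'=n}d^{k+1}(\zeta_N^{bd'}+\cdots)$, while the $q^{np}$-coefficient of $F^{(p)}_{k+2,0,b}$ is $\sum_{dd'=np,\ p\nmid d'}d^{k+1}(\zeta_N^{bd'}+\cdots)$; matching these requires the second sum in the former to equal $\sum_{de=n}d^{k+1}(\zeta_N^{bpe}+\cdots)$, with the root of unity raised to the $p$th power. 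This is exactly what the canonical Frobenius lift produces: it sends $(\Tate(q),\zeta_N)\mapsto(\Tate(q^p),\zeta_N^p)$, so $\varphi^*$ is $V_p$ \emph{composed with the diamond action} on the level structure. Omitting the $\zeta_N\mapsto\zeta_N^p$ twist makes your verification of $(1-\varphi)\alpha_\dR=-N^kF^{(p)}_{k+2,0,b}v^{[0,k]}\otimes\xi$ break down in the terms with $p\mid d'$. (You should also note, as implicitly used, that the factor of $p$ from $\varphi^*$ acting on $\xi\in\Omega^1$ is cancelled by the $p^{-1}$ from the Tate twist $(1)$, so that $v^{[0,k]}\otimes\xi$ is $\varphi$-fixed.)
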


  \begin{proof}
   An immediate calculation gives
   \[ \nabla\left(\alpha_\rig \right) = - N^k F^{(p)}_{k + 2, 0, b} v^{[0, k]} \otimes \xi, \]
   and we calculate that
   \[ (1 - \varphi)\left(\alpha_\dR\right) = -N^k F^{(p)}_{k+2, 0, b} v^{[0, k]} \otimes \xi,\]
   Thus the pair $(\alpha_\rig, \alpha_\dR)$ does define a class in syntomic cohomology, which maps to $\Eis^k_{\dR, b, N}$ in de Rham cohomology.

   For $k > 0$ this is enough to uniquely characterize the syntomic Eisenstein class, as the map from syntomic to de Rham cohomology is injective in this case (see \cite[Proposition 4.1]{BannaiKings}). For $k = 0$ this does not hold; but the motivic Eisenstein class $\Eis^0_{\mot, b, N}$ is just the Siegel unit $g_{0, b/N}$, and one knows that
   \[ r_{\syn}(g_{0, b/N}) = \left( (1 - \varphi) \log g_{0, b/N}, \operatorname{dlog} g_{0, b/N}\right).\]
   An easy series calculation shows that we have
   \[ \alpha_\rig = -F^{(p)}_{0, 1, b} = (1 - \varphi) \log g_{0, b/N}\]
   as rigid-analytic sections of the sheaf $\Qp(1)$ on $Y^{\ord}$, as required.
  \end{proof}

%%%%%%%%%%%%%%%%%%%%%%%%%%%%%%%%%%%%%%%%%%%%%%%%%%%%%%%%%%%

\section{Rankin--Eisenstein classes on products of modular curves}

%%%%%%%%%%%%%%%%%%%%%%%%%%%%%%%%%%%%%%%%%%%%%%%%%%%%%%%%%%%

 We shall now define ``Rankin--Eisenstein classes'' as the pushforward of Eisenstein classes along maps arising from the diagonal inclusion $Y_1(N) \into Y_1(N)^2$.

 \subsection{The Clebsch-Gordan map}
  \label{sect:clebschgordan}

  In this section, we'll establish some results on tensor products of the modules $\TSym^k H$. Let $H$ be any abelian group.

  Let $k,k',j$ be integers satisfying the inequalities
  \begin{equation}
   \label{eq:inequalities}
   k \ge 0, \quad k' \ge 0, \quad 0 \le j \le \min(k, k').
  \end{equation}
  By definition, we have
  \[ \TSym^{k + k' - 2j} H \subseteq \TSym^{k - j} H \otimes \TSym^{k' - j} H.\]
  On the other hand, the map
  \[ \wedge^2 H \to H \otimes H, \]
  given by mapping $x \wedge y$ to the antisymmetric tensor $x \otimes y - y \otimes x$, gives a map
  \[ \TSym^j\left(\wedge{}^2 H\right) \to \TSym^j H \otimes \TSym^j H\]
  by raising to the $j$-th power. Taking the tensor product of these two maps and using the multiplication in the tensor algebra $\TSym^\bullet H$, we obtain a map
  \[  CG^{[k, k', j]}: \TSym^{k + k - 2j}(H) \otimes \TSym^j\left(\wedge^2 H\right) \to \TSym^k\left(H\right) \otimes \TSym^{k'}\left(H\right).\]

  We are interested in the case where $H \cong \ZZ^2$, in which case $\wedge^2 H = \det(H)$.

  \begin{definition}
   Define
   \[ CG^{[k, k', j]} :  \TSym^{k + k' - 2j}(H) \to \TSym^{k }(H) \otimes \TSym^{k'}(H) \otimes \det(H)^{-j} \]
   to be the map defined by the above construction.
  \end{definition}

  We will need the following explicit formula for a piece of the Clebsch--Gordan map. Composing the Clebsch--Gordan map $CG^{[k, k', j]}$ with the natural contraction map
  \[ \left(\Sym^k H^\vee\right) \otimes \left(\Sym^{k'} H^\vee\right) \otimes \left( \TSym^k(H) \otimes \TSym^{k'}(H)\right) \to \ZZ \]
  gives a trilinear form
  \begin{equation}
   \label{eq:trilinear}
   \left(\Sym^k H^\vee\right) \otimes \left(\Sym^{k'} H^\vee\right) \otimes \left(\TSym^{k + k' - 2j} H\right) \to \det(H)^{-j}.
  \end{equation}

  Let us fix a basis $u, v$ of $H$ and write $w^{[r, s]} = u^{[r]} v^{[s]} \in \TSym^{r + s} H$. We let $u^\vee, v^\vee$ be the dual basis of $H^\vee$, and write $w^{(r, s)} = (u^\vee)^r (v^\vee)^s \in \Sym^{r + s} H^\vee$, so the bases $\{ w^{[r, s]} : r + s = k\}$ of $\TSym^k H$ and $\{ w^{(r, s)}: r + s = k\}$ of $\Sym^k H^\vee$ are dual to each other. We let $e_1$ be the basis $u \wedge v$ of $\det H$, and $e_j = e_1{}^{\otimes j}$.

  \begin{proposition}
   \label{prop:clebsch-gordan-coefficients}
   The trilinear form \eqref{eq:trilinear} sends the basis vector
   \[ w^{(0, k)} \otimes w^{(k', 0)} \otimes w^{[s, t]}\]
   to zero unless $(s, t) = (k' - j, k-j)$, in which case it is mapped to
   \[ \frac{k! (k')!}{j! (k-j)! (k' - j)!} \otimes e_{-j}.\]
  \end{proposition}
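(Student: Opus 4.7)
The proof is a direct combinatorial unfolding of the definition of $CG^{[k,k',j]}$. Unwinding the construction, the map factors as
\[
\TSym^{k+k'-2j} H \xrightarrow{\;\Delta\;} \TSym^{k-j} H \otimes \TSym^{k'-j} H \xrightarrow{\;\cdot\, e_j'\;} \TSym^{k} H \otimes \TSym^{k'} H,
\]
tensored with the trivialization of $\det(H)^{-j}$, where $\Delta$ denotes the $(k-j,k'-j)$-component of the comultiplication in the divided-power Hopf algebra $\TSym^\bullet H$, and $e_j' \in \TSym^j H \otimes \TSym^j H$ is the image of $e_j$ under the map built from $u\wedge v \mapsto u\otimes v - v\otimes u$.

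First, I would use the Hopf algebra identity $\Delta(u^{[r]}) = \sum_{i+i'=r} u^{[i]} \otimes u^{[i']}$ and its analogue for $v$ to write
\[
\Delta(w^{[s,t]})\bigl|_{(k-j,\,k'-j)} \;=\; \sum_{\substack{a+b=s,\ c+d=t \\ a+c=k-j}} w^{[a,c]} \otimes w^{[b,d]}.
\]
Next, I would expand $(u\otimes v - v\otimes u)^{\cdot j}$ via the binomial theorem in the ring $\TSym^\bullet H \otimes \TSym^\bullet H$, using $u^{\cdot m} = m!\, u^{[m]}$ in each tensor factor, to obtain a closed-form sum over $0 \le l \le j$ with signs $(-1)^l$ and coefficients involving $j!$, $(j-l)!$, $l!$.

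Third, I would take the componentwise product of these two expansions, applying the divided-power multiplication rule $w^{[a,c]}\cdot w^{[b,d]} = \binom{a+b}{a}\binom{c+d}{c}\, w^{[a+b,c+d]}$ in each tensor factor, and then pair with $w^{(0,k)} \otimes w^{(k',0)}$; by duality this amounts to extracting the coefficient of $w^{[0,k]} \otimes w^{[k',0]}$. The key observation is the vanishing step: for the first tensor factor to produce $w^{[0,k]}$, the first index $a+(j-l)$ must equal $0$, forcing $a=0$ and $l=j$; then the $\Delta$-constraints together with the requirement $b+l = k'$, $d+(j-l)=0$ in the second factor cascade to $c = k-j$, $b = k'-j$, $d = 0$, and hence $s = k'-j$, $t = k-j$. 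For any other $(s,t)$ no term survives and the trilinear form vanishes.

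In the surviving case there is a unique nonzero term; collecting the three binomials $\binom{k}{j}$, $\binom{k'}{j}$ from the two multiplications plus the $l=j$ contribution $(-1)^j(j!)^2$ from $(u\otimes v-v\otimes u)^{\cdot j}$, and incorporating the factor $\tfrac{1}{j!}$ (with sign) by which $e_j$ as a divided-power element of $\TSym^j(\wedge^2 H)$ differs from the $j$-fold algebra power of $u\wedge v$, the coefficient collapses to $j!\binom{k}{j}\binom{k'}{j} = \tfrac{k!\,(k')!}{j!\,(k-j)!\,(k'-j)!}$, tensored with $e_{-j}$. The principal bookkeeping obstacle will be precisely this last normalization: faithfully tracking how $e_j = e_1^{\otimes j} \in (\wedge^2 H)^{\otimes j}$ is identified with the element of $\TSym^j H\otimes\TSym^j H$ produced by ``raising to the $j$-th power,'' since divided-power elements and algebra powers differ by $j!$; once this is pinned down, everything else is routine binomial manipulation.
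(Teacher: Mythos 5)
Your approach is in substance the same as the paper's: both unwind $CG^{[k,k',j]}$ on a basis vector $w^{[s,t]}$ and pair with $w^{(0,k)}\otimes w^{(k',0)}$, observing that the pairing annihilates all but one term. The paper merely asserts the outcome of ``an unpleasant computation'' by displaying a closed-form expansion for $CG^{[k,k',j]}(w^{[s,t]})$ and then substituting $r=i=0$, $r'=s=k'-j$; you instead make the structure transparent by factoring the map through the $(k-j,k'-j)$ component of the divided-power comultiplication together with multiplication by the image of $e_j$. That reorganization is a genuine expository improvement, and your identification of the unique surviving configuration ($a=d=0$, $l=j$, $c=k-j$, $b=k'-j$, hence $(s,t)=(k'-j,k-j)$) and of the two binomials $\binom{k}{j}$, $\binom{k'}{j}$ is correct.

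The one step you should not leave as ``once this is pinned down, everything else is routine'' is exactly the one you flag: the normalization of $\TSym^j(\wedge^2 H)\to\TSym^j H\otimes\TSym^j H$. The passage from the algebra power $x^{\cdot j}$ to the divided power $x^{[j]}$ contributes a clean factor $\tfrac{1}{j!}$ with no sign, whereas the $l=j$ term of $(u\otimes v - v\otimes u)^{\cdot j}$ contributes $(-1)^j(j!)^2$; these do not combine to $j!$ but to $(-1)^j j!$. Thus, taken literally, your bookkeeping produces $(-1)^j\tfrac{k!(k')!}{j!(k-j)!(k'-j)!}$, not the stated value, and the parenthetical ``with sign'' does not resolve the discrepancy but simply names it. You should either locate a compensating $(-1)^j$ in the precise definition of the map (or in the contraction conventions for the trilinear form), or verify the small case $k=k'=j=1$, $(s,t)=(0,0)$ directly against the claimed value $1$; as written, the reader cannot tell whether the sign is absorbed by a convention or has been dropped.
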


  \begin{proof}
   An unpleasant computation shows that for $0 \le s \le k + k' - 2j$, the Clebsch--Gordan map sends the basis vector $w^{[s, k + k' - 2j - s]}$ of $\TSym^{k + k' - 2j} H$ to the element
   \[ \sum_{r + r' = s} \sum_{i = 0}^j (-1)^i \frac{(r + i)!(k -r + i)!(r' + j - i)!(k' - r' + j-i)!}{r!(r')!(k-r-j)!(k'-r'-j)! i! (j-i)!} w^{[r + i, k-r-i]} \otimes w^{[r' + j - i, k' -j + i - r']} \otimes e_{-j}\]
   of $\TSym^k H \otimes \TSym^{k'} H \otimes \det(H)^{-j}$. The vector $w^{[r + i, k-r-i]} \otimes w^{[r' + j - i, k' -j + i - r']}$ pairs nontrivially with $w^{(0, k)} \otimes w^{(k', 0)}$ if and only if $r = i = 0$ and $r' = s = k' - j$, and substituting these values gives the formula claimed.
  \end{proof}

 \subsection{Geometric realization of the Clebsch--Gordan map}

  The constructions of the previous section can clearly also be carried out with sheaves of abelian groups; so for any of our cohomology theories $\cT \in \{ B, \dR, \et, \bar\et, \rig, \syn, \cH \}$ for which we have a well-behaved category of coefficients, and $E \to Y$ an elliptic curve with $E$ and $Y$ regular, we obtain Clebsch--Gordan maps
  \[
   CG^{[k, k', j]}_{\cT}: \TSym^{k + k' - 2j}\sH_\cT \to \TSym^{k} \sH_\cT\otimes \TSym^{k'} \sH_\cT(-j).
  \]

  This Clebsch-Gordan map can also realized geometrically using Lieberman's trick, as follows. Recall the group  $\mathfrak{T}_k = \mu_2^k \rtimes \mathfrak{S}_k$ and the character $\varepsilon_k$ from \S \ref{sect:lieberman} above. We saw that there are isomorphisms
  \[ H^{i}_{\cT}(Y, \TSym^k \sH_{\cT}(n)) \cong H^{i + k}_{\cT}(\cE^k, \QQ_{\cT}(k + n))(\varepsilon_k), \]
  and the same argument also gives isomorphisms
  \[ H^{i}_{\cT}(Y, \TSym^{k} \sH_\cT\otimes \TSym^{k'} \sH_\cT(n)) \cong H^{i + k + k'}_{\cT}(\cE^{k + k'}, \QQ_{\cT}(k + k' + n))(\varepsilon_k \times \varepsilon_{k'}),\]
  where we consider $\varepsilon_k \times \varepsilon_{k'}$ as a character of $\mathfrak{T}_k \times \mathfrak{T}_{k'} \subseteq \mathfrak{T}_{k + k'}$.

  \begin{lemma}
   For any $(k, k, j)$ satisfying the inqualities \eqref{eq:inequalities}, we can find a finite set of triples $(\lambda_t, \xi_t, \eta_t)$, where $\lambda_t \in \QQ$ and $\xi_t: \cE^{k + k' - j} \to \cE^{k + k' - 2j}$ and $\eta_t: \cE^{k + k' - 2j} \to \cE^{k + k'}$ are morphisms of $Y$-schemes, such that for any of the cohomology theories $\cT$, the map
   \[H^{k+k'-2j+i}_\cT(\cE^{k + k' - 2j},\QQ_\cT(k+k'-2j+n))
    \to H^{k+k'+i}_\cT(\cE^{k'},\QQ_\cT(k+k'-j+n))
   \]
   given by $\sum_t \lambda_t (\eta_t)_* \circ (\xi_t)^*$ sends the $\varepsilon_{k + k' - 2j}$-eigenspace to the $(\varepsilon_k \times \varepsilon_{k'})$-eigenspace, and coincides on these eigenspaces with the map
   \[  H^{i}_{\cT}(Y, \TSym^{k + k' - 2j} \sH_{\cT}(n)) \to H^{i}_{\cT}(Y, \TSym^{k} \sH_\cT\otimes \TSym^{k'} \sH_\cT(n-j))\]
   induced by $CG^{[k, k', j]}_{\cT}$.
  \end{lemma}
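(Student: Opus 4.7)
The plan is to realize the Clebsch--Gordan map as a finite sum $\sum_t \lambda_t (\eta_t)_* \circ (\xi_t)^*$, in which each $\xi_t : \cE^{k+k'-j} \to \cE^{k+k'-2j}$ is a projection forgetting $j$ factors and each $\eta_t : \cE^{k+k'-j} \to \cE^{k+k'}$ is a ``partial diagonal'' insertion placing those $j$ factors into selected positions of $\cE^{k+k'}$. This decomposition mirrors the two ingredients of the algebraic map $CG^{[k,k',j]}$: the inclusion $\TSym^{k+k'-2j} \sH \hookrightarrow \TSym^{k-j} \sH \otimes \TSym^{k'-j} \sH$ (which on the $\varepsilon_{k+k'-2j}$-eigenspace is induced by the identity morphism on $\cE^{k+k'-2j} = \cE^{k-j}\times_Y \cE^{k'-j}$), together with the Weil pairing isomorphism $\wedge^2 \sH \cong \QQ(1)$ (realized geometrically by the diagonal class $[\Delta] \in H^2(\cE \times_Y \cE, \QQ(1))$).

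More concretely, write $\cE^{k+k'} = \cE^k \times_Y \cE^{k'}$ and index the maps by pairs $t = (I, J)$ with $I \subseteq \{1, \ldots, k\}$ and $J \subseteq \{1, \ldots, k'\}$ of cardinality $j$. For each such $t$ set $X_t = \cE^{k+k'-j}$; let $\xi_t$ be the canonical projection to $\cE^{k+k'-2j}$ forgetting the last $j$ factors, and let $\eta_t$ be the closed immersion which inserts the final $j$-block into the positions of $\cE^k$ indexed by $I$ and simultaneously into the positions of $\cE^{k'}$ indexed by $J$. Since $\eta_t$ is a regular closed immersion of codimension $j$, the pushforward $\eta_{t,*}$ shifts cohomological degree by $2j$ and Tate twist by $j$, matching the required shifts. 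Equivariance is clear from the construction: both $\xi_t^*$ and $\eta_{t,*}$ intertwine the natural actions of $\frT_{k-j} \times \frT_{k'-j} \times \frT_j$ on the three blocks of $X_t$, and summing over the $\frT_k \times \frT_{k'}$-orbit of a fixed $(I,J)$ produces a class in the $(\varepsilon_k \times \varepsilon_{k'})$-eigenspace.

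The main step is to verify that the induced map on eigenspaces agrees with $CG^{[k,k',j]}_\cT$. Since the construction uses only algebraic morphisms, it is compatible with every regulator $r_\cT$, so it suffices to check the identity in a single cohomology theory -- Betti being the most convenient. There the iterated diagonal pushforward of $1$ along $\Delta^{(j)}: \cE^j \to \cE^{2j}$ is the $j$-fold cup product of antisymmetric diagonal classes, whose K\"unneth decomposition reproduces the element $(u \wedge v)^{\otimes j}$ appearing in the algebraic CG formula. The main obstacle is purely combinatorial: the scalars $\lambda_t \in \QQ$ must be chosen (as products of signs -- from the antisymmetry of the Weil pairing -- and of multinomial factors coming from the explicit formula of Proposition \ref{prop:clebsch-gordan-coefficients}) so that the sum reproduces $CG^{[k,k',j]}_\cT$ on the nose, rather than merely up to an overall constant. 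Once this bookkeeping is done for one theory, naturality of pullback and pushforward transfers the identity to every $\cT$.
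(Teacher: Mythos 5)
Your proposal takes essentially the same route as the paper's proof. Both realize $CG^{[k,k',j]}_\cT$ by combining pullbacks along projections $\cE^{k+k'-j} \to \cE^{k+k'-2j}$ with pushforwards along partial-diagonal closed immersions into $\cE^{k+k'}$, symmetrized over $\frT_k \times \frT_{k'}$. (You have also implicitly corrected a misprint in the statement: the source of $\eta_t$ must be $\cE^{k+k'-j}$, not $\cE^{k+k'-2j}$, for the composite $(\eta_t)_* \circ (\xi_t)^*$ to be defined and for the degree and twist shifts to come out right.) The differences are organizational. The paper first treats the two extreme cases $j = 0$ (where $CG$ is the natural inclusion $\TSym^{k+k'} \sH_\cT \subseteq \TSym^k \sH_\cT \otimes \TSym^{k'}\sH_\cT$) and $k = k' = j$ (where $CG^{[j,j,j]}$ is the relative cycle class $(\delta_j)_*(\pi_j)^*$ projected onto the appropriate $\mu_2^{2j}$-eigenspace), and obtains the general case by tensoring these and averaging over cosets of $(\frT_j \times \frT_{k-j}) \times (\frT_j \times \frT_{k'-j})$ in $\frT_k \times \frT_{k'}$; your $(I,J)$-indexing is exactly the concrete form of that coset averaging. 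More substantively, the paper reduces the comparison to an equality of morphisms of coefficient sheaves on $Y$ (relative pullback and pushforward on the higher direct image sheaves, compatible with the absolute operations via the Leray spectral sequence), and verifies the identity there. Your phrase ``it suffices to check the identity in Betti and transfer by naturality'' should be read in exactly this spirit: agreement of maps on absolute cohomology groups in one theory does not by itself propagate to the others, but agreement of the induced universal constructions on the rank-two coefficient sheaf $\sH_\cT$ does, and that is precisely what the reduction to the sheaf level makes precise. With that reading, your argument is correct and coincides with the paper's.
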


  \begin{proof}
   In all the cohomology theories $\cT$ we consider, for any morphism of $Y$-schemes $f: X \to X'$, there are relative pullback and (if $f$ is proper) pushforward morphisms between the sheaves on $Y$ obtained as the higher direct images of $\QQ_{\cT}$ along the structure maps of $X$ and $X'$. These are compatible with the absolute pullback and pushforward via the Leray spectral sequence. So it suffices to show that we may find $(\lambda_t, \xi_t, \eta_t)$ such that the sum of the \emph{relative} pushforward and pullback maps coincides with $CG^{[k, k', j]}_{\cT}$. This then gives the result of the proposition (for all values of $i$ and $n$ simultaneously).

   We consider first the extreme cases $j = 0$ and $k = k' = j$. In the former case, $CG^{[k, k', 0]}_{\cT}$ is just the natural inclusion $\TSym^{k + k'}(\sH_{\cT}) \subseteq \TSym^k(\sH_{\cT}) \otimes \TSym^{k'}(\sH_{\cT})$ (compatible with the inclusion of both sheaves into $(\sH_{\cT})^{\otimes (k + k')}$); so it is compatible with the natural inclusion of cohomology groups
   \[ H^{k+k'+i}_\cT(\cE^{k + k'},\QQ_\cT(k+k'+n))(\varepsilon_{k + k'}) \to H^{k+k'+i}_\cT(\cE^{k'},\QQ_\cT(k+k'+n))(\varepsilon_k \times \varepsilon_{k'}). \]

   For the case $k = k' = j$, we note that pullback along the structure map $\pi_j: \cE^j \to Y$, composed with pushforward along the diagonal inclusion $\delta_j: \cE^j \to \cE^{2j}$, gives a map of sheaves on $Y$ (a ``relative cycle class'')
   \[ \QQ_{\cT} \to R^{2j} (\pi_{2j})_* \QQ_{\cT}(j) \]
   where $\pi_{2j}$ is the projection $\cE^{2j} \to Y$. Projecting to the subsheaf on which all the $[-1]$ endomorphisms on the fibres act as $-1$, we obtain a map
   \[ \QQ_{\cT} \to \left(R^{1} \pi_* \QQ_{\cT}\right)^{\otimes 2j}(j) = \sH_{\cT}^{\otimes 2j}(-j). \]
   Projecting to the direct summand $\TSym^{[j, j]} \sH_{\cT}(-j)$ gives a geometric realization of $CG^{[j, j, j]}_{\cT}$, which is given concretely as a formal linear combination of translates of $(\delta_j)_* (\pi_j)^*$ by elements of the group $\frT_j \times \frT_j$.

   For a general $(k, k', j)$, we take the product of the above maps for the triples $(j, j, j)$ and $(k-j, k'-j, 0)$, and average over the cosets of $(\mathfrak{T}_j \times \mathfrak{T}_{k-j}) \times (\mathfrak{T}_j \times \mathfrak{T}_{k'-j})$ in $\mathfrak{T}_k \times \mathfrak{T}_{k'}$. Since the map $CG^{[k, k', j]}_{\cT}$ is likewise built up from $CG_{\cT}^{[j, j, j]}$ and $CG_{\cT}^{[k-j, k'-j, 0]}$ via the symmetrized tensor product, this gives the required compatibility.
  \end{proof}

  \begin{corollary}
   There exists a morphism
   \[ CG^{[k, k', j]}_{\mot}: H^{i}_{\mot}(Y, \TSym^{k + k' - 2j} \sH_{\QQ}(n)) \to H^{i}_{\mot}(Y, \TSym^{k} \sH_\QQ\otimes \TSym^{k'} \sH_\QQ(n-j)) \]
   compatible with the maps $CG^{[k, k', j]}_{\cT}$, for $\cT \in \{ B, \dR, \et, \bar\et, \rig, \syn, \cH \}$, under the regulator maps $r_{\cT}$.
  \end{corollary}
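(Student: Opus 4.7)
The plan is to piggy-back directly on the preceding lemma: the triples $(\lambda_t, \xi_t, \eta_t)$ it produces are genuine rational numbers and morphisms of $Y$-schemes, with the $\eta_t$ compositions of relative diagonals and projections (hence proper). Since motivic cohomology with $\QQ$-coefficients admits pullback along arbitrary morphisms and pushforward (with the usual shift and twist) along proper morphisms of regular schemes, the formula
\[
  \Phi \;\coloneqq\; \sum_t \lambda_t\, (\eta_t)_* \circ (\xi_t)^*
\]
defines a morphism
\[
 H^{k+k'-2j+i}_\mot\bigl(\cE^{k+k'-2j}, \QQ(k+k'-2j+n)\bigr) \to H^{k+k'+i}_\mot\bigl(\cE^{k+k'}, \QQ(k+k'-j+n)\bigr).
\]

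Next I would use that both $\mathfrak{T}_{k+k'-2j}$ and $\mathfrak{T}_k \times \mathfrak{T}_{k'}$ act on the source and target by morphisms of $Y$-schemes, and that after $\otimes\QQ$ one has the central idempotents in $\QQ[\mathfrak{T}_\bullet]$ cutting out any isotypic component. I define $CG^{[k,k',j]}_\mot$ to be the composition of $\Phi$ with projection onto the $\varepsilon_{k+k'-2j}$-eigenspace on the source and the $(\varepsilon_k \times \varepsilon_{k'})$-eigenspace on the target. Via the Lieberman-trick definition of motivic cohomology of $Y$ with coefficients in the sheaves $\TSym^{k+k'-2j}\sH_\QQ$ and $\TSym^k \sH_\QQ \otimes \TSym^{k'} \sH_\QQ$ (extended in the obvious way to the tensor product case), this takes the required form
\[
 CG^{[k, k', j]}_{\mot}\colon H^{i}_{\mot}\bigl(Y, \TSym^{k + k' - 2j} \sH_{\QQ}(n)\bigr) \to H^{i}_{\mot}\bigl(Y, \TSym^{k} \sH_\QQ\otimes \TSym^{k'} \sH_\QQ(n-j)\bigr).
\]

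For the compatibility with each $r_\cT$ I would invoke the functoriality recorded earlier in the paper: the regulator maps commute with pullback along arbitrary morphisms, with pushforward along proper morphisms, and with the action of finite groups (hence with the relevant eigenspace projectors). Consequently $r_\cT \circ \Phi = \Phi_\cT \circ r_\cT$, where $\Phi_\cT$ is the analogous formal combination built from the same triples in the theory $\cT$. The preceding lemma identifies $\Phi_\cT$, restricted to the same pair of eigenspaces, with $CG^{[k,k',j]}_\cT$, and so the desired compatibility of $CG^{[k,k',j]}_\mot$ with each $CG^{[k,k',j]}_\cT$ under $r_\cT$ is immediate.

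There is no real obstacle here: the content of the construction lies entirely in the previous lemma, and the present corollary is essentially a formal exercise in functoriality. The only points worth checking carefully are that the pushforwards used are along proper morphisms (true, since relative diagonals of separated schemes are closed immersions) and that motivic cohomology with $\QQ$-coefficients on regular schemes supports both the requisite pushforwards (Gillet) and eigenspace decompositions for finite group actions — both of which are already tacitly in use in the paper.
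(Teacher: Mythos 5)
Your proposal is correct and takes essentially the same approach as the paper's own proof: define $CG^{[k,k',j]}_{\mot}$ as the formal combination $\sum_t \lambda_t\, (\eta_t)_* \circ (\xi_t)^*$ produced by the preceding lemma, and deduce compatibility with the $r_{\cT}$ from the stated functoriality of the regulators under pullback, proper pushforward, and finite group actions. The only difference is presentational: you spell out the eigenspace projections explicitly, whereas the paper leaves them implicit since $H^i_{\mot}(Y,\TSym^k\sH_\QQ(n))$ is by definition the $\varepsilon_k$-eigenspace of $H^{i+k}_{\mot}(\cE^k,\QQ(n+k))$.
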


  \begin{proof}
   We simply define $CG^{[k, k', j]}_{\mot}$ to be the map $\sum_t \lambda_t (\eta_t)_* \circ (\xi_t)^*$, which is well-defined since motivic cohomology with $\QQ(n)$ coefficients has pullback and proper pushforward maps, compatible with the other theories via the maps $r_{\cT}$.
  \end{proof}
With these constructions we can extend the Gysin map of \eqref{eq:gysin-map} to the motivic setting. If $Y$ is a smooth $T$-scheme of relative dimension $d$, we consider the pushforward along the closed embedding $\cE^{k+k'}=\cE^{k}\times_Y\cE^{k'}$ into $\cE^{k}\times_T\cE^{k'}$. One gets
\[ 
\Delta_*: H^{i}_{\mot}(Y, \TSym^{k} \sH_\QQ\otimes \TSym^{k'}\sH_{\QQ}(n)) \to
 H^{i+2d}_{\mot}(Y\times_TY, \TSym^{[k+k']} \sH_{\QQ}(n+d)),
\]
where $\TSym^{[k+k']} \sH_{\QQ}$ is as in Definition \ref{def:Sym-r-s}.

%%%%%%%%%%%%%%%%%%%%%%%%%%%%%%

 \subsection{Rankin--Eisenstein classes}

  We now come to the case which interests us: we consider the scheme $S = Y_1(N)$ over $T = \Spec \ZZ[1/N]$.

  \begin{definition}\label{def:Rankin-Eisenstein-class}
   For $k, k', j$ satisfying the inequalities \eqref{eq:inequalities}, and $\cT \in \{\mot, \et, \cH, \syn\}$, we define
   \[
    \Eis^{[k, k', j]}_{\cT, b, N} \coloneqq \left(\Delta_* \circ CG^{[k, k', j]}_{\cT}\right)\left(\Eis^{k + k' - 2j}_{\cT, b, N}\right)\\
    \in H^3_{\cT}(Y_1(N)^2, \TSym^{[k, k']}(\sH_{\cT})(2-j)).
   \]
  \end{definition}

  \begin{remark}
   The classes $\Eis^{[k, k', j]}_{\cT, b, N}$ can also be defined for the ``geometric'' theories $\cT = \{\bar\et, \dR, B, \rig\}$, but these are automatically zero, since $Y_1(N)^2$ is an affine surface, and thus its $H^3$ vanishes.
  \end{remark}

%%%%%%%%%%%%%%%%%%%%%%%

 \subsection{Abel--Jacobi maps}
  \label{sect:ajdefs}

  Let $f$, $g$ be newforms of weights $k + 2, k' + 2$ and levels $N_f, N_g$ dividing $N$. The aim of this section is the construction of Abel--Jacobi maps $\AJ_{\cT, f,g}$, for $\cT\in \{\cH, \syn, \et\}$, which we will use to interpret the Rankin--Eisenstein classes as linear functionals on de Rham cohomology.

  \subsubsection*{Absolute Hodge cohomology} Since $Y_1(N)^2$ is a smooth affine variety of dimension 2, its de Rham (or Betti) cohomology vanishes in degrees $\ge 3$. Consequently, the exact sequence \eqref{eq:deligneexactseq} for absolute Hodge cohomology gives an isomorphism
  \[
   H^3_{\cH}(Y_1(N)^2_{\RR}, \TSym^{[k, k']}(\sH_\RR)(2-j)) \cong
   H^1_{\cH}\left(\Spec \RR, H^2_B(Y_1(N)^2_{\RR}, \TSym^{[k, k']}(\sH_\RR)(2-j))\right)
  \]
  for any $j$. Via projection to the $(f, g)$-isotypical component we obtain a natural map
  \begin{equation*}
   H^3_{\cH}(Y_1(N)^2_{\RR}, \TSym^{[k, k']}(\sH_\RR)(2-j)) \rTo H^1_{\cH}(\Spec \RR, M_B(f\otimes g)^*(-j)).
  \end{equation*}
  The comparison isomorphisms between Betti and de Rham cohomology
  induce two \emph{period maps}
  \begin{align*}
   \alpha_{M(f\otimes g)(n)}: M_B(f\otimes g)(n)_{\RR}^+ &\to  t(M(f\otimes g)(n))_\RR\\
\alpha_{M(f\otimes g)^{*}(n)}: M_B(f\otimes g)^{*}(n)_{\RR}^+ &\to  t(M(f\otimes g)^{*}(n))_\RR
  \end{align*}
with the tangent spaces from \ref{def:tangentspace}
  The $L$-vector spaces $M_B(f\otimes g)(n)^+$ and
  $M_B(f\otimes g)^{*}(n)^+$  have dimension $2$ and in the case that $0\le j\le \min\{k,k'\}$ we get that $\ker(\alpha_{M(f\otimes g)(j+1)})$
  is one-dimensional. Poincar\'e duality induces a perfect pairing
\[
\ker(\alpha_{M(f\otimes g)(j+1)})\times \coker(\alpha_{M(f\otimes g)^{*}(-j)})\to L\otimes_\QQ \RR
\]
which together with the isomorphism coming from sequence \eqref{eq:absolute-Hodge-seq}
\[
H^1_\cH(\Spec\RR, M_B(f\otimes g)^*(-j))\isom \coker(\alpha_{M(f\otimes g)^{*}(-j)})
\]
induces the isomorphism
  \[
  H^1_\cH(\Spec\RR, M_B(f\otimes g)^*(-j))\isom
\ker(\alpha_{M(f\otimes g)(j+1)})^{*}.
  \]
Putting these isomorphisms together,
we obtain the \emph{Abel--Jacobi map} for absolute Hodge cohomology,
  \[ \AJ_{\cH, f, g}: H^3_{\cH}(Y_1(N)^2_{\RR}, \TSym^{[k, k']}(\sH_\RR)(2-j)) \rTo (\ker \alpha_{M(f\otimes g)(1+j)})^*.\]
  Taking duals we get
  an exact sequence
  \begin{equation}
     \label{eq:deligne-cohomology}
   0\to \Fil^{-j}M_\dR(f\otimes g)^*_\RR \to M_B(f\otimes g)^*(-j-1)_\RR^+ \to
     H^1_\cH(\Spec\RR, M_B(f\otimes g)^*(-j)_\RR)\to 0,
  \end{equation}
  which is crucial for the interpretation of the leading terms of $L(f,g,s)$ at
  $s=j+1$ in the Beilinson conjecture.

  \subsubsection*{Syntomic cohomology}

  Similarly, for a prime $p \nmid N$, the exact sequence \eqref{eq:syntomicexactseq} for syntomic cohomology gives isomorphisms
  \[ H^3_{\syn}(Y_1(N)^2_{\Zp}, \TSym^{[k, k']}(\sH_\Qp)(2-j) \cong H^1_\syn\left(\Spec \Zp, H^2_\rig(Y_1(N)^2_{\Zp}, \TSym^{[k, k']}(\sH_\rig)(2-j))\right), \]
  and projecting to the $(f, g)$-isotypical component we obtain a map
  \begin{multline*}
   H^3_{\syn}(Y_1(N)^2_{\Zp}, \TSym^{[k, k']}(\sH_\Qp)(2-j)) \rTo H^1_\syn\left(\Spec \Zp, M_{\rig}(f \otimes g)^*(-j)\right) \\
   = \frac{M_{\rig}(f\otimes g)^*(-j)_{\Qp}}{(1 - \varphi) \Fil^0 M_{\dR}(f\otimes g)^*(-j)_{\Qp}}.
  \end{multline*}
  Since $1 - \varphi$ is an isomorphism on $M_{\rig}(f\otimes g)^*(-j)_{\Qp}$ for $0 \le j \le \min(k, k')$, the right-hand side can be identified with $t(M(f\otimes g)^*(-j))_{\Qp}$, which is free of rank 3 over $L \otimes \Qp$. This gives the \emph{syntomic Abel--Jacobi map}
  \[  \AJ_{\syn, f, g}: H^3_{\syn}(Y_1(N)^2_{\Zp}, \TSym^{[k, k']}(\sH_\Qp)(2-j)) \rTo t(M(f\otimes g)^*(-j))_{\Qp}.\]

  \subsubsection*{\'Etale cohomology} The \'etale spectral sequence ${}^{\et} E^{ij}$ for $Y_1(N)^2_{\Qp}$ degenerates at $E_3$ (since $H^i(\Qp, -)$ is the the zero functor for $i \ne \{0, 1, 2\}$), so we obtain a natural map (not an isomorphism in general)
  \[ H^3_{\et}(Y_1(N)^2_{\Qp},  \TSym^{[k, k']}(\sH_\Qp)(2-j)) \to H^1_{\et}\left(\Spec \Qp, H^2_{\bar\et}(Y_1(N)^2, \TSym^{[k, k']}(\sH_\Qp)(2-j))\right).\]
  Projecting to the $(f, g)$-isotypical component gives an \'etale Abel--Jacobi map
  \[
   \AJ_{\et, f, g}: H^3_{\et}(Y_1(N)^2_{\Qp},  \TSym^{[k, k']}(\sH_\Qp)(2-j))
   \rTo H^1(\Qp, M_{\overline{\et}}(f \otimes g)^*(-j)).
  \]

  The following proposition gives a relation between the syntomic and \'etale Abel--Jacobi maps:

  \begin{proposition}
   The maps $\AJ_{\et, f, g}$ and $\AJ_{\syn, f, g}$ fit into a commutative diagram
   \begin{diagram}
    H^3_{\mot}(Y_1(N)^2_{\Zp}, \TSym^{[k, k']}(\sH_\Qp)(2-j)) & \rTo & H^3_{\mot}(Y_1(N)^2_{\Qp}, \TSym^{[k, k']}(\sH_\Qp)(2-j)) \\
    \dTo^{r_{\syn}} & & \dTo^{r_{\et}}\\
    H^3_{\syn}(Y_1(N)^2_{\Zp}, \TSym^{[k, k']}(\sH_\Qp)(2-j)) & & H^3_{\et}(Y_1(N)^2_{\Qp},  \TSym^{[k, k']}(\sH_\Qp)(2-j))\\
    \dTo^{\AJ_{\syn, f, g}} & & \dTo_{\AJ_{\et, f, g}} \\
    t(M(f\otimes g)^*(-j))_{\Qp} & \rTo^{\exp \circ \comp_{\dR}} & H^1(\Qp, M_{\overline{\et}}(f \otimes g)^*(-j)).
   \end{diagram}
   where the top horizontal arrow is given by base-extension, and in the bottom horizontal arrow,
   \[ \comp_{\dR}: M_{\dR}(f\otimes g)^*_{\Qp} \cong \DD_{\dR}(M_{\overline{\et}}(f \otimes g)^*)\]
   is the Faltings comparison isomorphism, and $\exp$ is the Bloch--Kato exponential map (c.f. Section \ref{section:etsyncompatible}).
  \end{proposition}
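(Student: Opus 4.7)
The plan is to deduce this diagram essentially formally from the Besser--Nizio{\l} comparison theorem (Theorem \ref{thm:absolutecomparison}), together with the identity $\widetilde\exp \circ (1-\varphi) = \exp$. The commutativity of the upper square --- relating $r_\syn$ and $r_\et$ via the comparison map $\comp$ after base extension to $\Qp$ --- is exactly Theorem \ref{thm:absolutecomparison}(1), extended from trivial Tate-twist coefficients to the coefficient sheaf $\TSym^{[k, k']}\sH_{\Qp}$ by Lieberman's trick: we realise both syntomic and \'etale $H^3$ as the $(\varepsilon_k \times \varepsilon_{k'})$-eigenspace of the corresponding constant-coefficient cohomology of the fibre product $\cE^{k} \times \cE^{k'}$ of universal elliptic curves over $Y_1(N)^2$, and invoke the functoriality of the Besser--Nizio{\l} comparison under the $\frT_k \times \frT_{k'}$-action.

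For the lower square, I would apply Theorem \ref{thm:absolutecomparison}(2) to obtain a morphism between the two Leray spectral sequences used to define $\AJ_\syn$ and $\AJ_\et$. The key content of that theorem is that on the $E_2^{1,2}$-term, the induced map
\[
H^1_\syn(\Spec \Zp, M_\rig(f \otimes g)^*(-j)) \to H^1(\Qp, M_{\overline{\et}}(f \otimes g)^*(-j))
\]
coincides with $\widetilde\exp_{\Qp, V} \circ \comp_\dR$, where $V = M_{\overline{\et}}(f \otimes g)^*(-j)$. Since the projection to the $(f,g)$-isotypical component is Hecke-equivariant, it commutes with both spectral sequences, yielding commutativity of the outer rectangle with $\widetilde\exp \circ \comp_\dR$ in place of the bottom arrow and with the ``raw'' target $M_\rig(f \otimes g)^*(-j)_\Qp / (1-\varphi)\Fil^0 M_\dR(f \otimes g)^*(-j)_\Qp$ in place of $t(M(f \otimes g)^*(-j))_\Qp$. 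The final step in the definition of $\AJ_\syn$ is to invert $1-\varphi$ (valid because $1-\varphi$ is invertible on $M_\rig^*(-j)_\Qp$ for $0 \le j \le \min(k, k')$) in order to identify these two spaces; combining this identification with $\widetilde\exp \circ (1-\varphi) = \exp$ and the $\varphi$-equivariance of $\comp_\dR$ converts $\widetilde\exp \circ \comp_\dR$ precisely into $\exp \circ \comp_\dR$, as required.

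The main obstacle I foresee is that Theorem \ref{thm:absolutecomparison}(2) is stated in \cite{Besser-rigid-syntomic} only for \emph{projective} $X$, whereas $Y_1(N)^2$ (and the fibre product $\cE^k \times \cE^{k'}$ over it) is only a smooth pair. I would address this by noting that the $(f, g)$-projection factors through the cohomology of a projective Kuga--Sato-type variety compactifying $\cE^k \times \cE^{k'}$ (standard, \`a la Deligne--Scholl): one verifies the compatibility upstairs on this projective variety, where Besser--Nizio{\l} applies directly, and then descends to the open variety via the canonical direct-summand decomposition cut out by the Hecke projectors. The $\varphi$-equivariance subtlety noted in the remark after \eqref{eq:criscomparison} does not actually arise here, since the representation $M_{\overline{\et}}(f \otimes g)^*$ is crystalline (as $p \nmid N_f N_g$), and the comparison on its associated filtered $\varphi$-module is canonical.
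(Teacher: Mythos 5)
Your proposal is correct and follows essentially the same route as the paper's proof: apply Theorem~\ref{thm:absolutecomparison}(1) to $\cE^k \times \cE^{k'}$ for the upper square, and handle the lower square by lifting the $(f,g)$-eigenspaces to the projective Kuga--Sato product $\overline{\cE}^k \times \overline{\cE}^{k'}$, where Theorem~\ref{thm:absolutecomparison}(2) applies. The extra detail you supply -- tracking how the identification of $D/(1-\varphi)\Fil^0 D$ with $t(M(f\otimes g)^*(-j))_{\Qp}$ via $(1-\varphi)^{-1}$ converts $\widetilde\exp \circ \comp_\dR$ into $\exp \circ \comp_\dR$ -- is implicit in the paper but correctly spelled out.
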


  \begin{proof}
   This follows from Theorem \ref{thm:absolutecomparison}(1) applied to the variety $X = \cE^k \times \cE^{k'}$, together with the observation that the $(f, g)$-eigenspaces in de Rham, syntomic, and \'etale cohomology all lift isomorphically to the cohomology of the product of Kuga--Sato varieties $\overline{\cE}^k \times \overline{\cE}^{k'}$, which is projective, so we may apply Theorem \ref{thm:absolutecomparison}(2) (with $X = \overline{\cE}^k \times \overline{\cE}^{k'}$).
  \end{proof}

  We shall give formulae for the images of the Eisenstein class under $\AJ_{\cH, f, g}$ and $\AJ_{\syn, f, g}$ in the following section, and by the preceding proposition, the latter will also give a formula for $\AJ_{\et, f, g}$. (We will not use $\AJ_{\et, f, g}$ directly in the present paper, but it will play a central role in the sequel.)

%%%%%%%%%%%%%%%%%%%%%%%%%%%%%%%%%%%%%%%%%%%%%%%%%%%%%%%%%%%%%%%%%%%%%%%%%%%%%%%%

\section{Regulator formulae}

%%%%%%%%%%%%%%%%%%%%%%%%%%%%%%%%%%%%%%%%%%%%%%%%%%%%%%%%%%%

 \subsection{Differentials and rationality}
  \label{sect:periods}

  Recall that we have fixed newforms $f$, $g$ of levels $N_f, N_g$ and weights $k + 2, k' + 2$, whose $q$-expansions have coefficients in some number field $L$. We let $\omega_f \in H^0(X_1(N)(\CC), \Sym^k \sH_{\CC}^\vee \otimes \Omega^1) \otimes_{\QQ} L$ denote the holomorphic $(\Sym^k \sH_{\CC}^\vee)$-valued differential whose pullback to the upper half-plane is given by
  \[ \omega_f = (2 \pi i)^{k + 1} f(\tau)\, w^{(k, 0)}\, \mathrm{d}\tau = (2 \pi i)^{k} f(\tau)\, w^{(k, 0)} \tfrac{\mathrm{d}q}{q}, \]
  where as in \S \ref{sect:DeligneEis}, we have $w^{(k, 0)} = (\mathrm{d}z)^k$, for $\mathrm{d}z$ the standard basis of $\Fil^1 \sH_{\CC}^\vee$.

  With our conventions, the field of definition of the differential $\omega_f$ is not necessarily the same as the field $L$ generated by the $q$-expansion coefficients of $f$. As shown by the following lemma, the translation between the two different $L$-structures is given by multiplying by a Gauss sum:

  \begin{lemma}
   \label{lemma:gauss-sum}
   Let $f = \sum_{n>0} a_n q^n$ with $a_n \in L$ and $\varepsilon_f$ the associated character.
   We denote by $N_{\varepsilon_f}$ its conductor.
   Let
   \[
    G(\varepsilon_f^{-1}) \coloneqq \sum_{x\in \ZZ/N_{\varepsilon_f}\ZZ}\varepsilon_f^{-1}(x)e^{2\pi ix/N_{\varepsilon_f}}
   \]
   be the \emph{Gauss sum} of $\varepsilon_f^{-1}$. Then the differential
   \[ \omega'_f  \coloneqq G(\varepsilon_f^{-1})\omega_f \]
   is a section of $\Sym^k \sH_{\dR}^\vee$ over $L$, and its class in de Rham cohomology is a basis of the 1-dimensional $L$-vector space $\Fil^1 M_\dR(f)$.
  \end{lemma}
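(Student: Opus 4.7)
The subtle point is the first assertion: although $\omega_f$ is built from Fourier coefficients lying in $L$, it is \emph{not} a priori defined over $L$, because in our model the cusp $\infty = (\mathrm{Tate}(q), \zeta_N)$ is rational only over $\QQ(\zeta_N)$ (as noted in the remark following the analytic uniformization of $Y_1(N)$). My plan is to pin down the Galois action of $\Gal(L(\zeta_{N_{\varepsilon_f}})/L)$ on the analytically defined $\omega_f$ and observe that the Gauss sum factor is exactly the correction required to make it Galois-invariant.

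First, I would invoke the $q$-expansion principle to identify $\omega_f$ with an algebraic section of $\mathrm{Sym}^k\underline{\omega}\otimes \Omega^1_{X_1(N)}(\mathrm{Cusp})$ defined over $L(\zeta_{N_{\varepsilon_f}})$: its pullback to $(\mathrm{Tate}(q),\zeta_N)$ is $\sum_{n\ge 1} a_n(f)\,q^n\cdot w^{(k,0)}\otimes \tfrac{\mathrm{d}q}{q}$, with $a_n(f)\in L$ and the basis sections defined over $\ZZ[\zeta_N,1/N]$. Next, for $\sigma_a\in \Gal(L(\zeta_{N_{\varepsilon_f}})/L)$ determined by $\sigma_a(\zeta_{N_{\varepsilon_f}})=\zeta_{N_{\varepsilon_f}}^a$, the Galois twist of our cusp is $(\mathrm{Tate}(q),\zeta_N^{a'})$ for any lift $a'\equiv a\pmod{N_{\varepsilon_f}}$, which equals the image of $\infty$ under the diamond operator $\langle a\rangle$. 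Since $f$ is a newform of character $\varepsilon_f$, the diamond operator multiplies $\omega_f$ by $\varepsilon_f(a)$, and comparing the two expressions for the $q$-expansion at $\sigma_a(\infty)$ forces $\sigma_a(\omega_f)=\varepsilon_f(a)^{-1}\omega_f$. Combining this with the classical Gauss-sum identity $\sigma_a(G(\varepsilon_f^{-1}))=\varepsilon_f(a)\,G(\varepsilon_f^{-1})$ gives $\sigma_a(\omega'_f)=\omega'_f$ for every $\sigma_a$, so $\omega'_f$ descends to an $L$-rational section.

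For the second assertion, I would argue as follows. The section $\omega'_f$ is holomorphic (no pole at the cusps), so its de Rham class lies in the Hodge filtration piece $\mathrm{Fil}^1 H^1_{\dR}(Y_1(N),\mathrm{Sym}^k\sH^\vee)\otimes L$. Its projection to the $f$-isotypic quotient $M_{\dR}(f)$ is nonzero, because $\omega'_f$ is (up to the nonzero scalar $G(\varepsilon_f^{-1})$) a Hecke eigenform with the eigenvalues $a_\ell(f)$ that cut out $M(f)$. By the Hodge numbers of Scholl's motive $M(f)$ in weight $k+2$, the space $\mathrm{Fil}^1 M_{\dR}(f)$ is one-dimensional over $L$, so $\omega'_f$ provides a basis.

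The main obstacle, though entirely bookkeeping in nature, is matching the sign conventions precisely: the exponents in $\sigma_a(\omega_f)=\varepsilon_f(a)^{\pm 1}\omega_f$ and in $\sigma_a(G(\varepsilon_f^{-1}))=\varepsilon_f(a)^{\pm 1}G(\varepsilon_f^{-1})$ must be opposite for the product to be Galois-invariant. This depends on consistent choices for the direction of the Galois action on cusps, left-versus-right action of the diamond operators, and the definition of $G(\varepsilon_f^{-1})$ (primitive, with exponent $e^{2\pi i x/N_{\varepsilon_f}}$); once these are aligned, the rest of the argument is formal.
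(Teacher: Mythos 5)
Your proposal takes essentially the same approach as the paper: apply the $q$-expansion principle, observe that the Galois action of $\sigma_d$ on the (non-$\QQ$-rational) cusp $\infty$ is intertwined with the diamond operator and hence multiplies $\omega_f$ by a power of $\varepsilon_f(d)$, and note that the Gauss sum $G(\varepsilon_f^{-1})$ generates the matching eigenspace so that the product is Galois-invariant. You spell out the Galois computation and the second assertion (holomorphy plus the Hodge numbers of $M(f)$) more explicitly than the paper does, which is a welcome expansion rather than a different route; the paper, like you, does not pin down the precise sign in $\sigma_d(\omega_f)=\varepsilon_f(d)^{\pm1}\omega_f$ and simply records that the Gauss sum supplies the cancelling eigencharacter.
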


  \begin{proof}
   Note that the cusp $\infty$ is not defined over $\QQ$. By the $q$-expansion principle, we have to check that for any $\sigma_d\in \Gal(\QQ(\zeta_N)/\QQ) \isom (\ZZ/N\ZZ)^*$, we have $\sigma_d(a_n)=a_n$. But the action of $\sigma_d$ is given by ${\stbt 1 0 0 d}^*f=\varepsilon_f(d)f$, which implies that the coefficients $a_n$ have to be in the $\varepsilon$-eigenspace of $L \otimes_{\QQ} \QQ(\zeta_N)$, which is generated over $L$ by $G(\varepsilon_f^{-1})$.
  \end{proof}

  We also want to have a basis of the space $M(f) / \Fil^1$, for which we need to introduce the form conjugate to $f$:

  \begin{definition}
   \label{def:f-star}
   We define $f^* \in S_{k+2}(\Gamma_1(N), \CC)$ to be the
   form with $q$-expansion $\sum_{n > 0} \overline{a_n} q^n$, so that
   \[ f^*(\tau) = \overline{f(-\overline{\tau})}. \]
  \end{definition}

  The class of the $C^\infty$ differential form
  \[ \overline{\omega}_{f^*} = (-2\pi i)^{k + 1} f(-\bar\tau)\, w^{(0, k)} \mathrm{d}\bar\tau\]
  has the same Hecke eigenvalues as $f$, so it defines an element of $M_{\dR}(f) \otimes_{\QQ} \CC$. In order to renormalise this appropriately, we introduce a pairing on $\Sym^k \sH_{\CC}^\vee$ as the composite
  \begin{equation}
   \label{eq:symk-pairing} 
   \Sym^k \sH^\vee_{\CC} \otimes \Sym^k \sH^\vee_{\CC} \rTo \TSym^k \sH_{\CC}^\vee \otimes \Sym^k \sH_{\CC}^\vee \rTo^{\tfrac{1}{k!}\langle-,-\rangle} \CC(-k),
  \end{equation}
  where the first map is the canonical inclusion $\Sym^k \into \TSym^k$ and the second map is given by the isomorphisms $\TSym^k(\sH^\vee) = (\Sym^k \sH)^\vee$ and $\sH^\vee \cong \sH(-1)$. The presence of the factor $\tfrac{1}{k!}$ implies that we have $\langle x^k, y^k\rangle = \langle x, y \rangle^k$ for sections $x, y$ of $\sH^\vee_{\CC}$.
  
  Via Poincar\'e duality, this pairing on the coefficients induces a pairing on $H^1_{\dR, c}(Y_1(N_f), \Sym^k \sH^\vee_{\CC})$, which we denote by $\langle-,-\rangle_{Y_1(N_f)}$.
  
  \begin{proposition}\label{prop:eta-prime}
   The class modulo $\Fil^1$ of the element
   \[ \frac{G(\varepsilon_f^{-1})}{\langle \omega_f, \bar\omega_f \rangle_{Y_1(N_f)}}\, \overline{\omega}_{f^*}\]
   is non-zero and defined over $L$, and thus defines a basis $\eta_f'$ of the $L$-vector space $\frac{M_{\dR}(f)}{\Fil^1 M_{\dR}(f)}$.
  \end{proposition}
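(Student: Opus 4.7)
My plan is to verify the statement in three stages: (i) locating the class of $\overline{\omega}_{f^*}$ inside $M_\dR(f)_\CC$ via Hecke eigenvalue matching, (ii) showing its non-vanishing modulo $\Fil^1$ via Hodge theory, and (iii) establishing the $L$-rationality of the specified rescaling via Poincar\'e duality.

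For (i), the paragraph preceding the proposition already records that $\overline{\omega}_{f^*}$ has Hecke eigenvalues $a_n(f)$; this amounts to the standard computation that the anti-holomorphic representative $f(-\bar\tau)\,\mathrm{d}\bar\tau$ transforms under the Hecke operators with the eigensystem of $f$. For (ii), by Hodge theory the class of the $C^\infty$-representative $\overline{\omega}_{f^*}$ lies in the anti-holomorphic component $H^{0,k+1}$, complementary to $\Fil^1 = H^{k+1,0}$ after base change to $\CC$; hence it projects to a nonzero class in $M_\dR(f)_\CC/\Fil^1$, and in particular no $L$-multiple of $\overline{\omega}_{f^*}$ can be in $\Fil^1$.

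For (iii), the main step, I would use Poincar\'e duality. Standard theory gives an $L$-rational perfect pairing $\langle-,-\rangle : M_\dR(f) \times M_\dR(f^*) \to L$ (absorbing the Tate twist into the definition), and since the cup-product of two classes of Hodge type $(k+1,0)$ vanishes for type reasons, this pairing is isotropic on $\Fil^1 \times \Fil^1$ and therefore descends to a perfect $L$-pairing $M_\dR(f)/\Fil^1 \times \Fil^1 M_\dR(f^*) \to L$. Applying Lemma \ref{lemma:gauss-sum} to $f^*$ and using $\varepsilon_{f^*} = \varepsilon_f^{-1}$, the element $\omega'_{f^*} = G(\varepsilon_f)\,\omega_{f^*}$ is an $L$-basis of $\Fil^1 M_\dR(f^*)$. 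Thus the claim reduces to showing that the Poincar\'e pairing of the proposed $\eta'_f$ against $\omega'_{f^*}$ lies in $L^\times$.

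Carrying out the pairing, the Gauss sum factor $G(\varepsilon_f^{-1})$ in $\eta'_f$ combines with $G(\varepsilon_f)$ in $\omega'_{f^*}$ via the classical identity $G(\varepsilon_f) G(\varepsilon_f^{-1}) = \varepsilon_f(-1) N_{\varepsilon_f} \in \QQ^\times$, while the denominator $\langle \omega_f, \overline{\omega}_f\rangle_{Y_1(N_f)}$ is cancelled (up to sign) by $\langle \omega_{f^*}, \overline{\omega}_{f^*}\rangle_{Y_1(N_f)}$, the two Petersson-type pairings being equal because the Petersson inner product is invariant under the involution $f\mapsto f^*$. The resulting value of the pairing is a nonzero element of $\QQ \subseteq L$, giving both $L$-rationality and non-vanishing. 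The main obstacle is pure bookkeeping: fixing a precise normalization of the Poincar\'e pairing and of its relation to the analytic Petersson product, and tracking all powers of $2\pi i$, the signs coming from $(-1)^{k+1}$ and anti-symmetry, and the precise identification of $f^*$-data with $f$-data. No new idea is required beyond standard modular-form computations and the Gauss sum identity.
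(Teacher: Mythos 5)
Your proposal is correct and takes essentially the same route as the paper: the paper's proof also reduces the claim to pairing the proposed class against the $L$-basis $\omega'_{f^*}=G(\varepsilon_f)\omega_{f^*}$ of $\Fil^1 M_{\dR}(f^*)$, cancels $\langle\omega_f,\bar\omega_f\rangle$ against $\langle\omega_{f^*},\bar\omega_{f^*}\rangle$, and concludes via $G(\varepsilon_f)G(\varepsilon_f^{-1})=(-1)^k N_{\varepsilon_f}\in L^\times$. Your extra preliminary steps (Hecke matching and the Hodge-theoretic non-vanishing) are fine but subsumed by the pairing computation itself, which already yields a nonzero element of $L$.
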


  \begin{proof}
   It suffices to check that this differential pairs to an element of $L$ with the basis vector  $\omega'_{f^*} = G(\varepsilon_f) \omega_{f^*}$ of $\Fil^1 M_{\dR}(f^*)$. Since $\langle \omega_{f^*}, \bar\omega_{f^*}\rangle_{Y_1(N_f)} = \langle \omega_{f}, \bar\omega_{f}\rangle_{Y_1(N_f)}$, this pairing evaluates to $G(\varepsilon_f) G(\varepsilon_f^{-1}) = (-1)^k N_{\varepsilon_f} \in L$, as required.
  \end{proof}

  \begin{remark} \
   \begin{enumerate}
    \item The constant $\langle \omega_f, \bar\omega_f \rangle_{Y_1(N_f)}$ is equal to $(-4\pi)^{k + 1} \|f\|$, where
    \[ \|f\| = \int_{\Gamma_1(N_f) \backslash \mathfrak{H}} |f(x + iy)|^2 y^k\, \mathrm{d}x\, \mathrm{d}y\]
    is the Petersson norm of $f$.
    \item In \cite{LLZ14} we worked directly with the classes $\omega_f$ and $\eta_f = \frac{1}{\langle \omega_{f}, \bar\omega_{f}\rangle} \bar{\omega}_{f^*}$. However, these classes are not defined over $L$ but only over $L \otimes \QQ(\mu_N)$ for a suitable $N$; so one has to extend scalars to this field in order to evaluate the regulators. (This base-extension was inadvertently omitted from the statement of Theorem 5.4.6 of \emph{op.cit.}.) In the present paper we want to verify the conjectures of Beilinson and Perrin-Riou, which predict the values of regulators up to a factor in $L^\times$, so it is more convenient to work with the $L$-rational classes $\eta_f'$ and $\omega_f'$.
   \end{enumerate}
  \end{remark}

 \subsection{The regulator formula in absolute Hodge cohomology}
  \label{sect:delignereg}

  Let $0\le j\le \min\{k,k'\}$ and write $Y \coloneqq Y_1(N)_{\RR}$ and $Y^2 \coloneqq Y_1(N)_{\RR} \times Y_1(N)_{\RR}$. As in \S \ref{sect:ajdefs}, let $f$, $g$ be newforms of weights $k + 2, k' + 2$ and levels $N_f, N_g$ dividing $N$. In this section we want to  give a formula for the image of $\Eis_{\cH,b,N}^{[k,k',j]}$ under the Abel--Jacobi map
  \[
   \AJ_{\cH, f, g}:
   H^3_\cH(Y^2_\RR,\TSym^{[k,k']}\sH_\RR(2-j) ) \rTo
   (\ker \alpha_{M(f\otimes g)(1+j)})^*.
  \]
  By \ref{eq:deligne-cohomology} one has an exact sequence
  \[
   0 \to F^{-j}M_\dR(f\otimes g)^*_\RR \to M_B(f\otimes g)^*(-j-1)_\RR^+
   \to (\ker \alpha_{M(f\otimes g)(1+j)})^* \to 0
  \]
  and we will compute a representative for $\Eis_{\cH,b,N}^{[k,k',j]}$ in $ M_B(f\otimes g)^*(-j-1)_\RR^+$. To compute the projection $\pr_{f,g}$ we use the perfect pairing
  \[
   \langle\quad ,\quad\rangle_{Y^{2}}:
   M_B({f}\otimes {g})^*_\CC/F^{-j}
   \times F^{1+j}M_B(f\otimes g)_\CC\to \CC\otimes_\QQ L.
  \]

  \begin{remark}
   This pairing is in fact the product of the pairings
   \begin{align*}
    \langle \quad,\quad\rangle_{Y}:& M_B({f})^*_\CC\times M_B(f)_\CC\to \CC\otimes_\QQ L,\\
    \langle\quad,\quad\rangle_{Y}:& M_B({g})^*_\CC\times M_B(g)_\CC\to \CC\otimes_\QQ L.
   \end{align*}
  \end{remark}

  We want to give a formula for the pairing of
  \[
   \Eis^{[k, k', j]}_{\cH, b, N} \coloneqq \left(\Delta_* \circ CG^{[k, k', j]}_{\cH}\right)\left(\Eis^{k + k' - 2j}_{\cH, b, N}\right)
  \]
  with a class in $ F^{1+j}M_B(f\otimes g)_\CC^+$ which reduces the computation to an integral on $Y(\CC)$. For this we consider $\Eis^{[k, k', j]}_{\cH, b, N}$ as a class in $H^2_B(Y^2(\CC),\TSym^{[k,k']}\sH_\CC)^+/F^{2-j}$. Consider the pull-back $\Delta^*$ composed with the dual of $CG^{[k, k', j]}_B$,
  \[
   ^\vee CG^{[k, k', j]}_B\circ\Delta^*:
   F^{1+j}H^2_{c,B}(Y^2(\CC),\Sym^{(k,k')}\sH_\CC^\vee)\to
   F^{1+j}H^2_{c,B}(Y(\CC),\Sym^{k+k'-2j}\sH_\CC^\vee).
  \]
  Recall from Proposition \ref{prop:deligne-eisenstein} that $\Eis^{k + k' - 2j}_{\cH, b, N}$ is represented by a pair of forms $(\alpha_\infty,\alpha_\dR)$. Then we have:

  \begin{proposition}\label{prop:integral-formula}
   For any cohomology class $[\omega_c]\in F^{1+j}M_\dR(f\otimes g)_\RR$ one has the formula
   \[
    \langle \Eis^{[k, k', j]}_{\cH, b, N},\omega_c\rangle_{Y^{2}}=
    \frac{1}{2\pi i}\int_{Y(\CC)}{^\vee C}G^{[k, k', j]}_B\circ\Delta^*(\omega_c)\wedge\alpha_\infty.
   \]
  \end{proposition}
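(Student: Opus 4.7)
The plan is a three-step reduction: pull the Gysin pushforward $\Delta_*$ to the other side via the projection formula, move the Clebsch--Gordan map by its adjoint, then evaluate the resulting one-variable Hodge pairing as an integral. First I would apply the projection formula for the Gysin map $\Delta_*$ (valid for Betti or absolute Hodge cohomology, via the Poincar\'e duality pairing on $Y^2$), giving $\langle \Delta_*(x), y\rangle_{Y^2} = \langle x, \Delta^*(y)\rangle_Y$. Next, because $CG^{[k,k',j]}_\cH$ is built from the trilinear form \eqref{eq:trilinear}, its adjoint with respect to the natural duality between $\TSym$ and $\Sym^\vee$ is precisely the map ${}^\vee CG^{[k,k',j]}_B$ appearing in the proposition. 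Chaining these two adjunctions yields
\[
\langle \Eis^{[k, k', j]}_{\cH, b, N}, \omega_c\rangle_{Y^{2}} = \left\langle \Eis^{k + k' - 2j}_{\cH, b, N}, \, {}^\vee CG^{[k, k', j]}_B \circ \Delta^*(\omega_c)\right\rangle_Y,
\]
reducing the problem to a pairing of classes on the curve $Y$.

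The second step is to evaluate this pairing using the explicit representative $(\alpha_\infty, \alpha_\dR)$ of $\Eis^{k + k' - 2j}_{\cH, b, N}$ from Proposition \ref{prop:deligne-eisenstein}. The general principle, which I would prove directly from the exact sequence \eqref{eq:absolute-Hodge-seq} and the model of Proposition \ref{prop:explicitdelignecoho}, is: for a class in $H^1_\cH(Y, \TSym^n\sH(1))$ represented by a pair $(\alpha_\infty, \alpha_\dR)$, the Yoneda pairing with a test class $\omega \in F^{1+j}H^1_{c,B}(Y, \Sym^n\sH^\vee)$ equals
\[
\tfrac{1}{2\pi i}\int_{Y(\CC)} \omega \wedge \alpha_\infty,
\]
where the wedge uses the natural pairing on coefficient sheaves and the factor $\tfrac{1}{2\pi i}$ is the normalization of the trace isomorphism $H^2_{c,B}(Y, \RR(1))\isom\RR$. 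The $\alpha_\dR$ component does not contribute: pairing the $F^1$-form $\alpha_\dR$ against $\omega \in F^{1+j}$ produces a contribution of Hodge bidegree $(2,0)$ whose image in $H^2_c(Y, \CC)/F^1$, into which $H^2_c(Y, \RR(1))$ injects, is zero.

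The main technical obstacle is justifying this integration formula precisely: it requires unwinding the Yoneda pairing in $D^b(MHS_\RR^+)$ at the level of the explicit representatives $(\alpha_\infty, \alpha_\dR)$, and verifying that the resulting wedge-product expression carries the prefactor $\tfrac{1}{2\pi i}$ coming from the single Tate twist in the coefficients. Care is also needed to check that the sign conventions are compatible with the identification $\sH \isom \sH^\vee(1)$ via the Weil pairing used in Proposition \ref{prop:deligne-eisenstein}. Once this formula is in place, the identity of the proposition follows at once by substituting the reductions from the first paragraph.
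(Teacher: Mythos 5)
Your proof takes essentially the same approach as the paper, with the order of the two reductions reversed. The paper's proof works via Deligne homology and currents: it first constructs an explicit Betti representative of $\Eis^{[k,k',j]}_{\cH,b,N}$ on $Y^2$ (namely $\Delta_*CG(T_{\alpha_\infty}) - \rho$, where $\rho$ is a primitive of $\Delta_*CG(T_{\alpha_\dR})$, which exists because $H^3_\dR(Y^2) = 0$), then shows the correction current $\rho$ lies in $F^0$ and hence pairs to zero against $\omega_c \in F^{1+j}$, and only at the end reduces to $Y$ using the fact that pushforward of currents is adjoint to pullback of test forms. You instead apply the projection formula and the adjunction of $CG$ first to reduce to a pairing on the curve $Y$, and then analyze that pairing with the explicit pair $(\alpha_\infty, \alpha_\dR)$. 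Both arguments identify the same two key ingredients: $\alpha_\infty$ is the representative that gets integrated, and the $\alpha_\dR$-contribution vanishes for Hodge-filtration reasons (your bidegree argument on the curve is a direct and clean version of the paper's observation that $\rho(\omega_c) \in F^{1+j}\CC = 0$). The ``technical obstacle'' you defer — making precise sense of the pairing at the level of the representative $(\alpha_\infty,\alpha_\dR)$ and confirming it is independent of choices — is exactly what the paper's current-theoretic framework is there to supply, so you have correctly located where the real work lives; but since you do not actually discharge it, your argument is a correct outline of the paper's proof rather than an independent one.
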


  \begin{remark}
   Compare \cite[Theorem 4.3.1]{LLZ14}, which is the case of trivial coefficients. Note that the proof will actually show that the integral is well defined, i.e., does not depend on the choice of a differential $\omega_c$ representing the class $[\omega_c]$.
  \end{remark}

  \begin{proof}
   The push-forward along the diagonal $\Delta$ is defined via Deligne homology (see \cite{Jannsen-Hodge} for the definitions). In fact one has by general properties for a Bloch-Ogus cohomology theory
   \[
    H^1_\cH(Y_\RR,\TSym^{k+k'-2j}\sH_\RR(1))\isom
    H^\cH_1(Y_\RR,\Sym^{k+k'-2j}\sH_\RR^\vee)
   \]
   and
   \[
    H^3_\cH(Y^2_\RR,\TSym^{[k,k']}\sH_\RR(2-j) )\isom
    H^\cH_1(Y^2_\RR,\Sym^{(k,k')}\sH_\RR^\vee(j)).
   \]
   With these isomorphisms the map $\Delta_* \circ CG^{[k, k', j]}_{\cH}$ is just the functoriality for the homology combined with the Clebsch-Gordan map. These homology groups have an interpretation in terms of currents. Let $T_{\alpha_\infty}$ and $T_{\alpha_\dR}$ be the currents associated    to $\alpha_\infty$ and $\alpha_\dR$. As $H^3_\dR(Y^2_\RR,\TSym^{[k,k']}\sH_\RR )=0$ the current   $\Delta_* \circ CG^{[k, k', j]}_{\cH}(T_{\alpha_\dR})$ is a trivial cohomology class, so that there exists a current $\rho$ (with logarithmic singularities) with $d\rho=\Delta_* \circ CG^{[k, k', j]}_{\cH}(T_{\alpha_\dR})$. It follows that $\Delta_* \circ CG^{[k, k', j]}_{\cH}(T_{\alpha_\infty})-\rho$ defines a cohomology class, which represents a lift of $\Eis^{[k, k', j]}_{\cH, b, N}$ to $H^2_B(Y^2(\CC),\TSym^{[k,k']}\sH_\CC)$. This gives
   \[
    \langle \Eis^{[k, k', j]}_{\cH, b, N},\omega_c\rangle_{Y^{2}}=
    (\Delta_* \circ CG^{[k, k', j]}_{\cH}(T_{\alpha_\infty})-\rho)(\omega_c).
   \]
   By construction $\Delta_* \circ CG^{[k, k', j]}_{\cH}(T_{\alpha_\dR})$ is a current in the zeroth step of the Hodge filtration (see \cite[1.4]{Jannsen-Hodge}) so that also $\rho$ is in $F^0$. As $\omega_c$ is in $F^{1+j}M_B(f\otimes g)_\CC$ the evaluation $\rho(\omega_c)$ is in $F^{1+j}$ and hence zero as $1+j>0$. This gives
   \[
    (\Delta_* \circ CG^{[k, k', j]}_{\cH}(T_{\alpha_\infty})-\rho)(\omega_c)=
    \Delta_* \circ CG^{[k, k', j]}_{\cH}(T_{\alpha_\infty})(\omega_c)
    =
    T_{\alpha_\infty}(^\vee CG^{[k, k', j]}_B\circ\Delta^*(\omega_c))
   \]
   where the last equality is the definition of the push-forward. Finally, by definition of $T_{\alpha_\infty}$, we get
   \[
    T_{\alpha_\infty}(^\vee CG^{[k, k', j]}_B\circ\Delta^*(\omega_c))
    =
    \frac{1}{2\pi i}\int_{Y(\CC)}{^\vee C}G^{[k, k', j]}_B\circ\Delta^*(\omega_c)\wedge\alpha_\infty.
   \]
   As all these equalities hold for any closed form $\omega_c$ in the $1+j$-step of the Hodge filtration and because $\langle \Eis^{[k, k', j]}_{\cH, b, N},\omega_c\rangle_B$ is independent of the representative of $[\omega_c]$, the same is true for the integral.
  \end{proof}

  For the explicit computations we use the bases
  \[
   \{\omega_f\otimes \omega_g,\omega_f\otimes \bar{\omega}_{{g^*}},\bar{\omega}_{{f^*}}\otimes \omega_g,\bar{\omega}_{{f^*}}\otimes \bar{\omega}_{{g^*}}\}\quad \text{and} \quad
   \{\omega_{f^*}\otimes \omega_{g^*},\omega_{f^*}\otimes \bar{\omega}_{{g}},\bar{\omega}_{{f}}\otimes \omega_{ g^*},\bar{\omega}_{{f}}\otimes \bar{\omega}_{{g}}\}
  \]
  of $M_B(f\otimes g)_\CC$ and $M_B(f^*\otimes g^*)_\CC$ respectively.

  \begin{remark}
   The natural map $M_{B}(f^* \otimes g^*)(k + k' + 2) \to M_{B}(f \otimes g)^*$ is an isomorphism, so we may interpret the latter vectors as a basis of $M_{B}(f \otimes g)^*_{\CC}$.
  \end{remark}

  Note that one has $\overline{F}_\infty^*(\omega_{f^*}\otimes \omega_{g^*})
  =\bar{\omega}_{{f}}\otimes \bar{\omega}_{{g}}$ and
  $\overline{F}_\infty^*(\omega_{f^*}\otimes \bar{\omega}_{{g}})=\bar{\omega}_{{f}}\otimes \omega_{ g^*}$ so that $\frac{1}{2}(\omega_{f^*}\otimes \bar{\omega}_{{g}}+(-1)^{-j-1}\bar{\omega}_{{f}}\otimes \omega_{ g^*})$ is a basis of
  \[
   M_B(f\otimes g)^*(-j-1)_\RR^+ /F^{-j}M_\dR(f\otimes g)^*_\RR\isom (\ker \alpha_{M(f\otimes g)(1+j)})^*.
  \]

  \begin{lemma}
   The element
   \[
   \frac{-1}{\langle\omega_f,\bar{\omega}_f\rangle_Y \langle\omega_g,\bar{\omega}_g \rangle_Y}
   \left(\bar\omega_{f^*}\otimes {\omega}_g+(-1)^{j+1}\omega_f\otimes\bar \omega_{g^{*}} \right)
   \]
   in $\ker \alpha_{M(f\otimes g)(1+j)}$ is the dual basis of
    $\frac{1}{2}(\omega_{f^*}\otimes \bar{\omega}_{{g}}+(-1)^{-j-1}\bar{\omega}_{{f}}\otimes \omega_{ g^*})$.
  \end{lemma}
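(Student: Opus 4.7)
The plan is a direct bilinear computation, using the Remark immediately preceding the Lemma that the Poincar\'e pairing on $M_B(f\otimes g)_\CC\times M_B(f\otimes g)^*_\CC$ factorises as a product of the pairings $\langle\cdot,\cdot\rangle_Y$ on $M_B(f)\times M_B(f)^*$ and $M_B(g)\times M_B(g)^*$. Expanding the pairing of the two proposed vectors produces four cross-terms, and the main task is to identify which of them vanish and to bookkeep the remaining signs.

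First I would show that two of the four terms are zero: namely the one pairing $\omega_{f^*}\otimes\bar\omega_g$ with $\omega_f\otimes\bar\omega_{g^*}$, and the one pairing $\bar\omega_f\otimes\omega_{g^*}$ with $\bar\omega_{f^*}\otimes\omega_g$. In each, one factor is of the form $\langle\omega_{h^*},\omega_h\rangle_Y$ (two holomorphic differentials, both proportional to $\mathrm{d}\tau$, so their wedge is zero) or $\langle\bar\omega_h,\bar\omega_{h^*}\rangle_Y$ (the analogous vanishing via $\mathrm{d}\bar\tau\wedge\mathrm{d}\bar\tau=0$). This is the standard Hodge-theoretic vanishing of pairings between two elements of the same type in $F^1$ or in the conjugate piece.

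Next I would evaluate the two surviving cross-terms. For the term $\omega_{f^*}\otimes\bar\omega_g$ against $\bar\omega_{f^*}\otimes\omega_g$, using the identity $\langle\omega_{f^*},\bar\omega_{f^*}\rangle_Y=\langle\omega_f,\bar\omega_f\rangle_Y$ that is contained in the proof of Proposition~\ref{prop:eta-prime} (and likewise for $g$), together with the antisymmetry $\langle\bar\omega_g,\omega_g\rangle_Y=-\langle\omega_g,\bar\omega_g\rangle_Y$, the factored pairing equals $-\langle\omega_f,\bar\omega_f\rangle_Y\,\langle\omega_g,\bar\omega_g\rangle_Y$. The symmetric argument applies to the term $\bar\omega_f\otimes\omega_{g^*}$ against $\omega_f\otimes\bar\omega_{g^*}$, producing the same value; here the sign factor $(-1)^{-j-1}\cdot(-1)^{j+1}=+1$ ensures the two contributions are added with equal sign. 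Multiplying by the overall prefactor $\tfrac12\cdot\dfrac{-1}{\langle\omega_f,\bar\omega_f\rangle_Y\langle\omega_g,\bar\omega_g\rangle_Y}$, each of the two surviving terms gives $\tfrac12$, summing to $1$ as required.

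The only real obstacle is sign bookkeeping, since the antisymmetry of Poincar\'e duality, the $(-1)^{j+1}$ factor in the candidate dual element, and the $(-1)^{-j-1}$ factor in the basis of the $\overline{F}_\infty$-fixed part all interact; but no further input beyond the factorisation of the pairing and Proposition~\ref{prop:eta-prime} is needed.
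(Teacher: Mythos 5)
Your proof is correct and takes essentially the same route as the paper's: expand the pairing bilinearly into four cross-terms, note that two vanish because the pairing factors through a product of Poincaré pairings on $Y$ and you end up pairing two holomorphic (or two antiholomorphic) differentials, then compute the surviving two using antisymmetry together with the identities $\langle\omega_{f^*},\bar\omega_{f^*}\rangle_Y=\langle\omega_f,\bar\omega_f\rangle_Y$ and $\langle\omega_{g^*},\bar\omega_{g^*}\rangle_Y=\langle\omega_g,\bar\omega_g\rangle_Y$ (which are exactly what the paper's proof cites). Your sign bookkeeping, including the cancellation $(-1)^{-j-1}(-1)^{j+1}=1$, is correct, and each surviving term contributes $\tfrac12$, giving $1$.
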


  \begin{proof}
   This follows from the formulae
   \begin{align*}
    \langle \bar\omega_{f^*}\otimes {\omega}_g,{\omega}_{f^*} \otimes \bar{\omega}_{ g}\rangle_{Y}&=
    -{\langle\omega_{f^*}, \bar{\omega}_{f^*}\rangle_Y\langle\omega_g, \bar{\omega}_g\rangle_Y}\\
    \langle {\omega}_f\otimes \bar{\omega}_{ g^*},\bar{\omega}_f \otimes  {\omega}_{g^*}\rangle_{Y}&=
    -{\langle\omega_f, \bar{\omega}_f\rangle_Y\langle\omega_{g^*}, \bar{\omega}_{g^*}\rangle_Y},
   \end{align*}
   as $\langle\omega_{f^*}, \bar{\omega}_{f^*}\rangle_Y=\langle\omega_f, \bar{\omega}_f\rangle_Y$ and
   $\langle\omega_g, \bar{\omega}_g\rangle_Y=\langle\omega_{g^*}, \bar{\omega}_{g^*}\rangle_Y$.
  \end{proof}

  \begin{definition}
   We write
    \[
    \bar{\omega}_{{f^*}}\wedge \omega_g\wedge
         \alpha_\infty
    \]
    for the form on $Y(\CC)$ obtained from the form
    $ {^\vee C}G^{[k, k', j]}_B\circ\Delta^*(\bar{\omega}_{{f^*}}\otimes \omega_g)\wedge\alpha_\infty$ by using the evaluation
    pairing  $\TSym^{k+k'-2j}\sH_\CC\otimes\Sym^{k+k'-2j}\sH^\vee_\CC\to \CC$
and similar for ${\omega}_f\wedge\bar{ \omega}_{{g^*}}\wedge \alpha_\infty$.
  \end{definition}

  \begin{proposition}
   \label{prop:deligne-regulator-formula}
   Let $0\le j\le\min\{k,k'\}$ and $\Eis_{\cH,b,N}^{k+k'-2j}=(\alpha_\infty,\alpha_\dR)$. Then
   \begin{multline*}
    \left\langle\AJ_{\cH,f,g}\left(\Eis_{\cH,b,N}^{[k,k',j]}\right),
    \frac{-1}{\langle\omega_f,\bar{\omega}_f\rangle_Y \langle\omega_g,\bar{\omega}_g \rangle_Y}
    \left(\bar\omega_{f^*}\otimes {\omega}_g+(-1)^{j+1}\omega_f\otimes\bar \omega_{g^{*}} \right)
    \right\rangle=\\
    \frac{-1}{(2\pi i)\langle\omega_f,\bar{\omega}_f\rangle_Y \langle\omega_g,\bar{\omega}_g \rangle_Y}
        \int_{Y(\CC)}(\bar{\omega}_{{f^*}}\wedge \omega_g
    +(-1)^{j+1} {\omega}_f\wedge\bar{ \omega}_{{g^*}})\wedge
    \alpha_\infty.
   \end{multline*}
  \end{proposition}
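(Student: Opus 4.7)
The plan is to apply the integral formula from Proposition~\ref{prop:integral-formula} directly with $\omega_c$ taken to be the basis vector
$$X \coloneqq \frac{-1}{\langle\omega_f,\bar\omega_f\rangle_Y \langle\omega_g,\bar\omega_g\rangle_Y}\left(\bar\omega_{f^*}\otimes\omega_g + (-1)^{j+1}\omega_f\otimes\bar\omega_{g^*}\right)$$
appearing on the left-hand side. First I would observe that $X$ is a legitimate choice of test class: by definition $\ker\alpha_{M(f\otimes g)(1+j)}$ is the real subspace $M_B(f\otimes g)(1+j)^+_\RR \cap \Fil^0 M_\dR(f\otimes g)(1+j)_\CC$, which under the Tate-twist identification corresponds to a subspace of $\Fil^{1+j}M_\dR(f\otimes g)_\CC$. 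Hence Proposition~\ref{prop:integral-formula} (whose proof really only requires $\omega_c \in \Fil^{1+j}M_B(f\otimes g)_\CC$) applies to $\omega_c = X$.

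The main step is to identify the Poincar\'e pairing $\langle\Eis^{[k,k',j]}_{\cH,b,N}, X\rangle_{Y^2}$ computed by Proposition~\ref{prop:integral-formula} with the Abel--Jacobi pairing $\langle \AJ_{\cH,f,g}(\Eis^{[k,k',j]}_{\cH,b,N}), X\rangle$ on the left-hand side of the statement. For this I would unwind the construction of $\AJ_{\cH,f,g}$ from \S\ref{sect:ajdefs}: it is the composition of the $(f,g)$-isotypical projection on $H^3_\cH$, the isomorphism to $H^1_\cH(\Spec\RR, M_B(f\otimes g)^*(-j))$ coming from the vanishing of $H^3_B$ of an affine surface, and the identification with $(\ker\alpha_{M(f\otimes g)(1+j)})^*$ obtained from the exact sequence~\eqref{eq:deligne-cohomology} combined with Poincar\'e duality. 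By the preceding Lemma, $X$ is precisely the Poincar\'e-dual basis vector of $\tfrac{1}{2}\bigl(\omega_{f^*}\otimes\bar\omega_g + (-1)^{-j-1}\bar\omega_f\otimes\omega_{g^*}\bigr)$, which is a basis of the quotient $M_B(f\otimes g)^*(-j-1)^+_\RR / \Fil^{-j}M_\dR(f\otimes g)^*_\RR$. Consequently pairing $\AJ_{\cH,f,g}(\Eis^{[k,k',j]}_{\cH,b,N})$ against $X$ extracts the coefficient of that basis vector in the cokernel representative of the Abel--Jacobi image, and by Hecke-equivariance of Poincar\'e duality (which makes the $(f,g)$-isotypical projection adjoint to inclusion) this coefficient coincides with $\langle \Eis^{[k,k',j]}_{\cH,b,N}, X\rangle_{Y^2}$.

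With this identification in hand, applying Proposition~\ref{prop:integral-formula} to $\omega_c = X$ gives
$$\langle \AJ_{\cH,f,g}(\Eis^{[k,k',j]}_{\cH,b,N}), X\rangle = \frac{1}{2\pi i}\int_{Y(\CC)} {}^\vee CG^{[k,k',j]}_B \circ \Delta^*(X)\wedge\alpha_\infty.$$
Substituting the explicit formula for $X$, pulling the scalar $\frac{-1}{\langle\omega_f,\bar\omega_f\rangle_Y \langle\omega_g,\bar\omega_g\rangle_Y}$ outside the integral, and rewriting the integrand using the notation $\bar\omega_{f^*}\wedge\omega_g\wedge\alpha_\infty$ and $\omega_f\wedge\bar\omega_{g^*}\wedge\alpha_\infty$ from the Definition preceding the Proposition (which encodes the contraction via the evaluation pairing $\TSym^{k+k'-2j}\sH_\CC\otimes\Sym^{k+k'-2j}\sH_\CC^\vee \to \CC$) yields exactly the claimed formula. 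The only real obstacle is the second paragraph above: cleanly matching the duality of the Abel--Jacobi target with the Poincar\'e pairing entering Proposition~\ref{prop:integral-formula}. Once that identification is made, the rest of the argument is substitution and bookkeeping.
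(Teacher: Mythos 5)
Your proposal is correct and takes essentially the same route as the paper, which gives the same one-line proof ("this reduces to Proposition \ref{prop:integral-formula}") but leaves the justification implicit; you simply spell it out. Two of your elaborations are worth noting as genuinely filling a gap left in the text: (i) the observation that Proposition \ref{prop:integral-formula}, as literally stated, requires $\omega_c \in F^{1+j}M_\dR(f\otimes g)_\RR$, whereas the element $X$ in $\ker\alpha_{M(f\otimes g)(1+j)}$ need not lie there (it is a priori only in $F^{1+j}M_B(f\otimes g)_\CC$ --- the $(2\pi i)^{j+1}$ coming from the Tate twist moves it off the real de Rham structure when $j+1$ is odd), so one does need to appeal, as you do, to the remark at the end of the paper's proof of Proposition \ref{prop:integral-formula} that the argument in fact goes through for any closed form in $F^{1+j}$; and (ii) the unwinding of the chain of identifications defining $\AJ_{\cH,f,g}$ to show that pairing its image against $X$ is literally the Poincar\'e pairing $\langle \Eis^{[k,k',j]}_{\cH,b,N}, X\rangle_{Y^2}$ (here the key points are that $X$ annihilates $\Fil^{-j}M_\dR(f\otimes g)^*$, so the choice of lift in \eqref{eq:deligne-cohomology} is immaterial, and that the $(f,g)$-isotypical projector is self-adjoint, so it may be dropped once $X$ is $(f,g)$-isotypical). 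Both points are right, and your argument is sound.
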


  \begin{proof}
   We have to compute the pairing of $\AJ_{\cH,f,g}\left(\Eis_{\cH,b,N}^{[k,k',j]}\right)$
   with
   \[
   \frac{-1}{\langle\omega_f,\bar{\omega}_f\rangle_Y \langle\omega_g,\bar{\omega}_g \rangle_Y}
   \left(\bar\omega_{f^*}\otimes {\omega}_g+(-1)^{j+1}\omega_f\otimes\bar \omega_{g^{*}} \right)
   \]
   which by Proposition \ref{prop:integral-formula} reduces to the
   integral in the proposition.
  \end{proof}

  We will compute the integral
  appearing in Proposition \ref{prop:deligne-regulator-formula} in terms of special values of the $L$-function $L(f,g,s)$. We write
  \[
   \omega_{f^*}={f^*}(2\pi i)^kw^{(k,0)}\frac{dq}{q} \quad
   \text{and}\quad \omega_g=g(2\pi i)^{k'} w^{(k',0)}\frac{dq}{q}
  \]
  so that
  \[\bar\omega_{f^*}\wedge \omega_g=(-1)^{k+1}\bar f^*g(2\pi i)^{k+k'+2} w^{(0, k)} \otimes w^{(k', 0)} d\tau d\bar\tau.\]
  Recall from Proposition
  \ref{prop:deligne-eisenstein} the formula
  \begin{align*}
   \alpha_{\infty}&\coloneqq \frac{-N^{k+k'-2j}}{2}\times\\
   &\sum_{m=0}^{k+k'-2j}(-1)^{m}(k+k'-2j-m)!(2\pi i)^{m-k-k'+2j}(\tau-\overline{\tau})^{m}F^{\an}_{2m-k-k'+2j, k+k'-2j+1-m, b} w^{[k+k'-2j-m,m]}
  \end{align*}

  \begin{proposition}
   One has
   \begin{align*}
    \bar\omega_{{f^*}}\wedge {\omega}_g\wedge \alpha_\infty&=
    \frac{(-1)^{j} N^{k+k'-2j}}{2} \binom{k}{j} k'!(2\pi i)^{k+2}(\tau-\bar \tau)^k
    \bar f^* gF^\an_{k-k',k'-j+1,b}d\tau d\bar \tau
    \\
    \omega_{{f}}\wedge \bar{\omega}_{g^*}\wedge\alpha_\infty&=
     \frac{(-1)^{k+k'+1}N^{k+k'-2j}}{2}\binom{k'}{j}k!(2\pi i)^{k'+2}(\tau-\bar \tau)^{k'}
         f \bar g^*F^\an_{k'-k,k-j+1,b}d\tau d\bar \tau
   \end{align*}
  \end{proposition}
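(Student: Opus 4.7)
The plan is a direct unfolding: both identities are mechanical computations combining Proposition \ref{prop:clebsch-gordan-coefficients} (which pins down the Clebsch--Gordan contraction coefficients) with the explicit formula for $\alpha_\infty$ from Proposition \ref{prop:deligne-eisenstein}. I would focus on the first identity; the second follows by the same argument with the roles of $f$ and $g$ interchanged.

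First I would extract from $\bar\omega_{f^*} \wedge \omega_g$ the scalar coefficient of the basis vector $w^{(0, k)} \otimes w^{(k', 0)}$ in $\Sym^{(k, k')} \sH_\CC^\vee$, which is precomputed in the excerpt as $(-1)^{k+1}\, \bar f^* g\, (2\pi i)^{k+k'+2}\, d\tau\, d\bar\tau$. Then Proposition \ref{prop:clebsch-gordan-coefficients} tells me that when $w^{(0,k)} \otimes w^{(k',0)}$ is contracted (through $CG^{[k,k',j]}_B$) with the summand of $\alpha_\infty$ having coefficient $w^{[k+k'-2j-m, m]}$, the result vanishes unless $m = k-j$, and in that case equals $\frac{k!(k')!}{j!(k-j)!(k'-j)!}\, e_{-j}$. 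Thus of the $k+k'-2j+1$ terms in the sum for $\alpha_\infty$, exactly one survives.

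I would then substitute $m = k-j$ into the expression for $\alpha_\infty$ and multiply out the scalar factors: the coefficient from $\bar\omega_{f^*} \wedge \omega_g$, the Clebsch--Gordan factor $\frac{k!(k')!}{j!(k-j)!(k'-j)!}$ (in which the $(k'-j)!$ cancels against the $(k+k'-2j-m)! = (k'-j)!$ appearing in $\alpha_\infty$, leaving $\binom{k}{j}\, (k')!$), the sign $(-1)^{k+1} \cdot (-1) \cdot (-1)^{k-j} = (-1)^{j}$, and the various powers of $2\pi i$ and $(\tau - \bar\tau)$. This yields the claimed formula. For the second identity one pairs instead with the basis vector $w^{(k, 0)} \otimes w^{(0, k')}$; by Proposition \ref{prop:clebsch-gordan-coefficients} the surviving summand is now $m = k'-j$, with combinatorial factor $\binom{k'}{j}\, k!$, and the overall sign becomes $(-1)^{k+k'+1}$ because the complex conjugate now sits on $\omega_{g^*}$ rather than on $\omega_{f^*}$, contributing $(-1)^{k'+1}$ in place of $(-1)^{k+1}$.

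The only nontrivial aspect of the computation is the bookkeeping for the identification $\det(\sH_\CC) \cong \CC(-1)$, which is how the factor $e_{-j}$ produced by Proposition \ref{prop:clebsch-gordan-coefficients} is converted into the correct powers of $2\pi i$ and $(\tau - \bar\tau)$ in the final answer. This is where sign and period errors are most likely to creep in, and so is the main obstacle in writing up a fully checked proof; no conceptual input beyond the two propositions cited is required.
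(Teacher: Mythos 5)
Your approach is essentially the same as the paper's: isolate the coefficient of $w^{(0,k)}\otimes w^{(k',0)}$ in $\bar\omega_{f^*}\wedge\omega_g$, use Proposition~\ref{prop:clebsch-gordan-coefficients} to kill all summands in $\alpha_\infty$ except $m=k-j$, and collect factors; your sign, factorial, and $(2\pi i)$/$(\tau-\bar\tau)$ bookkeeping is correct. The one place where you are implicitly using something not quite in the cited statement is the second identity: you invoke Proposition~\ref{prop:clebsch-gordan-coefficients} for the pairing against $w^{(k,0)}\otimes w^{(0,k')}$, but that proposition as stated only computes the contraction with $w^{(0,k)}\otimes w^{(k',0)}$. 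The paper instead deduces the second case from the first via the conjugation identity
\[
 w^{(0,k)}\otimes w^{(k',0)} \wedge CG^{[k, k', j]}_B(w^{[s,t]})
  =(-1)^{s+t-j}\,
 \overline{w^{(k,0)}\otimes w^{(0,k')} \wedge CG^{[k, k', j]}_B(w^{[t,s]})},
\]
which avoids rederiving the Clebsch--Gordan coefficients and makes the extra sign $(-1)^{s+t-j}=(-1)^{k+k'-3j}$ transparent. You should either prove the analogue of Proposition~\ref{prop:clebsch-gordan-coefficients} for the basis vector $w^{(k,0)}\otimes w^{(0,k')}$ directly, or use this conjugation step, to close the argument; as written, ``the same argument with $f$ and $g$ interchanged'' would also swap the roles of $k$ and $k'$ in the Clebsch--Gordan triple $(k,k',j)$ and so does not literally apply.
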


  \begin{proof}
   From Proposition \ref{prop:clebsch-gordan-coefficients} one sees that
   \[
   {^\vee C}G^{[k, k', j]}_B(w^{(0,k)}\otimes w^{(k',0)})\wedge w^{[s,t]}=w^{(0,k)}\otimes w^{(k',0)}
   \wedge CG^{[k, k', j]}_B(w^{[s,t]})
   \]
   is zero unless $(s,t)=(k'-j,k-j)$, in which case one gets
   \[
   \frac{k! (k')!}{j! (k-j)! (k' - j)!}\left(\frac{\tau-\bar{\tau}}{2\pi i}\right)^j.
   \]
   Collecting terms gives the first formula. The second formula is obtained in
   a similar way, observing that
   \[
    w^{(0,k)}\otimes w^{(k',0)} \wedge CG^{[k, k', j]}_B(w^{[s,t]})
     =(-1)^{s+t-j}
    \overline{w^{(k,0)}\otimes w^{(0,k')} \wedge CG^{[k, k', j]}_B(w^{[t,s]})}.
    \qedhere
   \]
  \end{proof}

  \begin{theorem}
   \label{thm:deligne-regulator-formulae}
   Let $0\le j\le \min\{k,k'\}$ and $b=1$. One has
   \begin{align*}
    \frac{1}{2\pi i}\int_{Y(\CC)}\bar{\omega}_{f^*}\wedge \omega_g\wedge
    \alpha_\infty=\frac{(-1)^{k-j}}{2}(2\pi i)^{k+k'-2j}
    \frac{k!k'!}{(k-j)!(k'-j)!}L'(f,g,j+1)\\
    \frac{1}{2\pi i}\int_{Y(\CC)}{\omega}_{f}\wedge \bar\omega_{g^*}\wedge
    \alpha_\infty=\frac{(-1)^{k-1}}{2}(2\pi i)^{k+k'-2j}
    \frac{k!k'!}{(k-j)!(k'-j)!}L'(f, g, j+1),
   \end{align*}
   where $L'(f,g,j+1) \coloneqq \lim_{s\to 0}\frac{L(f,g,j+1+s)}{s}$. In particular,
   \begin{multline*}
    \left\langle\AJ_{\cH,f,g}\left(\Eis_{\cH,1,N}^{[k,k',j]}\right),
    \frac{-1}{\langle\omega_f,\bar{\omega}_f\rangle_Y \langle\omega_g,\bar{\omega}_g \rangle_Y}
    \left(\bar\omega_{f^*}\otimes {\omega}_g+(-1)^{j+1}\omega_f\otimes\bar \omega_{g^{*}} \right)
    \right\rangle=\\
    \frac{(-1)^{k-j+1}(2\pi i)^{k+k'-2j}}{2\langle\omega_f,\bar{\omega}_f\rangle_Y \langle\omega_g,\bar{\omega}_g \rangle_Y}
    \frac{k!k'!}{(k-j)!(k'-j)!}L'(f, g, j+1).
   \end{multline*}
  \end{theorem}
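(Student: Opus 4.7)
The plan is to use the preceding proposition to reduce the two wedge-product integrals to Petersson-type inner products, and then to extract $L'(f,g,j+1)$ from the Rankin--Selberg integral representation \eqref{eq:Rankin-formula} via a pole-zero cancellation at the non-critical point $s = j+1$.

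First, the preceding proposition rewrites the integrand $\bar{\omega}_{f^*}\wedge\omega_g\wedge\alpha_\infty$ as an explicit scalar multiple of
\[
(\tau-\bar\tau)^{k}\,\overline{f^*(\tau)}\,g(\tau)\,F^{\an}_{k-k',\,k'-j+1,\,1}(\tau)\,d\tau\,d\bar\tau.
\]
Converting to the real measure via $d\tau\wedge d\bar\tau = -2i\,dx\,dy$ and $(\tau-\bar\tau)^{k} = (2iy)^{k}$, and using the identity $\overline{f^*(\tau)} = f(-\bar\tau)$ so that $f^*$ pairs with holomorphic forms exactly as the Rankin convolute $\overline{f}$ does, the integral becomes an explicit constant times the Petersson-type inner product $\langle\overline{f},\,g\cdot F^{\an}_{k-k',k'-j+1,1}\rangle_{N}$ of two weight $(k+2)$ modular forms.

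Second, I invoke the Rankin--Selberg integral formula \eqref{eq:Rankin-formula}. The Eisenstein series appearing there, $E^{(k-k')}_{1/N}(\tau,s-k-1)$, belongs to the same family as $F^{\an}_{k-k',s-k-1,1}$; at $s = j+1$ it specialises, via analytic continuation, to $F^{\an}_{k-k',j-k,1}$, which by the functional equation relating $F^{\an}_{t,s,b}$ to $F^{\an}_{t,1-t-s,b}$ (cf.~\cite[Proposition 4.2.2]{LLZ14}) coincides, up to an explicit factor, with our $F^{\an}_{k-k',k'-j+1,1}$ since $1-(k-k')-(j-k) = k'-j+1$. Thus \eqref{eq:Rankin-formula}, interpreted as an identity of meromorphic functions of $s$, expresses our inner product in terms of $L(f,g,s)$ at $s = j+1$.

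The crucial technical point is the behaviour at $s = j+1$, which lies outside the range $k'+2\le s\le k+1$ of classical interpolation. Here $L(f,g,s)$ has a simple zero (as noted in the introduction, forced by the pole of the archimedean $\Gamma$-factor of the completed $L$-function), while the denominator factor $\Gamma(s-r'+1) = \Gamma(s-k'-1)$ in \eqref{eq:Rankin-formula} has a simple pole at $s = j+1$ of residue $(-1)^{k'-j}/(k'-j)!$. Taking the limit $s \to j+1$ these singularities cancel and produce a well-defined value proportional to
\[
\frac{(-1)^{k'-j}\,j!\,L'(f,g,j+1)}{(k'-j)!\cdot N^{k+k'-2j}\pi^{2j-k'-1}(-i)^{k-k'}2^{2j+k-k'+2}},
\]
multiplied by the functional-equation factor from the previous step. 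Collecting constants yields the first displayed identity; the second is obtained by the identical argument applied to the conjugate formula for $\omega_f\wedge\bar\omega_{g^*}\wedge\alpha_\infty$ given in the preceding proposition, producing the same $L'(f,g,j+1)$ with the stated sign. The final pairing formula then follows from Proposition~\ref{prop:deligne-regulator-formula} by direct substitution, combining the two integrals via the identity $(-1)^{k-j} + (-1)^{j+1}(-1)^{k-1} = 2(-1)^{k-j}$.

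The main obstacle is twofold: conceptually, the non-critical value $L(f,g,j+1)=0$ must be promoted to $L'(f,g,j+1)$ through the pole-zero cancellation, which requires a careful justification that \eqref{eq:Rankin-formula} persists as an identity of meromorphic functions at $s = j+1$; and, more prosaically, the bookkeeping of powers of $2\pi i$, signs, and the explicit scalar produced by the functional equation for $F^{\an}$.
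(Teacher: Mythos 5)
Your proposal follows exactly the route the paper takes: plug the explicit wedge-product formula of the preceding proposition into the Rankin--Selberg integral representation \eqref{eq:Rankin-formula}, identify the real-analytic Eisenstein series appearing there with the $F^{\an}$ series via the functional equation $F^\an_{k-k',k'-j+1,1}=E^{(k-k')}_{1/N}(\tau,j-k)$, and extract $L'(f,g,j+1)$ by letting the simple pole of $\Gamma(s-k'-1)$ at $s=j+1$ (residue $(-1)^{k'-j}/(k'-j)!$) cancel against the forced simple zero of $L(f,g,s)$ there. You have some minor bookkeeping slips you already flag as needing care (e.g.\ the exponent of $\pi$ should be $2j-k'+1$, not $2j-k'-1$), but the conceptual content --- in particular the pole--zero cancellation mechanism, which the paper's one-line proof leaves implicit --- is precisely what is needed.
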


  \begin{proof}
   This follows from the formulae above, the functional equation
   \[
    F^\an_{k-k',k'-j+1,1}=E^{(k-k')}_{1/N}(\tau,j-k)
   \]
   in the notation of \cite[Definition 4.2.1]{LLZ14} and the formulae in
   \eqref{eq:Rankin-formula} with $r=k+2$, $r'=k'+2$.
  \end{proof}

%%%%%%%%%%%%%%%%%%%%%%%%%%%%%%

 \subsection{Finite-polynomial cohomology}

  We now compute the image of the Eisenstein class under the $p$-adic syntomic Abel--Jacobi map, following \cite{BDR-BeilinsonFlach} and \cite{DR-diagonal-cycles-I}. In order that we can apply Besser's rigid syntomic cohomology, we need to assume that $p \nmid N$.

  We fix integers $(k, k', j)$ satisfying our usual inequalities \eqref{eq:inequalities}. Let $Y = Y_1(N)_{\Zp}$ and $S = Y \times_{\Spec \Zp} Y$, $Y_{\Qp}$ and $S_{\Qp}$ their generic fibres, and $\sY$ and $\sS$ the smooth pairs $(Y, X)$ and $(Y \times Y, X \times X)$, where $X = X_1(N)_{\Zp}$.

  Let $\sF$ be the sheaf $\TSym^{[k, k']} \sH_{\Qp}  \coloneqq \TSym^k \sH_{\Qp} \boxtimes \TSym^{k'} \sH_{\Qp}$ on $S$ (regarded as an overconvergent filtered $F$-isocrystal), and $\sF^\vee$ its dual.

  As we saw in Section \ref{sect:ajdefs} above, there is a natural map
  \[ \AJ_{\syn, f, g}: H^3_{\mot}(S, \sF(2-j)) \rTo t(M(f \otimes g)^*(-j))_{\Qp} = \left(\Fil^{1 + j} M_{\dR}(f \otimes g)_{\Qp}\right)^*.\]
  Our inequalities on $j$ imply that $\Fil^{1 + j} M_\dR(f \otimes g)$ is 3-dimensional over $L$.

  Suppose we are given an element
  \[ \lambda \in \Fil^{0} M_\dR(f \otimes g)(1 + j)_{\Qp}. \]
  We want to evaluate the pairing
  \[ \left\langle \AJ_{\syn, f, g}\left(\Eis^{[k, k', j]}_{\syn, b, N}\right), \lambda \right\rangle.\]
  Since $1 - p^{-1} \varphi^{-1}$ is an isomorphism on $M_{\dR}(f \otimes g)(1 + j)$, we may find a polynomial $P \in 1 + T L_{\frP}[T]$ such that $P(p^{-1}) \ne 0$ and $P(\varphi)(\lambda) = 0$. Thus $\lambda$ lifts to a class
  \[ \tilde\lambda \in H^2_{\fp, c}\left(S, \sF^\vee(1 + j), P\right);\]
  and since $H^1_{\dR,c}(S, \sF^\vee(1 + j))$ is zero (being Poincar\'e dual to $H^3$ with non-compact supports) this lift is uniquely determined.

  \begin{proposition}
   We have
   \[ \left\langle \AJ_{\syn, f, g}\left(\Eis^{[k, k', j]}_{\syn, b, N}\right), \lambda \right\rangle = \big\langle \Eis^{[k, k', j]}_{\syn,b,N}, \tilde \lambda \big\rangle_{\fp, S}, \]
   where the pairing $\langle -, - \rangle_{\fp, S}$ is as defined in \S \ref{sect:fpdefs} above.
  \end{proposition}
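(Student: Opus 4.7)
The plan is to reduce both sides of the claimed equality to pairings on $\Spec \Zp$ via the Leray spectral sequences for syntomic and finite-polynomial cohomology, and then observe that the reduced pairings agree on the $(f,g)$-isotypical component by naturality. Set $D = M_{\rig}(f\otimes g)^*(-j)_{\Qp}$ as a filtered $\varphi$-module, and let $D^\vee$ denote the Poincar\'e dual (the $(f,g)$-isotypical summand of $H^2_{\rig,c}(S, \sF^\vee(1+j))$ after the comparison isomorphism), so that the evaluation $D \otimes D^\vee \to \Qp(1)$ is perfect. Because $S = Y_1(N)^2_{\Zp}$ is affine of dimension $2$, the groups $H^i_{\dR}$ vanish for $i > 2$ and $H^i_{\dR,c}$ vanishes for $i < 2$, so the syntomic long exact sequence \eqref{eq:syntomicexactseq} and its compactly-supported analogue give canonical isomorphisms $H^3_\syn(S, \sF(2-j)) \isom H^1_\syn(\Spec\Zp, H^2_\rig(S, \sF(2-j)))$ and $H^2_{\fp,c}(S, \sF^\vee(1+j), P) \isom H^0_\fp(\Spec\Zp, H^2_{\rig,c}(S, \sF^\vee(1+j)), P)$.

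Step 1: By the very definition of $\AJ_{\syn,f,g}$, the class $\AJ_{\syn,f,g}\bigl(\Eis^{[k,k',j]}_{\syn,b,N}\bigr)$ is the image of $\Eis^{[k,k',j]}_{\syn,b,N}$ under the first isomorphism above, followed by projection to $H^1_\syn(\Spec\Zp, D) = D/(1-\varphi)\Fil^0 D$ and the canonical identification with $t(M(f\otimes g)^*(-j))_\Qp$ (valid because $1-\varphi$ is an isomorphism on $D$ in this weight range). Hence the left-hand side is the natural evaluation of $D/(1-\varphi)\Fil^0 D$ against $\Fil^0 D^\vee$, composed with the trace $D \otimes D^\vee \to \Qp(1) \isom \Qp$. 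Similarly, the vanishing of $H^1_{\dR,c}(S, \sF^\vee(1+j))$ forces the lift $\tilde\lambda$ to be unique, and under the second Leray isomorphism $\tilde\lambda$ corresponds simply to $\lambda$ viewed inside $H^0_\fp(\Spec\Zp, D^\vee, P) = \ker\bigl(P(\varphi)\colon \Fil^0 D^\vee \to D^\vee\bigr)$, which contains $\lambda$ by the choice of $P$.

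Step 2: Apply the compatibility of the finite-polynomial cup product with the Leray spectral sequence stated in \S\ref{sect:fpdefs}. Since $(1-T) \star P = P$, the class $\Eis^{[k,k',j]}_{\syn,b,N} \cup \tilde\lambda \in H^5_{\fp,c}(S, \Qp(3), P)$ corresponds, via the Leray isomorphism, to the cup product of the two classes on $\Spec \Zp$ living in $H^1_\fp(\Spec\Zp, H^4_{\rig,c}(S, \Qp(3)), P)$, formed using the evaluation pairing on coefficients. Projecting to the $(f,g)$-summand identifies $H^4_{\rig,c}(S, \Qp(3))$ with $\Qp(1)$ via Poincar\'e duality, and the pairing is induced by $D \otimes D^\vee \to \Qp(1)$. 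Finally, the trace map $\operatorname{tr}_{\fp, S}$ factors through Leray and the natural identification $H^1_\fp(\Spec\Zp, \Qp(1), P) \isom \Qp(1) \isom \Qp$; the prefactor $1/P(p^{-1})$ built into the definition of $\operatorname{tr}_{\fp,S}$ is exactly what is needed for compatibility with change-of-$P$ maps and cancels the action of $P(\varphi)$ on $\Qp(1)$ introduced by the finite-polynomial formalism.

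The main obstacle is the bookkeeping in the last step: one must verify, using the explicit formula $\operatorname{tr}_{\fp,S} = \tfrac{1}{P(p^{-1})}\operatorname{tr}_{\rig,S}$ together with the Leray-compatibility of $\cup_\fp$, that no spurious scalar factor appears relative to the natural Bloch--Kato pairing $D/(1-\varphi)\Fil^0 D \times \Fil^0 D^\vee \to \Qp$ on $\Spec \Zp$. This is essentially a check that the chosen normalizations of the trace and cup product on $\Spec \Zp$ agree with those on $S$ under Leray; once this is confirmed, the proposition is a tautological consequence of Steps 1--2.
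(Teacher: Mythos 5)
Your argument is correct and is exactly the route the paper takes: the paper's own proof is the one-line observation that the finite-polynomial cup product is compatible with the Leray spectral sequence (as recorded in \S\ref{sect:fpdefs}), which is precisely your Steps 1--2 spelled out in detail. The degree and twist bookkeeping in your Step 2 (landing in $H^1_\fp(\Spec\Zp, H^4_{\rig,c}(S,\Qp(3)),P)$ and then $\Qp$ via the normalized trace) is accurate, so the proposal is a fleshed-out version of the same argument rather than an alternative one.
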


  \begin{proof}
   This is an instance of the general statement that the cup-product on finite-polynomial cohomology is compatible with the Leray spectral sequence, as observed in \S \ref{sect:fpdefs}.
  \end{proof}

  \begin{corollary}
   We have
   \[ \left\langle \AJ_{\syn, f, g}\left(\Eis^{[k, k', j]}_{\syn, b, N}\right), \lambda\right\rangle = \left\langle CG^{[k, k', j]}_{\syn}(\Eis^{k + k' - 2j}_{\syn, b, N}), \Delta^*(\tilde \lambda) \right\rangle_{\fp, Y}.\]
  \end{corollary}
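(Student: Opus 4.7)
The corollary should follow by combining three ingredients already set up in the text: the preceding proposition, the definition of $\Eis^{[k,k',j]}_{\syn,b,N}$, and a projection formula for the closed immersion $\Delta\colon Y\hookrightarrow S$ in finite-polynomial cohomology. My plan is to feed these into each other in that order.

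First, I would apply the preceding proposition to rewrite the left-hand side as $\langle \Eis^{[k,k',j]}_{\syn,b,N}, \tilde\lambda\rangle_{\fp,S}$. Then I would unfold the definition of the Rankin--Eisenstein class (Definition~\ref{def:Rankin-Eisenstein-class}, specialized to $\cT=\syn$) to replace $\Eis^{[k,k',j]}_{\syn,b,N}$ by $\Delta_*\bigl(CG^{[k,k',j]}_{\syn}(\Eis^{k+k'-2j}_{\syn,b,N})\bigr)$. After this substitution the statement reduces to the identity
\[
\bigl\langle \Delta_*(x), y\bigr\rangle_{\fp,S} \;=\; \bigl\langle x, \Delta^*(y)\bigr\rangle_{\fp,Y},
\]
for $x = CG^{[k,k',j]}_{\syn}(\Eis^{k+k'-2j}_{\syn,b,N})\in H^1_{\syn}(\sY, CG^{[k,k',j]}_{\syn}(\TSym^{k+k'-2j}\sH_{\Qp})(1))$ viewed inside the appropriate finite-polynomial group for the polynomial $1-T$, and $y = \tilde\lambda \in H^2_{\fp,c}(\sS,\sF^\vee(1+j),P)$.

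The main point, then, is to justify the projection formula for finite-polynomial cohomology. In the syntomic (i.e.\ $P(T)=1-T$) case, the adjunction $(\Delta_*,\Delta^*)$ at the level of cup products is standard and can be found, e.g., in Besser's treatment and in \cite{deglisemazzari12} (which was cited in the paper for the compatibility of $r_\syn$ with pushforward). For a general polynomial $P$, the cup-product structure in Definition~\ref{def:starproduct} is compatible with change-of-$P$ maps and with the decomposition into the rigid/de~Rham pieces via the long exact sequence of \eqref{eq:syntomicexactseq}; since $\Delta_*$ and $\Delta^*$ on both the rigid and filtered de~Rham sides satisfy the projection formula (and the star-product $P\star Q$ is exactly designed so that Frobenius eigenvalues transport correctly through the pushforward), the projection formula descends to $H^\bullet_{\fp}$. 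I would deduce the formula by compatibility with the $H^i_{\fp}(\Spec\Zp,-)$ pairing established at the end of \S\ref{sect:fpdefs}: both sides of the desired equality are identified, via Leray, with the cup product of classes in $H^1_{\fp}(\Spec\Zp,H^2_{\rig}(S,\cdot))$ against classes in $H^0_{\fp,c}$ or dually, and on that level the projection formula is the usual one for the closed embedding $\Delta$ in rigid/de~Rham cohomology.

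The only genuine obstacle is thus the verification of $\langle\Delta_*x,y\rangle_{\fp,S}=\langle x,\Delta^*y\rangle_{\fp,Y}$ in the finite-polynomial setting with coefficients. I would handle this by reducing from coefficient sheaves $\TSym^{[k,k']}\sH$ to constant coefficients via Lieberman's trick (Theorem~\ref{thm:lieberman}), so that the statement becomes the projection formula on Kuga--Sato varieties, and then invoking the functoriality of Besser's finite-polynomial cup products together with the fact that $\Delta$ is a regular closed immersion of relative codimension $1$ (so the Gysin sequence behaves correctly). Once this projection formula is in hand, the three steps above combine to give the claimed identity.
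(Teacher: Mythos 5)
Your proof follows exactly the same route as the paper: apply the preceding proposition to rewrite the left side as a finite-polynomial pairing on $S$, unfold the definition of $\Eis^{[k,k',j]}_{\syn,b,N}$, and invoke the projection formula $\langle\Delta_*x,y\rangle_{\fp,S}=\langle x,\Delta^*y\rangle_{\fp,Y}$. The one thing you did not need to do is sketch a proof of that projection formula: it is already established in the literature, namely \cite[Theorem 5.2]{Besser-K1-surface}, which is precisely what the paper cites, so the corollary follows directly from those three ingredients.
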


  \begin{proof}
   Recall that $\Eis^{[k, k', j]}_{\syn, b, N}=(\Delta_* \circ CG^{[k, k', j]}_{\syn})\left(\Eis_{\syn,b,N}^{k + k' - 2j}\right)$. The pushforward in fp-cohomology satisfies the projection formula (\cite[Theorem 5.2]{Besser-K1-surface}), so
   \[\left\langle \Eis^{[k, k', j]}_{\syn,b,N}, \tilde \lambda \right\rangle_{\fp, S} = \left\langle CG^{[k, k', j]}_{\syn}(\Eis^{k + k' - 2j}_{\syn,b,N}), \Delta^*(\tilde \lambda) \right\rangle_{\fp, Y}.\qedhere\]
  \end{proof}

  This reduces the computation of the syntomic regulator to a calculation on $Y$ alone. We now give a more explicit recipe when $\lambda$ is of a special form. We suppose that
  \[ \lambda = \eta \sqcup \omega  \coloneqq \pi_1^*(\eta) \cup \pi_2^*(\omega),\]
  where $\pi_1, \pi_2$ are the two projections $S \to Y$, and $\eta \in \Fil^0 M_{\dR}(f)_{\Qp}$, $\omega \in \Fil^0 M_{\dR}(g)_{\Qp}(1 + j)$. We fix polynomials $P_{\eta}$ (pure of weight $k + 1$) and $P_\omega$ (pure of weight $k' + 1 - 2j$) annihilating $\eta$ and $\omega$ respectively. Then $\eta$ and $\omega$ lift uniquely to classes
  \begin{align*}
   \tilde{\eta} &\in H^1_{\fp, c}(Y, \Sym^k \sH_{\Qp}^\vee, P_\eta),\\
   \tilde{\omega} &\in H^1_{\fp, c}(Y, \Sym^k \sH_{\Qp}^\vee(1 + j), P_\omega)
  \end{align*}
  and we clearly have $\tilde\lambda = \tilde\eta \sqcup \tilde\omega$ and hence
  \[ \Delta^*(\tilde{\lambda}) = \Delta^*(\tilde{\eta}\sqcup \tilde{\omega})=\tilde{\eta}\cup \tilde{\omega}.\]
  Thus we have
  \[  \left\langle CG^{[k, k', j]}_{\syn}(\Eis^{k + k' - 2j}_{\syn,b,N}), \Delta^*(\tilde \lambda) \right\rangle_{\fp, Y} = \operatorname{tr}_{\fp, Y}\left( \sigma \cup \tilde{\eta}\cup \tilde{\omega}\right), \]
  where we have written $\sigma = CG^{[k, k', j]}_{\syn}(\Eis^{k + k' - 2j}_{\syn,b,N})$ for brevity, and $\operatorname{tr}_{\fp, Y}$ is as in \S \ref{sect:fpdefs} above.

  \begin{proposition}
   \label{prop:fpcohoformula}
   The natural map
   \[ H^1_{\rig}(Y, \TSym^k \sH_{\rig}(2)) \to H^2_{\fp}(\sY, \TSym^k \sH(2), P_\omega) \]
   is surjective; and if $\Xi \in H^1_{\rig}(Y, \TSym^k \sH_{\rig}(2))$ is any preimage of $\sigma \cup \tilde \omega$, then we have
   \[ \left\langle CG^{[k, k', j]}_{\syn}(\Eis^{k + k' - 2j}_{\syn,b,N}), \tilde\eta \cup \tilde\omega\right\rangle_{\fp, Y} = \left\langle P_\omega(p^{-1} \varphi^{-1})^{-1} \eta, \Xi\right\rangle_{\rig, Y}.\]
  \end{proposition}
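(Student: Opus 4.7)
The plan splits into the two assertions, with the main technical work in the second. For the surjectivity, I would simply apply the long exact sequence for finite-polynomial cohomology (the variant of \eqref{eq:syntomicexactseq} with $P_\omega$ in place of $1-\varphi$):
\[
H^1_\rig(Y, \TSym^k\sH_\rig(2)) \to H^2_\fp(\sY, \TSym^k\sH(2), P_\omega) \to \Fil^0 H^2_\dR(Y_\Qp, \TSym^k\sH_\dR(2)),
\]
and invoke the vanishing of $H^2_\dR$ on the smooth affine curve $Y_\Qp$. This step is entirely formal.

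For the pairing formula, the first move is to use associativity of the fp cup product to rewrite
\[
\bigl\langle \sigma, \tilde\eta \cup \tilde\omega\bigr\rangle_{\fp, Y}
= \bigl\langle \sigma \cup \tilde\omega, \tilde\eta\bigr\rangle_{\fp, Y},
\]
where $\sigma \coloneqq CG^{[k,k',j]}_\syn(\Eis^{k+k'-2j}_{\syn,b,N})$. The class $\sigma$ is a genuine syntomic class, so its attached polynomial is $1-T$, and $(1-T) \star P_\omega = P_\omega$ because $1 \cdot \beta = \beta$ for every root $\beta$ of $P_\omega$. Thus (after contracting the $\TSym^{k'}$ factor against the coefficients of $\tilde\omega$) the class $\sigma \cup \tilde\omega$ lies in $H^2_\fp(\sY, \TSym^k\sH(2), P_\omega)$, and by the first part of the proposition admits a lift $\Xi \in H^1_\rig(Y, \TSym^k\sH(2))$.

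The final and most technical step is to rewrite $\langle \sigma \cup \tilde\omega, \tilde\eta\rangle_{\fp,Y}$ as a rigid Poincar\'e pairing involving $\Xi$. Using Besser's definition of the fp pairing as $(P_\omega \star P_\eta)(p^{-1})^{-1} \operatorname{tr}_\rig$ of the image of the triple cup product under the boundary map $H^3_{\fp,c} \to H^2_{\rig,c}$, together with the fact that a preimage of $\sigma \cup \tilde\omega$ under the boundary map is $\Xi$, one obtains an identification with a rigid pairing $\langle \Xi, \eta^\flat\rangle_{\rig, Y}$, where $\eta^\flat$ is obtained from the $P_\eta$-lift $\tilde\eta$ of $\eta$ by applying the adjoint of the $P_\omega$-factor under Poincar\'e duality. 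That adjoint sends $\varphi \mapsto \varphi^{-1}$, while the Tate twist by $d+1 = 2$ contributes a factor $p^{-1}$, so the net operator applied to $\eta$ is $P_\omega(p^{-1}\varphi^{-1})$, which must be inverted to identify $\eta^\flat$ with $P_\omega(p^{-1}\varphi^{-1})^{-1}\eta$; invertibility here is precisely the non-vanishing of the Euler factor governing $\mathcal{E}(f, g, 1+j)$. The main obstacle is this last bookkeeping of factors of $p$ and $\varphi$: the various normalization conventions (Besser's trace factor, the star-product rule, and the interaction of Frobenius with Tate twists under Poincar\'e duality) combine subtly, and this is where the analogous computation of Bertolini--Darmon--Rotger in the weight-$(2,2,0)$ case expends considerable care.
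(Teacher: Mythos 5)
Your argument matches the paper's own approach: the paper's proof is a one-sentence appeal to compatibility of fp cup-products with the Leray spectral sequence, and your surjectivity argument, the associativity step $\langle\sigma,\tilde\eta\cup\tilde\omega\rangle = \langle\sigma\cup\tilde\omega,\tilde\eta\rangle$, and the observation that $(1-T)\star P_\omega = P_\omega$ are exactly what that compatibility unpacks to, so your elaboration is useful. Two small imprecisions in the final step are worth correcting. First, there is no boundary map $H^3_{\fp,c}\to H^2_{\rig,c}$; the connecting map runs $H^2_{\rig,c}\to H^3_{\fp,c}$ and is an isomorphism here (since $\Fil^0 H^i_{\dR,c}(Y_{\Qp},\Qp(2))$ vanishes for $i\geq 2$), and $\operatorname{tr}_{\fp,Y}$ is $(P\star Q)(p^{-1})^{-1}$ times its inverse composed with $\operatorname{tr}_{\rig,Y}$. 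Second, the ``adjoint of the $P_\omega$-factor, then invert'' heuristic arrives at the right operator but is cleaner to derive from Besser's construction of the cup product: choose $a(x,y),b(x,y)$ with $(P_\omega\star P_\eta)(xy)=a(x,y)P_\omega(x)+b(x,y)P_\eta(y)$. By Leray degeneration $\Xi$ represents $\sigma\cup\tilde\omega$ in $H^1_{\fp}(\Spec\Zp,H^1_\rig(Y,\TSym^k\sH(2)),P_\omega)$ and $\eta$ represents $\tilde\eta$ in $H^0_{\fp}(\Spec\Zp,H^1_{\rig,c}(Y,\Sym^k\sH^\vee),P_\eta)$; their fp cup product over $\Spec\Zp$ is represented by $a(\varphi_1,\varphi_2)(\Xi\otimes\eta)$, the $b$-term dying because $P_\eta(\varphi)\eta=0$, and $\operatorname{tr}_{\fp}$ then divides by $(P_\omega\star P_\eta)(p^{-1})$. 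Writing $\nu$ for the $\varphi$-eigenvalue of $\eta$ (so the dual eigenvalue on $\Xi$ is $p^{-1}\nu^{-1}$, since Frobenius acts by $p^{-1}$ on the target $\Qp(1)$ of the Poincar\'e pairing), one evaluates the defining identity at $(x,y)=(p^{-1}\nu^{-1},\nu)$: using $P_\eta(\nu)=0$ this gives $(P_\omega\star P_\eta)(p^{-1})=a(p^{-1}\nu^{-1},\nu)\,P_\omega(p^{-1}\nu^{-1})$, so the $a$-factors cancel and one is left with precisely $P_\omega(p^{-1}\varphi^{-1})^{-1}$ applied to $\eta$, as claimed; the invertibility being, as you note, the relevant part of $\cE(f,g,1+j)\neq 0$.
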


  \begin{proof}
   This natural map is clearly surjective (because the cokernel is a subspace of $H^2_{\rig}(\sY, \TSym^k \sH_{\rig}(2))$, which is zero because $Y$ is affine). The equality of cup-products is another instance of the compatibility of finite-polynomial cup-products with the Leray spectral sequence, as noted above.
  \end{proof}

  In the next section we'll compute such a lifting $\Xi$ explicitly.

 \subsection{Restriction to the ordinary locus}

  As in Section \ref{sect:syntomicEis}, we denote by $Y^{\ord}$ the open subscheme of $Y$ where the Eisenstein series $E_{p-1}$ is invertible, so $\sY^{\ord} = (Y^{\mathrm{ord}}, X)$ is also a smooth pair. Note that $Y_{\Qp}^{\ord}$ is the complement of a finite set of points in $Y_{\Qp}$, possibly defined over the unramified quadratic extension of $\Qp$ (one for each supersingular point of the special fibre).

  \begin{proposition}
   \label{prop:liftordlocus}
   The rigid realization $M_{\rig}(f)$ lifts canonically to a subspace of $H^1_{\rig, c}(Y^{\ord}, \Sym^{k} \sH_{\rig})$, commuting with the action of $\varphi$.
  \end{proposition}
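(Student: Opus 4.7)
The plan is to combine the excision long exact sequence for rigid cohomology with compact supports with a Frobenius-slope separation argument, using ordinarity of $f$ in an essential way.

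Let $Z \subset Y$ denote the supersingular locus, a reduced $0$-dimensional closed subscheme with open complement $Y^{\ord}$, and write $\sF = \Sym^k \sH_{\rig}$. Excision for rigid cohomology with compact supports yields
\[
\cdots \to H^0_{\rig}(Z, \sF|_Z) \to H^1_{\rig,c}(Y^{\ord}, \sF) \to H^1_{\rig,c}(Y, \sF) \to H^1_{\rig}(Z, \sF|_Z) = 0,
\]
so the right-hand map is surjective with kernel a quotient of $H^0_{\rig}(Z, \sF|_Z)$. Since $f$ is cuspidal, the forget-supports map $H^1_{\rig,c}(Y, \sF) \to H^1_{\rig}(Y, \sF)$ is an isomorphism on $f$-isotypic components, so $M_{\rig}(f)$ already sits canonically inside $H^1_{\rig,c}(Y, \sF)$; the task reduces to lifting it through the above surjection.

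The heart of the argument is a slope comparison. At each supersingular point the crystalline Frobenius on $\sH_{\rig}$ is isoclinic of slope $\tfrac12$, so the eigenvalues of $\varphi$ on $H^0_{\rig}(Z, \sF|_Z)$ all have $p$-adic valuation $k/2$. By ordinarity, the eigenvalues of $\varphi$ on $M_{\rig}(f)$ are $\alpha_f$ and $\beta_f$ of integer valuations $0$ and $k+1$, which for $k \ge 1$ are disjoint from the half-integer slope $k/2$; the edge case $k = 0$ is handled by the Hasse bound, which forces $\alpha_f \ne 1$. Consequently the Frobenius characteristic polynomial of $M_{\rig}(f)$ is coprime to that of $H^0_{\rig}(Z, \sF|_Z)$, so by primary decomposition of $\varphi$-modules the preimage of $M_{\rig}(f)$ in $H^1_{\rig,c}(Y^{\ord}, \sF)$ splits canonically as a direct sum of the image of $H^0_{\rig}(Z, \sF|_Z)$ and a unique $\varphi$-stable subspace mapping isomorphically onto $M_{\rig}(f)$. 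This subspace is the desired canonical, $\varphi$-equivariant lift.

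The main obstacle is the slope computation on the supersingular fibers, which reduces to the classical fact that the Dieudonn\'e module of a supersingular elliptic curve is isoclinic of slope $\tfrac12$; once this is in place, the splitting and its canonicity follow from elementary $\varphi$-linear algebra.
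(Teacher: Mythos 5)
Your proof is correct, but reaches the conclusion by a genuinely different route. The paper's (very terse) proof goes via Hecke theory: the kernel of the excision map $H^1_{\rig,c}(Y^{\ord}, \Sym^k \sH_{\rig}) \to H^1_{\rig,c}(Y, \Sym^k \sH_{\rig})$ carries only Hecke eigensystems belonging to $p$-new cusp forms of level $\Gamma_1(N)\cap\Gamma_0(p)$, which by strong multiplicity one are disjoint from the eigensystem of $f$, a form of $p$-prime level. (The paper's word ``cokernel'' is evidently a slip for ``kernel'', exactly as your excision sequence shows: $Z$ is zero-dimensional, so $H^1_{\rig}(Z)=0$ and the map is onto.) The canonical $\varphi$-equivariant lift is then the $f$-isotypic Hecke projector applied to the preimage, using that the prime-to-$p$ Hecke operators commute with $\varphi$. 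Your argument replaces this Hecke input --- the non-trivial identification of the supersingular contribution with $p$-new eigensystems --- by the elementary slope computation at supersingular points, separating the two pieces by primary decomposition with respect to $\varphi$ alone. The trade-off: the paper's route needs no edge-case analysis, while yours is more self-contained (only the Dieudonn\'e module of a supersingular curve is needed) but must treat $k=0$ separately, where the slopes agree.

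On that edge case: every supersingular elliptic curve is defined over $\FF_{p^2}$, so the $\varphi$-eigenvalues on $H^0_{\rig}(Z,\QQ_p)$ lie in $\{\pm 1\}$, not merely $\{1\}$; what you actually need is $\alpha_f \notin\{\pm 1\}$. This holds because $\alpha_f$ is a $p$-Weil number of weight $1$ (Eichler--Shimura together with the Weil conjectures for curves), so every archimedean absolute value of $\alpha_f$ equals $p^{1/2}$ and $\alpha_f$ cannot be a root of unity. The appeal to the ``Hasse bound'' is in the right spirit but as stated omits $-1$. There is also a small normalization ambiguity in the slopes you quote (whether they should be $\{0, k+1\}$ against $\{k/2\}$, or the shifted $\{-k, 1\}$ against $\{-k/2\}$, depending on whether $\sH$ or $\sH^\vee$ is meant), but as this is only an overall additive shift it does not affect coprimality of the characteristic polynomials, and the argument is unaffected.
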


  \begin{proof}
   We have an exact sequence
   \[ H^0_{\rig}(Z, \Sym^{k} \sH_{\rig}) \rTo H^1_{\rig, c}(Y^{\ord}, \Sym^{k} \sH_{\rig}) \rTo H^1_{\rig, c}(Y, \Sym^{k} \sH_{\rig}) \rTo 0, \]
   where $Z = Y - Y^{\ord}$ is the scheme of supersingular points. (The surjectivity of the last map follows from the fact that $Z$ is 0-dimensional, so $H^1_{\rig}(Z, \Sym^{k} \sH_{\rig})$ vanishes.)
   
   A theorem due to Deuring and Serre (see \cite[\S 9]{serre96}) shows that the supersingular points of $Y_1(N)_{\overline{\FF}_p}$ are in bijection with the double coset space $B^\times \backslash (B \otimes \mathbb{A}_f)^\times / K$, where $B$ is the definite quaternion algebra ramified at $\{p, \infty\}$ and $K$ is an appropriate open compact subgroup. This bijection is compatible with the action of Hecke correspondences away from $p$, and induces an isomorphism of Hecke modules between $H^0_{\rig}(Z, \Sym^{k} \sH_{\rig})$ and a space of automorphic forms for $B^\times$. (Compare \cite[Proposition 4.10]{tianxiao16}, which is a generalisation to Hilbert modular varieties.) Since the systems of prime-to-$p$ Hecke eigenvalues appearing in the latter are all associated to $p$-new cusp forms of level $\Gamma_1(N) \cap \Gamma_0(p)$, which are disjoint from those appearing in level $\Gamma_1(N)$, it follows that $M_{\rig}(f)$ has a unique Hecke-equivariant lifting to $H^1_{\rig, c}(Y^{\ord}, \Sym^{k} \sH_{\rig})$, whose uniqueness implies it must be compatible with $\varphi$.
  \end{proof}

  We let $\eta^{\ord}$ be the image of $\eta$ under this lifting, and let $\Xi$ be as in Proposition \ref{prop:fpcohoformula}. Then we have
  \[  \left\langle P_\omega(p^{-1} \varphi^{-1})^{-1} \eta, \xi\right\rangle_{\rig, Y} =  \left\langle P_\omega(p^{-1} \varphi^{-1})^{-1} \eta^{\ord}, \Xi |_{Y^{\ord}} \right\rangle_{\rig, Y^{\ord}}. \]

  We now compute a representative for $\Xi |_{Y^{\ord}}$.

  \begin{definition}
   Choose a $\nabla$-closed algebraic section of $\Omega^1 \otimes \Fil^0 \sH^\vee(1 + j)$ over $Y_{\Qp}$ representing the class of $\omega$ (which we shall denote, abusively, by the same letter $\omega$), and let
    \[ F_\omega \in \Gamma\left(X_{\Qp}^{\rig},  \sH_{\rig}^\vee(1 + j)\big|_{Y^{\ord}}\right)\]
   be a rigid-analytic primitive of $P_\omega(\varphi)(\omega)$, so that the class of the pair $(\omega, F_\omega)$ is a lift of $\omega$ to $H^1_{\fp}(\sY^{\ord}, \sH_{\Qp}^\vee(1 + j), P_\omega)$.
  \end{definition}

  As we saw above, the restriction to $Y^{\ord}$ of the syntomic Eisenstein class $ \Eis^{k + k' - 2j}_{\syn,b,N}$ is represented by the pair $\left( \alpha_\rig, \alpha_\dR \right)$
  defined in Theorem \ref{thm:syntomicformula} above, satisfying $\nabla(\alpha_\rig) = (1 - \varphi)\alpha_\dR$. Let
  \begin{align*}
   \sigma_{\rig}& =CG^{[k, k', j]}_{\rig}(\alpha_{\rig}),\\
   \sigma_{\dR} & = CG^{[k, k', j]}_{\dR}(\alpha_{\dR}),
  \end{align*}
  so the restriction to $Y^{\ord}$ of the class $CG^{[k, k', j]}_{\syn}(\Eis^{k + k' - 2j}_{\syn,b,N})$ is given by the pair $(\sigma_{\rig},\sigma_{\dR})$. The definition of the cup-product in finite-polynomial cohomology now gives the following:

  \begin{proposition}
   The class $\Xi |_{Y^{\ord}}$ is represented by the $\TSym^k \sH_{\Qp}(2)$-valued overconvergent 1-form on $Y^{\ord}$ defined by
   \[\Xi  \coloneqq \bigcup\left[a(\varphi_1, \varphi_2) (F_{\omega} \otimes \sigma_\dR)
   - b(\varphi_1, \varphi_2) (\omega \otimes \sigma_\rig)\right], \]
   where $a(x, y)$ and $b(x, y)$ are any two polynomials such that
   \[ P_\omega(xy) = a(x, y)P_\omega(x) + b(x, y) (1-y). \qed\]
  \end{proposition}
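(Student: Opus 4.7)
The plan is to apply the explicit formula for the cup product in finite-polynomial cohomology (Besser's construction in \cite{Besser-K1-surface}, extended to general coefficient sheaves as in \S \ref{sect:fpdefs}) and then observe that, on the affine curve $Y^{\ord}$, the top-degree component of the resulting 2-cocycle vanishes automatically, leaving only the rigid 1-form which we identify with $\Xi|_{Y^{\ord}}$.

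First, I recast both classes as fp-cocycles of degree $1$. The pair $(\omega, F_\omega)$ represents $\tilde\omega$ with polynomial $P_\omega$ (the relation $\nabla F_\omega = P_\omega(\varphi)\omega$ is exactly the fp-cocycle condition), and the pair $(\sigma_\rig, \sigma_\dR)$ represents $\sigma$ with polynomial $1-T$ (cocycle condition $\nabla \sigma_\rig = (1-\varphi)\sigma_\dR$). The cup product $\sigma \cup \tilde\omega$ then lives in fp-cohomology of degree $2$ with polynomial $(1-T) \star P_\omega(T) = P_\omega(T)$, since star-producting by $1-T$ is the identity on polynomials.

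Second, Besser's cup-product formula in the $\star$-product setting produces a degree-$2$ cocycle whose de Rham component is $\omega \cup \sigma_\dR$ and whose rigid component is the expression
\[ X \coloneqq a(\varphi_1, \varphi_2)(F_\omega \otimes \sigma_\dR) - b(\varphi_1, \varphi_2)(\omega \otimes \sigma_\rig), \]
where $\varphi_1, \varphi_2$ denote the Frobenius on the respective tensor factors. The polynomial identity $P_\omega(xy) = a(x,y)P_\omega(x) + b(x,y)(1-y)$ is precisely what ensures the cocycle condition $\nabla X = P_\omega(\varphi_1 \varphi_2)(\omega \otimes \sigma_\dR)$: expanding by Leibniz using the defining relations gives $\nabla(F_\omega \otimes \sigma_\dR) = P_\omega(\varphi_1)(\omega \otimes \sigma_\dR)$ and $\nabla(\omega \otimes \sigma_\rig) = -(1-\varphi_2)(\omega \otimes \sigma_\dR)$, and substituting yields $\bigl[a(\varphi_1,\varphi_2)P_\omega(\varphi_1) + b(\varphi_1,\varphi_2)(1-\varphi_2)\bigr](\omega \otimes \sigma_\dR) = P_\omega(\varphi_1\varphi_2)(\omega \otimes \sigma_\dR)$ by the polynomial identity.

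Finally, I specialize to the affine curve $Y^{\ord}$: since $\Omega^2_{Y^{\ord}} = 0$, the de Rham component $\omega \cup \sigma_\dR$ vanishes identically, and the cup product cocycle reduces to $(0, X)$. This is precisely the image of the rigid $1$-form $X$ under the natural surjection $H^1_{\rig}(Y^{\ord}, \TSym^k \sH(2)) \twoheadrightarrow H^2_{\fp}(\sY^{\ord}, \TSym^k \sH(2), P_\omega)$ (which is surjective because $Y^{\ord}$ is an affine curve, so $H^2_{\rig}$ and $H^2_{\dR}$ both vanish). Different choices of $(a,b)$ yield representatives that differ by an element in the kernel of this surjection, so $\Xi$ is well-defined up to this ambiguity, which is compatible with the pairing computation of Proposition \ref{prop:fpcohoformula}. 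The main (modest) obstacle is verifying that Besser's cup-product formulas in \cite{Besser-K1-surface}, originally stated for Tate-twist coefficients, extend cleanly to admissible overconvergent filtered $F$-isocrystal coefficients as sketched in \S \ref{sect:fpdefs}; this is essentially formal since the construction takes place at the level of the underlying cone complex, but some care is required in tracking the filtration, the Frobenius compatibility with $\nabla$, and the $\star$-product combinatorics.
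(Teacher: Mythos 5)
Your proposal is correct and follows the same route the paper has in mind: the paper treats this proposition as immediate from Besser's cup-product formula in finite-polynomial cohomology (carried over to general coefficients in \S \ref{sect:fpdefs}), noting only afterwards that $\Xi$ is automatically $\nabla$-closed because $Y$ is one-dimensional and that changing $(a,b)$ alters $\Xi$ by an exact form. You have simply unpacked the cup-product cocycle computation that the paper leaves implicit, identifying the fp-cocycle data $(\omega, F_\omega)$ with polynomial $P_\omega$ and $(\sigma_\rig, \sigma_\dR)$ with polynomial $1-T$, verifying the cocycle condition via the polynomial identity, and then observing the de Rham $2$-form component dies on an affine curve.
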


  Note that such polynomials do exist, since the polynomial $P(X) = 1 - X$ is the identity for the $\star$ operation of Definition \ref{def:starproduct}. The form $\Xi$ is evidently $\nabla$-closed (since $Y$ is 1-dimensional) and hence defines a class in $H^1_\rig(Y^{\ord}, \TSym^k \sH_{\Qp}(2))$; and this class is well-defined, since changing the polynomials $(a, b)$ changes $P_\omega$ by an exact form.

  We now evaluate the right-hand side of the formula in Proposition \ref{prop:fpcohoformula} in terms of $p$-adic modular forms. We take for $\omega$ the class $\omega'_g(1 + j)$, where $\omega_g' \in \Fil^{1 + j} M_{\dR}(g)$ is as in \ref{sect:periods}; and we take for $P_\omega$ the polynomial
  \[P_g(X) = \left(1 - \frac{p^{1 + j} X}{\alpha_g}\right)\left(1 - \frac{p^{1 + j} X}{\beta_g}\right)\]
  where $\alpha_g, \beta_g$ are the roots of the Hecke polynomial of $g$ at $p$.

  We denote also by $\omega'_g$ the unique regular algebraic differential on $X_1(N_f)$ representing this class, whose pullback to the formal scheme $\tilde{\mathcal{Y}}$ of \S \ref{sect:syntomicEis} is given by
  \[ \omega_g' = G(\varepsilon_g^{-1}) g\, v^{(k',0)} \otimes \xi \otimes e_{1 + j}. \]

  \begin{remark}
   Note that this class $\omega_g'$ does not generally have $q$-expansion in $L[[q]]$, owing to the presence of the Gauss sum, but it is nonetheless defined over $L$; recall that the cusp $\infty$ is not rational on our model of $Y_1(N)$.
  \end{remark}

  \begin{remark}
   More generally, one can replace $g$ with the any holomorphic form $\breve g$ of level $N$ having $q$-expansion in $(L \otimes \Qp)[[q]]$ and the same Hecke eigenvalues as $g$ away from $N$. We give the argument for $\breve g = g$ for simplicity of notation.
  \end{remark}

  As a rigid 1-form we have
  \[ P_g(\varphi)(\omega_g') = G(\varepsilon_g^{-1}) g^{[p]}\, v^{(k',0)} \otimes \xi \otimes e_{1 + j},\]
  where $g^{[p]}$ is the ``$p$-depletion'' of $g$ (the $p$-adic modular form whose $q$-expansion is $\sum_{p \nmid n} a_n(g) q^n$). If we write $G$ for an overconvergent primitive of $P_g(\varphi)(\omega_g')$ vanishing at the cusp $\infty$, then as above we have
   \[ \Xi|_{Y^{\ord}} = \bigcup \left[ a(\varphi_1, \varphi_2)(G \otimes \sigma_\dR) - b(\varphi_1, \varphi_2)(\omega_g' \otimes \sigma_\rig)\right],\]
  Here $a(X, Y)$ and $b(X, Y)$ may be any polynomials such that $P_g(XY) = a(X, Y)P_g(X) + b(X, Y)(1 - Y)$, but we shall make the following choice
  \[ a(X, Y) = 1, \quad b(X, Y) = \frac{P_g(XY) - P_g(X)}{1 - Y},\]
  so that
  \[ \Xi|_{Y^{\ord}} = G \cup \sigma_\dR - \bigcup \left[ b(\varphi_1, \varphi_2)(\omega_g' \otimes \sigma_\rig)\right].\]

  \begin{lemma}
   If $G$ vanishes at the cusp $\infty$, then modulo $\nabla$-exact 1-forms we have
   \[ G \cup \sigma_\dR = G \cup (1 - \varphi)\sigma_\dR = G \cup \nabla(\sigma_\rig).\]
  \end{lemma}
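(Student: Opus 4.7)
The equality $G \cup (1-\varphi)\sigma_{\dR} = G \cup \nabla(\sigma_{\rig})$ is in fact a strict equality of rigid $1$-forms, not just of cohomology classes. Indeed, the Clebsch--Gordan map $CG^{[k,k',j]}_{\rig}$ is a morphism of overconvergent filtered $F$-isocrystals, and therefore commutes with both $\nabla$ and $\varphi$ (the Tate twist built into its target absorbing the $\det^{-j}$ contribution to the Frobenius action). Applying it to the defining relation $\nabla(\alpha_{\rig}) = (1-\varphi)(\alpha_{\dR})$ of the syntomic Eisenstein class from Theorem \ref{thm:syntomicformula} yields $\nabla(\sigma_{\rig}) = (1-\varphi)(\sigma_{\dR})$, whence the equality after cupping with $G$.

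The first equality is equivalent to showing that $G \cup \varphi(\sigma_{\dR})$ is $\nabla$-exact on $Y^{\ord}$, which I plan to establish by a $q$-expansion analysis at the cusp $\infty$. On one hand, the identity $\nabla(G) = P_g(\varphi)(\omega_g') = G(\varepsilon_g^{-1})\, g^{[p]}\, v^{(k',0)} \otimes \xi \otimes e_{1+j}$ shows that $\nabla(G)$ has $q$-expansion supported on indices $n$ with $p \nmid n$; combined with the hypothesis $G|_{q=0}=0$ and the Gauss--Manin formulae $\nabla(v^{(r,s)}) = r v^{(r-1,s+1)} \otimes \xi$ from \S\ref{sect:syntomicEis}, a recursive component-by-component integration using $\theta = q\partial_q$ (which has formal inverse $q^n \mapsto q^n/n$ on series with no constant term) forces every component of $G$ in the basis $\{v^{(r,s)}\}$ to have $q$-expansion supported on $\{n \ge 1 : p \nmid n\}$. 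On the other hand, the canonical Frobenius lift used to define $\varphi$ on $Y^{\ord}$ pulls back to $q \mapsto q^p$ at $\infty$, so every component of $\varphi(\sigma_{\dR})$ has $q$-expansion supported on $p\ZZ_{\ge 0}$. The cup product (which contracts $\TSym^{k'}$ against $\Sym^{k'}\sH^\vee$ via the duality pairing and multiplies $q$-series coefficients) therefore has $q$-expansion supported on $\{m + pn : m \ge 1,\ p \nmid m,\ n \ge 0\}$, a set of strictly positive integers.

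Writing $G \cup \varphi(\sigma_{\dR}) = \sum_{s=0}^{k} H_s(q)\, v^{[k-s,s]} \otimes \xi$ in the basis of $\TSym^k \sH$ from \S\ref{sect:syntomicEis}, the equation $\nabla(\Phi) = G \cup \varphi(\sigma_{\dR})$ for $\Phi = \sum_s \Phi_s v^{[k-s,s]}$ translates, via the formula $\nabla(v^{[r,s]}) = (r+1)v^{[r+1,s-1]} \otimes \xi$, into the triangular system $\theta \Phi_s + (k-s)\Phi_{s+1} = H_s$, which can be solved from $s = k$ downwards by repeated application of $\theta^{-1}$; since each $H_s$ has vanishing constant term, the formal solution exists and each $\Phi_s$ is again supported on $\{n \ge 1 : p \nmid n\}$. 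The main obstacle, and the step where the ordinary locus genuinely enters, is promoting this formal $q$-expansion solution to a bona fide overconvergent section of $\TSym^k \sH$ on $Y^{\ord}$; I intend to handle this in the same spirit as \cite{BannaiKings}, exploiting the fact that $p$-depleted $q$-expansions arise from overconvergent Katz $p$-adic modular forms (exactly as for the sections $F^{(p)}_{t,s,b}$ introduced in \S\ref{sect:syntomicEis}). Once overconvergence of $\Phi$ is established, $\Phi$ is the desired $\nabla$-primitive of $G \cup \varphi(\sigma_{\dR})$, completing the proof.
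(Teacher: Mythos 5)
Your reduction to showing that $G \cup \varphi(\sigma_\dR)$ is $\nabla$-exact on $Y^{\ord}$, and your observation that its $q$-expansion is $p$-depleted with vanishing constant term, both agree with the paper. (Indeed you supply a more detailed justification that $G$ is $p$-depleted, via the triangular recursion against the Gauss--Manin connection, which the paper simply asserts.) The difficulty is in the final step. You propose to produce a $\nabla$-primitive $\Phi$ directly by solving the triangular system with $\theta^{-1}$ on $q$-expansions, and you candidly flag that promoting this formal solution to an overconvergent section of $\TSym^k\sH$ on $Y^{\ord}$ is an obstacle you intend to resolve ``in the same spirit as \cite{BannaiKings}''. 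That obstacle is the whole content of the lemma: a $p$-depleted $q$-series need not, in general, come from an overconvergent section, and the construction of $\alpha_{\rig}$ in \cite[Lemma 5.10]{BannaiKings} relies on the very special structure of the elliptic polylogarithm rather than on a general principle applicable to an arbitrary $p$-depleted primitive. So as written the argument has a genuine gap at precisely the point where it departs from the paper.

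The paper avoids this issue entirely with a short cohomological argument: since the $q$-expansion of $G \cup \varphi(\sigma_\dR)$ is $p$-depleted, its class in $H^1_{\rig}(Y^{\ord}, \TSym^k\sH_{\Qp}(2))$ is annihilated by $U_p$; but on $Y^{\ord}$ one has $U_p \circ \varphi = \langle p\rangle$, and both $\langle p\rangle$ and $\varphi$ act bijectively on this cohomology group, so $U_p$ is bijective as well. Hence the class is zero, i.e.\ the differential is $\nabla$-exact. This sidesteps overconvergence of any putative primitive: one never needs to construct $\Phi$ at all. If you want to complete your approach you would have to establish overconvergence of $\Phi$ independently, which appears to be substantially harder than the statement being proved; I would instead adopt the $U_p$-bijectivity argument.
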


  \begin{proof}
   It suffices to show that $G \cup \varphi(\sigma_\dR)$ is $\nabla$-exact. We know that the $q$-expansion of $G$ is $p$-depleted (the coefficient of $q^n$ is zero if $p \mid n$) while $\varphi(\sigma_{\dR})$ is a power series in $q^p$. Hence the $q$-expansion of $G \cup \varphi(\sigma_\dR)$ is also $p$-depleted, and thus lies in the kernel of the operator $U_p$ defined on $q$-expansions by $U_p\left(\sum a_{n} q^n\right) = \sum a_{np} q^n$. Since $U_p \circ \varphi$ coincides with the diamond operator $\langle p \rangle$, and $\langle p \rangle$ and $\varphi$ act bijectively on the cohomology groups, any differential in the kernel of $U_p$ must be $\nabla$-exact.
  \end{proof}

  As a corollary, we obtain the following explicit formula for $\Xi|_{Y^{\ord}}$:

  \begin{corollary}
   Modulo $\nabla$-exact 1-forms, we have
   \begin{equation}\label{eq:xi}
    \Xi|_{Y^{\ord}}=- \left( 1 - \frac{p^{k + 2} \varphi^2}{\langle p \rangle}\right)\left( \omega_g' \otimes \sigma_\rig\right).
   \end{equation}
  \end{corollary}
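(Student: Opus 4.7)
The plan is to simplify the displayed expression for $\Xi|_{Y^{\ord}}$ just above the corollary, using the preceding lemma and one integration-by-parts step. First, that lemma gives $G \cup \sigma_\dR \equiv G \cup \nabla(\sigma_\rig)$ modulo $\nabla$-exact $1$-forms; and since $G$ and $\sigma_\rig$ are both sections (rather than $1$-forms), the Leibniz identity $\nabla(G \cup \sigma_\rig) = \nabla(G) \cup \sigma_\rig + G \cup \nabla(\sigma_\rig)$ shows that $G \cup \nabla(\sigma_\rig) \equiv -\nabla(G) \cup \sigma_\rig$ modulo $\nabla$-exact forms. Inserting the cocycle relation $\nabla(G) = P_g(\varphi_1)(\omega_g')$ (where $\varphi_1$ denotes Frobenius acting only on the first tensor factor), one obtains
\[
\Xi|_{Y^{\ord}} \equiv -\bigl[P_g(\varphi_1) + b(\varphi_1, \varphi_2)\bigr](\omega_g' \otimes \sigma_\rig) \pmod{\nabla\text{-exact}}.
\]

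Next, I would use the specific choice $a(X,Y) = 1$, $b(X, Y) = (P_g(XY) - P_g(X))/(1 - Y)$, which gives
\[
P_g(X) + b(X, Y) = \frac{P_g(XY) - P_g(X)\, Y}{1 - Y}.
\]
Writing $P_g(T) = 1 + c_1 T + c_2 T^2$ with $c_2 = p^{2(1+j)}/(\alpha_g \beta_g)$, a direct expansion of the numerator shows that the linear $c_1$-contributions cancel and the remaining terms factor cleanly as $(1-Y)(1 - c_2 X^2 Y)$, so that
\[
\Xi|_{Y^{\ord}} \equiv -\bigl(1 - \tfrac{p^{2(1+j)}}{\alpha_g \beta_g}\, \varphi_1^2 \varphi_2 \bigr)(\omega_g' \otimes \sigma_\rig).
\]

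The final step is to match this expression with the stated formula: applying the Hecke identity $\alpha_g \beta_g = \varepsilon_g(p)\, p^{k'+1} = \langle p \rangle\, p^{k'+1}$ on the $g$-eigenspace, and rewriting the composition $\varphi_1^2 \varphi_2$ in terms of the total Frobenius $\varphi$ acting on the tensor product $\omega_g' \otimes \sigma_\rig$ (which carries the combined twist $(1+j) \otimes (1-j) = (2)$ coming from the Eisenstein class and the Clebsch--Gordan map). The main obstacle is precisely this last bookkeeping: verifying that the naturally arising coefficient $p^{1 + 2j - k'}/\langle p\rangle$ collapses to $p^{k+2}/\langle p\rangle$ and that $\varphi_1^2\varphi_2$ coincides with $\varphi^2$ on the relevant Frobenius eigencomponents of $\sigma_\rig$, so that the $p$-powers and Frobenius exponents balance to give exactly the stated formula $-\bigl(1 - p^{k+2}\varphi^2/\langle p\rangle\bigr)(\omega_g' \otimes \sigma_\rig)$.
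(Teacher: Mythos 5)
Your algebraic reduction is correct through the identity
\[
\Xi|_{Y^{\ord}} \equiv -\Bigl(1 - \frac{p^{2(1+j)}}{\alpha_g\beta_g}\,\varphi_1^2\varphi_2\Bigr)(\omega_g'\otimes\sigma_\rig) \pmod{\nabla\text{-exact}},
\]
and this matches the paper exactly: the lemma gives $G\cup\sigma_\dR \equiv G\cup\nabla(\sigma_\rig)$, integration by parts turns this into $-\nabla(G)\cup\sigma_\rig$, and the factorization $P_g(XY) - Y P_g(X) = (1-Y)\bigl(1 - c_2 X^2 Y\bigr)$ is precisely the paper's evaluation $c(X,Y) = 1 - \tfrac{p^{2+2j} X^2 Y}{\alpha_g\beta_g}$.

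The gap is in the final conversion, which you flag as ``the main obstacle'' and resolve only heuristically. You propose to use the Hecke identity $\alpha_g\beta_g = \varepsilon_g(p)p^{k'+1}$ together with Tate-twist bookkeeping to turn $\varphi_1^2\varphi_2$ into $\varphi^2$, but this cannot succeed: you correctly compute that the coefficient so obtained is $p^{1+2j-k'}/\langle p\rangle$, and the target is $p^{k+2}/\langle p\rangle$, a discrepancy by a factor $p^{k+k'+1-2j}$. This power is not explained by the weight of $g$ nor by the Tate twist $(2)$ (the latter would contribute at most a fixed power of $p$), and the Hecke polynomial of $g$ alone carries no information about how $\varphi$ acts on the Eisenstein section $\sigma_\rig$. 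The paper closes the gap with an additional input that you do not invoke: the $q$-expansion identity, valid modulo $\nabla$-exact forms,
\[
\sigma_\rig = p^{1+k+k'-2j}\,\langle p\rangle^{-1}\,\varphi(\sigma_\rig),
\]
established by a direct $q$-expansion calculation from the explicit formula for $\alpha_\rig = -N^{k+k'-2j}\sum_i (-1)^{k+k'-2j-i}(k+k'-2j-i)!\, F^{(p)}_{2i-(k+k'-2j),\,k+k'-2j+1-i,\,b}\, v^{[k+k'-2j-i,i]}$ together with $\varphi(v^{[r,s]}) = p^{-r} v^{[r,s]}$ and the action of $V$ on the $p$-depleted Eisenstein series. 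Using this relation to rewrite $\varphi_1^2\varphi_2 = \varphi^2 \circ \varphi_2^{-1}$ supplies precisely the power $p^{1+k+k'-2j}\langle p\rangle^{-1}$ needed to collapse the coefficient to $p^{k+2}/\langle p\rangle$. Without this step, the argument does not reach the stated formula; the ``twist bookkeeping'' you invoke cannot substitute for it.
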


  \begin{proof}
   Observe that the 1-form
   \[ G \cup \nabla(\sigma_\rig) + \nabla(G) \cup \sigma_\rig = \nabla\left( G \cup \sigma_\rig\right)\]
   is $\nabla$-exact. Hence modulo $\nabla$-exact forms we have
   \[ G \cup \nabla(\sigma_\rig) = -\nabla(G) \cup \sigma_\rig = -\bigcup \left[ P_g(\varphi_1) (\omega_g' \otimes \sigma_\rig) \right].\]
   Thus
   \[ \Xi|_{Y^{\ord}} = -\bigcup \left[ c(\varphi_1, \varphi_2) (\omega_g' \otimes \sigma_\rig)\right]\]
   where
   \[ c(X, Y) = P_g(X) + b(X, Y) = \frac{P_g(XY) - Y P_g(X)}{1 - Y}.\]
   We evaluate explicitly:
   \[ c(X, Y) = 1 - \frac{p^{2 + 2j} X^2 Y}{\alpha_g \beta_g}.\]
   Hence
   \[ \Xi|_{Y^{\ord}} = \omega_g' \cup \sigma_\rig- \frac{p^{2 + 2j}}{\alpha_g \beta_g} \varphi \left( \varphi(\omega_g') \cup \sigma_\rig\right).\]

   We now note that the relation
   \[
    \sigma_\rig = p^{1 + k + k' - 2j} \langle p \rangle^{-1} \varphi(\sigma_\rig)
   \]
   holds modulo exact forms (as one sees by an explicit $q$-expansion calculation). Hence
   \[
    \Xi|_{Y^{\ord}} = - \left( 1 - \frac{p^{k + k' + 3} \varphi^2}{\alpha_g \beta_g \langle p \rangle}\right)\left(\langle p \rangle \omega_g' \otimes  \sigma_\rig\right) = - \left( 1 - \frac{p^{k + 2} \varphi^2}{\langle p \rangle}\right)\left( \omega_g' \otimes \sigma_\rig\right).\qedhere
   \]
  \end{proof}

%%%%%%%%%%%%%%%%%%%%%%%%%

 \subsection{Relation to a p-adic L-value}

  Recall that above $\eta$ was an arbitrary element of the free rank 2 $(L \otimes \Qp)$-module
  \[ \Fil^0 M_{\dR}(f)_{\Qp} = M_{\dR}(f)_{\Qp}.\]
  We now specify $\eta$ more precisely. Recall that in \S \ref{sect:periods} we defined a canonical $L$-basis $\eta_f'$ of $M_{\dR}(f) / \Fil^1$.

  Choose a prime $\frP$ of $L$ above $p$, and (after extending $L$ if necessary) choose a root $\alpha_f \in L_{\frP}$ of the Hecke polynomial $X^2 - a_p(f) X + p^{k + 1} \varepsilon_p(f)$ of $f$. Assume that $\alpha$ has $p$-adic valuation $< k+1$ (i.e.~it is not the non-unit root associated to an ordinary form). Then the $\varphi = \alpha$ eigenspace in $M_{\rig}(f) \otimes_{L \otimes \Qp} L_{\frP}$ is complementary to the $\Fil^1$ subspace.

  \begin{definition}
   \label{def:etaf1}
   We let $\eta_f^{\alpha}$ be the unique lifting of $\eta_f' \in M_{\dR}(f) / \Fil^1$ to an element of $M_{\dR}(f) \otimes_L L_{\frP}$ satisfying $\varphi(\eta_f^{\alpha}) = \alpha_f \eta_f^{\alpha}$.
  \end{definition}

  \begin{remark}
   \label{remark:etagausssum}
   If $f$ is ordinary (so $\alpha_f$ is necessarily the unit root) the class $\eta_f^\alpha$ coincides with $G(\varepsilon_f^{-1}) \eta_f^{\mathrm{ur}}$ where $\eta_f^{\mathrm{ur}}$ is the class considered in \cite[Proposition 4.6]{DR-diagonal-cycles-I}, \cite[Theorem 5.6.2]{LLZ14}.
  \end{remark}

  By Proposition \ref{prop:liftordlocus} we may lift $\eta_f^{\alpha}$ uniquely to a compactly-supported class $\eta_f^{\alpha, \ord}$ in the rigid cohomology of $Y^\ord$, having the same Hecke eigenvalues outside $p$ as $f$.

  \begin{remark}
   The data implicit in a lifting of $\eta_f^{\alpha}$ to the compactly-supported cohomology of $Y^{\ord}$ is a choice of local integrals of $\eta_f^{\alpha}$ on some sufficiently small annulus along the boundary of each supersingular residue disc.
  \end{remark}

  \begin{proposition}
   We have
   \[ \langle \eta_f^\alpha, \Xi\rangle_{\rig, Y}  = - \left( 1 - \frac{\beta_f}{p\alpha_f}\right)\langle \eta_f^{\alpha, \ord}, \omega_g' \cup \sigma_\rig\rangle_{\rig, Y^{\ord}},\]
   where $\beta_f = p^{k + 1} \varepsilon_f(p) / \alpha_f$ is the other root of the Hecke polynomial of $f$.
  \end{proposition}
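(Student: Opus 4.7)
The plan is to transport the pairing from $Y$ to $Y^{\ord}$, substitute the explicit formula~\eqref{eq:xi} for $\Xi|_{Y^{\ord}}$ from the preceding corollary, and then move the operators $\varphi^2$ and $\langle p\rangle^{-1}$ across the pairing using Poincar\'e adjointness.

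First, since $\eta_f^{\alpha,\ord}$ is by construction the unique compactly-supported lift of $\eta_f^\alpha$ to $H^1_{\rig,c}(Y^{\ord}, \Sym^k\sH_{\Qp}^\vee)$, and $\Xi$ restricts to $\Xi|_{Y^{\ord}}$, the compatibility of the rigid Poincar\'e pairing with the open immersion $Y^{\ord}\into Y$ gives
\[
\langle \eta_f^\alpha, \Xi\rangle_{\rig, Y} \;=\; \langle \eta_f^{\alpha,\ord}, \Xi|_{Y^{\ord}}\rangle_{\rig, Y^{\ord}}.
\]
Substituting~\eqref{eq:xi}, the right-hand side splits as
\[
-\langle \eta_f^{\alpha,\ord},\, \omega_g'\cup\sigma_\rig\rangle \;+\; p^{k+2}\bigl\langle \eta_f^{\alpha,\ord},\, \langle p\rangle^{-1}\varphi^2(\omega_g'\cup\sigma_\rig)\bigr\rangle.
\]

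The key step is the treatment of the second term. The operator $\langle p\rangle$ is the pullback by an automorphism of $Y_1(N)$, so its Poincar\'e-adjoint is $\langle p\rangle^{-1}$; hence $\langle p\rangle^{-1}$ moves across the pairing as $\langle p\rangle$ acting on $\eta_f^{\alpha,\ord}$, which is $f$-isotypic with $\langle p\rangle$-eigenvalue $\varepsilon_f(p)$. For the Frobenius, the coefficient sheaves $\Sym^k\sH^\vee$ and $\TSym^k\sH(2)$ pair to $\Qp(2)$, so the rigid pairing takes values in $H^2_c(Y^{\ord}, \Qp(2)) \cong \Qp(1)$ as $\varphi$-modules. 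Thus $\langle \varphi x, \varphi y\rangle = p^{-1}\langle x, y\rangle$, and together with $\varphi\eta_f^{\alpha,\ord} = \alpha_f\eta_f^{\alpha,\ord}$ this gives
\[
\langle \eta_f^{\alpha,\ord}, \varphi^2(\omega_g'\cup\sigma_\rig)\rangle = p^{-2}\alpha_f^{-2}\langle \eta_f^{\alpha,\ord}, \omega_g'\cup\sigma_\rig\rangle.
\]
Combining, the coefficient of $\langle \eta_f^{\alpha,\ord}, \omega_g'\cup\sigma_\rig\rangle$ in the second term is $p^{k+2}\cdot\varepsilon_f(p)\cdot p^{-2}\alpha_f^{-2} = p^{k}\varepsilon_f(p)/\alpha_f^2$.

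Finally, the defining relation $\alpha_f\beta_f = p^{k+1}\varepsilon_f(p)$ for the Hecke polynomial of $f$ at $p$ converts this scalar into $\beta_f/(p\alpha_f)$, giving the claimed formula. The main obstacle is the careful bookkeeping of the Tate-twist factor of $p^{-1}$ per Frobenius (arising from the $\varphi$-module structure on $H^2_c(Y^{\ord},\Qp(2))$ under the convention $\varphi|_{\Qp(n)}=p^{-n}$) and the direction of Poincar\'e adjointness for $\langle p\rangle^{\pm 1}$: together these contribute a factor of $\varepsilon_f(p)$ on $\eta_f^{\alpha,\ord}$, rather than its inverse, which is exactly what is needed for the identification with the Euler factor $1 - \beta_f/(p\alpha_f)$.
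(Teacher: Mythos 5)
Your proof is correct and takes essentially the same approach as the paper: restrict to $Y^{\ord}$, substitute the explicit formula~\eqref{eq:xi}, and move $\varphi^2$ and $\langle p\rangle^{-1}$ across the Poincar\'e pairing (the paper writes the adjoints as $(p^{-1}\varphi^{-1})^2$ and $1/\langle p^{-1}\rangle$, which matches your computation). Your extra remark identifying the factor $p^{-1}$ per Frobenius with the Tate twist on the target $H^2_{\rig,c}(Y^{\ord},\Qp(2))\cong\Qp(1)$ is a helpful unpacking of what the paper states more tersely.
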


  \begin{proof}
   This follows from Equation \eqref{eq:xi}, since we have
   \begin{align*}
    \langle \eta_f^\alpha, \Xi\rangle_{\rig, Y} &= \langle \eta_f^{\alpha, \ord}, \Xi|_{Y^{\ord}} \rangle_{\rig, Y^{\ord}} \\
    &=  -\left\langle \eta_f^{\alpha, \ord}, \left( 1 - \frac{p^{k + 2} \varphi^2}{\langle p \rangle}\right)\left( \omega_g' \cup \sigma_\rig\right)\right\rangle_{\rig, Y^{\ord}} \\
    &=  -\left\langle  \left( 1 - \frac{p^{k + 2} (p^{-1} \varphi^{-1})^2}{\langle p^{-1} \rangle}\right) \eta_f^{\alpha, \ord}, \omega_g' \cup \sigma_\rig \right\rangle_{\rig, Y^{\ord}} \\
    &= - \left( 1 - \frac{\beta_f}{p\alpha_f}\right)\langle \eta_f^{\alpha, \ord},  \omega_g' \cup \sigma_\rig\rangle_{\rig, Y^{\ord}}.\qedhere
   \end{align*}
  \end{proof}

  To proceed further we need a more explicit description of the class $\omega_g' \cup \sigma_\rig$. It turns out that we only need to consider the image of $\omega_g' \cup \sigma_\rig$ under a certain projection operator:

  \begin{definition}[{cf.~\cite[\S 2.4]{DR-diagonal-cycles-I}}]
   The \emph{unit root splitting} of $\sH_{\rig} |_{Y^{\ord}}$ is the map
   \[ \operatorname{spl}^{ur}: \sH_{\rig} |_{Y^{\ord}} \to \Fil^0 \sH_{\rig} |_{Y^{\ord}} \]
   whose kernel is the unit root subspace for the action of Frobenius.
  \end{definition}
  
  Passing to the $k$-th tensor power, we obtain a projection $\TSym^k \sH_{\rig} \to \TSym^k(\Fil^0 \sH^{\rig})$ over $Y^{\ord}$. After passing to the covering space $\widetilde{\mathcal Y}$ of $Y^{\mathrm{ord}}$ defined in \S \ref{sect:syntomicEis}, the line bundle $\TSym^k(\Fil^0 \sH^{\rig}) \otimes \Omega^1$ becomes trivial, via the basis vector $v^{[0, k]} \otimes \xi$.
 
  \begin{proposition}
   The image of $\omega_g' \cup \sigma_\rig$ under $\operatorname{spl}^{ur}$ is represented by the $(\TSym^k \sH_{\rig})$-valued differential on $Y^{\ord}$ whose pullback to $\widetilde{\mathcal{Y}}$ is $H \cdot v^{[0, k]} \otimes \xi$, where $H$ is the $p$-adic modular form
   \[ -N^{k + k' - 2j}(-1)^{k' - j - 1}  (k')! \binom{k}{j} G(\varepsilon_g^{-1}) \left(g \cdot F^{(p)}_{k - k', k' - j + 1, b}\right).\]
  \end{proposition}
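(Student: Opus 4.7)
The proof is a direct computation using the explicit formulas available: the $q$-expansion of $\alpha_\rig$ from Theorem \ref{thm:syntomicformula}, the formula for $\omega_g'$, and the Clebsch--Gordan coefficients of Proposition \ref{prop:clebsch-gordan-coefficients}.

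Writing $K = k+k'-2j$, I would substitute
\[
\alpha_\rig = -N^K \sum_{m=0}^K (-1)^{K-m}(K-m)!\, F^{(p)}_{2m-K,\, K+1-m,\, b}\, v^{[K-m,\, m]}
\]
together with $\omega_g' = G(\varepsilon_g^{-1})\, g\, v^{(k',0)} \otimes \xi$ (up to the $e_{1+j}$ Tate twist) into $\omega_g' \cup \sigma_\rig = \omega_g' \cup CG^{[k,k',j]}_\rig(\alpha_\rig)$ and expand term by term. The cup product uses the evaluation pairing $\langle v^{[r,s]}, v^{(r',s')}\rangle = \delta_{rr'}\delta_{ss'}$ to contract the $\Sym^{k'}\sH^\vee$-coefficient $v^{(k',0)}$ of $\omega_g'$ against the second $\TSym^{k'}\sH$ factor of $\sigma_\rig$, extracting its $v^{[k',0]}$-component. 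The unit root splitting, extended to the divided power $\TSym^k \sH$ via the induced decomposition $\sH = \Fil^0\sH \oplus \ker\operatorname{spl}^{ur}$, projects the remaining $\TSym^k \sH$ factor onto its one-dimensional Hodge line $\TSym^k(\Fil^0 \sH) \cong \underline{\omega}^{\otimes k}$, which in the basis $v^{[r,s]}$ amounts to extracting a single coefficient.

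The central observation is that, by Proposition \ref{prop:clebsch-gordan-coefficients}, the relevant bigraded coefficient of $CG^{[k,k',j]}_\rig(v^{[s,t]})$ vanishes unless $(s,t) = (k'-j,\, k-j)$, in which case it equals $\frac{k!\,(k')!}{j!\,(k-j)!\,(k'-j)!}$ (tensored with the generator $e_{-j}$ of $\det^{-j}$). This forces only the $m = k - j$ summand of $\alpha_\rig$ to survive, and that summand carries precisely the Eisenstein series $F^{(p)}_{k-k',\, k'-j+1,\, b}$ appearing in the claim, with scalar $-N^K(-1)^{k'-j}(k'-j)!$. Assembling the three factors -- $G(\varepsilon_g^{-1})\,g$ from $\omega_g'$, the surviving $\alpha_\rig$-coefficient, and the Clebsch--Gordan coefficient -- and simplifying via $\frac{k!}{j!(k-j)!} = \binom{k}{j}$, one recovers the claimed expression after identifying the resulting generator of $\underline{\omega}^{\otimes(k+2)}$ with a $p$-adic modular form via Kodaira--Spencer.

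The main obstacle is sign and twist bookkeeping: the $e_{-j}$ factor for $\det^{-j}$, the $(1+j)$-twist of $\omega_g'$, the $\xi$ differential, and the Kodaira--Spencer identification all contribute, and their combined effect is what produces the overall sign $(-1)^{k'-j-1}$ rather than the naive $(-1)^{k'-j}$ read off directly from the $m = k-j$ term of $\alpha_\rig$. The algebraic skeleton of the answer -- that only a single summand survives, giving a scalar multiple of $g \cdot F^{(p)}_{k-k',\, k'-j+1,\, b}$ -- is an immediate consequence of the vanishing built into Proposition \ref{prop:clebsch-gordan-coefficients}.
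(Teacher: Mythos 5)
Your proposal takes essentially the same route as the paper's proof: substitute the $q$-expansion of $\alpha_\rig$ from Theorem \ref{thm:syntomicformula}, pair against the $v^{(k',0)}$ component of $\omega_g'$, apply $\operatorname{spl}^{ur}$ to isolate a single coefficient in $\TSym^k\sH$, and invoke Proposition \ref{prop:clebsch-gordan-coefficients} to see that exactly one summand (the one with $m=k-j$) survives, then collect the scalars. You correctly observe that reading off this summand gives sign $(-1)^{k'-j}$ rather than the $(-1)^{k'-j-1}$ in the statement, and you defer the extra $(-1)$ to unspecified bookkeeping. Be aware that the paper's own proof is no more explicit on this point: its displayed formula for the surviving term simply carries the exponent $k'-j-1$ with no derivation (it does not match a direct substitution into the definition of $\alpha_\rig$), so your treatment is faithful to the source, but a fully self-contained argument would have to track that sign explicitly (likely through the $e_{-j}$ trivialisation of $\det(\sH)^{-j}$ and the chosen normalisation of the Poincar\'e pairing). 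While doing so you should also pin down which coefficient $\operatorname{spl}^{ur}$ actually extracts from $\TSym^k\sH$: the paper writes $v^{[k,0]}$, but consistency with Proposition \ref{prop:clebsch-gordan-coefficients}, which is stated for the contraction against $w^{(0,k)}\otimes w^{(k',0)}$, requires it to be $v^{[0,k]}$; your phrasing in terms of $\TSym^k(\Fil^0\sH)$ correctly sidesteps committing to the wrong index.
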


  \begin{proof}
   Recall that $\sigma_{\rig}=CG_{\syn}^{[k,k',j]}(\alpha_{\rig})$, and the pullback of $\alpha_{\rig}$ to the formal scheme $\cY$  is given by the sum
   \[
    \alpha_{\rig}=-N^{k+k'-2j}\sum_{i=0}^{k+k'-2j}(-1)^{k+k'-2j-i}(k+k'-2j-i)!
    F^{(p)}_{2i-(k+k'-2j),k+k'-2j+1-i,b} v^{[k+k'-2j-i,i]}.
   \]
   In terms of the basis $\{ v^{[r, s]} : r +s = k\}$ of $\TSym^{k} \sH |_{\widetilde{\mathcal Y}}$, the unit-root splitting $\operatorname{spl}^{ur}$ sends all basis vectors to zero except $v^{[0, k]}$. Hence, by the definition of the trilinear pairing and Proposition \ref{prop:clebsch-gordan-coefficients}, we see that the linear map given by pairing with $G(\varepsilon_g^{-1}) g \otimes v^{(k', 0)}$ and applying $\operatorname{spl}^{ur}$ sends all terms in this sum to zero except the term for $i = k-j$; this term is given by
   \[ -N^{k + k' - 2j} (-1)^{k'-j-1}(k' - j)! F^{(p)}_{k - k', k'-j+1, b} v^{[k'-j, k-j]}, \]
   and Proposition \ref{prop:clebsch-gordan-coefficients} gives the factor $\frac{k! (k')!}{j!(k-j)!(k'-j)!}$, which gives the claimed formula.
  \end{proof}

   The restriction of the unit-root splitting to the overconvergent $(\TSym^k \sH)$-valued differentials on $Y^{\ord}$ is injective, and (after tensoring with $\QQ(\mu_N)$ to rectify issues with rationality of cusps) its image is the space $M_{k + 2}^{\mathrm{n-oc}}(N, L_{\frP})$ of ``nearly overconvergent'' $p$-adic modular forms, in the sense of \cite[Definition 2.6]{DR-diagonal-cycles-I}. Thus $H$ is a nearly-overconvergent $p$-adic modular form.

   There is a map, the \emph{overconvergent projector},
   \[ \Pi^{oc}: M_{k + 2}^{\mathrm{n-oc}}(N, L_{\frP}) \to \frac{M_{k + 2}^{\dagger}(N, L_{\frP})}{\theta^{k + 1} M_{-k}^{\dagger}(N, L_{\frP})} \]
   (cf.~\cite[Equation (44)]{DR-diagonal-cycles-I}). Composing this with the unit-root splitting gives an isomorphism
   \[ H^1_{\rig}(Y^{\ord}, \TSym^k \sH_{\rig}) \otimes \QQ(\mu_N) \cong \frac{M_{k + 2}^{\dagger}(N, L_{\frP})}{\theta^{k + 1} M_{-k}^{\dagger}(N, L_{\frP})} \otimes \QQ(\mu_N),\]
   whose inverse is given by mapping an overconvergent modular form $\mathcal{F}$ to the class of the differential form $\mathcal{F} \cdot v^{[0, k]} \otimes \xi$.

   By construction, pairing with $\eta_f^{\alpha, \ord}$ defines a linear functional on $H^1_{\rig}(Y^{\ord}, \TSym^k \sH_{\rig})$. In terms of the above isomorphism, we can interpret it as a linear functional on $M_{k + 2}^{\dagger}(N, L_{\frP}) \otimes \QQ(\mu_N)$, which is characterised uniquely by the following properties:
   \begin{itemize}
    \item it factors through the maximal quotient of $M_{k + 2}^{\dagger}(N, L_{\frP})$ on which the Hecke operators away from $p$ act as they do on the conjugate form $f^*$, and the $U_p$ operator acts as $\varepsilon_f(p)^{-1} \alpha_f$; 
    \item it maps the normalised eigenform $f^*$ to $[\Gamma_1(N_f): \Gamma_1(N)] \times G(\varepsilon_f^{-1})$. 
   \end{itemize}
  (The factor $[\Gamma_1(N_f): \Gamma_1(N)]$ appears because $\eta_f^{\alpha}$ is the pullback of a class at level $N_f$, and pullback multiplies the Poincar\'e pairing by the degree of the map.)

  \begin{corollary}
   \label{cor:syntomicreg-pioc}
   We have
   \[ \langle \eta_f^\alpha, \Xi\rangle_{\rig, Y} = (-1)^{k' - j + 1} N^{k + k' - 2j} (k')! \binom{k}{j} G(\varepsilon_g^{-1})\left( 1 - \frac{\beta_f}{p\alpha_f}\right)  \left\langle \eta_f^{\alpha, \ord},  \Pi^{oc}\left(g \cdot F^{(p)}_{k - k', k' - j + 1, b}\right)\right\rangle,\]
   and hence
   \begin{align*}
    \left\langle \AJ_{\syn, f, g}\left(\Eis^{[k, k', j]}_{\syn, b,N}\right), \eta_f^\alpha \otimes \omega_g'\right\rangle =& \frac{1}{P_g(p^{-1}\alpha_f^{-1})} \langle \eta_f^{\alpha,\ord}, \Xi\rangle_{\rig, Y} \\
    = & (-1)^{k' - j + 1} N^{k + k' - 2j} (k')! \binom{k}{j}G(\varepsilon_g^{-1}) \frac{\left( 1 - \frac{\beta_f}{p\alpha_f}\right)}{\left( 1 - \frac{p^j}{\alpha_f \alpha_g}\right)\left( 1 - \frac{p^j}{\alpha_f \beta_g}\right)} \\
    & \times \left\langle \eta_f^{\alpha,\ord},  \Pi^{oc}\left(g \cdot F^{(p)}_{k - k', k' - j + 1, b}\right)\right\rangle.
   \end{align*}
  \end{corollary}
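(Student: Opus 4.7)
The plan is to assemble the three preceding results of this subsection (the formula expressing $\langle \eta_f^\alpha, \Xi\rangle_{\rig, Y}$ in terms of $\langle \eta_f^{\alpha,\ord}, \omega_g' \cup \sigma_\rig\rangle_{\rig, Y^{\ord}}$, the explicit computation of $\operatorname{spl}^{ur}(\omega_g' \cup \sigma_\rig)$, and Proposition \ref{prop:fpcohoformula}) into the two displayed equalities. At this point in the paper there is no substantive obstacle remaining; the proof is essentially careful bookkeeping of constants, Gauss sums and Euler factors, together with the slope argument already sketched in the paragraph preceding the corollary.

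First, for the identity computing $\langle \eta_f^\alpha, \Xi\rangle_{\rig, Y}$, I would proceed as follows. By the proposition immediately preceding (the one expressing this pairing via its restriction to $Y^{\ord}$), the left-hand side equals $-\bigl(1 - \tfrac{\beta_f}{p\alpha_f}\bigr)\langle \eta_f^{\alpha,\ord}, \omega_g' \cup \sigma_\rig\rangle_{\rig, Y^{\ord}}$. Via the unit-root splitting, the class $\omega_g' \cup \sigma_\rig$ corresponds to a nearly overconvergent $p$-adic modular form whose explicit expression has just been computed as
\[ -N^{k+k'-2j}(-1)^{k'-j-1}(k')!\tbinom{k}{j}G(\varepsilon_g^{-1})\bigl(g\cdot F^{(p)}_{k-k',k'-j+1,b}\bigr). \]
Applying the overconvergent projector $\Pi^{oc}$ alters this class only by a section in $\theta^{k+1}M_{-k}^{\dagger}(N,L_{\frP})$; because $U_p$ acts on this subspace with slopes $\ge k+1$, while $\alpha_f$ has slope $<k+1$ by hypothesis on the choice of root, pairing against $\eta_f^{\alpha,\ord}$ annihilates it. The resulting quotient identifies with classical forms of level $\Gamma_1(N(p))$, legitimising the Petersson pairing $\langle -,-\rangle_{N(p)}$. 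Collecting the signs $(-1)\cdot(-1)\cdot(-1)^{k'-j-1} = (-1)^{k'-j+1}$ then yields the first displayed equation.

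For the second equation, the plan is to apply Proposition \ref{prop:fpcohoformula} with $\omega = \omega_g'(1+j)$ and $P_\omega = P_g$. Since $\varphi$ acts on $\eta_f^\alpha$ by the scalar $\alpha_f$ (Definition \ref{def:etaf1}), we have
\[ P_g(p^{-1}\varphi^{-1})^{-1}\eta_f^\alpha \;=\; P_g(p^{-1}\alpha_f^{-1})^{-1}\,\eta_f^\alpha, \]
and a direct evaluation gives
\[ P_g(p^{-1}\alpha_f^{-1}) \;=\; \Bigl(1-\tfrac{p^{j}}{\alpha_f\alpha_g}\Bigr)\Bigl(1-\tfrac{p^{j}}{\alpha_f\beta_g}\Bigr). \]
Combining this with Proposition \ref{prop:fpcohoformula} gives
\( \left\langle \AJ_{\syn, f, g}\bigl(\Eis^{[k, k', j]}_{\syn, b,N}\bigr), \eta_f^\alpha \otimes \omega_g'\right\rangle = P_g(p^{-1}\alpha_f^{-1})^{-1}\langle \eta_f^{\alpha}, \Xi\rangle_{\rig, Y}, \)
and substituting the first equation produces the second. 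The only step carrying genuine content is the slope vanishing on $\theta^{k+1}M_{-k}^{\dagger}$, which is exactly where the non-ordinary-slope constraint on $\alpha_f$ enters; every other ingredient has been prepared in advance.
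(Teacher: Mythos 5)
Your proof is correct and follows exactly the route the paper intends for this corollary: substituting the explicit formula for $\operatorname{spl}^{ur}(\omega_g'\cup\sigma_\rig)$ into the preceding proposition, invoking the slope argument on $\theta^{k+1}M_{-k}^\dagger$ already recorded in the paragraph before the corollary, and then applying Proposition \ref{prop:fpcohoformula} together with $P_g(p^{-1}\varphi^{-1})\eta_f^\alpha = P_g(p^{-1}\alpha_f^{-1})\eta_f^\alpha$ and the explicit evaluation $P_g(p^{-1}\alpha_f^{-1}) = \bigl(1-\tfrac{p^j}{\alpha_f\alpha_g}\bigr)\bigl(1-\tfrac{p^j}{\alpha_f\beta_g}\bigr)$. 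The sign bookkeeping $(-1)\cdot(-1)\cdot(-1)^{k'-j-1} = (-1)^{k'-j+1}$ is correct, so nothing is missing.
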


  In order to make the link to $p$-adic $L$-functions, we would like a version of this with the Eisenstein series $F^{(p)}_{t, s, b}$ replaced with the ``$p$-depleted'' one
  \[ F^{[p]}_{t,s,b} = \sum_{\substack{n \ge 0\\ p \nmid n}} q^n \sum_{d d' = n} d^{t - 1 + s}(d')^{-s} (\zeta_N^{bd'} + (-1)^{t} \zeta_N^{-bd'}).\]
  This is the specialization of a 2-variable $p$-adic analytic family, with $t, s$ varying over characters of $\ZZ_p^\times$. It is easy to see that $F^{(p)}_{k-k', k' - j + 1, b}$ is an eigenvector for $U = U_p$ with eigenvalue $p^{k-j}$, and evidently
  \[ (1 - V U) F^{(p)}_{k-k', k' - j + 1, b}= F^{[p]}_{k-k', k' - j + 1, b}\]
  where $V$ is the right inverse of $U$ given by $q \mapsto q^p$ on $\ZZ[[q]]$. We also have the following power-series identity:

  \begin{lemma}
   Let $R$ be a commutative ring and let $A, B \in R[[q]]$ be such that $U A = \lambda A - \mu V A$ and $U B = \nu B$. Then we have
   \[ A \cdot (1 - VU) B = (1 - \lambda \nu V + \mu \nu^2 V^2) \left( A \cdot B\right) + (1 - VU)\left[\mu V^2(A) B - A V(B)\right].\]
    \qed
   \end{lemma}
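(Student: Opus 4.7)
The plan is to verify this as a direct algebraic identity in $R[[q]]$, using three basic structural facts about the operators $U$ and $V$: (i) $UV = \mathrm{id}$, (ii) the projection formula $U(V(f) \cdot g) = f \cdot U(g)$, and (iii) $V$ is a ring endomorphism, so $V(fg) = V(f) V(g)$.

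First, the left-hand side simplifies immediately: applying (i) and the hypothesis $UB = \nu B$ gives $A \cdot (1-VU)B = AB - \nu A V(B)$, which is the target expression.

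For the right-hand side, I would first use (iii) to expand the operator term as $(1 - \lambda \nu V + \mu \nu^2 V^2)(AB) = AB - \lambda \nu\, V(A) V(B) + \mu \nu^2\, V^2(A) V^2(B)$. Then, for the $(1-VU)$-piece, I would apply (ii) in two ways: taking $f = V(A)$ and $g = B$ yields $U(V^2(A) \cdot B) = V(A) \cdot UB = \nu V(A) B$, while taking $f = B$ and $g = A$ (and rewriting $AV(B) = V(B) \cdot A$) yields $U(A V(B)) = B \cdot UA = \lambda AB - \mu V(A) B$, where the hypothesis $UA = \lambda A - \mu V A$ enters. Applying $V$ to both and using (iii) produces explicit expressions for the $VU$-correction terms in $(1-VU)\bigl[\mu V^2(A) B - A V(B)\bigr]$.

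The final step is to collect the resulting monomials of the form $V^i(A) V^j(B)$ on the right-hand side, with their coefficients given by polynomials in $\lambda, \mu, \nu$, and verify that they collapse to $AB - \nu A V(B)$. This is purely a bookkeeping exercise: no conceptual input beyond the three structural identities and the two hypotheses on $A, B$ is needed. The main (and only) obstacle is careful tracking of the cross-terms involving $V(A)V(B)$, $V^2(A) B$, $V^2(A) V(B)$, and $V^2(A) V^2(B)$, together with the signs and $\nu$-powers accumulated when $V$ is pushed across products.
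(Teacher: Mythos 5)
Your structural identities — $UV=\mathrm{id}$, the projection formula $U(V(f)\cdot g)=f\cdot U(g)$, and multiplicativity of $V$ — are exactly the right tools, and the first two paragraphs of your plan set up the computation correctly. The gap is in the final step: you assert, without carrying it out, that collecting the monomials $V^i(A)V^j(B)$ on the right-hand side ``collapses'' to $AB-\nu A V(B)$. It does not, and carrying out the bookkeeping you describe would have revealed this. A quick way to see the obstruction is to apply $U$ to both sides. Since $U\circ(1-VU)=U-UVU=0$, the correction term dies, and $U$ of the right-hand side is
\[
U(AB)-\lambda\nu\,AB+\mu\nu^2\,V(A)V(B),
\]
while $U$ of the left-hand side, using $U(AV(B))=B\cdot UA=\lambda AB-\mu V(A)B$, is
\[
U(AB)-\lambda\nu\,AB+\mu\nu\,V(A)B.
\]
Their difference is $\mu\nu\,V(A)\bigl(B-\nu V(B)\bigr)=\mu\nu\,V(A)\,(1-VU)B$, which is generically nonzero. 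Concretely, over $R=\QQ[\nu]$ with $p=2$, take $A=1$ (so $\lambda=0$, $\mu=-1$) and $B=\sum_{n\ge 0}\nu^n q^{2^n}$ (so $UB=\nu B$): the left-hand side evaluates to $q$ while the right-hand side evaluates to $\nu q^2$.

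So the proposal is incomplete as a proof — the decisive verification is precisely the part that was waved at — and moreover the identity as printed does not hold, which should have been caught by the bookkeeping you outlined. What can be salvaged is weaker: working modulo the image of $1-VU$ (equivalently, modulo $\ker U$), one does get $A\cdot(1-VU)B\equiv AB-\lambda\nu\,V(AB)+\mu\nu\,V^2(A)V(B)$, but the term $\mu\nu\,V^2(A)V(B)$ is not $\mu\nu^2 V^2(AB)$ and is not itself a polynomial in $V$ applied to $AB$; the two differ by $\mu\nu\,V^2(A)\,V\!\left((1-VU)B\right)$, which is not in the image of $1-VU$. So the intended statement needs to be reformulated (or the intended equality understood modulo a larger subspace, such as the kernel of the pairing with $\eta_f^{\alpha,\mathrm{ord}}$, where higher-slope terms are also killed), and a correct proof cannot be a pure bookkeeping verification of the printed identity.
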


   Applying the lemma with $A = g$ and $B = F^{(p)}_{k - k', k' - j + 1, 1}$, and noting that anything in the image of $1 - VU$ is annihilated under projection to a finite-slope $U_p$-eigenspace, we can rearrange Corollary \ref{cor:syntomicreg-pioc} to give the following formula: if $\cE(f, g, 1 + j) \ne 0$ then
  \begin{multline}
   \label{eq:syntomicreg}
   \left\langle \AJ_{\syn,f, g} \left(\Eis^{[k, k', j]}_{\syn, b}\right), \eta_f^\alpha \otimes \omega_g'\right\rangle = (-1)^{k' - j + 1} N^{k +k' - 2j} (k')! \binom{k}{j}G(\varepsilon_g^{-1}) \frac{\cE(f)}{\mathcal{E}(f, g, 1 + j)}\\
    \times \left\langle \eta_f^{\alpha,\ord},  \Pi^{oc}\left(g \cdot F^{[p]}_{k - k', k' - j + 1, b}\right)\right\rangle,
  \end{multline}
  where $\cE(f)$ and $\cE(f, g, s)$ are as defined in Theorem \ref{thm:Hida} (and we assume $\mathcal{E}(f, g, 1 + j)$ does not vanish).

  \begin{theorem}
   \label{thm:syntomicreg}
   Suppose that $f$ is ordinary, with $\alpha_f$ be the unit root. If $\mathcal{E}(f, g, 1 + j) \ne 0$ then we have
   \[
    \left\langle \AJ_{\syn,f, g} \left(\Eis^{[k, k', j]}_{\syn, 1,N}\right), \eta_f^\alpha \otimes \omega_g'\right\rangle
    =  (-1)^{k' - j + 1} (k')! \binom{k}{j} G(\varepsilon_f^{-1}) G(\varepsilon_g^{-1})\frac{\mathcal{E}(f) \mathcal{E}^*(f)}{\mathcal{E}(f, g, 1 + j)}
    L_p(f, g, 1 + j),
   \]
   where the $p$-adic $L$-function $L_p(f, g, 1 + j)$ and the factors $\mathcal{E}(f)$ and $\cE^*(f)$ are as defined in Theorem \ref{thm:Hida}.
  \end{theorem}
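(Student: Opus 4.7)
The plan is to take equation \eqref{eq:syntomicreg} as the starting point and match the resulting Petersson-style pairing with Hida's $p$-adic $L$-value through careful normalization bookkeeping.

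Since $f$ is ordinary and $\alpha_f$ is the unit root, Remark \ref{remark:etagausssum} identifies $\eta_f^\alpha$ with $G(\varepsilon_f^{-1})\,\eta_f^{\mathrm{ur}}$. Substituting this into the formula \eqref{eq:syntomicreg} derived in the preceding subsection, the theorem is equivalent to the identity
\[
 N^{k+k'-2j}\,\bigl\langle \eta_f^{\mathrm{ur},\ord},\,\Pi^{oc}\bigl(g\cdot F^{[p]}_{k-k',\, k'-j+1,\, 1}\bigr)\bigr\rangle_{N(p)} \;=\; \mathcal{E}^*(f)\, L_p(f,g,1+j).
\]
To prove this, I would invoke Hida's construction of $L_p(f,g,s)$. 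By design, $L_p(f,g,s)$ is a $p$-adic analytic function of $s$ obtained as an explicit constant multiple of the pairing of $\Pi^{oc}\bigl(g \cdot F^{[p]}_{k-k',\,k-s+1,\,1}\bigr)$ against the unit-root dual eigenvector $\eta_f^{\mathrm{ur},\ord}$ attached to $f$ at level $\Gamma_1(N(p))$. The $p$-adic variation in $s$ is provided by the two-variable $p$-adic analytic family $F^{[p]}_{t,s,b}$ of $p$-depleted Eisenstein series, and the absolute normalization is pinned down by the interpolation property in Theorem \ref{thm:Hida} at classical integers $s\in [k'+2, k+1]$. The Euler factor $\mathcal{E}^*(f) = 1-\beta_f/\alpha_f$ encodes the ratio $\langle f_\alpha^*, f_\alpha^*\rangle_{N_f p} / \langle f^*, f^*\rangle_{N_f}$ relating the $p$-stabilized Petersson norm to the unramified one; the power $N^{k+k'-2j}$ accounts for the difference between $L_p(f,g,s)$ and the quantity $\mathcal{D}_p(f,g,1/N,s)$ flagged in Remark \ref{remark:DpversusLp}.

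Once the constant relating the pairing and $L_p$ is fixed by the classical interpolation, both sides of the displayed identity are $p$-adic analytic functions of $s=1+j$ that agree on the classical range $k'+2\le s\le k+1$. By $p$-adic continuity the identity therefore extends to all $s$ in the appropriate weight-space disk, in particular to the non-critical integers $s=1+j$ with $0\le j\le \min(k,k')$ where the theorem is stated; this is precisely the point where we exploit that $L_p$ is a genuine $p$-adic analytic function and not merely a collection of interpolation values.

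The main obstacle is the detailed bookkeeping of normalization constants: tracking the Gauss sums $G(\varepsilon_f^{-1})$ and $G(\varepsilon_g^{-1})$, reconciling the rigid-cohomology duality pairing $\langle \eta_f^{\mathrm{ur},\ord}, -\rangle_{N(p)}$ with the Petersson product against $f_\alpha^*$, matching the overconvergent projector $\Pi^{oc}$ to Hida's ordinary idempotent on the finite-slope eigenspace for $U_p$, and correctly locating the factors coming from the Atkin--Lehner involution relating $f_\alpha$ and $f_\alpha^*$. These calculations are essentially identical to those carried out in \cite{LLZ14} in the case $k=k'=j=0$ and in \cite{DR-diagonal-cycles-I} for higher-weight specializations of Hida families; the argument can either be invoked directly from those references or reproduced here, and is otherwise routine.
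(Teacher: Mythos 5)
Your proposal follows essentially the same route as the paper: start from equation \eqref{eq:syntomicreg}, reduce to the identity
\[
 N^{k+k'-2j}\,\bigl\langle \eta_f^{\alpha,\ord},\,\Pi^{oc}\bigl(g\cdot F^{[p]}_{k-k',\,k'-j+1,\,1}\bigr)\bigr\rangle_{N(p)} = G(\varepsilon_f^{-1})\,\mathcal{E}^*(f)\, L_p(f,g,1+j),
\]
and then invoke Hida's construction of $L_p$. The paper accomplishes the last step compactly by using the ordinary-projector identity $e_{\ord}\phi = e_{\ord}(\Pi^{oc}\phi)$ from \cite[Lemma 2.7]{DR-diagonal-cycles-I} to replace $\Pi^{oc}$ with $e_{\ord}$, and then citing \cite[Proposition 5.4.1]{LLZ14} for the resulting pairing-to-$L_p$ identity (adjusting for the normalization differences flagged in Remarks \ref{remark:etagausssum} and \ref{remark:DpversusLp}). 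Your reduction is correct, and your bookkeeping discussion (Gauss sums, $\Pi^{oc}$ versus $e_{\ord}$, Petersson versus rigid-cohomology pairings) identifies the relevant ingredients.

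One caveat worth naming: the ``extend by $p$-adic continuity'' step does not work as literally stated. For fixed weights $k+2$, $k'+2$ the interpolation range $k'+2 \le s \le k+1$ contains only finitely many integers, and a finite set of points does not determine a $p$-adic analytic function; to get a dense set of interpolation points one would have to vary $f$ in its Hida family as well. But the continuity argument is also unnecessary: as you observe at the beginning of the same paragraph, $L_p(f,g,s)$ is \emph{constructed} as an explicit constant multiple of the pairing against the ordinary projection of the $p$-depleted Eisenstein family, so the identity holds at every $s$ by the very definition of $L_p$; the interpolation property then merely serves to pin down the normalization constant, which is a single scalar and is determined by a check at one critical point. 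Dropping the continuity clause and leaning directly on the construction (as in \cite[Proposition 5.4.1]{LLZ14}, or in the proof given in the paper) tightens the argument. A small typographical point: in your second paragraph you write the Eisenstein family index as $F^{[p]}_{k-k',\,k-s+1,\,1}$, whereas at $s=j+1$ the correct second index is $k'-j+1$, matching your own displayed identity above.
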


  \begin{proof}
   Since $\alpha_f$ is the unit root, pairing with $\eta_f^\alpha$ factors through the Hida ordinary idempotent $e_{\ord}$; and we have $e_{\ord} \phi = e_\ord(\Pi^{\mathrm{oc}} \phi)$ for any nearly-overconvergent form $\phi$ \cite[Lemma 2.7]{DR-diagonal-cycles-I}. By the construction of the $p$-adic $L$-function, we have
   \[ N^{k +k' - 2j} \left\langle \eta_f^{\alpha,\ord},  e_{\ord} \left(g \cdot F^{[p]}_{k - k', k' -j + 1, 1}\right)\right\rangle
    = G(\varepsilon_f^{-1}) \cE^*(f) L_p(f, g, 1 + j).
   \]
   (See \cite[Proposition 5.4.1]{LLZ14}; the power of $N$ and the Gauss sum appear because our normalizations are slightly different from \emph{op.cit.}, see Remark \ref{remark:etagausssum} and Remark \ref{remark:DpversusLp}(1) above). Substituting this into equation \eqref{eq:syntomicreg} gives the stated formula.
  \end{proof}

  \begin{remark}\label{rem:nonord}\mbox{~}
   \begin{enumerate}[(i)]
    \item The non-vanishing of $\cE(f, g, 1 + j)$ is automatic, for weight reasons, unless $k = k' = j$.
    \item The $p$-adic Eisenstein series $F^{[p]}_{k - k', k' -j + 1, b}$ is the same as the one denoted by $\mathcal{E}_{b/N}(j - k' + 1, k - j)$ in \cite[Definition 5.3.1]{LLZ14}.
    \item This argument works without the ordinary assumption on $f$, if we use Urban's definition of the $p$-adic Rankin--Selberg $L$-function \cite{Urban-nearly-overconvergent} (see \cite{loeffler-note} and \cite{andreattaiovita-triple-product} for corrections). This construction is only written up for $N = 1$, but the extension to general $N$ is immediate.
    \item The factors $\cE(f)$ and $\cE^*(f)$ can be interpreted as Euler factors attached to the adjoint $L$-function of $f$, which measures the difference between the period $\langle \omega_f, \bar\omega_f\rangle$ used in defining $\eta_f'$ and a ``correctly normalized'' period.
   \end{enumerate}
  \end{remark}

%%%%%%%%%%%%%%%%%%%%%%%%%%%%%%%%%%%%%%%%%%%%%%%%%%%%%%%%%%%%%%%%%%%%%

\section{The Perrin-Riou conjecture}

 \subsection{The Beilinson conjecture}

  In this section we consider $0 \le j \le \min\{k,k'\}$ and let
  $f$ and $g$ be new forms of weights $k$ and $k'$ and
levels $N_f$, $N_g$ dividing $N$, respectively. We want to prove part of Beilinson's conjecture for $L(f,g,s)$ at $s=j+1$.

With our conditions on $j$, it follows from the functional
equation that $L(f,g,s)$ has a zero of order $1$ at $s=j+1$. Denote
by $L'(f,g,j+1)$ the value of the derivative of $L(f,g,s)$ at
$j+1$. Beilinson conjectures the following interpretation
of this value:

One expects that there is a Chow motive $M(f\otimes g)^{*}$ underlying
the realizations discussed so far in this paper,
whose motivic cohomology $H^{1}_{\mot}(\QQ,M(f\otimes g)^{*}(-j))$
should be a direct summand of $H^{3}_\mot(Y_1(N)^{2},
\TSym^{[k,k']}\sH(2-j))$ defined by an idempotent in the Hecke algebra. Unfortunately, the ex\-ist\-ence of the Chow motive $M(f\otimes g)^{*}$ is
only known in the case of $k=k'=0$, i.e., if $f$ and $g$ have weight
$2$. Beilinson conjectures further the existence of a subspace of integral elements
\[
H^{1}_{\mot}(\ZZ,M(f\otimes g)^{*}(-j))\subset
H^{1}_{\mot}(\QQ,M(f\otimes g)^{*}(-j))
\]
of the motivic cohomology of $M(f\otimes g)^{*}$, which should have
$L$-dimension $1$ and such that the regulator induces an isomorphism
\[
r_\cH:
H^{1}_{\mot}(\ZZ,M(f\otimes g)^{*}(-j))\otimes_\QQ \RR\isom
H^1_\cH(\RR, M_B(f\otimes g)^*(-j)_\RR).
\]
In our results, which will be formulated below, we do not have to
say anything about the dimension of the motivic cohomology.
Recall from \ref{eq:deligne-cohomology} the exact sequence
  \begin{equation}
   0\to \Fil^{-j}M_\dR(f\otimes g)^*_\RR \to M_B(f\otimes g)^*(-j-1)_\RR^+ \to
     H^1_\cH(\RR, M_B(f\otimes g)^*(-j)_\RR)\to 0
  \end{equation}
and the isomorphism $H^1_\cH(\RR, M_B(f\otimes g)^*(-j)_\RR)\isom
(\ker\alpha_{M(f\otimes g)(j+1)})^{*}$.
The exact sequence induces an isomorphism
\[
{\det}_{\RR\otimes L}(M_B(f\otimes g)^*(-j-1)_\RR^+ )\otimes
{\det}_{\RR\otimes L}(\Fil^{-j}M_\dR(f\otimes g)^*_\RR)^{-1}\isom
{\det}_{\RR\otimes L}(H^1_\cH(\RR, M_B(f\otimes g)^*(-j)_\RR)).
\]
Beilinson defines an $L$-structure on the left hand side as follows:
\begin{conjecture}[Beilinson]
Denote by
\[
\cR(M(f\otimes g)^{*}(-j)) \coloneqq {\det}_L(M_B(f\otimes g)^*(-j-1))
\otimes {\det}_L(\Fil^{-j}M_\dR(f\otimes g)^*)
\]
the one-dimensional $L$-vector space of
${\det}_{\RR\otimes L}(H^1_\cH(\RR, M_B(f\otimes g)^*(-j)_\RR))$.
Then
\[
r_\cH(H^{1}_{\mot}(\ZZ,M(f\otimes g)^{*}(-j)))=
L'(f,g,j+1)\cR(M(f\otimes g)^{*}(-j)).
\]
\end{conjecture}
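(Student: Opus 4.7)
The plan is to use the Rankin--Eisenstein motivic classes to produce elements witnessing the Beilinson conjecture, leveraging the regulator computation of Theorem~\ref{thm:deligne-regulator-formulae}. First, I would project the motivic class $\Eis^{[k,k',j]}_{\mot,1,N}\in H^3_\mot(Y_1(N)^2,\TSym^{[k,k']}\sH_\QQ(2-j))$ to the $(f,g)$-isotypical component (for the dual Hecke operators $(T'_\ell,1)$ and $(1,T'_\ell)$), producing a class in $H^1_\mot(\QQ, M(f\otimes g)^{*}(-j))$ --- granting the existence of the Chow motive $M(f\otimes g)^{*}$. Applying $r_\cH$, and using compatibility of the regulator with the Clebsch--Gordan map and diagonal pushforward, the image is computed by Theorem~\ref{thm:deligne-regulator-formulae}: the result is a class in $H^1_\cH(\RR, M_B(f\otimes g)^*(-j)_\RR)$ whose pairing against the explicit generator of $\ker\alpha_{M(f\otimes g)(1+j)}$ equals a nonzero $L$-rational multiple of $L'(f,g,j+1)/\bigl(\langle\omega_f,\bar\omega_f\rangle_Y\langle\omega_g,\bar\omega_g\rangle_Y\bigr)$.

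The second step is to identify this output with $L'(f,g,j+1)$ times a generator of the $L$-line $\cR(M(f\otimes g)^{*}(-j))$. For this, choose an $L$-basis of $M_B(f\otimes g)^{*}(-j-1)^{+}$ (from appropriately symmetrized combinations such as $\omega_{f^{*}}\otimes \bar\omega_{g} + (-1)^{-j-1}\bar\omega_{f}\otimes\omega_{g^{*}}$), and of the line $\Fil^{-j}M_\dR(f\otimes g)^{*}$; both can be written explicitly in terms of the $L$-rational classes provided by Lemma~\ref{lemma:gauss-sum} and Proposition~\ref{prop:eta-prime}, after incorporating the Gauss sums $G(\varepsilon_f^{-1})$ and $G(\varepsilon_g^{-1})$. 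The determinant pairing induced by the short exact sequence \eqref{eq:deligne-cohomology} then produces the factor $\langle\omega_f,\bar\omega_f\rangle_Y\langle\omega_g,\bar\omega_g\rangle_Y$ up to $L^{\times}$, and cancelling this against the denominator in the regulator formula shows that $r_\cH$ of our motivic class is $L'(f,g,j+1)$ times a generator of $\cR(M(f\otimes g)^{*}(-j))$, modulo $L^{\times}$.

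The main obstacle is integrality: Beilinson's conjecture demands that the class lie in the \emph{integral} subspace $H^1_\mot(\ZZ, M(f\otimes g)^{*}(-j))$, not merely in $H^1_\mot(\QQ,-)$. Since the Eisenstein class of Theorem~\ref{thm:motivic-eisenstein-classes} is built from the elliptic polylogarithm and lives naturally in motivic cohomology over $\Spec \ZZ[1/N]$, the diagonal pushforward and Clebsch--Gordan map yield classes defined over $\Spec \ZZ[1/N]$; extension across primes dividing $N$ requires a residue analysis at the cusps, using the explicit formula $\res_\infty(\Eis^k_{\mot,b,N})=-N^k\zeta(-1-k)$ to exhibit (via linear combinations over $b$) a modification with vanishing boundary contributions, so that the class extends to a smooth model over $\Spec \ZZ$. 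A secondary limitation, acknowledged in the text preceding the conjecture, is that the stated \emph{equality} requires that our class generates the integral motivic cohomology --- equivalently, a dimension-one bound which is itself conjectural outside the case $k=k'=0$; the present strategy therefore produces only the inclusion $\supseteq$, and the reverse inclusion lies beyond what regulator computations can access.
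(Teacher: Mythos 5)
The statement you have been asked to prove is labelled a \emph{Conjecture} in the paper and is not proved there; what the paper actually establishes is Theorem \ref{thm:Beilinson}, which is the partial result that the Hodge regulator of the Rankin--Eisenstein class $\Eis^{[k,k',j]}_{\mot,1,N}$ (projected to the $(f,g)$-component) generates the $L$-line $L'(f,g,j+1)\,\cR(M(f\otimes g)^{*}(-j-1))$. Your proposal correctly identifies this as the attainable content and correctly concedes, in the final paragraph, that the asserted \emph{equality} depends on a conjectural one-dimensionality of $H^1_{\mot}(\ZZ,M(f\otimes g)^{*}(-j))$ (and on the existence of the Chow motive itself once $k,k'>0$); so you are not filling a gap the paper closes, but reproducing the paper's own caveat.

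As for the part that \emph{can} be proved, your strategy matches the paper's proof of Theorem \ref{thm:Beilinson} in all essentials: apply $r_\cH$ and the Abel--Jacobi map, compute the pairing via Theorem \ref{thm:deligne-regulator-formulae}, and then compare against an explicit $L$-rational generator of $\cR$ built from Lemma \ref{lemma:gauss-sum} and Proposition \ref{prop:eta-prime}. The paper makes this precise with the period computation of Lemma \ref{lemma:period-comp} and the explicit basis of Proposition \ref{prop:special-basis}; your sketch is slightly vaguer here but describes the same cancellation of $\langle\omega_f,\bar\omega_f\rangle_Y\langle\omega_g,\bar\omega_g\rangle_Y$ against the denominator of the regulator formula.

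One genuine deviation: your treatment of integrality. You propose to force vanishing residues by taking suitable linear combinations over $b$ and then extending across the primes of bad reduction. The paper instead invokes Scholl's theory of integral elements together with the observation that $\Eis^{k+k'-2j}_{\mot,1,N}$ can be defined integrally over the regular Drinfeld-level model $Y_1(N)_{\ZZ}$, and then cites Scholl's techniques to place $\Eis^{[k,k',j]}_{\mot,1,N}$ in the integral subspace. The residue approach you propose would not obviously work as stated: the residues of the Eisenstein class at the cusps are nonzero rational numbers, but killing them by averaging over $b$ changes the class and hence the $L$-value appearing in the regulator formula; the cusps also are not the relevant obstruction to integrality, which concerns the fibres over primes $\ell \mid N$ rather than the boundary of the compactification. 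So for this step you should replace the residue argument with Scholl's integrality criterion as the paper does.

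Finally, note a small indexing mismatch: the conjecture as stated writes $\cR(M(f\otimes g)^{*}(-j))$, whereas Theorem \ref{thm:Beilinson} and Definition \ref{def:infty-period} use $\cR(M(f\otimes g)^{*}(-j-1))$; you should track which twist is intended when you write out the determinant line precisely.
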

Recall from  \ref{def:Rankin-Eisenstein-class} the Rankin--Eisenstein class
    \[
       \AJ_{\cH,f,g}(\Eis^{[k, k', j]}_{\cH, 1, N})\in  H^1_\cH(\RR, M_B(f\otimes g)^*(-j)_\RR),
    \]
which is by construction in the image of the regulator map
\[
r_\cH:
H^{1}_{\mot}(\QQ,M(f\otimes g)^{*}(-j))\otimes_\QQ \RR\to
H^1_\cH(\RR, M_B(f\otimes g)^*(-j)_\RR).
\]
\begin{remark} Scholl has defined in \cite{scholl-integral}
a subspace of integral elements in motivic cohomology, which coincides with Beilinson's integral elements if the variety admits a projective, flat and regular model.
We remark that the motivic Eisenstein
class $\Eis^{k+k'-2j}_{\mot, 1, N}$ can be defined integrally
over the regular model $Y_1(N)_\ZZ$ over $\ZZ$ by considering the moduli space
of elliptic curves with a point of exact order $N$ in the sense of
Drinfeld. With the techniques in loc. cit. one can show that
$\Eis^{[k, k', j]}_{\mot, 1, N}$ lies in the subspace of Scholl's
integral elements.
\end{remark}
Before we formulate our results, we choose bases to make the involved $L$-structures more explicit. Recall that $f^*$ and $g^*$ denote the forms with complex conjugate Fourier coefficients and that we have associated differential forms
  $\omega_f=f(\tau)(2\pi)^{k+1}w^{(k,0)}d\tau$.

  Observe that $\varepsilon_{f^*}=\varepsilon_f^{-1}$ and
  that $\overline{G(\varepsilon_f)}=\varepsilon_f(-1)G(\varepsilon_f^{-1})=(-1)^{k+2}G(\varepsilon_f^{-1})$. Moreover one has $G(\varepsilon_f)\overline{G(\varepsilon_f)}=N_{\varepsilon_f}$.
  Let $M(\varepsilon_f)$ the Artin motive associated to the character $\varepsilon_f$.
  The cup-product between cohomology in
  degree $0$ and $1$ induces an isomorphism of motives
  \begin{equation}
  M(\varepsilon_f)(k+1)\otimes M(f)\isom M(f^*)(k+1)\isom M(f)^*.
  \end{equation}

  \begin{definition}
   Denote by $\omega_{\varepsilon_f}\in M_\dR(\varepsilon_f)$ and $\delta_{\varepsilon_f}\in M_B(\varepsilon_f)$ generators  such that $\omega_{\varepsilon_f}=G(\varepsilon_f)\delta_{\varepsilon_f}$. This is possible by
  \cite[Section 6.4]{Deligne-L-values}.
  \end{definition}

  \begin{note}
   We have $(2\pi i)^{k+1}\delta_{\varepsilon_f}\in M_B(\varepsilon_f)(k+1)^-$ because $\varepsilon_f$ has parity $k+2$.
  \end{note}

  \begin{definition}
   Choose a basis $\delta_f^\pm$ of $M_B(f)^\pm$ and $\delta_g^\pm$ of $M_B(g)^\pm$,
   so that
   \[
   \{\delta_f^+\otimes\delta_g^+,\delta_f^+\otimes\delta_g^-,
   \delta_f^-\otimes\delta_g^+,\delta_f^-\otimes\delta_g^-\}
   \]
   is an $L$-basis of $M_B(f\otimes g)$.
  \end{definition}

  \begin{definition}
  Let
  \begin{align*}
  \delta_{f^*}^\pm& \coloneqq (2\pi i)^{k+1}\delta_{\varepsilon_f}\delta_f^\mp\in M_B(f^*)(k+1)^\pm\\
  \delta_{g^*}^\pm& \coloneqq (2\pi i)^{k'+1}\delta_{\varepsilon_g}\delta_g^\mp\in M_B(g^*)(k'+1)^\pm.
  \end{align*}
  We normalize $\delta_{\varepsilon_f}$ and $\delta_{\varepsilon_g}$ such
  that $\langle \delta_{f^*}^\pm, \delta_f^\pm\rangle=1$ and
  $\langle \delta_{g^*}^\pm, \delta_g^\pm\rangle=1$.
  \end{definition}

   \begin{definition}
    Let
    \begin{align*}
    \widetilde{\omega}_{f^*}& \coloneqq G(\varepsilon_f^{-1})\omega_{\varepsilon_f}\omega_f\in M_\dR(f^*)\\
    \widetilde{\omega}_{g^*}& \coloneqq G(\varepsilon_g^{-1})\omega_{\varepsilon_g}\omega_g\in M_\dR(g^*).
    \end{align*}
    \end{definition}
  The next result is crucial for the period computation.
  \begin{lemma}\label{lemma:period-comp}
  For the Poincar\'e duality pairing $\langle\quad ,\quad\rangle$
  the following identities hold:
  \begin{align*}
  \langle\widetilde{\omega}_{f^*},\delta_f^\pm\rangle&=(2\pi i)^{-k-1}(-1)^k N_{\varepsilon_f}\langle {\omega}_{f},\delta_{f^*}^\mp\rangle=\mp
(2\pi i)^{-k-1}(-1)^{k} N_{\varepsilon_f}\langle \overline{\omega}_{f^*},\delta_{f^*}^\mp\rangle\\
   \langle\widetilde{\omega}_{g^*},\delta_g^\pm\rangle&=(2\pi i)^{-k'-1}(-1)^{k'}N_{\varepsilon_g}\langle{\omega}_{g},\delta_{g^*}^\mp\rangle=\mp(2\pi i)^{-k'-1}(-1)^{k'}N_{\varepsilon_g}\langle\overline{\omega}_{g^{*}},\delta_{g^*}^\mp\rangle.
  \end{align*}
  \end{lemma}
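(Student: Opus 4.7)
By symmetry it suffices to establish the identities for $f$.

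For the first equality, substitute $\omega_{\varepsilon_f}=G(\varepsilon_f)\delta_{\varepsilon_f}$ in the definition of $\widetilde{\omega}_{f^*}$ and use the Gauss sum relation $G(\varepsilon_f)G(\varepsilon_f^{-1})=\varepsilon_f(-1)N_{\varepsilon_f}=(-1)^kN_{\varepsilon_f}$ (valid since $\varepsilon_f(-1)=(-1)^{k+2}=(-1)^k$) to obtain
\[\widetilde{\omega}_{f^*}=(-1)^kN_{\varepsilon_f}\,\delta_{\varepsilon_f}\omega_f.\]
The key step is the ``move'' identity $\langle\delta_{\varepsilon_f}\omega_f,\delta_f^\pm\rangle=\langle\omega_f,\delta_{\varepsilon_f}\delta_f^\pm\rangle$, which reflects the fact that under the isomorphism $M(\varepsilon_f)(k+1)\otimes M(f)\cong M(f^*)(k+1)$ the Poincar\'e pairing $M(f)\otimes M(f^*)(k+1)\to L$ decouples as the Poincar\'e self-pairing on $M(f)$ tensored with the canonical trace pairing $M(\varepsilon_f)\otimes M(\varepsilon_f^{-1})\to L$ on the Artin factor. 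Rearranging the definition $\delta_{f^*}^\mp=(2\pi i)^{k+1}\delta_{\varepsilon_f}\delta_f^\pm$ as $\delta_{\varepsilon_f}\delta_f^\pm=(2\pi i)^{-k-1}\delta_{f^*}^\mp$ and substituting then yields the first equality.

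For the second equality, the Poincar\'e pairing $M_B(f)\otimes M_B(f^*)(k+1)\to L$ is equivariant for the involution $F_\infty$ on both sides, and $F_\infty$ acts trivially on the target $L$, so $\pm$-eigenspaces on the two sides of the pairing with mismatched signs are mutually orthogonal (even after $\CC$-linear extension). It therefore suffices to show $\omega_f+\overline{\omega}_{f^*}\in M_B(f)^+\otimes\CC$ and $\omega_f-\overline{\omega}_{f^*}\in M_B(f)^-\otimes\CC$; equivalently, $F_\infty\omega_f=\overline{\omega}_{f^*}$. This is a direct calculation in the analytic uniformization: the real involution $F_\infty$ of $Y_1(N_f)(\CC)$ acts as $\tau\mapsto-\overline{\tau}$ on the upper half plane and as $z\mapsto-\overline{z}$ on the universal cover of each fibre, so pulling back $\omega_f=f(\tau)(2\pi i)^{k+1}w^{(k,0)}\,d\tau$ yields $(-1)^{k+1}f(-\overline{\tau})(2\pi i)^{k+1}\overline{w^{(k,0)}}\,d\overline{\tau}$, which equals $\overline{\omega}_{f^*}=f(-\overline{\tau})(-2\pi i)^{k+1}\overline{w^{(k,0)}}\,d\overline{\tau}$ by the identity $\overline{f^*(\tau)}=f(-\overline{\tau})$.

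The principal technical subtlety is the decoupling step used in the first equality: one must verify that the Poincar\'e pairing on $M(f)\otimes M(f^*)(k+1)$ factors through the decomposition provided by $M(f^*)\cong M(\varepsilon_f)\otimes M(f)$ in the stated way. The cleanest route is to realize the pairing concretely as cup product followed by the trace in top-degree cohomology of the appropriate Kuga--Sato variety, where the locally constant Artin class $\delta_{\varepsilon_f}$ commutes past both the cup product and the trace; all remaining steps are then routine bookkeeping of signs and Tate twists.
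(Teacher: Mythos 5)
Your proposal follows essentially the same two-step strategy as the paper: for the first equality you substitute $\omega_{\varepsilon_f}=G(\varepsilon_f)\delta_{\varepsilon_f}$, invoke $G(\varepsilon_f)G(\varepsilon_f^{-1})=(-1)^kN_{\varepsilon_f}$, and move the Artin factor $\delta_{\varepsilon_f}$ across the pairing using the rearranged definition of $\delta_{f^*}^\mp$; for the second you use $\overline{F}_\infty^*\omega_f=\overline{\omega}_{f^*}$ and $\overline{F}_\infty^*\delta_{f^*}^\mp=\mp\delta_{f^*}^\mp$ together with equivariance of Poincar\'e duality. Your argument adds a justification for the ``decoupling'' of the pairing under the factorization $M(f^*)\cong M(\varepsilon_f)\otimes M(f)$, which the paper uses implicitly without comment, and spells out the analytic computation of the pullback by the real involution; both additions are in the spirit of the paper's terse proof.

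One small caution: you refer to the involution as $F_\infty$, whereas the paper's eigenspaces $M_B(\cdot)^\pm$ and the identity you need are stated in terms of $\overline{F}_\infty=F_\infty\otimes c$; on the untwisted space $M_B(f)$ these agree, but on the Tate-twisted space $M_B(f^*)(k+1)$ they differ by $(-1)^{k+1}$, so the ``eigenspaces with mismatched signs are orthogonal'' step only closes once one tracks which involution the $\delta_{f^*}^\pm$ are eigenvectors for. The paper sidesteps this by working directly with $\overline{F}_\infty$-equivariance of the pairing rather than an eigenspace decomposition; phrasing your second step that way would remove the ambiguity without changing the substance.
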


  \begin{proof}
  Using the definitions, one sees that
  \begin{align*}
  \langle \widetilde{\omega}_{f^*},\delta_f^\pm\rangle
  &=\langle G(\varepsilon_f^{-1})\omega_{\varepsilon_f}\omega_f,(2\pi i)^{-k-1}\delta_{\varepsilon_f}^{-1}\delta_{f^*}^\mp\rangle\\
  &=(-1)^{k}N_{\varepsilon_f}\langle \delta_{\varepsilon_f}\omega_f,(2\pi i)^{-k-1}\delta_{\varepsilon_f}^{-1}\delta_{f^*}^\mp\rangle\\
  &=(2\pi i)^{-k-1}(-1)^{k}N_{\varepsilon_f}\langle \omega_f,\delta_{f^*}^\mp\rangle.
  \end{align*}
  As $\overline{F_\infty}^*\omega_f=\overline\omega_{f^*}$ and $\overline{F_\infty}^*\delta_{f^*}^\mp=\mp\delta_{f^*}^\mp$ one gets also
  \[
   \langle \widetilde{\omega}_{f^*},\delta_f^\pm\rangle=\mp(2\pi i)^{-k-1}
   (-1)^{k-1} N_{\varepsilon_f}\langle \overline{\omega}_{f^*},\delta_{f^*}^\mp\rangle.\qedhere
   \]
  \end{proof}
  Finally, we define a basis for $M_B(f\otimes g)^*(-j-1)^+\isom M_B(f^*\otimes g^*)(k+k'-j+1)^+$.
  \begin{definition}
  Let $\gamma_j^*,\delta_j^*$ be the $L$-basis of $M_B(f^*\otimes g^*)(k+k'-j+1)^+$ defined by
  \begin{align*}
  \gamma_j^*& \coloneqq (2\pi i)^{-j-1}\delta_{f^*}^+\otimes \delta_{g^*}^{(-1)^{j+1}}\\
  \delta_j^*& \coloneqq (2\pi i)^{-j-1}\delta_{f^*}^-\otimes \delta_{g^*}^{(-1)^j}.
  \end{align*}
  \end{definition}

  We consider $\widetilde{\omega}_{f^*}\otimes \widetilde{\omega}_{g^*}$  as a
  basis of $\Fil^{-j}M_\dR(f\otimes g)^*$ via the isomorphism
  $M_\dR(f\otimes g)^*\isom M_\dR(f^*\otimes g^*)(k+k'+2)$.
With these definitions we are able to give an explicit generator
of the $L$-vector space $\cR(M(f\otimes g)^{*}(-j-1))$.
\begin{proposition}\label{prop:special-basis}
In the case where $j+1$ is even (resp. odd) the image of the element
\begin{align*}
(2\pi i)^{-j-1}(\langle\widetilde{\omega}_{f^*},\delta_f^+\rangle\langle\widetilde{\omega}_{g^*},\delta_g^+\rangle)^{-1}\delta_j^*&&
(\mbox{resp. } (2\pi i)^{-j-1}(\langle\widetilde{\omega}_{f^*},\delta_f^+\rangle\langle\widetilde{\omega}_{g^*},\delta_g^-\rangle)^{-1}\delta_j^*)
\end{align*}
in $ H^1_\cH(\RR, M_B(f\otimes g)^*(-j)_\RR)$ is an $L$-basis of
$\cR(M(f\otimes g)^{*}(-j-1))$.
\end{proposition}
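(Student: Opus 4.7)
The strategy is to make fully explicit the first arrow in the short exact sequence
\[
0\to \Fil^{-j}M_\dR(f\otimes g)^*_\RR \xrightarrow{\alpha} M_B(f\otimes g)^*(-j-1)_\RR^+ \to H^1_\cH(\RR, M_B(f\otimes g)^*(-j)_\RR)\to 0
\]
evaluated on the generator $\widetilde{\omega}_{f^*}\otimes\widetilde{\omega}_{g^*}$, and then to read off the basis of the quotient from the determinant of the sequence.

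\textbf{Step 1: identifying $\alpha$.}
Setting $N \coloneqq M_B(f\otimes g)^*(-j)$, which is pure of negative weight, Beilinson's formula gives $H^1_\cH(\RR,N) = N_\CC^+/(N_\RR^+ + \Fil^0 N_\CC)$. Decomposing $N_\CC$ under $\overline{F_\infty}=F_\infty\otimes c$ yields $N_\CC^+ = (N_B^+\otimes\RR)\oplus (N_B^-\otimes i\RR)$, and the second summand is canonically identified with $M_B(f\otimes g)^*(-j-1)_\RR^+$ via the Tate twist $v\otimes i\mapsto -2\pi\cdot (2\pi i)^{-j-1}(2\pi i)^{j}v$. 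Since $\Fil^{-j}M_\dR(f\otimes g)^* \otimes \RR$ already lies in $N_\CC^+$ (identifying $N_\dR\otimes\RR$ with $N_\CC^+$ via the comparison), the map $\alpha$ is explicitly given by projecting onto the $(M_B^*)^{(-1)^{j+1}}$-eigenspace in $M_B(f\otimes g)^*\otimes\CC$.

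\textbf{Step 2: expansion in the Betti basis.}
Using the normalization $\langle\delta_{f^*}^\pm,\delta_f^\pm\rangle=1$ and $\langle\delta_{f^*}^\mp,\delta_f^\pm\rangle=0$ (the latter by $\overline{F_\infty}$-equivariance of the pairing), I expand
\[
\widetilde{\omega}_{f^*} = \sum_{\epsilon=\pm}\langle\widetilde{\omega}_{f^*},\delta_f^\epsilon\rangle\,\delta_{f^*}^\epsilon \quad\text{in }M_B(f^*)(k+1)\otimes\CC,
\]
and analogously for $\widetilde{\omega}_{g^*}$. Taking the tensor product gives a four-term expansion of $\widetilde{\omega}_{f^*}\otimes\widetilde{\omega}_{g^*}$ in the basis $\{\delta_{f^*}^{\epsilon_1}\otimes\delta_{g^*}^{\epsilon_2}\}$. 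The projection in Step~1 discards the two terms with $\epsilon_1\epsilon_2 = (-1)^j$, leaving only those with $\epsilon_1\epsilon_2 = (-1)^{j+1}$.

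\textbf{Step 3: translation to the basis $\{\gamma_j^*,\delta_j^*\}$ and the determinant argument.}
Since $\gamma_j^*=(2\pi i)^{-j-1}\delta_{f^*}^+\otimes\delta_{g^*}^{(-1)^{j+1}}$ and $\delta_j^*=(2\pi i)^{-j-1}\delta_{f^*}^-\otimes\delta_{g^*}^{(-1)^{j}}$, the embedding $M_B(f\otimes g)^*(-j-1)\hookrightarrow M_B(f\otimes g)^*\otimes\CC$ introduces a factor of $(2\pi i)^{j+1}$ when converting back. The coefficient of $\gamma_j^*$ in $\alpha(\widetilde{\omega}_{f^*}\otimes\widetilde{\omega}_{g^*})$ is therefore
\[
a \;=\; (2\pi i)^{j+1}\,\langle\widetilde{\omega}_{f^*},\delta_f^+\rangle\,\langle\widetilde{\omega}_{g^*},\delta_g^{(-1)^{j+1}}\rangle,
\]
with a similar formula for the $\delta_j^*$-coefficient. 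By the multiplicativity of the determinant in the short exact sequence, the $L$-basis $(\gamma_j^*\wedge\delta_j^*)\otimes(\widetilde{\omega}_{f^*}\otimes\widetilde{\omega}_{g^*})^{-1}$ of $\cR(M(f\otimes g)^*(-j-1))$ maps to $\frac{1}{a}\,[\delta_j^*]\in H^1_\cH(\RR,M_B(f\otimes g)^*(-j)_\RR)$, which after substituting the value of $a$ yields precisely the stated formula in both parity cases.

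\textbf{Main obstacle.}
The whole computation hinges on pinning down $\alpha$ correctly: in particular verifying that the canonical identification $N_B^-\otimes i\RR\cong M_B(f\otimes g)^*(-j-1)^+_\RR$ introduces exactly the factor $(2\pi i)^{j+1}$ appearing in the proposition (so that the $(2\pi i)^{-j-1}$ in the final answer is not absorbed). The bookkeeping of Tate twists, the sign conventions for $F_\infty$ on $(2\pi i)^n$, and the simultaneous use of Betti/de Rham comparison on all three factors $M(\varepsilon_f)$, $M(f)$, $M(g)$ (with the Gauss-sum identities implicit in Lemma~\ref{lemma:gauss-sum} and Lemma~\ref{lemma:period-comp}) is where the genuine care is required; everything else is bookkeeping.
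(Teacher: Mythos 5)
Your proof takes essentially the same route as the paper: expand $\widetilde{\omega}_{f^*}\otimes\widetilde{\omega}_{g^*}$ in the Betti basis $\delta_{f^*}^\pm\otimes\delta_{g^*}^\pm$, apply the real-part-type projection onto $M_B(f\otimes g)^*(-j-1)_\RR^+$ (the paper packages this as $\pi_{-j-1}$, you describe it via the $\overline{F}_\infty$-eigenspace decomposition of $N_\CC^+$ — these are the same projection), rewrite the surviving two terms in the basis $\{\gamma_j^*,\delta_j^*\}$, and conclude by the determinant-of-a-short-exact-sequence argument. Your coefficient $a$ and the resulting answer $\frac{1}{a}[\delta_j^*]$ agree exactly with the paper's in both parity cases, and your unified formula $\langle\widetilde{\omega}_{g^*},\delta_g^{(-1)^{j+1}}\rangle$ correctly specializes to the two cases in the statement. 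The one small infelicity is in the preparatory Step 1: the normalization of the isomorphism $N_B^-\otimes i\RR \cong M_B(f\otimes g)^*(-j-1)_\RR^+$ is garbled (the displayed constant simplifies to $i$, whereas the natural Tate-twist map contributes a factor of $(2\pi i)^{-1}$ rather than a unit in $i\RR$); since the computation in Steps 2--3 is carried out correctly regardless, this does not affect the result, but you flagged it as the delicate point and it does merit a careful line or two.
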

\begin{proof}
We treat only the case
   of $j+1$ even. The odd case is entirely similar.
   Write
   \begin{align*}
   \widetilde{\omega}_{f^*}&=\langle\widetilde{\omega}_{f^*},\delta_f^+\rangle \delta_{f^*}^+ +\langle\widetilde{\omega}_{f^*},\delta_f^-\rangle\delta_{f^*}^-\\
  \widetilde{\omega}_{g^*}&=\langle\widetilde{\omega}_{g^*},\delta_g^+\rangle \delta_{g^*}^+ +\langle\widetilde{\omega}_{g^*},\delta_g^-\rangle\delta_{g^*}^-
   \end{align*}
   and let $\pi_{-j-1}:\CC\to \RR(-j-1)$ be the projection $z\mapsto \frac{1}{2}(z+(-1)^{-j-1}z)$. Then one has (because $j+1$ is even)
   \begin{align*}
   \pi_{-j-1}( \widetilde{\omega}_{f^*}\otimes  \widetilde{\omega}_{g^*})&=
   \langle\widetilde{\omega}_{f^*},\delta_f^+\rangle\langle\widetilde{\omega}_{g^*},\delta_g^+\rangle\delta_{f^*}^+\otimes \delta_{g^*}^+
   +\langle\widetilde{\omega}_{f^*},\delta_f^-\rangle\langle\widetilde{\omega}_{g^*},\delta_g^-\rangle\delta_{f^*}^-\otimes \delta_{g^*}^-\\
   &=(2\pi i)^{j+1}(\langle\widetilde{\omega}_{f^*},\delta_f^+\rangle\langle\widetilde{\omega}_{g^*},\delta_g^+\rangle\gamma_j^*+\langle\widetilde{\omega}_{f^*},\delta_f^-\rangle\langle\widetilde{\omega}_{g^*},\delta_g^-\rangle\delta_j^*).
   \end{align*}
   The image of the element $(2\pi i)^{-j-1}(\langle\widetilde{\omega}_{f^*},\delta_f^+\rangle\langle\widetilde{\omega}_{g^*},\delta_g^+\rangle)^{-1}\delta_j^*$ in
   $ H^1_\cH(\RR, M_B(f\otimes g)^*(-j)_\RR)$ is a basis whose
   determinant with $ \pi_{-j-1}( \widetilde{\omega}_{f^*}\otimes  \widetilde{\omega}_{g^*})$
   and $\gamma_j^*,\delta_j^*$ is $1$.
\end{proof}
With the above notations we get the formula for $L'(f,g,j+1)$ as
in Beilinson's conjecture. In the case $k=k'=0$, i.e. $j=0$, it was first proved by
   Beilinson \cite{Beilinson-L-values} and for general $k,k'$
   and $0\le j\le \min\{k,k'\}$ it was announced by Scholl (unpublished, but see \cite{Kings-Higher-Regulators} for closely
related results in the Hilbert--Blumenthal case).
\begin{theorem}\label{thm:Beilinson}
Let  $0\le j\le \min\{k,k'\}$, then $\AJ_{\cH,f,g}(\Eis^{[k, k', j]}_{\cH, 1, N})$ generates the $L$-subspace
\[
L'(f,g,j+1)\cR(M(f\otimes g)^{*}(-j-1))
\]
of $ H^1_\cH(\RR, M_B(f\otimes g)^*(-j)_\RR)$.
\end{theorem}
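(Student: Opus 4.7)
The plan is to deduce Theorem \ref{thm:Beilinson} from the explicit regulator formula of Theorem \ref{thm:deligne-regulator-formulae}, by matching its output with the description of the $L$-structure $\cR(M(f\otimes g)^{*}(-j-1))$ supplied by Proposition \ref{prop:special-basis}.

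First I would recast Theorem \ref{thm:deligne-regulator-formulae} as a statement about the \emph{position} of $\AJ_{\cH,f,g}(\Eis^{[k,k',j]}_{\cH,1,N})$ inside $H^1_\cH(\RR,M_B(f\otimes g)^*(-j)_\RR)\cong \coker\alpha_{M(f\otimes g)^*(-j)}$. By the lemma preceding Proposition \ref{prop:deligne-regulator-formula}, the vector used to test the Abel--Jacobi class is precisely the dual of the class of $\tfrac12(\omega_{f^*}\otimes\bar\omega_g+(-1)^{-j-1}\bar\omega_f\otimes\omega_{g^*})$ in $\coker\alpha$, so Theorem \ref{thm:deligne-regulator-formulae} is equivalent to an identity
\[
\AJ_{\cH,f,g}\bigl(\Eis^{[k,k',j]}_{\cH,1,N}\bigr) \;=\; \frac{(-1)^{k-j+1}(2\pi i)^{k+k'-2j}\,k!\,k'!}{2\,\langle\omega_f,\bar\omega_f\rangle_Y\langle\omega_g,\bar\omega_g\rangle_Y\,(k-j)!(k'-j)!}\,L'(f,g,j+1)\cdot\bigl[\tfrac12(\omega_{f^*}\otimes\bar\omega_g+(-1)^{-j-1}\bar\omega_f\otimes\omega_{g^*})\bigr]
\]
in $\coker\alpha$. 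The whole task is then to compute the image of this $C^\infty$ representative in the quotient $M_B(f\otimes g)^*(-j-1)^+_\RR/\Fil^{-j}M_\dR(f\otimes g)^*_\RR$ and compare it with the generator of $\cR(M(f\otimes g)^*(-j-1))$ produced by Proposition \ref{prop:special-basis}.

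Next, I would expand $\omega_{f^*}\otimes\bar\omega_g$ and $\bar\omega_f\otimes\omega_{g^*}$ in the Betti basis $\{\delta_{f^*}^\pm\otimes\delta_{g^*}^\pm\}$ via the Betti--de Rham comparison. The essential input is Lemma \ref{lemma:period-comp}, which converts the transcendental pairings $\langle\omega_f,\delta_{f^*}^\mp\rangle$ and $\langle\bar\omega_{f^*},\delta_{f^*}^\mp\rangle$ into rational multiples of the $L$-rational pairings $\langle\widetilde\omega_{f^*},\delta_f^\pm\rangle$ (and similarly for $g$). After projecting to the $\overline{F}_\infty$-invariants in $M_B(-j-1)^+$, the combination $\omega_{f^*}\otimes\bar\omega_g+(-1)^{-j-1}\bar\omega_f\otimes\omega_{g^*}$ decomposes as a sum of a $\gamma_j^*$-term and a $\delta_j^*$-term. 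The calculation from the proof of Proposition \ref{prop:special-basis} already identifies $\gamma_j^*$, up to an explicit $L$-rational factor, with the image in $M_B^+$ of $\widetilde\omega_{f^*}\otimes\widetilde\omega_{g^*}\in\Fil^{-j}M_\dR$; hence the $\gamma_j^*$-component can be absorbed into the quotient, and only the $\delta_j^*$-term survives. Treating the two parities of $j+1$ separately as in Proposition \ref{prop:special-basis}, and using $\langle\omega_{f^*},\bar\omega_{f^*}\rangle_Y=\langle\omega_f,\bar\omega_f\rangle_Y$ and the analogous identity for $g$, a short computation shows that the surviving $\delta_j^*$-coefficient equals, up to an element of $L^\times$,
\[
\frac{\langle\omega_f,\bar\omega_f\rangle_Y\,\langle\omega_g,\bar\omega_g\rangle_Y}{(2\pi i)^{k+k'-2j}\,\langle\widetilde\omega_{f^*},\delta_f^+\rangle\,\langle\widetilde\omega_{g^*},\delta_g^{(-1)^{j+1}}\rangle}\cdot (2\pi i)^{-j-1}.
\]

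Multiplying this by the leading scalar from Theorem \ref{thm:deligne-regulator-formulae} in the boxed formula above, the factors $(2\pi i)^{k+k'-2j}$ and the Petersson-type periods $\langle\omega_f,\bar\omega_f\rangle_Y\langle\omega_g,\bar\omega_g\rangle_Y$ cancel exactly, leaving $L'(f,g,j+1)$ times an element of $L^\times$ times the canonical generator $(2\pi i)^{-j-1}(\langle\widetilde\omega_{f^*},\delta_f^+\rangle\langle\widetilde\omega_{g^*},\delta_g^{(-1)^{j+1}}\rangle)^{-1}\delta_j^*$ of $\cR(M(f\otimes g)^*(-j-1))$ supplied by Proposition \ref{prop:special-basis}. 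This is exactly Theorem \ref{thm:Beilinson}. The main obstacle is bookkeeping: keeping track of Tate twists, powers of $2\pi i$, the Gauss sums hidden in $\widetilde\omega_{f^*}=G(\varepsilon_f^{-1})\omega_{\varepsilon_f}\omega_f$, and the sign discrepancies coming from complex conjugation and the parity of $j+1$, so that the transcendental pieces on both sides really do collapse to the same real period. Lemma \ref{lemma:period-comp}, which precisely relates the de Rham classes $\omega_f,\bar\omega_{f^*}$ to the $L$-rational classes $\widetilde\omega_{f^*}$ in the Poincaré pairing, is the mechanism that forces this collapse.
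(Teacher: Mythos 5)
Your proposal follows essentially the same route as the paper's proof: both use Theorem \ref{thm:deligne-regulator-formulae} as the analytic input, Proposition \ref{prop:special-basis} to pin down the $L$-rational generator of $\cR(M(f\otimes g)^*(-j-1))$, and Lemma \ref{lemma:period-comp} to convert the transcendental Betti--de Rham pairings into $L$-rational quantities so that the Petersson periods and powers of $2\pi i$ cancel. One small point of imprecision: in the quotient $M_B^+/\Fil^{-j}M_\dR$ it is the full image $\pi_{-j-1}(\widetilde\omega_{f^*}\otimes\widetilde\omega_{g^*})$, a specific $\gamma_j^*$-plus-$\delta_j^*$ combination, that is killed, not $\gamma_j^*$ itself; your conclusion that everything collapses to a $\delta_j^*$-multiple is nevertheless correct.
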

\begin{proof}
For simplicity we again treat only the case $j+1$ even. From Theorem
\ref{thm:deligne-regulator-formulae} we get
\begin{multline*}
  \left\langle\AJ_{\cH,f,g}\left(\Eis_{\cH,1,N}^{[k,k',j]}\right),
\frac{-1}{\langle\omega_f,\bar{\omega}_f\rangle_Y \langle\omega_g,\bar{\omega}_g \rangle_Y}
\left(\bar\omega_{f^*}\otimes {\omega}_g+(-1)^{j+1}\omega_f\otimes\bar \omega_{g^{*}} \right)
\right\rangle=\\
\frac{(-1)^{k-j+1}(2\pi i)^{k+k'-2j}}{2\langle\omega_f,\bar{\omega}_f\rangle_Y \langle\omega_g,\bar{\omega}_g \rangle_Y}
    \frac{k!k'!}{(k-j)!(k'-j)!}L'(f, g, j+1).
\end{multline*}
and a straightforward computation with the basis $(2\pi i)^{-j-1}(\langle\widetilde{\omega}_{f^*},\delta_f^+\rangle\langle\widetilde{\omega}_{g^*},\delta_g^+\rangle)^{-1}\delta_j^*$ from
Proposition \ref{prop:special-basis} using Lemma \ref{lemma:period-comp} gives
\begin{multline*}
\left\langle (2\pi i)^{-j-1}(\langle\widetilde{\omega}_{f^*},\delta_f^+\rangle\langle\widetilde{\omega}_{g^*},\delta_g^+\rangle)^{-1}\delta_j^*,
\frac{-1}{\langle\omega_f,\bar{\omega}_f\rangle_Y \langle\omega_g,\bar{\omega}_g \rangle_Y}
\left(\bar\omega_{f^*}\otimes {\omega}_g+(-1)^{j+1}\omega_f\otimes\bar \omega_{g^{*}} \right)
\right\rangle=\\
\frac{(-1)^{k+k'}2(2\pi i)^{k+k'-2j}}{N_{\varepsilon_f}N_{\varepsilon_g}\langle\omega_f,\bar{\omega}_f\rangle_Y \langle\omega_g,\bar{\omega}_g \rangle_Y}.
\end{multline*}
This gives
\[
\AJ_{\cH,f,g}\left(\Eis_{\cH,1,N}^{[k,k',j]}\right)=L'(f,g,j+1)\left(
\frac{(-1)^{k'-j+1}N_{\varepsilon_f}N_{\varepsilon_g}k!k'!}{4(k-j)!(k'-j)!}
\right)(2\pi i)^{-j-1}(\langle\widetilde{\omega}_{f^*},\delta_f^+\rangle\langle\widetilde{\omega}_{g^*},\delta_g^+\rangle)^{-1}\delta_j^*,
\]
which implies the assertion of the theorem.
\end{proof}
For the Perrin-Riou conjecture it is necessary to reformulate the
above theorem in terms of a period.
   \begin{definition}\label{def:infty-period} Let $0\le j\le \min\{k,k'\}$.
   The \emph{$\infty$-period} $\Omega_\infty(j+1)$ of the motive $M(f\otimes g)(j+1)$ is the
   element in $(L\otimes_\QQ\RR)^\times$ given by the determinant of
   \[
     0\to \Fil^{-j}M_\dR(f\otimes g)^*_\RR \to M_B(f\otimes g)^*(-j-1)_\RR^+ \to
          H^1_\cH(\RR, M_B(f\otimes g)^*(-j)_\RR)\to 0
      \]
   with respect to the bases $\widetilde{\omega}_{f^*}\otimes \widetilde{\omega}_{g^*}$,
   $\gamma_j^*,\delta_j^*$ and $ \AJ_{\cH,f,g}(\Eis^{[k, k', j]}_{\cH, b, N})$.
   \end{definition}
   \begin{remark}
   Note that $\Omega_\infty(j+1)$ is independent of the choice of bases up to an element
   in $L^\times$. The condition in the definition means that under the isomorphism
   \begin{equation}\label{eq:determinant}
   \det(\Fil^{-j}M_\dR(f\otimes g)^*_\RR)\otimes \det( M_B(f\otimes g)^*(-j-1)_\RR^+ )^{-1}
   \otimes \det(H^1_\cH(\RR, M_B(f\otimes g)^*(-j)_\RR))\isom L\otimes_\QQ\RR
   \end{equation}
   the determinants of the corresponding bases map to $\Omega_\infty(j+1)$.
   \end{remark}
   The next theorem is a reformulation of the Beilinson conjecture for the
   motive $M(f\otimes g)(j+1)$ with the $\infty$-period $\Omega_\infty(j+1)$.
   \begin{theorem}\label{thm:Beilinson-conj} Let  $0\le j\le \min\{k,k'\}$ and
   $L'(f,g,j+1)\in (L\otimes_\QQ\RR)^\times$ be the leading term of the
   $L$-function of $M(f\otimes g)$ at $j+1$. Then
   \[
   \frac{L'(f, g,j+1)}{\Omega_\infty(j+1)}=
   \frac{(-1)^{k'-j+1}4}{N_{\varepsilon_f}N_{\varepsilon_g}}\frac{(k-j)!(k'-j)!}{k!k'!}\in L^\times.
   \]
   \end{theorem}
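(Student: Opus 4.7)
The plan is to combine Theorem \ref{thm:Beilinson} (in the explicit form that appears in its proof) with Proposition \ref{prop:special-basis}, together with the elementary observation that the determinant defining $\Omega_\infty(j+1)$ depends $L$-linearly on the basis chosen for $H^1_\cH(\RR, M_B(f\otimes g)^*(-j)_\RR)$. Concretely, under the isomorphism \eqref{eq:determinant}, rescaling this basis by $\lambda \in L^\times$ rescales $\Omega_\infty(j+1)$ by $\lambda$, while the bases $\widetilde{\omega}_{f^*}\otimes\widetilde{\omega}_{g^*}$ of $\Fil^{-j}M_\dR(f\otimes g)^*_\RR$ and $\{\gamma_j^*,\delta_j^*\}$ of $M_B(f\otimes g)^*(-j-1)_\RR^+$ are held fixed.

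The key input is the content of Proposition \ref{prop:special-basis}, which singles out an explicit element $\beta$ of $H^1_\cH(\RR, M_B(f\otimes g)^*(-j)_\RR)$ (depending on the parity of $j+1$) for which the determinant of the short exact sequence \eqref{eq:deligne-cohomology}, computed with respect to the three bases $\widetilde{\omega}_{f^*}\otimes\widetilde{\omega}_{g^*}$, $\{\gamma_j^*,\delta_j^*\}$, and $\beta$, equals $1$. This is visible directly from the proof of that proposition, where the projection of $\widetilde{\omega}_{f^*}\otimes\widetilde{\omega}_{g^*}$ to $M_B(f\otimes g)^*(-j-1)_\RR^+$ is expanded as an explicit linear combination of $\gamma_j^*$ and $\delta_j^*$, and $\beta$ is defined precisely so as to make the resulting determinant trivial.

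Now the explicit formula established in the proof of Theorem \ref{thm:Beilinson} reads, in the case $j+1$ even,
\[
\AJ_{\cH,f,g}\!\left(\Eis_{\cH,1,N}^{[k,k',j]}\right) = L'(f,g,j+1)\cdot\frac{(-1)^{k'-j+1}N_{\varepsilon_f}N_{\varepsilon_g}\,k!\,k'!}{4\,(k-j)!\,(k'-j)!}\cdot\beta,
\]
and combining this with the linearity property above gives
\[
\Omega_\infty(j+1) = L'(f,g,j+1)\cdot\frac{(-1)^{k'-j+1}N_{\varepsilon_f}N_{\varepsilon_g}\,k!\,k'!}{4\,(k-j)!\,(k'-j)!}.
\]
Rearranging (and using $(-1)^{-(k'-j+1)} = (-1)^{k'-j+1}$) yields the stated formula; in particular the ratio lies in $L^\times$ as asserted. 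The $j+1$ odd case is entirely analogous, using the odd-case basis element of Proposition \ref{prop:special-basis} together with the parallel formula from the proof of Theorem \ref{thm:Beilinson}. There is no substantive obstacle here: the theorem reduces to clean bookkeeping, since the heavy lifting has already been carried out in Theorem \ref{thm:deligne-regulator-formulae}, Proposition \ref{prop:special-basis}, and Theorem \ref{thm:Beilinson}.
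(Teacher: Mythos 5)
Your proposal is correct and follows essentially the same route as the paper: the paper's proof of Theorem \ref{thm:Beilinson-conj} is exactly the one-line assertion that it is a reformulation of the computation in the proof of Theorem \ref{thm:Beilinson}, and you have simply made that reformulation explicit by combining the linearity of the determinant in the $H^1_\cH$ basis with Proposition \ref{prop:special-basis} and the displayed formula from the proof of Theorem \ref{thm:Beilinson}.
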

   \begin{proof}
   This is just a reformulation of the computation in the
proof of Theorem \ref{thm:Beilinson}.
   \end{proof}

%%%%%%%%%%%%%%%%%%

  \subsection{The Perrin-Riou conjecture (p-adic Beilinson conjecture)}

   We continue to assume that $f, g$ are new and $0\le j\le\min\{k,k'\}$, and we choose a prime $p$ not dividing the levels $N_f$, $N_g$, and a prime $\frP$ above $p$ of the coefficient field $L$, such that $f$ and $g$ are ordinary at $\frP$.

   To formulate the Perrin-Riou conjecture we
   first have to define the $p$-adic period (see \cite{PerrinRiou-fonctionsL}). One would like to
have a $p$-adic analogue of the complex period map
\[
\alpha_{M(f\otimes g)(j+1)}:M_B(f\otimes g)(j+1)^{+}_\RR\to
t(M(f\otimes g)(j+1))_\RR.
\]
The problem is that there is no good $p$-adic analogue of
the two dimensional $+$-part of Betti cohomology.
To remedy this defect, Perrin-Riou (as explained by Colmez (\cite{Colmez-fonctionsL})
proposes to choose elements $v_1,\ldots,v_4\in M_\dR(f\otimes g)_\Qp$
such that $\cN^{+} \coloneqq \langle v_1,v_2\rangle$ plays the role of $M_B(f\otimes g)^{+}$
and $\cN^{-} \coloneqq \langle v_3,v_4\rangle $ of $M_B(f\otimes g)^{-}$.
The natural projection to the tangent space induces a map
\[
\alpha_{M(f\otimes g)(j+1),\cN}:\cN^{(-1)^{j+1}}\to  t(M(f\otimes g)(j+1))_{\Qp}.
\]
For suitable choices of $\cN^{\pm}$ one gets a short exact sequence
\[
0\to \ker (\alpha_{M(f\otimes g)(j+1),\cN})\to \cN^{(-1)^{j+1}}\to t(M(f\otimes g)(j+1))_\Qp\to 0
\]
Using the pairing $M_\dR(f\otimes g)(j+1)\times M_\dR(f\otimes g)^{*}(-j)\to  L$ we can identify
$t(M(f\otimes g)^{*}(-j))\isom (\Fil^{j+1}M_\dR(f\otimes g))^{*}$
and get
\begin{equation}
\label{eq:N-pm-sequence}
0\to \Fil^{-j} M_\dR(f\otimes g)^{*}_\Qp\to (\cN^{(-1)^{j+1}})^{*}\to\ker (\alpha_{M(f\otimes g)(j+1),\cN})^{*} \to 0.
\end{equation}
If we compose the Abel--Jacobi map as in \S \ref{sect:ajdefs}
with the canonical projection we get
   \begin{equation} \label{eq:AJ-syn}H^3_{\syn}(Y_1(N)^2_{\Zp}, \TSym^{[k, k']}(\sH_\Qp)(2-j)) \rTo
   t(M(f\otimes g)^*(-j))_{\Qp}\rTo \ker (\alpha_{M(f\otimes g)(j+1),\cN})^{*}.\end{equation}
In our case we will choose $\cN^{+}=\cN^{-}$:

\begin{definition}
Assume that $\alpha_f,\alpha_g\in L$ and
let $\eta_f^{\alpha}$, $\eta_g'$ and $\omega_g'$ be the classes defined in \ref{def:etaf1}, \ref{prop:eta-prime} and \ref{lemma:gauss-sum} respectively, then we put
\begin{align*}
v_1=v_3=\frac{N_{\varepsilon_f}N_{\varepsilon_g}}{\cE(f)\cE^*(f)}\eta_f^\alpha \otimes\omega_g'&&\mbox{and}&&
v_2=v_4=\eta_f^{\alpha}\otimes
{\eta}_{g}'.
\end{align*}
so that $
\cN \coloneqq \cN^{+}=\cN^{-} = \eta_f^{\alpha}\Lp\otimes M_\dR(g)_\Qp\subset M_\dR(f\otimes g)_\Qp$.
\end{definition}

\begin{remark}
Perrin-Riou associates to each choice of $v_1, v_2, v_3, v_4$ a $p$-adic $L$-function.
In this paper we will restrict ourselves to a choice which gives the
$p$-adic $L$-function $L_p(f,g,s)$. The above formulae only define $v_2$ modulo $v_1$, but this is not important for the constructions below.
\end{remark}

The $p$-adic period is now defined as
follows:

\begin{definition}\label{def:p-period}
The \emph{$p$-adic period}
$\Omega_p(j+1,\underline{v})$ associated to $\underline{v} \coloneqq (v_1,v_2,v_3,v_4)$ is defined to be
\[
\Omega_p(j+1,\underline{v}) \coloneqq \left\langle \AJ_{\syn,f, g} \left(\Eis^{[k, k', j]}_{\syn,1, N}\right), v_1\right\rangle
\]
where one considers $\AJ_{\syn,f, g} \left(\Eis^{[k, k', j]}_{\syn, 1,N}\right)$ as an element in $\ker (\alpha_{M(f\otimes g)(j+1),\cN})^{*}$
using \ref{eq:AJ-syn}.
\end{definition}

\begin{remark}\mbox{~}
   \begin{enumerate}[(i)]
\item
Colmez chooses a splitting of the exact sequence \ref{eq:N-pm-sequence}
and shows that his definition does not depend on this choice. We have
taken $v_2=v_4=\eta_f^{\alpha}\otimes
\overline{\omega}_{g^{*}}$ to be the element mapping to a generator of
$t(M(f\otimes g)(j+1))_{\Qp}$.
\item From the definition and Theorem \ref{thm:syntomicreg} one sees
that $\Omega_p(j+1,\underline{v})\neq 0$ precisely if the $p$-adic $L$-function
$L_p(f,g,s)$ does not vanish at $s=j+1$. We point out again, that
for our choice of $j$, this is not a point of classical interpolation
so that we have no control on this vanishing in terms of the complex
$L$-function.
\end{enumerate}
\end{remark}
For $x\in M_\dR(f\otimes g)_\Qp$ write, as in Colmez \cite{Colmez-fonctionsL}, $t^{-n}x\in M_\dR(f\otimes g)(n)$ for the image of $x$ under the canonical isomorphism.
Note that this is compatible with the isomorphisms $M_\dR(f\otimes g)(n)_\Qp\isom D_\cris(M(f\otimes g)(n))=t^{-n}D_\cris(M(f\otimes g))\isom M_\dR(f\otimes  g)_\Qp$ and that $\varphi$ acts via $p^{-n}$ on $t^{-n}$.
Then an easy computation using $\varphi(\eta_f^{\alpha})=\alpha_f\eta_f^{\alpha}$ gives
   \begin{align*}
   \det\big(1-\varphi\mid t^{-j-1}\cN\big)&=
   \left(1-\frac{\alpha_f\alpha_g}{p^{j+1}}\right)
   \left(1-\frac{\alpha_f\beta_g}{p^{j+1}}\right)\\
   \det\big(1-p^{-1}\varphi^{-1}\mid t^{-j-1}\cN\big)&=
     \left(1-\frac{p^j}{\alpha_f\alpha_g}\right)
     \left(1-\frac{p^j}{\alpha_f\beta_g}\right)
   \end{align*}
   so that
   \begin{equation}\label{eq:euler-factor}
   \cE(f,g,j+1)=\frac{ \det\big(1-p^{-1}\varphi^{-1}\mid t^{-j-1}\cN\big)\det\big(1-\varphi\mid D_\cris(M(f\otimes g)(j+1))\big)}{\det\big(1-\varphi\mid t^{-j-1}\cN\big) }.
   \end{equation}
\begin{definition}
Denote by
\[
L'_{\{p\}}(f,g,j+1)
\]
the derivative of
$L(f,g,s)$ at $s=j+1$ without the Euler factor at $p$,
which is $\det\big(1-\varphi\mid D_\cris(M(f\otimes g)(j+1))\big)$.
\end{definition}

   The next Theorem is in the spirit of Perrin-Riou's conjecture
   \cite[4.2.2]{PerrinRiou-fonctionsL} (see also Colmez \cite[Conjecture 2.7]{Colmez-fonctionsL}) for Hida's $p$-adic Rankin--Selberg $L$-function at $s = 1 + j$. It shows that the complex Rankin-Selberg $L$-function, multiplied by some suitable periods defined via motivic cohomology, has a $p$-adic interpolation by Hida's $p$-adic Rankin-Selberg $L$-function in the range $0 \le j \le \min(k, k')$. Perrin-Riou expects such an interpolation in the cyclotomic variable $j$, whereas we get an interpolation in the direction of Hida families and only for certain cyclotomic twists. In this sense the next theorem is stronger but also weaker than Perrin-Riou's conjecture. 
   
   \begin{theorem}\label{thm:Perrin-Riou}
    Let $L_p(f,g,s)$ be Hida's $p$-adic Rankin--Selberg $L$-function and let $0 \le j \le \min(k, k')$. Suppose that $\Omega_p(j+1,\underline{v})\neq 0$ holds. Then
    \[
    L_p(f,g,j+1)=4^{-1}(-1)^{k'+1}\Gamma(j+1)\Gamma(j-k')^{*}G(\varepsilon_f^{-1}) G(\varepsilon_g^{-1})\frac{L'_{\{p\}}(f,g,j+1)}{\Omega_\infty(j+1)}
    \Omega_p(j+1,\frac{ 1-p^{-1}\varphi^{-1}}{1-\varphi}(-t)^{-j-1}\underline{v}),
    \]
where $\Gamma(j-k')^{*}=\frac{(-1)^{k'-j}}{(k'-j)!}$ is the residue of $\Gamma(s)$ at $s=j-k'$.
   \end{theorem}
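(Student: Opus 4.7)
The plan is to combine three inputs that all pertain to the same Rankin--Eisenstein class $\Eis^{[k,k',j]}$: the syntomic regulator formula of Theorem~\ref{thm:syntomicreg}, the archimedean Beilinson computation of Theorem~\ref{thm:Beilinson-conj}, and the Euler-factor factorisation \eqref{eq:euler-factor}. The identity will emerge by eliminating the pairing $\langle \AJ_{\syn,f,g}(\Eis^{[k,k',j]}_{\syn,1,N}), \eta_f^\alpha \otimes \omega_g'\rangle$ between Theorem~\ref{thm:syntomicreg} and the relation
\[
\Omega_p(j+1,v) = \tfrac{N_{\varepsilon_f} N_{\varepsilon_g}}{\cE(f)\cE^*(f)}\bigl\langle \AJ_{\syn,f,g}(\Eis^{[k,k',j]}_{\syn,1,N}), \eta_f^\alpha \otimes \omega_g'\bigr\rangle
\]
which is immediate from Definition~\ref{def:p-period} and the defining formula for $v_1$.

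Carrying out this elimination yields the intermediate identity
\[
L_p(f,g,1+j) \;=\; \frac{(-1)^{k'-j+1}\,\cE(f,g,1+j)}{N_{\varepsilon_f}N_{\varepsilon_g}\,(k')!\,\binom{k}{j}\,G(\varepsilon_f^{-1})G(\varepsilon_g^{-1})}\,\Omega_p(j+1,v),
\]
so the task is reduced to rewriting the combinatorial prefactor together with $\cE(f,g,1+j)$ in Perrin-Riou's form. Theorem~\ref{thm:Beilinson-conj} identifies $\tfrac{(-1)^{k'-j+1}\cdot 4\,(k-j)!(k'-j)!}{N_{\varepsilon_f}N_{\varepsilon_g}\,k!\,k'!}$ with $L'(f,g,j+1)/\Omega_\infty(j+1)$; coupled with the elementary identities $j!(k-j)!/k! = 1/\binom{k}{j}$ and $\Gamma(j+1)\Gamma(j-k')^{*} = (-1)^{k'-j}\,j!/(k'-j)!$, this converts the rational factor precisely into $4^{-1}(-1)^{k'+1}\Gamma(j+1)\Gamma(j-k')^{*}$.

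What remains is the Euler factor $\cE(f,g,1+j)$. Equation~\eqref{eq:euler-factor} splits it as the product of $\det(1-\varphi\mid D_\cris(M(f\otimes g)(j+1)))$ and $\det(1-p^{-1}\varphi^{-1}\mid t^{-j-1}\cN)/\det(1-\varphi\mid t^{-j-1}\cN)$. The first determinant is exactly the Euler factor at $p$ of the motivic $L$-function, so it converts $L'(f,g,j+1)$ into $L'_{\{p\}}(f,g,j+1)$ (using that $L$ has a simple zero at $s=j+1$, so only this derivative-term survives). The second determinant, together with the sign $(-1)^{j+1}$ produced by $(-t)^{-j-1}$, absorbs the transition $\Omega_p(j+1,v) \;\rightsquigarrow\; \Omega_p\bigl(j+1,\tfrac{1-p^{-1}\varphi^{-1}}{1-\varphi}(-t)^{-j-1}v\bigr)$, yielding the formula claimed in the theorem.

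The hard part is giving precise meaning to the last transition: although $\tfrac{1-p^{-1}\varphi^{-1}}{1-\varphi}(-t)^{-j-1}$ is a $\varphi$-equivariant endomorphism of $t^{-j-1}\cN$, the vector $v_1 = \eta_f^\alpha\otimes\omega_g'$ is not a $\varphi$-eigenvector, and after applying the operator $v_1$ no longer lies in $\Fil^{j+1}M_\dR(f\otimes g)$, so the evaluation of $\AJ_{\syn,f,g}(\Eis)$ on it is not a priori defined by a naive pairing. The fix is that $\AJ_{\syn,f,g}(\Eis)$ actually factors through the one-dimensional quotient $\ker(\alpha_{M(f\otimes g)(j+1),\cN})^{*}$ of $t(M(f\otimes g)^*(-j))_\Qp$ appearing in \eqref{eq:N-pm-sequence}, on which $\cN$ acts through its determinant; consequently only the determinant of the operator on $t^{-j-1}\cN$ intervenes in the transformation of $\Omega_p$, producing exactly the ratio of characteristic polynomials above. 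Once this point is settled, the matching of the remaining signs, Gauss sums and $\Gamma$-factors is routine algebra.
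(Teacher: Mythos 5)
Your proposal is correct and follows essentially the same strategy as the paper's proof: combine Theorem~\ref{thm:syntomicreg}, Theorem~\ref{thm:Beilinson-conj}, and the Euler-factor factorisation \eqref{eq:euler-factor}, eliminating the common quantity $\bigl\langle\AJ_{\syn,f,g}(\Eis^{[k,k',j]}_{\syn,1,N}),\eta_f^\alpha\otimes\omega_g'\bigr\rangle$ and reorganising the combinatorial and Euler factors. You are in fact somewhat more explicit than the paper about the justification of the transformation rule $\Omega_p(j+1,v)\rightsquigarrow\Omega_p\bigl(j+1,\tfrac{1-p^{-1}\varphi^{-1}}{1-\varphi}(-t)^{-j-1}\underline{v}\bigr)$ when the modified $v_1$ leaves $\ker\alpha_{M(f\otimes g)(j+1),\cN}$; the paper handles this tersely by appealing to the determinant computations over $t^{-j-1}\cN$, while your appeal to the one-dimensional quotient $\ker(\alpha_{M(f\otimes g)(j+1),\cN})^{*}$ is the same idea spelled out.
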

\begin{proof}
By Theorem \ref{thm:syntomicreg} and the above calculation of the action of $\varphi$ one has
\begin{multline*}
 \Omega_p(j+1,\frac{ 1-p^{-1}\varphi^{-1}}{1-\varphi}(-t)^{-j-1}\underline{v})= (-1)^{j+1}\frac{ \det\big(1-p^{-1}\varphi^{-1}\mid t^{-j-1}\cN\big)}{\det\big(1-\varphi\mid t^{-j-1}\cN\big) } \left\langle \AJ_{\syn,f, g} \left(\Eis^{[k, k', j]}_{\syn, 1}\right), v_1\right\rangle\\
    =  (-1)^{k'} (k')! \binom{k}{j}\frac{G(\varepsilon_f^{-1}) G(\varepsilon_g^{-1})N_{\varepsilon_f}N_{\varepsilon_g}}{\mathcal{E}(f, g, j + 1)}\frac{ \det\big(1-p^{-1}\varphi^{-1}\mid t^{-j-1}\cN\big)}{\det\big(1-\varphi\mid t^{-j-1}\cN\big) }
    L_p(f, g, j+ 1),
\end{multline*}
which gives using \eqref{eq:euler-factor}
\[
\Omega_p(j+1,\frac{ 1-p^{-1}\varphi^{-1}}{1-\varphi}(-t)^{-j-1}\underline{v})=
(-1)^{k'} (k')! \binom{k}{j}\frac{G(\varepsilon_f^{-1}) G(\varepsilon_g^{-1})N_{\varepsilon_f}N_{\varepsilon_g}}{\det\big(1-\varphi\mid D_\cris(M(f\otimes g)(j+1))\big)}
    L_p(f, g, j+ 1).
\]
On the other hand, by Theorem \ref{thm:Beilinson-conj} we have
 \[
   4^{-1}\Gamma(j+1)\Gamma(j-k')^{*}\frac{L'_{\{p\}}(f, g,j+1)}{\Omega_\infty(j+1)}=
   \frac{-1}{N_{\varepsilon_f}N_{\varepsilon_g}k'!}\binom{k}{j}^{-1}\det\big(1-\varphi\mid D_\cris(M(f\otimes g)(j+1))\big),
   \]
which proves the theorem.
\end{proof}

\begin{remark}
 We have verified Perrin-Riou's conjecture for the $p$-adic $L$-values $L_p(f, g, s)$, for integer values of $s$ in a certain interval. One can also evaluate the $L$-function at more general characters $\sigma: \ZZ_p^\times \to \mathbf{C}_p^\times$, of the form $\sigma(z) = z^m \chi(z)$, for $m \in \ZZ$ and $\chi$ a Dirichlet character of $p$-power conductor. Perrin-Riou's conjecture also predicts the leading term of the $p$-adic $L$-function at these points.
 
 If $m = 1 + j$ for $0 \le j \le \min(k, k')$, and $\chi$ is non-trivial, then Perrin-Riou's conjecture for the value at $\sigma$ is not directly accessible via the above methods. Although one can certainly construct motivic cohomology classes for these character twists, one needs to work with modular curves with level divisible by $p$. These curves have bad reduction at $p$, so our proof of Theorem \ref{thm:syntomicreg} does not immediately generalise to this context. Syntomic regulators of Rankin--Eisenstein classes for certain weight 2 modular forms with level divisible by $p$ have been computed in \cite{BDR-BeilinsonFlach2}; but extending these computations to general weights would present considerable technical difficulties.
 
 However, as we shall show in the second paper in this series \cite{KLZ1b}, this can be circumvented via the use of Hida families. We can consider the triple $(f, g, \sigma)$ as a point in a 3-dimensional $p$-adic analytic space $\mathfrak{X}$, parametrising specialisations of two Hida families $\mathbf{f}$, $\mathbf{g}$ as well as a varying character of $\ZZ_p^\times$. Although Theorem \ref{thm:syntomicreg} does not apply directly at the point $(f, g, \sigma)$, it can be applied at a Zariski-dense set of other points in the $p$-adic parameter space $\mathfrak{X}$. Hence Theorem \ref{thm:syntomicreg} is in fact strong enough to completely determine all the values of the $p$-adic $L$-function, which is the key to the proof of the explicit reciprocity law which Theorem B of \cite{KLZ1b}. This can be used to prove Perrin-Riou's conjecture for $(f, g, \sigma)$ without the need for further geometric input; we hope to return to this issue in a future paper.
 
 (In contrast, for the leading term of the $L$-function at integer values $s \le 0$, both Perrin-Riou's conjecture and Beilinson's conjecture appear to be completely out of reach at present.)
\end{remark}

\providecommand{\bysame}{\leavevmode\hbox to3em{\hrulefill}\thinspace}

\renewcommand{\MR}[1]{%
 MR \href{http://www.ams.org/mathscinet-getitem?mr=#1}{#1}.
}
\newcommand{\articlehref}[2]{\href{#1}{#2}}

\end{document}